\def\isarxiv{1}

\ifdefined\isarxiv%
\documentclass[10pt]{article}
\usepackage[margin=1.5in]{geometry}
\usepackage{times}
\usepackage{hyperref}
\usepackage{natbib}
\else
\documentclass{article}
\usepackage{multicol,enumitem}
\usepackage{hyperref}
\usepackage[nonatbib]{template/neurips_2024}
\usepackage[noend]{algpseudocode} 
\usepackage{algorithm} 
\fi

\usepackage{hyperref}       
\usepackage{url}   

\usepackage{amsmath}
\usepackage{amssymb}
\usepackage{amsthm}

\usepackage{booktabs}       
\usepackage{amsfonts}       
\usepackage{nicefrac}       
\usepackage{microtype}      
\usepackage{xcolor}         

\usepackage{amsmath,amsthm,amssymb}
\usepackage{graphicx}
\usepackage{cleveref}
\usepackage{comment}
\usepackage{bbm}
\usepackage{textcomp}
\usepackage{wrapfig}
\usepackage{float}

\usepackage{url}            
\usepackage{booktabs}       
\usepackage{threeparttable}  
\usepackage{amsfonts}       
\usepackage{nicefrac}       
\usepackage{microtype}      
\usepackage{multirow}
\usepackage{algorithm}
\usepackage{amsmath}
\usepackage{amsthm}
\usepackage{amssymb}
\usepackage{color}
\usepackage[english]{babel}
\usepackage{graphicx}
\usepackage{grffile}
\usepackage{wrapfig,epsfig}
\usepackage{url}
\usepackage{color}
\usepackage{algpseudocode}
\usepackage[T1]{fontenc}
\usepackage{bbm}
\usepackage{comment}
\usepackage{tikz}

\usepackage{pgfplots}
\pgfplotsset{compat=1.8}
\tikzset{elegant/.style={smooth,thick,samples=500,magenta}}

\theoremstyle{plain}
\newtheorem{theorem}{Theorem}[section]
\newtheorem{lemma}{Lemma}[section]
\newtheorem{remark}{Remark}[section]
\newtheorem{corollary}{Corollary}[section]
\newtheorem{proposition}{Proposition}[section]
\theoremstyle{definition}
\newtheorem{definition}{Definition}[section]

\newtheorem{assumption}{Assumption}[section]

\newtheorem{example}{Example}[section]

\crefname{assumption}{Assumption}{Assumptions}

\newcommand{\cov}{\mathrm{Cov}}

\newcommand{\Var}{\mathrm{Var}}

\newcommand{\R}{\mathbb{R}}

\newcommand{\keywords}[1]{\par\noindent
{\bfseries Keywords: } #1}

\definecolor{b2}{RGB}{51,153,255}
\definecolor{myGreen}{RGB}{80,180,0}
\definecolor{myGold}{rgb}{0.75,0.6,0.12}
\definecolor{myBlue}{RGB}{100,100,255}

\title{HAL-MLE Log-Splines Density Estimation (Part I: Univariate)}

\ifdefined\isarxiv%
\author{
  Yilong Hou \\
  Department of Biostatistics\\
  University of California, Berkeley, CA \\
  \texttt{yilong\_hou@berkeley.edu} \\
  \and
  Zhengpu Zhao \\
  Department of Statistics\\
  University of California, Berkeley, CA \\
  \texttt{zhengpu@berkeley.edu} \\
  \and
  Yi Li \\
  Department of Biostatistics\\
  University of California, Berkeley, CA \\
  \texttt{yi\_li@berkeley.edu} \\
  \and
  Mark van der Laan \\
  Department of Biostatistics\\
  University of California, Berkeley, CA \\
  \texttt{laan@berkeley.edu} \\
}
\date{March 3, 2026}

\else

\author{%
  David S.~Hippocampus\thanks{Use footnote for providing further information
    about author (webpage, alternative address)---\emph{not} for acknowledging
    funding agencies.} \\
  Department of Computer Science\\
  Cranberry-Lemon University\\
  Pittsburgh, PA 15213 \\
  \texttt{hippo@cs.cranberry-lemon.edu} \\
}

\fi
\begin{document}

\setlength{\abovedisplayskip}{3.2pt}
\setlength{\belowdisplayskip}{3.2pt}

\ifdefined\isarxiv%
  \maketitle
\begin{abstract}
We study nonparametric maximum likelihood estimation of probability densities under a total variation (TV) type penalty, sectional variation norm (also named as Hardy-Krause variation). TV regularization has a long history in regression and density estimation, including results on $L^2$ and KL divergence convergence rates. Here, we revisit this task using the Highly Adaptive Lasso (HAL) framework. We formulate a HAL-based maximum likelihood estimator (HAL-MLE) using the log-spline link function from \citet{kooperberg1992logspline}, and show that in the univariate setting the bounded sectional variation norm assumption underlying HAL coincides with the classical bounded TV assumption. This equivalence directly connects HAL-MLE to existing TV-penalized approaches such as local adaptive splines \citep{mammen1997locally}. 
We establish three new theoretical results: (i) the univariate HAL-MLE is asymptotically linear, (ii) it admits pointwise asymptotic normality, and (iii) it achieves uniform convergence at rate $n^{-(k+1)/(2k+3)}$ up to logarithmic factors for the smoothness order $k \geq 1$. These results extend existing results from \citet{van2017uniform}, which previously guaranteed only uniform consistency without rates when $k=0$. We will include the uniform convergence for general dimension $d$ in the follow-up work of this paper. The intention of this paper is to provide a unified framework for the TV-penalized density estimation methods, and to connect the HAL-MLE to the existing TV-penalized methods in the univariate case, despite that the general HAL-MLE is defined for multivariate cases. 
\end{abstract}

\keywords{cadlag functions, delta method, efficient influence curve, pathwise differentiability, variational optimization}

\else
\maketitle
\begin{abstract}

\end{abstract}

\fi

\section{Introduction}
\label{intro}
We consider nonparametric density estimation on a compact support. Given a random variable $X \sim P_0$, where $P_0$ is absolutely continuous, and i.i.d. samples $x_{1:n}$ from $P_0$, our goal is to estimate the true density function $p_0$ of the underlying distribution $P_0$. This paper aims to provide a thorough theoretical analysis of univariate density estimation with a variational penalty and its application. The statistical model for $P_0$ will be nonparametric up till assuming that $p_0$ has support on a known interval $[a,b]$ and is a cadlag function with a bounded variation norm, explained in detail below. Our framework is naturally extended to the multivariate case. 

Nonparametric density estimation methods typically include kernel-based approaches, splines, and wavelet techniques. Kernel density estimation (KDE), despite its simplicity, suffers from several drawbacks: it poorly captures densities with rapidly varying regions and faces severe challenges due to the curse of dimensionality in the multivariate case.

The first drawback of KDE arises from the fact that it is a linear smoother whose fitted values depend linearly on the observed responses. TV penalties have been introduced
into spline regression to resolve the same problem with kernel regression, smoothing splines, etc.    \citet{mammen1997locally} developed local adaptive splines (LAS) demonstrating $L^2$ convergence, while \citet{tibshirani2014adaptive} formulated restricted LAS and trend filtering (TF) as general Lasso regressions. Later, the TV penalty was also introduced into the density estimation problem \citep{bak2021penalized, sadhanala2024exponential}. The TV-penalized logspline density estimation (PLSDE) proposed in \citet{bak2021penalized} is more relevant to our problem, and it intends to tackle the oscillation issue of logspines method \citep{kooperberg1991study, kooperberg1992logspline}. \citet{kooperberg1992logspline} employs an exponential link transformation ensuring positivity:
\begin{align}
\label{Link_function}
p_{f}(x) = \frac{\exp(f(x))}{\int_a^b \exp(f(u)) d\mu(u)},
\end{align}
with splines $f(x) = \sum_j \beta_j \phi_j(x)$, typically cubic B-splines. Parameters are estimated via maximum likelihood, often guided by criteria such as AIC or BIC. A theoretical analysis of the logspline model is provided in \citet{stone1990large}.
\citet{bak2021penalized} employed TV penalty and BIC criteria to provide univariate KL-divergence convergence analysis and its generalization to bivariate case. However, their method, Penalized Log-spline Density Estimation (PLSDE), encounters difficulties generalizing to multivariate settings without assuming higher-order continuity. This is a manifestation of the curse of dimensionality, and it falls into the same problem for the multivariate spline when the TV penalty is not introduced. Another problem is that PLSDE is limited to the uniform knots, which is not preferred in the multivariate case. An argument of the knot placement problem is provided in Section~\ref{The Highly Adaptive Lasso (HAL) Assumptions}.

In contrast, the Highly Adaptive Lasso (HAL) proposed by \citet{van2017generally} and practically introduced in  \citet{benkeser2016highly} estimates multivariate càdlàg functions defined on $[0,1]^d$ using a sectional variation norm, yielding uniform consistency and pointwise asymptotic normality without dimension-enforced smoothness assumptions. The density estimator based on HAL is referred to as HAL-MLE log-splines method. In this paper, we restrict our focus to univariate HAL-MLE in order to compare the performance with the classical log-spines methods and demonstrate theoretical connections to LAS and TF approaches, even though the HAL theory applies to the multivariate case. Other than density estimation, HAL-MLE also provide the asymptotic efficiency guarantee for general pathwise-differentiable statistical estimand, i.e. moments, survival probability, or percentiles, by a simple plug-in or a single-step TMLE procedure as developed in \citet{van2017generally, van2023efficient}.

The remainder of this paper is organized as follows. Section~\ref{The Highly Adaptive Lasso (HAL) Assumptions} introduces the HAL assumption and establishes its connection to the classical bounded total variation (BTV) assumption underlying local adaptive splines. Section~\ref{HAL Density Estimation} presents the construction of HAL-MLE with the log-spline link function \citep{leonard1978density, silverman1982estimation, kooperberg1992logspline, rytgaard2023estimation}. Section~\ref{Theoretical Properties} presents the theoretical results on its univariate $L^2$ convergence, asymptotic linearity, pointwise asymptotic normality, and uniform convergence. We also propose a variance estimator for the density based on the delta method. Section~\ref{section:HAL-MLE for Pathwise Differentiable Statistical Estimands} considers the plug-in HAL-MLE and HAL-TMLE of pathwise differentiable statistical estimands, shown to achieve asymptotic efficiency with influence-curve-based variance estimation. Section~\ref{sct: opt_alg} turns to computation, where we discuss the implementation of a series of optimization algorithms tailored for HAL-MLE. Section~\ref{sec:simulation} reports simulation results, empirically verifying the theoretical guarantees mentioned in the previous sections, and comparing the finite sample performance of HAL-MLE with TF (applied to density estimation), log-splines, and KDE. In Section~\ref{sec:case-study}, we present a case study with the galaxy velocity data to visualize the convergence, confidence intervals, and targetings. We provide a Python package for HAL-MLE density estimation, available at \url{https://github.com/zhengpu-berkeley/HALDensity}, and experiment details at \url{https://github.com/yilongHou/link_HAL_MLE}.

\section{The Highly Adaptive Lasso (HAL) Assumptions} 
\label{The Highly Adaptive Lasso (HAL) Assumptions}

The HAL assumptions refer to \emph{càdlàg} functions with bounded sectional variational norm (BSVN), and the modeling of HAL is a natural extension from the exact representation of this function class. In the univariate case, a \emph{c\`adl\`ag} function could be considered as a function that can be approximated by a linear combination of simple indicator functions that jumps from 0 to 1 at a knot-point. The function class includes the linear combination of cumulative distribution functions.  \citet{gill2001inefficient} showed that the concept of SVN extends the Donsker property of univariate \emph{càdlàg} functions with bounded total variation to multivariate cases. Some follow-up works of this results appear in \citet{van2017generally, rytgaard2023estimation}. A detailed discussion on \emph{c\`adl\`ag} functions with BSVN and their measurability is provided in \citet[Section~2]{munch2024estimating}. A similar result also appears in \citet{radulovic2017weak}. And in some literature like \citet{radulovic2017weak, ki2024mars}, the SVN is referred to as the Hardy-Krause variation. We here introduce these concepts and their relationship to HAL.

\subsection{Basic Concepts.} \label{Basic Concepts}
We first introduce the function class of \emph{c\`adl\`ag} functions with BSVN (upper bounded by some \(U < \infty\)) without assuming any order of differentiability, denoted as $D^{(0)}_U([0,1]^d)$.
\begin{definition}[\emph{Càdlàg} Functions with Bounded Sectional Variational Norm (BSVN)]\label{assump:cadlag with BSVN}
The function $f\in D^{(0)}_U([0,1]^d)$ is the function that can be represented as follows:
\[
  f(x)
  = f(0)
    + \sum_{s\subset\{1,\dots,d\}}
      \int_{(0(s),\,x(s)]} f\bigl(du_s,0(-s)\bigr),
\]
where \(s\) is any nonempty subset of \(\{1,\dots,d\}\), and $-s \equiv \{1,\dots,d\} \setminus s$. Here, $x(s)$ denotes the components of $x$ with indices in $s$, while $0(s)$ is the zero vector of the same length as $x(s)$, so that $(0(s), x(s)]$ is a left-open right-closed cube within the $\vert s \vert$-dimensional unit cube. Accordingly, the SVN is defined by
\[
  \Vert f \Vert_v^*
  = \bigl|f(0)\bigr|
    + \sum_{s\subset\{1,\dots,d\}}
      \int_{(0(s),\,x(s)]}
      \Bigl|\,f\bigl(du(s),0(-s)\bigr)\Bigr|.
\]
This is a stronger assumption than the bounded total variation (BTV) assumption, since it requires bounded variation not only on the full coordinate set \(s=\{1,\dots,d\}\) but on all lower‑dimensional sections \(s\subset\{1,\dots,d\}\).  
\end{definition}

In the univariate case $D^{(0)}_U([0,1])$, these definitions simplify to
\begin{align}
    f(x)
  = f(0) + \int_{(0,x]} f(du),
  \quad
  \Vert f \Vert_v^*
  = \bigl|f(0)\bigr|
    + \int_{(0,x]}
      \Bigl|\,f(du)\Bigr|.
\end{align}

When \(f\) is also $k$-th order differentiable, we define the $k$-th order sectional variational norm, \( \Vert f \Vert_{v,k}^*\), and the function subclass of $D^{(0)}_U([0,1])$ with bounded \( \Vert f \Vert_{v,k}^*\), $D^{(k)}_U([0,1])$, as follows: 

\begin{definition}[The Function with k-th order BSVN]\label{assump:DkM}
For the smoothness order $k\ge1$ and a large enough constant $U>0$, we say $f\in D^{(k)}_U([0,1])$ if the following hold:
\begin{enumerate}
  \item $f\in D^{(0)}_U([0,1])$.
  \item For each $1\le i\le k$, the $i$‑th Lebesgue–Radon–Nikodym derivative of $f$ exists
    \[
      f^{(i)}(u)\;=\;\frac{\mathrm{d}}{\mathrm{d}u}f^{(i-1)}(u)
    \]
    and $f^{(i)}\in D^{(0)}_U([0,1])$.
  \item The $k$‑th order sectional variation norm of $f$,
    in the univariate case, simplified as
    \[
      \Vert f \Vert_{v,k}^*
      \;=\sum_{i=0}^k |f^{(i)}(0)| + \mathrm{TV}\bigl(f^{(k)}\bigr),
    \]
    satisfies
    \[
      \Vert f \Vert_{v,k}^* \;\le\; U, \text{for some $U < \infty$}
    \]
\end{enumerate}
\end{definition}
A quick note on the notation: We use $\mathrm{TV}(\cdot)$ to denote the total variation functional as defined in Section 9.1 of \citet{tibshirani2022divided}. When the function $f$ is a càdlàg function, $\mathrm{TV}(f) = \int_0^1 |df(u)|$. This is also referred to as the total variation norm (a seminorm), denoted as $|f|_v$. We then write $(x - u)_+^k = (x - u)^k\,I\{u\le x\}$ where $I$ is the indicator, and this truncated $k$-th order spline with knot point $u \in [0,1]$ is $k$-th order weakly differentiable. Notice that the following representation theorem and its derivation are specialized to the univariate case, and we include it here because it motivates and unifies the parametrization of such a function class. Here we demonstrate the exact representation for $f\in D^{(k)}_U([0,1])$ with the concepts above. 
\begin{proposition}[Univariate Exact Representation]
\label{thm:HAL_representation}

Any $f\in D^{(k)}_U([0,1])$ could be represented as follows:
\[
  f(x)
  = \sum_{i=0}^k \frac{1}{i!}\,f^{(i)}(0)\,x^i
    + \int_0^1 \frac{1}{k!}\,(x-u_k)_+^k \,d\,f^{(k)}(u_k).
\]
\end{proposition}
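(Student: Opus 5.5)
The plan is to prove the representation by induction on $k$, using the univariate identity $f(x) = f(0) + \int_{(0,x]} f(du)$ from Definition~\ref{assump:cadlag with BSVN} as the base case. For $k=0$ the claimed formula reads
\[
f(x)=f(0)+\int_0^1 (x-u)_+^0\,df(u),
\]
and with the convention $(x-u)_+^0 = I\{u\le x\}$ this is exactly the univariate càdlàg representation. One caveat is the atom at $u=0$: the integral should be read over $(0,1]$, so that it agrees with $f(0)+\int_{(0,x]}f(du)$.

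For the inductive step I will assume the formula for order $k-1$ and apply it to $g=f'$. By Definition~\ref{assump:DkM}, $g\in D^{(k-1)}_U([0,1])$ with $g^{(i)}=f^{(i+1)}$. This gives
\[
f'(t)=\sum_{i=0}^{k-1}\frac{1}{i!}f^{(i+1)}(0)\,t^i+\int_0^1\frac{1}{(k-1)!}(t-u)_+^{k-1}\,df^{(k)}(u).
\]
Since $f'$ is the Radon--Nikodym derivative of $f$, the function $f$ is absolutely continuous, so $f(x)=f(0)+\int_0^x f'(t)\,dt$. Integrating the polynomial part term by term gives $\sum_{i=1}^{k}\frac{1}{i!}f^{(i)}(0)x^i$, and adding $f(0)$ produces the full polynomial sum.

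For the remainder, I will interchange the order of integration by Fubini--Tonelli:
\[
\int_0^x\int_0^1\frac{(t-u)_+^{k-1}}{(k-1)!}\,df^{(k)}(u)\,dt=\int_0^1\left(\int_0^x\frac{(t-u)_+^{k-1}}{(k-1)!}\,dt\right)df^{(k)}(u).
\]
The inner integral equals $\frac{1}{k!}(x-u)_+^k$, which follows by splitting into the cases $u\le x$ and $u>x$. This completes the induction.

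The main obstacle is justifying the Fubini step and the absolute continuity. The integrand is bounded by $1/(k-1)!$ on $[0,1]^2$. The signed measure $df^{(k)}$ has finite total variation, because $\mathrm{TV}(f^{(k)})\le U$, so integrability with respect to $dt\times|df^{(k)}|$ is immediate. I will apply Fubini separately to the Jordan decomposition $df^{(k)} = \mu^+ - \mu^-$, which are finite positive measures.

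Absolute continuity of each $f^{(i-1)}$ is part of what "Lebesgue--Radon--Nikodym derivative exists" means in Definition~\ref{assump:DkM}: the measure $df^{(i-1)}$ has density $f^{(i)}$ with respect to Lebesgue measure. This yields $f^{(i-1)}(x)=f^{(i-1)}(0)+\int_0^x f^{(i)}$. I will state this interpretation explicitly at the start of the proof.
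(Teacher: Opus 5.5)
Your proof is correct and is essentially the paper's argument. Both use induction on $k$: the base case is the c\`adl\`ag representation over $(0,1]$, and the inductive step integrates the truncated-power kernel once and exchanges the order of integration by Fubini. The only difference is which end of the derivative chain carries the inductive hypothesis. The paper applies it to $f$ and then expands $f^{(k+1)}(u)=f^{(k+1)}(0)+\int_{(0,u]}df^{(k+1)}$, while you apply it to $f'$ and then use $f(x)=f(0)+\int_0^x f'(t)\,dt$. Your write-up also spells out the Fubini and absolute-continuity justifications, which the paper leaves implicit.
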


The multivariate or higher‑order spline exact representation is included in \citet[Section 3]{vanderlaan2023higherordersplinehighly}. 

\subsection{Relationship with TV} \label{Relationship with TV}

BTV is a classic assumption of spline methods for the regression problem. The implementation is to derive TV as a function of the coefficients of the bases. Thus, different basis systems have different representations of the TV. The HAL assumptions seem to be stricter by Definition~\ref{assump:cadlag with BSVN}, but there is actually no loss comparing to the BTV assumption in the univariate case. LAS uses the same truncated power basis as in univariate HAL, and therefore we argue the equivalency of the two assumptions through analyzing the estimation of LAS.

\paragraph{A Review of Locally Adaptive Regression Splines (LAS):}
Suppose $f$ is a $k$-th order differentiable function, and $f^{(k)}$ is the $k$-th order derivative of $f$. By convention $f^{(0)} \equiv f$, or WLOG we can adopt the term that $f^{(k)}$ is the $k$-th weak derivative of $f$ from \citet{tibshirani2014adaptive}.
\citet{mammen1997locally} proposed LAS, which  achieved optimal $L^2$ convergence rates in $\mathcal{F}_k$, where 

\[
\mathcal{F}_k
\;=\;
\bigl\{\,f : [0,1]\to\mathbb{R}
\;\big|\;
f\text{ is $k$-th order differentiable and }
\mathrm{TV}\bigl(f^{(k)}\bigr)<\infty
\bigr\}.
\]
The optimization over $\mathcal{F}_k$ is referred to as the unrestricted LAS method as follows:
\begin{equation}\label{eq:ula-spline}
\hat f \;\in\;\arg\min_{f\in\mathcal{F}_k}
\frac{1}{2}\sum_{i=1}^n \bigl(y_i - f(x_i)\bigr)^2
\;+\;\lambda\,\mathrm{TV}\bigl(f^{(k)}\bigr),
\end{equation}
Due to the difficulty of implementation, especially when $k \ge 2$, they proposed the restricted LAS method as an asymptotic solution, as follows:

\begin{align*}
\mathcal{F}_{n,k}
&=
\bigl\{\,f:[0,1]\to\mathbb{R}
\;\big|\;
f\text{ is the linear combination of the $k$-th order splines with knots in }x_{1:n},\\
&\qquad\mathrm{TV}\bigl(f^{(k)}\bigr)<\infty
\bigr\},
\end{align*}
and the optimization problem becomes
\begin{equation}\label{eq:rla-spline}
\hat f \;\in\;\arg\min_{f\in\mathcal{F}_{n,k}}
\frac{1}{2}\sum_{i=1}^n \bigl(y_i - f(x_i)\bigr)^2
\;+\;\lambda\,\mathrm{TV}\bigl(f^{(k)}\bigr).
\end{equation}

\citet{mammen1997locally} showed the asymptotic convergence of the solutions of the two problems when the distances among $x_{1:n}$ are close enough. In \citet[Theorem~9]{mammen1997locally}, they showed that, for the truth $g_{0,n}$ in a generic function class $\mathcal{G}$ with $\mathrm{BTV}$, if one can choose a good enough linear subspace $\mathcal{G}_{n,k}$ with an oracle $g_{1,n}$, then the solution $\hat{g}$ on $\mathcal{G}_{n,k}$ preserves the optimal $L^2$ convergence on $\mathcal{G}$. Thus, the asymptotic conclusion holds by choosing $\mathcal{G}$ with $\mathrm{BTV}$ to be $\mathcal{F}_{k}$, and $\mathcal{G}_{n,k}$ to be $\mathcal{F}_{n,k}$.

\paragraph{The Missing \emph{C\`adl\`ag} Perspective:}
As discussed, the restricted LAS uses a finite linear combination of truncated power basis as the estimation, and thus the estimation is always a \emph{c\`adl\`ag} function. Its generalization to non-\emph{c\`adl\`ag} functions is achieved through the equivalency of a class of functions with the same $\mathrm{TV}$. In order words, one can make a function into a \emph{c\`adl\`ag} function by shifting finite points while maintaining the same $\mathrm{TV}$, and the LAS provides the same estimation. The $L^2$ convergence still holds for this finite pointwise difference. For example, $I(x<1)$ is left continuous, moving the point at $x = 1$ up to 1 provides $I(x \le 1)$ which is \emph{c\`adl\`ag}, and they are indistinguishable under the same TV penalty. That is to say, among this set of indistinguishable functions, there must be a \emph{c\`adl\`ag} function. And the convergence to non-\emph{c\`adl\`ag} functions achieved by the LAS ignores pointwise behavior.

We claim that, by the representation in Proposition \ref{thm:HAL_representation}, $D^{(k)}_U([0,1])$ is the closure of the infinite linear combination of $k$-th order truncated power splines. Thus, we conclude that the solution based on the finite-dimensional working model $\mathcal{F}_{n,k}$ is a finite-dimensional approximation of $f$ in $D^{(k)}_U([0,1])$, where $f$ is the indistinguishable \emph{c\`adl\`ag} function of the truth $f_0$. Noticing that the pointwise behavior does not hurt $L^2$ convergence, but we need the rigorous \emph{c\`adl\`ag} assumption for proving uniform convergence in Section~\ref{Theoretical Properties}.

\paragraph{Conclusion:} In the univariate case, when $k = 0$, the SVN only requires the existence of $|f(0)|$ and BTV. And when $k \ge 1$, the \emph{c\`adl\`ag} assumption and the existence of $|f^{(i)}(0)|$ for all $1 \le i \le k$ is implied by differentiability, and so BSVN and BTV are equivalent assumptions.

\begin{remark} We could argue that we only need the Lebesgue–Radon–Nikodym derivative, and relax our assumption by a null set with Lebesgue measure 0. But they could be augmented in a similar manner as well. This is the reason \citet{tibshirani2014adaptive} introduced weak differentiability for the function class and the working model in Section 3 therein. 
\end{remark}

\section{HAL Density Estimation} \label{HAL Density Estimation}
With the HAL assumptions and the same link function in \citet{kooperberg1992logspline}, we here form the density estimation technique, HAL-MLE. The intuition of this choice of link function is to assume that the log-likelihood of the density follows a càdlàg structure with bounded sectional variation, making it well-suited for modeling via HAL. This link function is also referred to as the histo-spline method \citep{leonard1978density, silverman1982estimation}. Specifically, when using zero-order HAL basis functions, the function \( f \) can be expressed as a histogram. One major benefit of this link function is that the resulting density belongs to the exponential family, ensuring the convexity of the MLE optimization with $L_1$ norm regularization.

\subsection{HAL Construction.} \label{Estimation Construction}
Proposition \ref{thm:HAL_representation} shows that we can represent any function $f \in D^{(k)}_U([0,1])$ by an infinite linear combination of the $k$-th order splines (bases). We then could construct our estimation with a finite-dimensional working model by approximating the integral with a sum. This explains the bias of HAL model by not including all the bases. We initialize the pre-selection working model by partitioning the integral into $n$ parts with knot points $x_1, \cdots, x_n$. Let $f_0\in D^{(k)}_U([0,1])$ be the truth, and $f_n$ be the estimation based on the $n$-knot points. Note that the dimension of the working model here refers to the parameters needed for modeling rather than the structure of observations.

\[
  f_n(x)
  = \sum_{i=0}^k \frac{f^{(i)}(0)}{i!}\,x^i
    + \sum_{j =1}^n \frac{\Delta\,f^{(k)}(x_j)}{k!}\,(x-x_j)_+^k \,.
\]

And we could summarize the basis system as follows:

\[
\phi_{i,0}(x) =  x^i, \quad i = 0, \ldots, k,
\]
\[
\phi_{k,x_j}(x) = (x - x_j)_+^k, \quad j = 1, \ldots, n.
\] \label{basis_d1}

This basis naturally divides into:
\begin{itemize}
    \item The \textbf{parametric (global) part}: the global polynomial trend ($x^i$-type terms),
    \item The \textbf{nonparametric (local) part}: the truncated power functions that allow flexible deviations at specified knots $x_{1:n}$.
\end{itemize}

\begin{remark}
Such a basis system is also called the truncated power basis \citep[Theorem 8.51]{schumaker2007spline}, and each basis can be represented by a tuple of basis order and knot points, i.e., for the nonparametric part of the basis system, we index them each by a tuple $(k, u)$ for $k$-th order splines and any knot point $u \in (0,1]$. For the parametric part of the basis system, we index them each by a tuple $(i, 0)$, for $i \leq k$ and starting point 0.
\end{remark}

We denote this working model index set as $\mathcal{R}^k(n)$ with cardinality $n+k+1$. This is referred to as the initial working model, whereas the model after Lasso selection is called the post-selection working model, denoted as $\mathcal{R}_n$ with cardinality $J_n$. Notice that the initial working model involves $(n+k+1)$ parameters, and an element in this working model is of the form
\[
  f_{n,\beta}(x)
  = \sum_{i=0}^k \beta_i\,\phi_{i,0}(x)
    + \sum_{j=1}^n \beta_{k+j}\,\phi_{k,x_j}(x).
\]
Hence, its $k$-th order sectional variation norm is
$\bigl\|f_{n,\beta}\bigr\|_{v,k}^* = \|\beta\|_1.$ And for a general loss \(L\), the empirical risk minimizer is
\begin{align} \label{equation: HAL beta optimizer}
 \beta_n(M)
  = \arg\min_{\|\beta\|_1 \le M}
    P_n\,L\bigl(f_{n,\beta}\bigr),
  \qquad
  f_n(x)=f_{n,\beta_n(M)}(x).   
\end{align}
$M$ here serves as a user-specified hyperparameter as the $L_1$ norm constraint of the basis coefficients. In practice, it could be tuned by cross validation and the corresponding choice of $M$ is denoted as $M_{cv}$, and the process is referred to as CV-HAL-MLE. The undersmoothening of HAL-MLE is achieved by choosing a $M' > M_{cv}$, the post-selection working model enlarges, but still indexed by the initial working model. The undersmoothened HAL-MLE reduces bias by including more bases, but increases variance as a trade-off. One can refer to Appendix~\ref{Appendix A: Notation, Definition} for more detailed definitions.

\subsection{HAL-MLE with Link Function} \label{HAl_MLE with Link Function}

As the setup in the introduction, let the $X\sim P_0$ be defined on $[0,1]$. We index the density $p_0 = \mathrm{d}P_0/\mathrm{d}\mu$ in terms of a $p_{f_0}$ for a function $f_0 \in D^{(k)}_U([0,1])$, where $\mu$ could be chosen as the Lebesgue measure for simplicity. Thus, the statistical model is defined as $\mathcal{M} = \{ p_{f} : f \in D^{(k)}_U([0,1]) \}$. The HAL-MLE of is then denoted as $f_{n, \beta_n(M)}$, and $\beta_n(M)$ is estimated as follows:
\[
\beta_n(M) = \arg \min_{\beta: \Vert \beta\Vert_1 < M} \sum_{i=1}^{n} L(f_{n, \beta})(X_i) = \arg \max_{\beta:  \Vert \beta\Vert_1 \leq M} \sum_{i=1}^{n} \sum_{j=1}^{n+k+1} \beta_j \phi_j(x_i) - n\log C(\beta),
\] where \( x_{1:n} \sim  P_0,  i.i.d.\), $L = -\log p_f$ is the loss function, and the $C(\beta) = \int_0^1 \exp(f_{\beta}(x)) \mathrm{d}x$ is the normalizing constant. 

\subsubsection{Comparison with NPMLE}
Our set ${\cal R}^k(n)$ of $k$-th order spline basis functions is determined by data and its linear span is flexible enough to provide a strong approximation of any function in our class $D^{(k)}_U([0,1])$. Moreover, for large enough $U$, the latter class will contain the true $f_0$ identifying the true density $p_{f_0}$ of $X$. 
Therefore, the HAL-MLE over the linear combination of basis in ${\cal R}_n$ with some $L_1$ norm upper bound, denoted as $D^{(k)}_U({\cal R}_n)$ approximates an MLE over the full class $D^{(k)}_U([0,1])$ containing the true $f_0$. Consequently we can view the HAL-MLE as  a Nonparametric Maximum Likelihood Estimation (NPMLE) method for estimating the true density of the data. An important distinction between HAL‑MLE and the traditional NPMLE methods, like the empirical distribution and Kaplan–Meier (KM) estimator \citep{kaplan1958nonparametric}, lies in the functional constraint HAL imposes. HAL‑MLE produces density estimates confined to the space $D^{(k)}_U([0,1])$. In contrast, traditional NPMLE imposes no smoothness or variation‑norm constraints on the densities. As a consequence, the NPMLE is attained at the (non-density) empirical discrete distribution that puts mass $1/n$ on each observation $x_i$. This is not even a real density, even when there are reasons to believe that the underlying true density is continuous. Similarly, the NPMLE of the failure time distribution for right-censored data under the assumption of independent censoring  is attained at a discrete failure time distribution, while the HAL-MLE analogue would result in a valid density estimator of the true failure time density.  
NPMLE can still provide efficient plug-in estimators of smooth functionals of the density, such as a survivor function. The HAL-MLE, possibly extended with the HAL-TMLE, also yield efficient plug-in estimators of such smooth features, while it still also estimates the density well.
\subsubsection{Uniform Grid vs Data Adaptive Grid}
Uniform grid is a common choice in the literatures of univariate splines methods. PLSDE \citep{bak2021penalized} benefits from this choice for the representation of TV penalty with B-splines. TF \citep{tibshirani2022divided} also benefits from this choice for the simple recursion of the divided difference matrix. And theoretically, both uniform grid and data-adaptive grid could provide a strong approximation of any BTV function.

In HAL-MLE, we prefer data-adaptive initialization rather than uniform knots, even though the matter is nuisance in the univariate case. When generalized to multivariate case, the uniform grid becomes an initialization of $n^d$ knots. However, a $d$-dimensional function has $2^d - 1$ sections, and HAL initialize $n(2^d - 1)$ knots, $n$ knots per section. We have a much smaller working model to begin with. 

\section{Theoretical Properties for HAL-MLE} \label{Theoretical Properties}
With the HAL assumptions and HAL-MLE setup, we here provide a comprehensive theoretical analysis of the convergence properties of spline density estimation with variational penalty, including $L^2$-convergence, pointwise asymptotic normality, and uniform consistency.

\subsection{Rate of Convergence  of HAL-MLE in Loss-based Dissimilarity} \label{subsec:rate-of-convergence-hal-mle-loss-based-dissimilarity}

We first establish the $L^2$ convergence rate of the HAL-MLE. This result is natural in the context of univariate spline methods penalized by total variation, 
and similar conclusions have been obtained for related estimators 
\citep{mammen1997locally, tibshirani2014adaptive, bak2021penalized}. 
Our contribution is to demonstrate the $L^2$ convergence of the HAL-MLE without any assumptions, 
using a standard proof strategy based on loss-based dissimilarity. 
Recall that the loss-based dissimilarity, $d_{0}(f_{n},f_{0}) \equiv P_0{L(f_n)} - P_0{L(f_0)}$, coincides with the Kullback--Leibler divergence when $L = -\log p_f$. 
Let
\[
\mathcal F := D^{(k)}_U([0,1]), 
\qquad 
f_{0} := \arg\min_{f \in \mathcal F} P_{0}L(f),
\]
where $D^{(k)}_U([0,1])$ denotes the $k$th-order bounded variation class on $[0,1]$. 
For a user-specified $L_1$-norm constraint $M$ and a finite-dimensional index set $\mathcal{R}_n$, 
define the empirical estimator
\[
f_{n} := \arg\min_{f \in D^{(k)}_M(\mathcal{R}_n)} P_{n}L(f).
\]

The proof relies on two facts.  
\begin{enumerate}
  \item (Positivity Condition) With the link function, for all $f \in \mathcal F$, there exists $\delta > 0$ such that $p_f \geq \delta$ uniformly.
  \item (Second-order Loss Behavior Condition) When $L(f) = -\log p_f$ (negative log-likelihood), we have
  \[
  \sup_{f \in \mathcal F} 
  \frac{P_{0}\!\bigl\{L(f) - L(f_{0})\bigr\}^{2}}{d_{0}(f,f_{0})}
  = O(1) < \infty.
  \]
\end{enumerate}

\begin{remark}
For the first fact, note that under the bounded variation assumption, all 
$f \in \mathcal F$ are uniformly bounded. Consequently, the link function is 
uniformly bounded away from zero and bounded above, so the required positivity 
condition holds automatically. The second fact follows directly from 
\citet{van2004asymptotic}.
\end{remark}

\begin{remark} \label{remark:logspline-convergence}
For the ones familiar with the logspline literature, the $L^2$ and $L^{\infty}$ convergence of logsplines in \citet{stone1990large} are very different from ours. First, they deal with a parametric MLE on equally spaced knots, whereas we deal with data-adaptive knots.Second, the convergence they showed is between the estimation and the parametric oracle MLE, which is defined as the argmax of the expected value of the parametric log-likelihood.
\end{remark}

\begin{theorem}[$L^2$ convergence of HAL-MLE]
\label{thm:l2-convergence-hal-mle}
If $f_0 \in D^{(k)}_U([0,1])$ and $L(f) = -\log p_f$, then,
\[
d_{0}(p_{f_n},p_{f_{0}}) = O_P\!\left(n^{-\frac{2k+2}{2k+3}}\right),
\quad \text{and hence} \quad
\lVert p_{f_n} - p_{f_{0}}\rVert_{L^2(\mu)} = O_P\!\left(n^{-\frac{k+1}{2k+3}}\right).
\]
\end{theorem}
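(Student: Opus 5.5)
The plan is the standard empirical-process argument for minimum-loss estimators over a Donsker class, driven by a local entropy bound and the two facts listed above.

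\textbf{Step 1: a basic inequality with an oracle.} I introduce an oracle $f_{0,n}\in D^{(k)}_M(\mathcal{R}_n)$ that approximates $f_0$. Proposition~\ref{thm:HAL_representation} supplies it: discretize the integral $\int_0^1 \frac{1}{k!}(x-u)_+^k\,df_0^{(k)}(u)$ on the data-adaptive knots $x_{1:n}$. This gives $\|f_{0,n}-f_0\|_\infty = O_P(n^{-(k+1)}\cdot\mathrm{polylog})$ when $k\ge1$ (a discrepancy of order $\max_j|x_{j+1}-x_j|^{k+1}$ at each knot gap), and in any case $d_0(f_{0,n},f_0)=o_P(n^{-(2k+2)/(2k+3)})$, provided $M\ge U$. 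Since $P_nL(f_n)\le P_nL(f_{0,n})$, we get
\[
d_0(f_n,f_0)\le -(P_n-P_0)\{L(f_n)-L(f_0)\} + (P_n-P_0)\{L(f_{0,n})-L(f_0)\} + d_0(f_{0,n},f_0).
\]

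\textbf{Step 2: entropy of the loss class.} The loss class is $\{L(f)-L(f_0): f\in D^{(k)}_M\}$ with $L(f)=-f+\log C(f)$. Because every $f$ in this class is uniformly bounded, $\log C(f)$ is a Lipschitz function of $f$ in sup-norm, and the Positivity Condition applies. The class $D^{(k)}_M([0,1])$ is a convex hull of $k$-th order truncated power splines plus polynomials. It has uniform entropy $\log N(\epsilon,L^2)\lesssim \epsilon^{-1/(k+1)}$; this is the classical Birman–Solomyak bound for $k$-th order bounded-variation classes.

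\textbf{Step 3: modulus of continuity and rate.} By the Second-order Loss Behavior Condition, $\|L(f)-L(f_0)\|_{P_0,2}\lesssim d_0(f,f_0)^{1/2}$. I then apply the maximal inequality for bounded classes (van der Vaart–Wellner, Lemma 3.4.2). It gives $E\sup_{d_0\le\delta^2}|\mathbb{G}_n(L(f)-L(f_0))|\lesssim J(\delta)\bigl(1+J(\delta)/(\delta^2\sqrt n)\bigr)$ with $J(\delta)=\int_0^\delta \epsilon^{-1/(2k+2)}d\epsilon\asymp \delta^{(2k+1)/(2k+2)}$. A peeling argument (Theorem 3.2.5 there), together with the negligible oracle terms, then yields $r_n^2\phi(1/r_n)\le\sqrt n$. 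This solves to $\delta_n = n^{-(k+1)/(2k+3)}$, i.e.
\[
d_0=O_P\bigl(n^{-(2k+2)/(2k+3)}\bigr).
\]
Finally, the $L^2$ statement follows because the densities are bounded above and below. Hence KL is equivalent to squared Hellinger, and Hellinger is equivalent to $L^2(\mu)$; so $\|p_{f_n}-p_{f_0}\|_{L^2}^2\lesssim d_0$.

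\textbf{Main obstacle.} I expect the hardest part to be making Step 2 rigorous with the right exponent and without log factors, i.e.\ transferring the sharp entropy of the $k$-th order BV ball to the loss class. The normalizer $\log C(f)$ is a nonlinear functional, but it is constant in $x$ and Lipschitz. Its contribution is therefore a one-dimensional bounded set, which does not inflate the entropy.

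A secondary difficulty is controlling the oracle term $(P_n-P_0)\{L(f_{0,n})-L(f_0)\}$, since $f_{0,n}$ depends on the data through the knots. I would handle it by bounding it with the same supremum over the class and using $\|f_{0,n}-f_0\|_\infty\to0$. If the oracle rate is delicate, the alternative is to treat $f_{0,n}$ as lying in a deterministic approximating class built on quantile knots, which the data knots are close to.
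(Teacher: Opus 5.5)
Your proposal is correct, and it follows the same outline as the paper's proof in Appendix~B. Both arguments use five ingredients:
\begin{enumerate}
\item the Birman--Solomyak entropy $\epsilon^{-1/(k+1)}$, transferred to the loss class (the paper uses its norm-comparison/covering-equivalence argument; you use the cleaner remark that $\log C(f)$ is a bounded constant shift);
\item the second-order bound $P_0\{L(f)-L(f_0)\}^2\lesssim d_0(f,f_0)$;
\item the modulus $J(\delta)\bigl(1+J(\delta)/(\delta^2\sqrt n)\bigr)$ with $J(\delta)\asymp\delta^{(2k+1)/(2k+2)}$;
\item the fixed point $\delta_n\asymp n^{-(k+1)/(2k+3)}$;
\item the passage from KL to $L^2(\mu)$ via bounded densities.
\end{enumerate}
The genuine difference is your Step~1. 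The paper asserts $P_n\{L(f_n)-L(f_0)\}\le 0$ ``because $f_n$ is the empirical risk minimizer.'' That silently requires $f_0\in D^{(k)}_M(\mathcal R_n)$, which fails for a finite-dimensional working model. Your oracle $f_{0,n}$ on the data knots, with the two extra terms it generates, is what makes the basic inequality legitimate. Likewise, your use of the peeling theorem (Theorem~3.2.5 of van der Vaart--Wellner) replaces the paper's informal iteration $\delta_0=1,\ \delta_1\asymp n^{-1/4},\dots$ with a rigorous argument. The paper's version is shorter; yours costs an approximation lemma but is an actual proof.

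Two details of Step~1 should be fixed, though neither breaks the argument.

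\emph{The sup-norm rate.} The rate $n^{-(k+1)}$ you state is too optimistic. Moving each gap's mass of $df_0^{(k)}$ to a knot gives error of order $\max_j|x_{(j+1)}-x_{(j)}|=O_P(\log n/n)$ when $k\ge1$, because $u\mapsto(x-u)_+^k$ is only uniformly Lipschitz. For $k=0$ there is no sup-norm convergence at all when $f_0$ jumps between knots, so you cannot use $\|f_{0,n}-f_0\|_\infty\to0$ there. All you need is
$d_0(f_{0,n},f_0)\lesssim\|f_{0,n}-f_0\|_{L^2(P_0)}^2\lesssim\max_j|x_{(j+1)}-x_{(j)}|=O_P(\log n/n)$.
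This holds for every $k\ge0$ and is $o(n^{-(2k+2)/(2k+3)})$.

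\emph{The oracle empirical-process term.} For $(P_n-P_0)\{L(f_{0,n})-L(f_0)\}$, do not insert its own small radius into Lemma~3.4.2. If you do, the Bernstein part $J(\delta)^2/(\delta^2 n)$ dominates, e.g.\ it is of order $(n\log n)^{-1/2}\gg n^{-2/3}$ when $k=0$. Instead, use monotonicity in the radius and bound the term by the supremum over the ball of radius $\delta_n$. That supremum is $O_P(n^{-1/2}J(\delta_n))=O_P(\delta_n^2)$, which is exactly the near-minimizer slack Theorem~3.2.5 permits.
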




\textcolor{blue}{This rate agrees with the $L^2$ convergence rate for order $k=0$ and dimension $d=1$ in regression \citep{mammen1997locally, tibshirani2014adaptive, bibaut2019fast}.
Our density estimation setting preserves this rate from the regression setting, which is sharp and without any assumptions.}


\subsection{Pointwise Asymptotic Normality of the HAL-MLE}\label{subsect: Pointwise Asymptotic Normality of the HAL-MLE}
Recall the link function, $p_{\beta}=p_{f_{\beta}}=\frac{\exp(f_{\beta})}{C(\beta)}$ where $C(\beta)=\int_0^1 \exp(f_{\beta}(x)) \mathrm{d}x$. The score for the log-likelihood loss is calculated as $S_{\beta}(\phi_j)=\frac{\mathrm{d}}{\mathrm{d}\beta(j)}\log p_{\beta}$ at $\beta$ for the $j$-th entry $\beta(j)$. Note that this equals
\begin{align}
S_{\beta}(\phi_j)=\phi_j(X)- P_{\beta}\phi_j,
\end{align}
where $P_{\beta}\phi_j\equiv \int \phi_j(x)p_{\beta}(x) \mathrm{d}x = \mathbb{E}_{\beta}[\phi_j]$. We use the notation $S_{\beta}(\phi)$ to denote the score vector, and $\phi \equiv (\phi_1, \cdots, \phi_{J_n})^\top$. By the linearity of expectation functional, we note $\phi\rightarrow S_{\beta}(\phi)$ is linear in $\phi$. 

Recall that \(\mathcal R_n\subset R^k(n)\) is a data‐adaptive index set of basis functions with cardinality $J_n$ corresponding with the non-zero coefficients in the HAL-MLE fit $f_{n,\beta_n}$, using a particular  $M_n$ for the $L_1$-norm constraint. The HAL-MLE implies a $J_n$-dimensional working model
\[
D^{(k)}(\mathcal R_n)
= \Bigl\{f_{n,\beta}=\sum_{j\in\mathcal R_n}\beta_j\,\phi_j(x):\beta\Bigr\}
\;\subset\;D^{(k)}([0,1]).
\]
The HAL‐MLE $f_n=f_{n,\beta_n}$ also equals a minimizer over this working model $D^{(k)}_{M_n}({\cal R}_n)$ so that
\[
f_n \;=\;\arg\min_{f\in D^{(k)}_{M_n}(\mathcal R_n)}\;-\,P_n\log p_f .
\]

One nice property of the link function is that the information matrix defined as the negative of the partial derivative of the expectation of the score, $-d/d\beta(i) P_0 S_{\beta}(\phi_j)$, equals the covariance of the scores $S_{\beta}(\phi_j)S_{\beta}(\phi_i)$ under $P_{\beta}$, which can be viewed as an inner product between the two basis functions $\phi_i$ and $\phi_j$. This inner product is an inner product for the linear space $D^{(k)}({\cal R}_n)$ providing an orthonormal basis that will make the information matrix for the reparametrized $D^{(k)}({\cal R}_n)$ a diagonal matrix and while making the scores orthogonal, and equivalently, uncorrelated w.r.t. $P_{\beta}$. This provides us with a powerful ingredient for our formal analysis of the HAL-MLE. 

Specifically, for a basis function $\phi_i$ and its corresponding parameter $\beta(i)$, the $i$-th element of $\beta$,
\begin{eqnarray} \label{eq:inner-product-score}
-\frac{\mathrm{d}}{\mathrm{d}\beta(i)}P_0 S_{\beta}(\phi_j)
&=& 0+ \frac{\mathrm{d}}{\mathrm{d}\beta(i)} P_0 \{P_{\beta}[\phi_j]\} \nonumber = \frac{\mathrm{d}}{\mathrm{d}\beta(i)} P_{\beta}[\phi_j]\\ &=& \int \phi_j(x) \frac{\mathrm{d}}{\mathrm{d}\beta(i)} p_{\beta}(x) \, \mathrm{d}x \nonumber 
= \int \phi_j(x) S_{\beta}(\phi_i) p_{\beta}(x) \, \mathrm{d}x \nonumber \\
&=& \int (\phi_j(x) - P_{\beta} \phi_j) S_{\beta}(\phi_i) p_{\beta}(x) \, \mathrm{d}x \nonumber 
= P_{\beta} S_{\beta}(\phi_j) S_{\beta}(\phi_i) \nonumber \\
&\equiv& \langle \phi_i, \phi_j \rangle_{\beta}.
\end{eqnarray}
W.r.t. the orthonormal basis parametrization of $D^{(k)}({\cal R}_n)$ for which the information matrix is diagonal, we can obtain a straightforward linearization of the HAL-MLE w.r.t. the oracle MLE $f_{{\cal R}_n,0}\equiv \arg\min_{f\in D^{(k)}_M({\cal R}_{n})}-P_0 \log p_f$ by being able to trivially invert the information matrix, thereby avoiding the delicate understanding of eigenvalues of the original information matrix in terms of the original parametrization of $D^{(k)}({\cal R}_n)$.

Nonetheless, such an analysis runs into the following complication. 
With this analysis, we can approximate $f_{n,\beta_n}-f_{{\cal R}_n,0}$ with an empirical mean of a function of $X_i$ where this function is indexed by ${\cal R}_n$, thereby not allowing the application of a central limit theorem. In other words, the data dependence of ${\cal R}_n$ makes it hard to push towards a CLT. One could resolve this by using a a priori set of $J_n$ knot-points and select $J_n$ (e.g. uniform knots) with cross-validation. However, the Lasso provides an effective way of selecting the right set of knot-points for fitting the true density so that this would be a practically inferior procedure, and the utility of the Lasso to select the working model is even more important for the extension to multivariate density estimation.

Therefore, for proving the pointwise asymptotic normality of the HAL-MLE, we create an independent version ${\cal R}_{n,0}$ of ${\cal R}_n$ that is independent of $P_n$ and yields a sup-norm approximation of $D^{(k)}_U([0,1])$ at rate $O(1/J_{n,0}^{k+1})$ up till a $\log n$ factor. Such sets have been shown to exist in \citet{vanderlaan2023higherordersplinehighly}. We could view  ${\cal R}_n=\hat{R}(P_n)$ as an algorithm that maps the data $P_n$ into a set of basis functions. Let $P_n^{\#}$ be the empirical probability measure of an independent sample of $n$ i.i.d. observations from $P_0$. We can then define a particular independent version as 
\(\mathcal R_{n,0} = \hat{R}(P_n^{\#})\).
We use this independent version to define an independent working model $D^{(k)}({\cal R}_{n,0})$ of dimension $J_{n,0}$, which can be viewed as an approximation of the data dependent working model $D^{(k)}({\cal R}_n)$. 
This independent working model defines a different oracle MLE
\[
f_{n,0}=f_{{\cal R}_{n,0},0}
\equiv\arg\min_{f\in D^{(k)}_M(\mathcal R_{n,0})}\;-\,P_0\log p_f
\;=\;f_{{\cal R}_{n,0},\beta_{n,0}}.
\] 
To emphasize the two different working models we also use notation $f_{{\cal R}_n,\beta}$ and $f_{{\cal R}_{n,0},\beta}$ as parametrizations of the two working models $D^{(k)}({\cal R}_n)$ and $D^{(k)}({\cal R}_{n,0})$. Specifically, we also use notation $f_{{\cal R}_n,0}$ and $f_{{\cal R}_{n,0},0}$ for the two oracle MLEs,  where the latter is also denoted with $f_{n,0}$.
We now analyze the HAL-MLE w.r.t.  the independent oracle MLE $f_{{\cal R}_{n,0},0}$ instead of $f_{{\cal R}_n,0}$, relying on showing that the HAL-MLE over $D^{(k)}({\cal R}_n)$ indeed can be viewed as an approximate MLE over $D^{(k)}({\cal R}_{n,0})$.

Although \(f_n\) is not a full MLE over \(D^{(k)}(\mathcal R_{n,0})\), it can be shown to act as an approximate MLE solving its score equations at close enough approximation so that $f_n$ not only acts as an MLE over the data dependent working model $D^{(k)}({\cal R}_n)$ but also as an MLE over the independent working model $D^{(k)}({\cal R}_{n,0})$. The approximation error in solving these score equations is a term in our analysis that is formally addressed in Appendix~\ref{App C: Score Analysis} by introducing the $L_1$ constrained score space. The HAL-MLE, $f_n$, has one $L_1$ norm constraint on the coefficients, and thus solves $J_n - 1$ score equations. We name the linear span of these $J_n - 1$ scores as the $L_1$ constrained score space, and this space is a closed linear subspace of $D^k(\mathcal{R}_n)$. 

Let \(\{\phi_j^*: j\in\mathcal R_{n,0}\}\) be an orthonormal basis of \(\operatorname{span}\{\phi_j: j\in\mathcal R_{n,0}\}\) under the inner product
\(\langle f,g\rangle_{\beta_{n,0}}\). We define a normalized score vector at \(x\) by
\(
\bar{\phi}_{n,x}(X)\equiv J_{n,0}^{-1/2}\sum_{j\in {\cal R}_{n,0}}\phi_j^*(x)\phi_j^*(X)\).
Notice that  for each value $x\in [0,1]$, $\bar{\phi}_{n,x}$ is a function on $[0,1]$. Notice that $J_{n,0}^{-1/2} \bar{\phi}_{n,x}(X) = J_{n,0}^{-1}\sum_{j\in {\cal R}_{n,0}}\phi_j^*(x)\phi_j^*(X)$, which is a linear combination of the orthonormal basis functions $\phi_j^*$, and we believe it to be uniformly bounded. Additionally, we use notation $O^+(r(n))$ if the term is $O(r(n)(\log n)^p)$ for some finite $p$, and similarly we define $O_p^+(r(n))$. Note that $p$ can be negative, and usually we can determine this $p$ based on undersmoothening, and it does not influence the main rate $r(n)$.
\begin{theorem}[Asymptotic Linearity of HAL-MLE]
\label{thm:asymptotic-linearity-hal-mle}
Let $J_{n,0}$ satisfy $J_{n,0}^{-(k+1)}=o((J_{n,0}/n)^{1/2})$.
Suppose the following conditions hold for an integer $k \geq 1$:
\begin{enumerate}
  \item (Basis boundedness) $J_{n,0}^{-1/2}\,\Vert  \bar\phi_{n,x}\Vert_{\infty}=O(1)$;
  \item (Smoothness) $f_0\in D^{(k)}_U([0,1])$ for some $U<\infty$;
  \item (Uniform Approximation of $D^{(k)}_U([0,1])$ by $D^{(k)}(\mathcal R_{n,0})$) 
\[
\sup_{f\in D^{(k)}_U([0,1])}\inf_{g\in D^{(k)}(\mathcal R_{n,0})}\Vert f-g \Vert_{\infty}
=O^+\bigl(J_{n,0}^{-(k+1)}\bigr).
\]
  \item ($L^2(\mu)$ Approximation of $D^{(k)}_U([0,1])$ by $D^{(k)}({\cal R}_n)$)
\[
\sup_{f\in D^{(k)}_U([0,1])}\inf_{g\in D^{(k)}(\mathcal R_{n})}\Vert f-g \Vert_{L^2(\mu)}
=O^+\bigl(J_{n}^{-(k+1)}\bigr),
\]
and can be extended to the corresponding $L_1$ constrained score space in $D^{(k)}(\mathcal R_{n})$.

\end{enumerate}
Then, with cross‐validated or slightly undersmoothened $f_n$, for each fixed $x\in[0,1]$,
\[
  \sqrt{\frac{n}{J_{n,0}}}\bigl(f_n(x)-f_{{0}}(x)\bigr)
  =\sqrt{n}\,(P_n-P_0)\bigl[S_{f_{n,0}}(\bar\phi_{n,x})\bigr]
  +o_P(1).
\]
The latter term equals $n^{1/2}$-scaled empirical mean of independent mean zero random variables $S_{f_{n,0}}(\bar{\phi}_{n,x})$ with finite variance. $\sqrt{J_{n,0}}S_{f_{n,0}}(\bar\phi_{n,x})$ serves as the influence curve of the HAL-MLE $f_n(x)$ considered as an estimator of the oracle $f_{n,0}(x)$.
\end{theorem}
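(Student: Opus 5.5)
We decompose $f_n(x)-f_0(x)=\{f_n(x)-f_{n,0}(x)\}+\{f_{n,0}(x)-f_0(x)\}$ and handle the two pieces separately. The second piece is a pure bias term. By Condition 3, there is $g\in D^{(k)}(\mathcal R_{n,0})$ with $\Vert f_0-g\Vert_\infty=O^+(J_{n,0}^{-(k+1)})$. Since $f_{n,0}$ minimizes the KL risk over $D^{(k)}_M(\mathcal R_{n,0})$, the positivity condition and the second-order loss behavior give $d_0(f_{n,0},f_0)\lesssim\Vert f_0-g\Vert_\infty^2$. Inside the finite-dimensional working model, the basis boundedness Condition 1 converts this into a pointwise bound, $|f_{n,0}(x)-f_0(x)|=O^+(J_{n,0}^{-(k+1)})$. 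After multiplying by $\sqrt{n/J_{n,0}}$, the rate condition $J_{n,0}^{-(k+1)}=o((J_{n,0}/n)^{1/2})$ makes this term $o(1)$; we absorb the log factor through the undersmoothing choice.

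For the first piece we linearize in the $\langle\cdot,\cdot\rangle_{\beta_{n,0}}$-orthonormal parametrization $f_{n,0}+\sum_j\gamma_j\phi_j^*$ of $D^{(k)}(\mathcal R_{n,0})$. There the information matrix, by the identity \eqref{eq:inner-product-score}, is the identity at $\gamma=0$. We first project $f_n$ onto $D^{(k)}(\mathcal R_{n,0})$, writing $\tilde f_n=f_{n,0}+\sum_j\gamma_{n,j}\phi_j^*$, and control $f_n-\tilde f_n$ pointwise using Condition 3 (the sup-norm error is $O^+(J_{n,0}^{-(k+1)})$).

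Next we expand the score equations. The oracle satisfies $P_0S_{f_{n,0}}(\phi_j^*)=0$, assuming the $L_1$ constraint is not active at the oracle for large $M$. A first-order Taylor expansion of $\gamma\mapsto P_0S_{\gamma}(\phi_j^*)$ then gives
\[
\gamma_{n,j}=P_0 S_{\tilde f_n}(\phi_j^*)\cdot(-1)+\dots=(P_n-P_0)S_{f_{n,0}}(\phi_j^*)-P_nS_{\tilde f_n}(\phi_j^*)+R_{n,j},
\]
where $R_{n,j}$ is a second-order remainder of size $O(\Vert\gamma_n\Vert^2)$. By Theorem \ref{thm:l2-convergence-hal-mle}, $\Vert\gamma_n\Vert^2\asymp d_0=O_P(n^{-(2k+2)/(2k+3)})$. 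Summing against $\phi_j^*(x)$ and scaling by $\sqrt{n/J_{n,0}}$, the leading term becomes exactly $\sqrt n(P_n-P_0)S_{f_{n,0}}(\bar\phi_{n,x})$.

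Three remainders must then be shown to be $o_P(1)$.

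\textbf{(a) Empirical process term.} This is $\sqrt{n/J_{n,0}}\,(P_n-P_0)\{S_{\tilde f_n}-S_{f_{n,0}}\}(\sum_j\phi_j^*(x)\phi_j^*)$. We bound it with a maximal inequality over the $J_{n,0}$-dimensional class, which is fixed given the independent sample $P_n^{\#}$; that independence from $P_n$ is exactly why $\mathcal R_{n,0}$ is used. Conditioning on $P_n^{\#}$, the class is a finite-dimensional VC-type class with envelope controlled by Condition 1, and consistency of $\tilde f_n$ makes the term small.

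\textbf{(b) Quadratic remainder.} This is $\sqrt{n/J_{n,0}}\sum_j\phi_j^*(x)R_{n,j}$. We bound it by Cauchy–Schwarz and Condition 1, using the $L^2$ rate.

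\textbf{(c) Score-equation approximation error.} This is $\sqrt{n/J_{n,0}}\,P_nS_{\tilde f_n}(\sum_j\phi_j^*(x)\phi_j^*)$, which measures how well $f_n$ solves the scores of the independent working model. We expect it to be the main obstacle. The plan has two steps. First, replace $S_{\tilde f_n}$ by $S_{f_n}$ at cost $O^+(J_{n,0}^{-(k+1)})$. Second, use that $f_n$ exactly solves $P_nS_{f_n}(h)=0$ for every $h$ in the $L_1$ constrained score space of $D^{(k)}(\mathcal R_n)$ (the $J_n-1$ equations). For the direction $h_x=J_{n,0}^{-1}\sum_j\phi_j^*(x)\phi_j^*$, Condition 4 (extended to the constrained score space) gives an element $h_{x,n}$ of that space with $\Vert h_x-h_{x,n}\Vert_{L^2(\mu)}=O^+(J_n^{-(k+1)})$. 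This yields
\[
P_nS_{f_n}(h_x)=P_nS_{f_n}(h_x-h_{x,n})=(P_n-P_0)S_{f_n}(h_x-h_{x,n})+P_0S_{f_n}(h_x-h_{x,n}).
\]
We bound the $P_0$ part by Cauchy–Schwarz, using $\Vert p_{f_n}-p_{f_0}\Vert_{L^2}$ times $O^+(J_n^{-(k+1)})$. We bound the empirical part by a Donsker/entropy bound for $D^{(k)}_U$, which gives $o_P(n^{-1/2})$ times the $L^2$ norm. Undersmoothing, which takes $J_n$ large relative to $J_{n,0}$, makes the product $o_P((J_{n,0}/n)^{1/2}\cdot J_{n,0}^{-1/2})$ after rescaling.

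Finally, $S_{f_{n,0}}(\bar\phi_{n,x})$ has mean zero because $P_{f_{n,0}}$ and $P_0$ agree on the scores at the oracle. Its variance equals $J_{n,0}^{-1}\sum_j\phi_j^*(x)^2$ by orthonormality, up to the difference between $P_0$ and $P_{f_{n,0}}$, and this is $O(1)$ by Condition 1. This gives the stated representation and influence-curve interpretation.
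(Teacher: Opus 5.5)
Your architecture is the paper's. You use the independent working model $D^{(k)}(\mathcal R_{n,0})$, the projection $\tilde f_n$, and the $\langle\cdot,\cdot\rangle_{\beta_{n,0}}$-orthonormal basis that makes the information matrix the identity. You Taylor-expand the oracle score equations and control $P_nS_{f_n}(\bar\phi_{n,x})$ through the $L_1$ constrained score space. Your (b) is the paper's $R_{1n}(\bar\phi)$, and your (c) splits into the paper's $(P_{\tilde f_n}-P_{f_n})(\bar\phi)$ and $r_n(\bar\phi)$. Term (a) needs no maximal inequality: $S_f(\phi)-S_{f'}(\phi)=(P_{f'}-P_f)\phi$ does not depend on $X$, so $(P_n-P_0)\{S_{\tilde f_n}-S_{f_{n,0}}\}(\cdot)\equiv 0$. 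That is how the paper disposes of it.

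The genuine gap is your bias step. Optimality of $f_{n,0}$ only gives an $L^2$-type bound, $\Vert f_{n,0}-g\Vert_{\beta_{n,0}}\lesssim J_{n,0}^{-(k+1)}$.
\begin{itemize}
\item Condition 1 gives $\sum_j\phi_j^*(x)^2=J_{n,0}^{1/2}\bar\phi_{n,x}(x)=O(J_{n,0})$.
\item Hence for $h=\sum_j a_j\phi_j^*$ the best it yields is $|h(x)|\le\Vert a\Vert_2\bigl(\sum_j\phi_j^*(x)^2\bigr)^{1/2}=O(J_{n,0}^{1/2})\,\Vert h\Vert_{\beta_{n,0}}$.
\item This inverse inequality cannot be improved in general; a single normalized B-spline on a mesh of width $\asymp J_{n,0}^{-1}$ attains it.
\end{itemize}
So your route gives only $|f_{n,0}(x)-f_0(x)|=O^+(J_{n,0}^{1/2-(k+1)})$, i.e.\ a scaled bias of order $\sqrt n\,J_{n,0}^{-(k+1)}$. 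The hypothesis $J_{n,0}^{-(k+1)}=o((J_{n,0}/n)^{1/2})$ does not make this $o(1)$; at $J_{n,0}\asymp n^{1/(2k+3)}$ it grows like $n^{1/(4k+6)}$.

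The paper does not pass through the KL risk. It takes $\Vert f_{n,0}-f_0\Vert_\infty=O^+(J_{n,0}^{-(k+1)})$ directly from Condition 3, just as you do for $\Vert f_n-\tilde f_n\Vert_\infty$. In effect this is a sup-norm stability property of projections onto $D^{(k)}(\mathcal R_{n,0})$. You need that ingredient, or a localization property such as $\Vert\bar\phi_{n,x}\Vert_{L^1(\mu)}=O(J_{n,0}^{-1/2})$; Condition 1 is not enough.

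The same $J_{n,0}^{1/2}$ loss threatens the first step of (c). Bounding the replacement cost $\sqrt n\,(P_{f_n}-P_{\tilde f_n})\bar\phi_{n,x}$ by Cauchy--Schwarz again gives only $\sqrt n\,J_{n,0}^{-(k+1)}$. The paper recovers the missing factor with the identity $(P_{\tilde f_n}-P_{f_n})\bar\phi_{n,x}=J_{n,0}^{-1/2}\,\Pi\bigl(w\mid D^{(k)}(\mathcal R_{n,0})\bigr)(x)$, where $w=(p_{\tilde f_n}-p_{f_n})/p_{f_{n,0}}$. The identity holds because $P_{f_{n,0}}w=0$, so $\int\phi_j^*(p_{\tilde f_n}-p_{f_n})=\langle \phi_j^*,w\rangle_{\beta_{n,0}}$. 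Combined with a sup-norm bound $O^+(J_{n,0}^{-(k+1)})$ on that projection, the term is $\sqrt{n/J_{n,0}}\,O^+(J_{n,0}^{-(k+1)})=o(1)$ by the rate condition.
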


Here we explain the plausibility and understanding for these approximation assumptions. The bias is identified as the uniform approximation error between the finite-dimensional working model $D^{(k)}(\mathcal R_{n,0})$ and $D^{(k)}_U([0,1])$. \citet[Appendices~D and~F]{vanderlaan2023higherordersplinehighly} show that such a set ${\cal R}_{n,0}$ of size $J_{n,0}$ exists for all sizes $J_{n,0}$. Therefore, we could simply select ${\cal R}_{n,0}$ as such a set and choose it for $J_{n,0}=J_n$.
Moreover, in this article it is argued that, due to HAL achieving the $L^2$ rate of convergence $O_p(n^{-(k+1)/(2k+3)})$, the sets of non-zero coefficients in the HAL fit should satisfy an adaptive version of this approximation condition allowing this same approximation error w.r.t. true target function, and that under some undersmoothing it will satisfy the (non-adaptive) uniform approximation condition. 
And the $L^2$ approximation error is a weaker condition on $D^{(k)}(\mathcal R_{n})$. Its extension to the $L_1$ constrained score space can be explained by approximating one basis function by other basis functions in the score space.

\begin{remark}[The Necessity for Cross-Validation and Undersmoothing]
The $L_1$-norm in the HAL-MLE drives the number of basis functions in ${\cal R}_n$. This number $J_n$ needs to balance between the bias $O^+\bigl(J_{n}^{-(k+1)}\bigr)$ of the working model $D^{(k)}({\cal R}_n)$ and variance $\sqrt{\frac{n}{J_{n}}}$ or the MLE for this working model. Cross-validation can guarantee the optimal choice \citep{van2003unified, vaart2006oracle}. One may need some undersmoothing so that the bias is reduced by a $\log n$ factor, making it negligible relative to the standard error. 
\end{remark}


\begin{theorem}[Pointwise Asymptotic Normality for HAL-MLE]
\label{thm:density_HAL_asymptotic_normality}
Under the conditions of Theorem~\ref{thm:asymptotic-linearity-hal-mle}, let $\sigma^2_{n,0}(x)=P_0(\bar{\phi}_{n,x}-P_{\beta_{n,0}}\bar{\phi}_{n,x})^2$, and then we have
\begin{equation}
    {\sigma}_{n,0}^{-1} \left( \frac{n}{J_{n,0}} \right)^{1/2} \bigl(f_n - f_{{0}}\bigr)(x) \Rightarrow_d N(0,1).
\end{equation}
\end{theorem}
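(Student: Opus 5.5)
The plan is to derive the result directly from the asymptotic linear representation in Theorem~\ref{thm:asymptotic-linearity-hal-mle}, combined with a triangular-array central limit theorem applied conditionally on the independent sample $P_n^{\#}$. By Theorem~\ref{thm:asymptotic-linearity-hal-mle},
\[
\sqrt{\tfrac{n}{J_{n,0}}}\bigl(f_n(x)-f_0(x)\bigr)=\sqrt n\,(P_n-P_0)\bigl[S_{f_{n,0}}(\bar\phi_{n,x})\bigr]+o_P(1).
\]
After dividing by $\sigma_{n,0}(x)$, it suffices to show two things. First, the leading term divided by $\sigma_{n,0}(x)$ converges to $N(0,1)$. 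Second, $\sigma_{n,0}(x)$ is bounded away from zero, so that $o_P(1)/\sigma_{n,0}=o_P(1)$. Slutsky's lemma then completes the argument.

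\textbf{Step 1 (conditioning).} Because ${\cal R}_{n,0}=\hat R(P_n^{\#})$ is built from an independent sample, conditioning on $P_n^{\#}$ fixes ${\cal R}_{n,0}$, $\beta_{n,0}$, the orthonormal basis $\phi_j^*$, and hence the function $\xi_{n}(X)\equiv S_{f_{n,0}}(\bar\phi_{n,x})(X)=\bar\phi_{n,x}(X)-P_{\beta_{n,0}}\bar\phi_{n,x}$. Conditionally, $\sqrt n(P_n-P_0)\xi_n=n^{-1/2}\sum_i(\xi_n(X_i)-P_0\xi_n)$ is a normalized sum of a row-wise i.i.d.\ triangular array. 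I would note that the paper's $\sigma_{n,0}^2$ is centered at $P_{\beta_{n,0}}$ rather than $P_0$. The difference $P_0\xi_n=(P_0-P_{\beta_{n,0}})\bar\phi_{n,x}$ is of order $\Vert p_{f_{n,0}}-p_0\Vert_{L^2}\,\Vert\bar\phi_{n,x}\Vert_{L^2}$. Here $\Vert\bar\phi_{n,x}\Vert_{L^2}=O(1)$ by orthonormality and the equivalence of the $\langle\cdot,\cdot\rangle_{\beta_{n,0}}$ and $L^2(\mu)$ norms, which follows from the positivity condition. The density error is $O^+(J_{n,0}^{-(k+1)})$ by condition 3, so $P_0\xi_n\to0$, and $\sigma_{n,0}^2$ differs from $\Var_{P_0}(\xi_n)$ by $o(1)$.

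\textbf{Step 2 (variance nondegeneracy).} Since $p_0$ and $p_{\beta_{n,0}}$ are both bounded above and below, $\sigma_{n,0}^2(x)\asymp \Vert\bar\phi_{n,x}-c\Vert_{\beta_{n,0}}^2$. By orthonormality this equals $J_{n,0}^{-1}\sum_j\phi_j^*(x)^2$, up to the constant direction, which is already removed by the centering. This is the diagonal of the reproducing kernel of the working model divided by $J_{n,0}$. For a localized spline space with $J_{n,0}$ knots that approximates at rate $J_{n,0}^{-(k+1)}$ (condition 3), this quantity is bounded below. I expect this to be the main obstacle, because nothing in the stated conditions gives a lower bound explicitly. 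I would first try to deduce it from the approximation property: the kernel diagonal is $\sup\{g(x)^2:\Vert g\Vert_{\beta_{n,0}}\le1\}$. One can construct $g$ in the span approximating a bump of width $J_{n,0}^{-1}$ at $x$; such a bump has $L^2$ norm of order $J_{n,0}^{-1/2}$ while its height at $x$ is of order $1$, which gives $\sum_j\phi_j^*(x)^2\gtrsim J_{n,0}$. Failing that, I would add $\liminf\sigma_{n,0}(x)>0$ as an implicit regularity condition.

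\textbf{Step 3 (Lindeberg).} By condition 1, $|\xi_n|\le 2\Vert\bar\phi_{n,x}\Vert_\infty=O(J_{n,0}^{1/2})$. The Lindeberg condition requires $\max_i|\xi_n(X_i)|/\sqrt n\to0$, which holds since $J_{n,0}/n\to0$; this follows from the rate condition $J_{n,0}^{-(k+1)}=o((J_{n,0}/n)^{1/2})$ together with $J_{n,0}\to\infty$. Lindeberg--Feller then gives conditional convergence $\sqrt n(P_n-P_0)\xi_n/\sigma_{n,0}\Rightarrow N(0,1)$ for almost every sequence $P_n^{\#}$. Dominated convergence applied to characteristic functions turns this into unconditional convergence, and Slutsky's lemma with the $o_P(1)$ remainder finishes the proof.
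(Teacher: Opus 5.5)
Your route is the paper's. You take the linear representation from Theorem~\ref{thm:asymptotic-linearity-hal-mle} and apply a CLT to $\sqrt n(P_n-P_0)S_{f_{n,0}}(\bar\phi_{n,x})$, which is an i.i.d.\ sum because ${\cal R}_{n,0}$ is built from the independent sample $P_n^{\#}$. The paper only checks $\sigma^2_{n,0}=O(1)$, via $\sigma^2_{n,0}\lesssim J_{n,0}^{-1}\sum_j\phi_j^*(x)^2$, and then invokes the CLT. Your conditional Lindeberg--Feller argument supplies details the paper omits. So does your Step~2 lower bound, which is what is actually needed to divide the $o_P(1)$ remainder by $\sigma_{n,0}$.

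\textbf{One justification is backwards.} The condition $J_{n,0}^{-(k+1)}=o((J_{n,0}/n)^{1/2})$ is equivalent to $n^{1/2}=o(J_{n,0}^{k+3/2})$, i.e.\ $J_{n,0}\gg n^{1/(2k+3)}$. This is a \emph{lower} bound on $J_{n,0}$, so it cannot give $J_{n,0}/n\to0$. You genuinely need $J_{n,0}/n\to0$ for Lindeberg: if $J_{n,0}\asymp n$, the summands $\xi_n(X_i)/\sqrt n$ can be of order one and the limit need not be Gaussian. Take it from the regime $J_{n,0}\asymp^+ n^{1/(2k+3)}$ in which the paper proves Theorem~\ref{thm:asymptotic-linearity-hal-mle}, or state it as an assumption.

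\textbf{Two smaller points.}

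First, under the paper's setup your Step~1 is unnecessary. The oracle solves $P_0S_{f_{n,0}}(\phi_j^*)=0$ for every $j$, and $\bar\phi_{n,x}$ lies in the span of the $\phi_j^*$. Hence $P_0\xi_n=0$ exactly and $\sigma^2_{n,0}=\Var_{P_0}(\xi_n)$. Your approximation argument matters only if the $L_1$ constraint binds for the oracle.

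Second, in Step~2 the unit-height bump of width $J_{n,0}^{-1}$ is not in $D^{(k)}_U([0,1])$, since its $k$-th order variation norm is of order $J_{n,0}^{k}$. Rescale it by $J_{n,0}^{-k}$ before invoking condition~3. The ratio $g(x)^2/\Vert g\Vert^2$ is scale invariant. After rescaling, the sup-norm error $O^+(J_{n,0}^{-(k+1)})$ is negligible relative to both the height $J_{n,0}^{-k}$ and the $L^2$ norm $J_{n,0}^{-k-1/2}$. The second comparison holds precisely because the rate condition forces $J_{n,0}$ to grow polynomially.
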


A natural question is whether we can extend these to the corresponding density estimation. Notice that what we demonstrated is the pointwise asymptotic normality instead of the weak convergence of $(f_n-f_0)$ in function space. Thus, we cannot even apply the functional delta method. 
However, we construct the proof of the following corollary by recognizing that it is driven entirely by $f_n-f_0$, while the normalizing constant is a pathwise differentiable plug-in estimator with no contribution to the linearization.

\begin{corollary}[Delta-method for Density Estimation] \label{coro: Delta-method for Density Estimation}
    Under the conditions of Theorem~\ref{thm:asymptotic-linearity-hal-mle}, let $g(f)(x)=\log p_f(x)$, then we have,
    \[
    (n/J_{n,0})^{1/2}( g(f_n)(x)-g(f_0)(x))=(n/J_{n,0})^{1/2}(f_n(x)-f_0(x))+o_P(1).
    \]
    Therefore, $\log p_{f_n}(x)-\log p_{f_0}(x)$ behaves as $(f_n-f_0)(x)$, which proves that 
    \[
    (n/J_{n,0})^{1/2}(p_{f_n}-p_{f_0})(x)=p_0(x)(n/J_{n,0})^{1/2}(f_n-f_0)(x)+o_P(1).
    \]
\end{corollary}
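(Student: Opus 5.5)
The plan is to write $g(f)(x)=\log p_f(x)=f(x)-\log C(f)$ with $C(f)=\int_0^1 e^{f(u)}\,du$. Then
\[
g(f_n)(x)-g(f_0)(x)=(f_n-f_0)(x)-\bigl\{\log C(f_n)-\log C(f_0)\bigr\},
\]
so the first claim reduces to showing $(n/J_{n,0})^{1/2}\{\log C(f_n)-\log C(f_0)\}=o_P(1)$. The normalizing constant is a smooth, integrated functional of $f$, so it should converge at the parametric rate $n^{-1/2}$, or at worst at the squared $L^2$ rate plus an $n^{-1/2}$ term. Multiplying by $(n/J_{n,0})^{1/2}$ then leaves only a factor $J_{n,0}^{-1/2}\to0$.

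To control $\log C(f_n)-\log C(f_0)$, I would Taylor expand $t\mapsto\log C(f_0+t(f_n-f_0))$. The first derivative at $t=0$ is $P_{f_0}(f_n-f_0)$. The second derivative is the variance of $f_n-f_0$ under the intermediate density $p_{f_t}$. By the positivity remark in Section~\ref{subsec:rate-of-convergence-hal-mle-loss-based-dissimilarity}, all $f\in D^{(k)}_U([0,1])$ are uniformly bounded, so this variance is $\lesssim\|f_n-f_0\|_{L^2(\mu)}^2$.

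This gives
\[
\log C(f_n)-\log C(f_0)=P_{f_0}(f_n-f_0)+O(\|f_n-f_0\|_{L^2(\mu)}^2).
\]
Theorem~\ref{thm:l2-convergence-hal-mle}, together with the fact that $p_f$ is bounded above and below, transfers the density rate to $\|f_n-f_0\|_{L^2}$ modulo constants. To do this, note that $d_0$ is the KL divergence, which is equivalent to $\|\log p_{f_n}-\log p_{f_0}\|^2$, and this controls the centered part of $f_n-f_0$. Since $f$ is identified only up to an additive constant, I would fix a normalization, for example $P_{f_0}f=0$ or centering with respect to $\mu$; the constant cancels in $g$ anyway. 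The quadratic term is then $O_P(n^{-(2k+2)/(2k+3)})$, and times $(n/J_{n,0})^{1/2}$ it vanishes provided $J_{n,0}\gtrsim n^{1/(2k+3)}$. This is the regime imposed by $J_{n,0}^{-(k+1)}=o((J_{n,0}/n)^{1/2})$.

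The main obstacle is the linear term $P_{f_0}(f_n-f_0)=\int (f_n-f_0)(u)p_0(u)\,du$, because the $L^2$ rate alone is too slow for it. Here I would use Theorem~\ref{thm:asymptotic-linearity-hal-mle} integrated over $x$. The linearization $f_n(x)-f_0(x)\approx (J_{n,0}/n)^{1/2}(P_n-P_0)S_{f_{n,0}}(\bar\phi_{n,x})$ gives, after integrating against $p_0$, an empirical process term $(P_n-P_0)\bigl[J_{n,0}^{-1}\sum_j \phi_j^*(X)\,P_0\phi_j^*\bigr]$, up to centering. This is the projection of a fixed smooth function onto the working model, so it is a uniformly bounded function and the term is $O_P(n^{-1/2})$.

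Alternatively, and more cleanly, I would treat $\Psi(f)=\log C(f)$ as a pathwise differentiable functional. I would then invoke the HAL-MLE plug-in efficiency result of Section~\ref{section:HAL-MLE for Pathwise Differentiable Statistical Estimands}, or verify directly that $f_n$ solves the score equation $P_nS_{f_n}(h)\approx0$ for a working-model approximation $h$ of the relevant gradient, using condition 4. Either route gives $\Psi(f_n)-\Psi(f_0)=O_P(n^{-1/2})$, hence $(n/J_{n,0})^{1/2}\cdot O_P(n^{-1/2})=O_P(J_{n,0}^{-1/2})=o_P(1)$.

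For the second claim, write $p_{f_n}(x)-p_{f_0}(x)=p_0(x)\{e^{\Delta_n}-1\}$ with $\Delta_n=g(f_n)(x)-g(f_0)(x)$. Applying the mean value theorem gives $e^{\Delta_n}-1=\Delta_n+O(\Delta_n^2)$. By the first part together with Theorem~\ref{thm:density_HAL_asymptotic_normality}, $\Delta_n=O_P((J_{n,0}/n)^{1/2})$, so $(n/J_{n,0})^{1/2}\Delta_n^2=O_P((J_{n,0}/n)^{1/2})=o_P(1)$. Substituting the first claim then yields the stated expansion with leading term $p_0(x)(n/J_{n,0})^{1/2}(f_n-f_0)(x)$.
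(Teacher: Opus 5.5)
Your decomposition and the quadratic Taylor bound are the same as the paper's. Your variance form of the second derivative is slightly cleaner, because it is invariant to additive constants. That term is also $o_P(n^{-1/2})$ for every $J_{n,0}$, since $(2k+2)/(2k+3)>1/2$, so you need no condition on $J_{n,0}$ there.

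The real difference is the linear term $P_{f_0}(f_n-f_0)$. The paper never bounds it. It asserts that $C$ is stationary along directions $h$ with $E_{P_f}h=0$ (``$D^*_f=0$''), which is legitimate only if $f_n-f_0$ is $P_{f_0}$-centered. In other words, the paper tacitly uses your first normalization, $P_{f_0}f=0$. Under that choice the linear term vanishes identically. Indeed, $\log C(f)=P_0f-P_0\log p_f$ gives $\log C(f_n)-\log C(f_0)=d_0(f_n,f_0)$ exactly, so the claim follows from Theorem~\ref{thm:l2-convergence-hal-mle} alone. You list this normalization as an option without noticing that it removes your ``main obstacle''.

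Your route (b) is what a $P_0$-free normalization such as $\int f\,d\mu=0$ requires. Then $\log C(f)=-\int\log p_f\,d\mu$, with canonical gradient $1-1/p$. Its exact remainder is $\int(\log r+1-r)\,d\mu=O(\|p_{f_n}-p_0\|_\mu^2)$ for $r=p_0/p_{f_n}$. So the extra hypotheses of Theorem~\ref{thm:plugin_HAL_asymptotic_efficiency} hold automatically, and you get $O_P(n^{-1/2})$. This costs the score-space machinery, but it is explicit about a first-order term that the paper's argument hides.

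Drop route (a). Integrating the pointwise $o_P(1)$ remainder over $x$ needs the uniform version of the basis-boundedness assumption from Theorem~\ref{thm:uniform-density-hal}, which the corollary does not assume.

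Also, ``the constant cancels in $g$ anyway'' does not make the normalization a free choice, because $f_n(x)-f_0(x)$ depends on it. For instance, with $f(0)=0$ you get $\log C(f)=-\log p_f(0)$, and the claim at $x=0$ would say that $\log p_{f_n}(0)$ converges faster than $\sqrt{J_{n,0}/n}$. State the normalization as part of the result.

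Your expansion $e^{\Delta_n}-1=\Delta_n+O(\Delta_n^2)$ for the second display is fine, and it supplies a step the paper only asserts.
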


This extends the pointwise asymptotic normality and the uniform convergence from $f_n$ to $p_{f_n}$. 

\subsection{Uniform Convergence for HAL-MLE} \label{subsct: uniform convergence for HAL-MLE}
The uniform convergence of HAL-MLE is a direct consequence of the pointwise asymptotic normality.
\begin{theorem}[Uniform Convergence for HAL-MLE]\label{thm:uniform-density-hal}
Extending the Basis Boundedness Assumption to hold uniformly for $x \in [0,1]$ a.s., while maintaining the other assumptions of Theorem~\ref{thm:asymptotic-linearity-hal-mle}, then
\[
\sup_{x\in[0,1]}
\sqrt{n/J_{n,0}}\,\lvert f_n(x)-f_0(x)\rvert
= O_P(\sqrt{\log n}).\]
By selecting $J_{n,0}\asymp n^{1/(2k+3)}$  up till $\log n$-factors, it follows
\quad
\[
\|f_n - f_0\|_\infty
= O_P^+\Bigl(n^{-(k+1)/(2k+3)}\Bigr).
\]
\end{theorem}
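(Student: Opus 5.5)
The plan is to upgrade the pointwise linearization of Theorem~\ref{thm:asymptotic-linearity-hal-mle} to one that is uniform in $x$, and then control the supremum of the resulting empirical process by a maximal inequality over a finite grid. Concretely, I would first revisit the proof of Theorem~\ref{thm:asymptotic-linearity-hal-mle} and check that every remainder term is controlled uniformly in $x\in[0,1]$. This gives
\[
\sup_{x\in[0,1]}\Bigl|\sqrt{n/J_{n,0}}\,\bigl(f_n(x)-f_0(x)\bigr)-\sqrt{n}\,(P_n-P_0)\bigl[S_{f_{n,0}}(\bar\phi_{n,x})\bigr]\Bigr|=o_P(\sqrt{\log n}).
\]
The remainders fall into three groups.
\begin{itemize}
\item The bias term $f_{n,0}-f_0$ is already a sup-norm bound, by the uniform approximation condition 3 together with $J_{n,0}^{-(k+1)}=o((J_{n,0}/n)^{1/2})$.
\item The score-equation approximation error depends on $x$ only through the coefficient vector $(\phi_j^*(x))_j$. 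It is therefore bounded by $\sup_x J_{n,0}^{-1/2}\|\bar\phi_{n,x}\|_\infty$ times an $x$-free quantity, and the strengthened (uniform-in-$x$) basis boundedness assumption makes this $O(1)$ times the pointwise rate.
\item The second-order term of the likelihood expansion is quadratic in $\beta_n-\beta_{n,0}$. Evaluating it at $x$ again only multiplies by $\phi^*(x)$, so it is uniformly controlled as well.
\end{itemize}

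Next I would bound $Z_n(x)\equiv\sqrt{n}(P_n-P_0)S_{f_{n,0}}(\bar\phi_{n,x})$ uniformly. Since $\mathcal R_{n,0}$ is independent of $P_n$, I would condition on it, so that $\{S_{f_{n,0}}(\bar\phi_{n,x}):x\}$ is a fixed class. Each member has the form $J_{n,0}^{-1/2}\sum_j\phi_j^*(x)\,S(\phi_j^*)$, a linear combination of $J_{n,0}$ fixed functions, with sup-norm $O(J_{n,0}^{1/2})$ by basis boundedness and bounded variance $\sigma^2_{n,0}(x)$.

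I would take a grid $x_1,\dots,x_{N}$ with $N$ polynomial in $n$. Bernstein's inequality plus a union bound then gives
\[
\max_{l}|Z_n(x_l)|=O_P\bigl(\sqrt{\log N}+J_{n,0}^{1/2}\log N/\sqrt n\bigr)=O_P(\sqrt{\log n}),
\]
because $J_{n,0}/n\to0$ polynomially. To pass from the grid to all of $[0,1]$, I would use that $x\mapsto\phi_j^*(x)$ are $k$-th order splines, hence Lipschitz with constants polynomial in $n$ (at most $J_{n,0}$ pieces, bounded coefficients). Taking the grid mesh $n^{-c}$ with $c$ large makes the interpolation error $o_P(1)$. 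Combining this with the uniform linearization gives the first display.

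The rate then follows by substitution. We have $\|f_n-f_0\|_\infty=O_P(\sqrt{J_{n,0}\log n/n})$ provided the bias condition holds, and $J_{n,0}\asymp n^{1/(2k+3)}$ (up to logs) balances $J_{n,0}^{-(k+1)}$ against $\sqrt{J_{n,0}/n}$. Both are then $n^{-(k+1)/(2k+3)}$ up to log factors, which yields the $O_P^+$ statement.

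I expect the main obstacle to be the first step: making the score-equation remainder (Appendix~\ref{App C: Score Analysis}) uniform in $x$. The pointwise argument may use the specific direction $\bar\phi_{n,x}$ when projecting onto the $L_1$-constrained score space. My first attempt would be to show the remainder is a bilinear form $\phi^*(x)^\top r_n$ with $\|r_n\|_2$ free of $x$, and then apply Cauchy--Schwarz with uniform basis boundedness. If that fails, I would fall back on a grid argument for the remainder too, absorbing at most an extra $\sqrt{\log n}$ factor.
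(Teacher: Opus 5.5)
Your overall architecture is the paper's. You make the remainders of the pointwise linearization uniform in $x$, bound $\sup_x|\sqrt n(P_n-P_0)S_{f_{n,0}}(\bar\phi_{n,x})|$, and substitute $J_{n,0}\asymp^+ n^{1/(2k+3)}$. The genuine difference is the maximal inequality.

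\textbf{The paper's route.} It rescales the class $\{S_{f_{n,0}}(\bar\phi_{n,x}):x\in[0,1]\}$ by $J_{n,0}^{-1/2}$, so its elements have $L^2$ norm $\lesssim J_{n,0}^{-1/2}$. It then asserts a VC-type bound $\log N(\epsilon)\lesssim\log(1/\epsilon)$ ``uniformly in $n$'', on the grounds that the family is indexed by the scalar $x$. Finally it applies the entropy-integral bound with $\delta_n\asymp J_{n,0}^{-1/2}$, giving $J_{n,0}^{1/2}\delta_n\sqrt{\log(1/\delta_n)}\asymp\sqrt{\log n}$.

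\textbf{Your route.} You condition on $\mathcal R_{n,0}$ and apply Bernstein's inequality (envelope $O(J_{n,0}^{1/2})$, variance $O(1)$). A union bound over a polynomial-size grid and an interpolation step then finish.

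\textbf{Comparison.} The paper's argument is shorter and reuses the chaining machinery of Appendix~B. Yours is more elementary and exposes what ``uniformly in $n$'' hides. A one-parameter index yields such covering numbers only through regularity of $x\mapsto\bar\phi_{n,x}$, and that regularity degrades with $n$, so it costs a $\log n$ rather than nothing.

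On the remainders, the paper simply asserts uniformity under the strengthened assumption. Your worry about the score term is milder than you fear. The Appendix~C bound is an empirical-process supremum plus a Cauchy--Schwarz term that depends on $x$ only through $\|\bar\phi_{n,x}-\tilde\phi_{n,x}\|_\mu$. Also, only $O_P(\sqrt{\log n})$ is needed there, not $o_P(\sqrt{\log n})$.

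\textbf{A justification to replace.} In your interpolation step, ``bounded coefficients'' is not available. The $\langle\cdot,\cdot\rangle_{\beta_{n,0}}$-orthonormalization of a truncated-power basis with data-adaptive knots can have enormous coefficients. Two repairs work:
\begin{enumerate}
\item On each knot interval, $\phi_j^*$ is a polynomial of degree at most $k$, and uniform basis boundedness gives $\|\phi_j^*\|_\infty\le\sup_y(\sum_i\phi_i^*(y)^2)^{1/2}=O(J_{n,0}^{1/2})$. Markov's inequality, together with continuity across knots for $k\ge1$, then gives a Lipschitz constant $O(J_{n,0}^{1/2}/h_{\min})$. Here $h_{\min}$ is the smallest gap among knots and endpoints, which is at least $n^{-3}$ with probability tending to one when the knots are sample points of $P_n^{\#}$.
\item Cleaner still, $x\mapsto\sqrt n(P_n-P_0)\bar\phi_{n,x}=J_{n,0}^{-1/2}\sum_j\phi_j^*(x)\,\mathbb G_n\phi_j^*$ is itself a polynomial of degree at most $k$ on each of the $J_{n,0}+1$ knot intervals. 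Its supremum is therefore at most a Lebesgue constant $\Lambda_k$ (depending only on $k$) times its maximum over $(k+1)(J_{n,0}+1)$ nodes fixed by $\mathcal R_{n,0}$. The union bound then gives $O_P(\sqrt{\log J_{n,0}})$ with no Lipschitz estimate at all.
\end{enumerate}
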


\begin{remark}[Zero‐order Case]
When \(k=0\), the uniform rate above need not hold, but one still has
\(\|f_n - f_0\|_\infty = o_P(1)\) based on \citet{van2017uniform}.
\end{remark}

In Remark~\ref{remark:logspline-convergence}, we mentioned the difference between the log-spline convergence and HAL-MLE convergence. Here we provide more details. First, in \citet{stone1990large}, they deal with a parametric MLE on equally spaced knots, whereas we deal with data-adaptive knots. Second, the convergence they showed is between the estimation and the parametric oracle MLE. Third, their uniform convergence rate relies on the assumption of the size of the post-selection working model to be $o(n^{\frac12 - \epsilon})$ for some $\epsilon > 0$; however, there is no theory suggesting that AIC or BIC can select the right size of the post-selection working model, unlike the theoretical guarantee in \citet{van2003unified} for cross-validation. Here, HAL-MLE shows a stronger result considering the bias between the oracle and the truth with the simplest setting of just using the data-adaptive knot points and cross-validation.

\subsection{Generalizability of the Proofs} 
\label{subsect: Generalizability of the Proofs}
Another interesting discussion is the generalizability of these proofs to different basis functions but asymptotically spanning the same function space, such as the B-splines and falling factorial basis. For the B-splines, they are linear combinations of the truncated power basis \citep{de1976splines}. For the falling factorial basis, they are identical to the truncated power basis up to a constant when $k=0$ and $k=1$. When $k \geq 2$, the proximity of the truncated power basis and the falling factorial basis is discussed in Chapter 10.1 of \citet{tibshirani2022divided}. Nevertheless, when $k \geq 1$, the falling factorial basis is continuous and therefore trivally càdlàg. Thus, these three basis systems should span the same space. The key assumptions of HAL-MLE proofs rely on i) bounding the SVN, ii) being an MLE on a fine enough working model. This is the reason why we have the same $L^2$ convergence rate as in the regression case in \citet{fang2021multivariate, ki2024mars}, even though the multivariate basis system is different. So our guess is that these proofs can be generalized to the B-splines and falling factorial basis under the same setting. However, the representation of TV is complicated for the B-splines with data-adaptive knots as mentioned in Section 3 of \citet{wang2014falling}.

\subsection{Variance Estimation for Density}  \label{density_sd_section}
Based on pointwise asymptotic normality, we propose a variance estimator using the Delta method, 
analogous to the variance estimator for generalized linear models (GLMs). 
Consider the parametric working model for HAL-MLE, i.e., 
\[
f_n(x) = \phi(x)^{\top}\beta_n,
\]
where $\phi$ denotes the vector of basis functions in $\mathcal{R}_n$. Consider this working model directly as a parametric model for the density itself with a truth $\beta_0$, we have a finite-dimensional parameter $\beta_n$, and $\beta_n \rightarrow^p \beta_0$. The density is then parameterized by $\beta_n$, and we denote $p_{f_n}(x) = p_{\beta_n}(x)$. Using the Delta method, we get the first-order Taylor expansion of $p_{f_n}(x)$ around $\beta_n$,
\[
p_{f_n}(x) = p_{\beta_n}(x) = p_{\beta_0}(x) + \nabla_\beta p_{\beta_n}(x) (\beta_n - \beta_0) + o_P(\Vert \beta_n - \beta_0 \Vert),
\]
where $\nabla_\beta p_{\beta_n}(x)$ is the gradient of $p_{\beta_n}(x)$ with respect to $\beta_n$.
Ignoring the second-order term, and taking the variance of both sides, we get
\[
\Var(p_{f_n}(x)) = \Var(p_{\beta_n}(x) - p_{\beta_0}(x)) \approx \Var(\nabla_\beta p_{\beta_n}(x) (\beta_n - \beta_0)).
\]



Denoting $\Var(p_{\beta_n}(x))$ as $\sigma^2_{n}(x)$, we obtain a plug‑in estimator of the variance \(\sigma^2_{n,0}(x)\) defined in Theorem~\ref{thm:density_HAL_asymptotic_normality}, as follows:
\begin{align} \label{eq:delta-method-variance-estimator}
\sigma_n^2(x)
=& (\nabla_\beta p_{\beta_n}(x))^\top
\; \widehat{\cov}(\beta_n)\;
(\nabla_\beta p_{\beta_n}(x)) \nonumber \\
=& (p_{\beta_n}(x))^2\,
\Bigl[S_{f_n}(\phi)(x)\Bigr]^\top 
\widehat{\cov}(\beta_n)\,
\Bigl[S_{f_n}(\phi)(x)\Bigr]. 
\end{align}
And the confidence interval can be constructed accordingly based on the $z$-statistic.
The above formula $\sigma^2_{n,0}$ in Theorem \ref{thm:density_HAL_asymptotic_normality} for the asymptotic variance relies on the orthonormal basis formulation, which makes it not practical to estimate it. This delta-method formula is a standard sandwich variance estimator for a function of the parameters of the working model. The covariance matrix of $\beta_n$ is estimated with the empirical covariance of the inverse of the information matrix applied to the score vector $(S_{f_n}(\phi)$, where $\phi \equiv (\phi_1, \cdots, \phi_{J_n})^\top$, corresponds to robust estimation of the covariance matrix of the MLE for a misspecified parametric working model.
In our case, note that this observed information matrix actually also equals the covariance of the scores under $P_{f_n}$:
$I_n(\beta_n)(j_1,j_2)=P_{f_n}S_{f_n}(\phi_{j_1})S_{f_n}(\phi_{j_2})$. Since our working model $D^{(k)}({\cal R}_n)$ will approximate the true $f_0$, we could also replace $I_n(\beta_n)$ by $I_{{n,emp}(\beta_n)}=P_n S_{f_n}(\phi)S_{f_n}(\phi)^{\top}$. We claim that the latter is more robust than the one using $I_n(\beta_n)$ and is thus recommended.


\subsubsection{\texorpdfstring{Covariance Estimation for $\beta_n$}{Covariance Estimation for beta\_n}} \label{covariance_beta}
Our delta-method variance estimator relies on an inverse of the information matrix, such as  its empirical version
\[
I_{n,emp}(\beta_n) = \frac{1}{n}\sum_{i=1}^{n} S_{f_n}(\phi)(x_i)\,S_{f_n}(\phi)(x_i)^\top.
\]

The inverse can become unstable when some of the basis functions are highly correlated. 
For numerical stability, we might add a ridge regularization to the diagonal of $I_n(\beta_n)$ before inverting, in case it is nearly singular. Following the uniform convergence theory of the HAL, the convergence rate is $\sqrt{\frac{J_{n,0}}{n}}$. Thus, the $\sigma_n^2(x)=O_p(\frac{J_{n,0}}{n})$, and this implies $I_n(\beta_n)^{-1}=O_p(J_{n,0})$. So, its minimum eigenvalue should be of order $\frac{1}{J_{n,0}}$.


For the first order HAL, $J_{n,0}\asymp^+(n^{\frac{1}{5}})$. Thus, we should add a regularization parameter at least smaller than $O_p(n^{-\frac{1}{5}})$. On the other hand, this delta-method variance estimator ignores the bias of the working model so that some overestimation of this variance would be welcome for better coverage. 
Overall, the robust selection of such a constant is an interesting problem to be addressed in future work. In our practical simulations, we added a regularization parameter of size  $O_p(n^{-\frac{1}{5}})$.  

\section{HAL-MLE for Pathwise Differentiable Statistical Estimands} \label{section:HAL-MLE for Pathwise Differentiable Statistical Estimands}
One could use the density estimator for statistical estimands such as moments, the median, the CDF, or the survival probability. Let $\Psi:{\cal M}\rightarrow \R$ be a pathwise differentiable target parameter with canonical gradient (also called efficient influence curve) $D^*_P$ at $P$ where ${\cal M}=\{P_f: f\in D^{(k)}_U([0,1]^d \}$, then, for a path $P_{\varepsilon}^h$ through $P$ with direction $h$, $\frac{\mathrm{d}}{\mathrm{d} \varepsilon} \Psi(P_{\varepsilon}^h) = \mathbb{E}_P[ D^*_P \, h]$. Let $R(P, P_0)\equiv \Psi(P)-\Psi(P_0)+P_0D^*_P$ be the exact remainder implied by the canonical gradient, which is known to be a second-order difference due to it being the exact remainder in a first order Tailor expansion $R(P,P_0)=\Psi(P)-\Psi(P_0)-(P-P_0)D^*_P$.
We have $\Psi(P)=\Psi(P_f)=\Psi^F(f)$ for a $\Psi^F: D^{(k)}_U([0,1]^d)\rightarrow \R$.
We can represent $D^*_P=D^*_f$ with $f=f(P)$.

\subsection{"Naive" Approach: Plug-in} \label{Plug-in HAL-MLE}
Plugging in the density would be a natural approach, and we will show that it yields an asymptotically efficient estimator, possibly achieved by using undersmoothing. We here provide a standard TMLE-type analysis establishing asymptotic efficiency under the assumption that $D^{(k)}({\cal R}_n)$ approximates $D^{(k)}_U([0,1])$. 

The tangent space generated by paths through $f$ is given by closure of linear span $\{S_f(\phi): \phi\in D^{(k)}_U([0,1])\}$. Thus, we can represent 
$D^*_f=S_f(\phi_f)$ for some $\phi_f\in D^{(k)}_M([0,1])$ for $\forall k$ including 0. Therefore, if the basis ${\cal R}_n$ is chosen so that $D^{(k)}({\cal R}_n)$  yields a $O^+(1/J_n^{k+1})$ $L^2$ approximation of $D^{(k)}_U([0,1])$, then we can find a projection $\tilde{\phi}_f$ of $\phi_f$ onto $D^{(k)}({\cal R}_n)$ that approximates $\phi_f$ in $L^2$-norm within distance $O^+(1/J_n^{k+1})$.

For simplicity, firstly, consider the case that $f_n$ is a relaxed HAL-MLE so that it equals an MLE over $D^{(k)}({\cal R}_n)$, while we assume that it has been established that $\Vert \beta_n\Vert=o_p(1)$ as it would be for the HAL-MLE. Then, $P_n S_{f_n}(\phi_j)=0$ for all $j\in {\cal R}_n$. Let $\tilde{D}_{f_n}$ be the projection in $L^2(\mu)$ of $D^*_{f_n}$ onto $\{S_{f_n}(\phi_j):j\in {\cal R}_n\}$. Then, $P_n \tilde{D}_{f_n}=0$. In that case if $\Vert \tilde{D}_{f_n}-D^*_{f_n}\Vert_{\mu}=o_p^+(1/J_n^{k+1})$, then it follows that $P_n D^*_{f_n}=o_p(n^{-1/2})$ and the efficiency theorem below  applies. For the HAL-MLE we already know we control $\Vert \beta_n\Vert_1=O(1)$, but we would need to show that the scores $P_n S_{f_n}(\phi_j)$, $j\in {\cal R}_n$ are solved good enough so that $P_n \tilde{D}_{f_n}=o_p(n^{-1/2})$. In the Appendix~\ref{app_f: Asymp Efficiency}, we show that this can generally be arranged by extending the approximation condition to the $L_1$ constrained score space.

\begin{theorem}[Efficient Influence Curve Approximation]
\label{thm:Efficient Influence Curve Approximation}
Suppose the sieve \(D^{(k)}(\mathcal R_n)\) satisfies the \(L^2\)‐approximation and it can be extended to its corresponding $L_1$ constrained score space, and maintain the Smoothness Assumptions in Theorem~\ref{thm:asymptotic-linearity-hal-mle}. 
Let \(f_n\) be the cross‐validated HAL–MLE over \(D^{(k)}(\mathcal R_n)\). Then
\[
P_n\,D^*_{f_n}
= o_p\!\bigl(n^{-1/2}\bigr).
\]
\end{theorem}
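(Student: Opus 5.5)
We write the canonical gradient as a score, $D^*_{f_n}=S_{f_n}(\phi_{f_n})$ with $\phi_{f_n}\in D^{(k)}_M([0,1])$, as discussed above. Then we split $P_nD^*_{f_n}$ into a part that the HAL-MLE solves exactly and a remainder that the approximation condition controls. Let $\mathcal S_n\subset D^{(k)}(\mathcal R_n)$ be the $L_1$ constrained score space. It is the span of the $J_n-1$ directions $h=\sum_{j\in\mathcal R_n}\alpha_j\phi_j$ along which the constrained optimum can be perturbed while keeping $\Vert\beta\Vert_1=M_n$ fixed, for instance those with $\sum_j \mathrm{sign}(\beta_{n,j})\alpha_j=0$. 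The KKT conditions of the Lasso-constrained MLE give $P_nS_{f_n}(h)=0$ for every $h\in\mathcal S_n$. Let $\tilde\phi_n$ be an element of $\mathcal S_n$ that approximates $\phi_{f_n}$. By linearity of $\phi\mapsto S_{f_n}(\phi)$,
\[
P_nD^*_{f_n}=P_nS_{f_n}(\tilde\phi_n)+P_nS_{f_n}(\phi_{f_n}-\tilde\phi_n)=(P_n-P_0)S_{f_n}(\phi_{f_n}-\tilde\phi_n)+P_0S_{f_n}(\phi_{f_n}-\tilde\phi_n).
\]

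\textbf{Empirical-process term.} The functions $S_{f_n}(\phi_{f_n}-\tilde\phi_n)$ lie in a class of bounded sectional variation norm, since both $\phi_{f_n}$ and $\tilde\phi_n$ have variation bounded by a multiple of $M$. That class is Donsker, with entropy integral $J(\delta)\lesssim\delta^{1-1/(2(k+1))}$ up to log factors. Their $L^2(P_0)$ norm is $O^+(J_n^{-(k+1)})=o_P(1)$ by the extended $L^2$ approximation condition, using positivity of $p_0$ and $p_{f_n}$ to move between $L^2(\mu)$ and $L^2(P_0)$. A standard maximal inequality, or asymptotic equicontinuity, then gives $(P_n-P_0)S_{f_n}(\phi_{f_n}-\tilde\phi_n)=o_P(n^{-1/2})$.

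\textbf{Bias term.} Since $P_{f_n}S_{f_n}(h)=0$ for every $h$,
\[
P_0S_{f_n}(h)=\int h\,(p_0-p_{f_n})\,d\mu\le\Vert h\Vert_{L^2(\mu)}\,\Vert p_0-p_{f_n}\Vert_{L^2(\mu)}.
\]
Take $h=\phi_{f_n}-\tilde\phi_n$. Theorem~\ref{thm:l2-convergence-hal-mle} gives $\Vert p_{f_n}-p_0\Vert_{L^2}=O_P(n^{-(k+1)/(2k+3)})$, and the approximation condition gives $\Vert h\Vert_{L^2}=O^+(J_n^{-(k+1)})$. Under cross-validation $J_n\asymp^+ n^{1/(2k+3)}$, so the product is $O_P^+(n^{-2(k+1)/(2k+3)})$. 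This is $o_P(n^{-1/2})$ because $2(k+1)/(2k+3)>1/2$ for all $k\ge0$. Undersmoothing by a $\log n$ factor absorbs the polylog terms.

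\textbf{Main obstacle.} The difficult step is justifying that $\phi_{f_n}$ can be approximated at rate $O^+(J_n^{-(k+1)})$ by an element of $\mathcal S_n$, rather than merely of $D^{(k)}(\mathcal R_n)$. We lose one direction, namely the one normal to the $L_1$ constraint, which the MLE does not solve. We would first approximate $\phi_{f_n}$ by $g=\sum_j\alpha_j\phi_j\in D^{(k)}(\mathcal R_n)$. We would then correct $g$ along a single basis function $\phi_{j^*}$ with $\beta_{n,j^*}\neq0$ so that the constraint functional vanishes. The cost of this correction is $|\sum_j\mathrm{sign}(\beta_{n,j})\alpha_j|\cdot\Vert\phi_{j^*}\Vert$, and this is exactly the piece the extension hypothesis in the statement is meant to cover. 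If that is not enough, a fallback is to bound the unsolved direction directly. The KKT multiplier satisfies $P_nS_{f_n}(\phi_j)=\lambda_n\,\mathrm{sign}(\beta_{n,j})$, and under undersmoothing one can argue that $\min_{j\in\mathcal R_n}|P_nS_{f_n}(\phi_j)|=o_P^+(n^{-1/2})$. Since $\tilde\phi_n$ has $L_1$-bounded coefficients, the contribution of this direction is then $o_P(n^{-1/2})$.
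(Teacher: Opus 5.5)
Your proposal is correct and follows the paper's proof essentially step for step. The paper likewise writes $D^*_{f_n}=S_{f_n}(\phi_{f_n})$ and replaces $\phi_{f_n}$ by an approximation $\tilde\phi_{f_n}$ in the $L_1$ constrained score space, which is granted by hypothesis. It then bounds $(P_n-P_0)\{\phi_{f_n}-\tilde\phi_{f_n}\}$ via the entropy integral $\delta^{(2k+1)/(2k+2)}$, and bounds $(P_0-P_{f_n})\{\phi_{f_n}-\tilde\phi_{f_n}\}$ via Cauchy--Schwarz together with Theorem~\ref{thm:l2-convergence-hal-mle} and $J_n\asymp^+ n^{1/(2k+3)}$. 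Your KKT description of that score space (coefficients with $\sum_j \mathrm{sign}(\beta_{n,j})\alpha_j=0$) and your fallback via the multiplier $\lambda_n$ are more explicit than anything in the paper, but neither is needed, since the extension hypothesis already covers the step you flag as the main obstacle.
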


This directly leads to the asymptotic efficiency theorem. 
\begin{theorem}[Asymptotic Efficiency of Plug‐in HAL‐MLE]
\label{thm:plugin_HAL_asymptotic_efficiency}
Suppose the following conditions hold:
\begin{enumerate}
  \item (L\(^2\)‐Approximation) \(D^{(k)}(\mathcal R_n)\) satisfies the \(L^2\)‐approximation and it can be extended to its corresponding $L_1$ constrained score space.
  \item (Negligible Remainder) \(R(P_{f_n},P_0)=o_p\bigl(n^{-1/2}\bigr)\).
  \item (\(L^2(P_0)\)-continuity of EIC) \(P_0\{(D^*_{f_n}-D^*_{f_0})^2\}
      = o_p\bigl(1\bigr)\).
\end{enumerate}
Then the plug‐in estimator \(\Psi^F(f_n)\) satisfies
\[
\Psi^F(f_n) - \Psi^F(f_0)
= (P_n - P_0)\,D^*_{f_0} \;+\; o_p\!\bigl(n^{-1/2}\bigr),
\]
and is therefore asymptotically efficient.
\end{theorem}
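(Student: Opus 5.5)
The argument is the standard TMLE-type expansion built on the exact remainder. Write $P_n=P_{f_n}$. The definition $R(P,P_0)=\Psi(P)-\Psi(P_0)+P_0D^*_P$ gives the identity
\[
\Psi^F(f_n)-\Psi^F(f_0) = -P_0 D^*_{f_n} + R(P_{f_n},P_0).
\]
Adding and subtracting $P_n D^*_{f_n}$ turns this into
\[
\Psi^F(f_n)-\Psi^F(f_0) = (P_n-P_0)D^*_{f_0} + (P_n-P_0)\bigl(D^*_{f_n}-D^*_{f_0}\bigr) - P_nD^*_{f_n} + R(P_{f_n},P_0).
\]
It then suffices to show that each of the last three terms is $o_p(n^{-1/2})$. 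The conclusion of asymptotic efficiency follows because $D^*_{f_0}$ is the canonical gradient at $P_0$, so the leading term is the efficient influence curve.

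The remainder term is $o_p(n^{-1/2})$ directly by condition~2 (Negligible Remainder). For $P_nD^*_{f_n}$, condition~1 (the $L^2$-approximation of $D^{(k)}(\mathcal R_n)$ extended to the $L_1$ constrained score space) is exactly the hypothesis of Theorem~\ref{thm:Efficient Influence Curve Approximation}, together with the smoothness assumption $f_0\in D^{(k)}_U([0,1])$, which we keep implicitly. That theorem then gives $P_nD^*_{f_n}=o_p(n^{-1/2})$. Here $f_n$ is the cross-validated (or undersmoothed) HAL-MLE, as in that theorem.

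The main obstacle is the empirical-process term $(P_n-P_0)(D^*_{f_n}-D^*_{f_0})$. I would handle it with the usual asymptotic equicontinuity argument. This requires that, with probability tending to one, $D^*_{f_n}$ lies in a $P_0$-Donsker class $\mathcal D=\{D^*_f: f\in D^{(k)}_{M}([0,1])\}$.

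To justify the Donsker property, I would use the fact that $f_n$ has $\Vert f_n\Vert^*_{v,k}\le M$ uniformly. Then the scores $S_f(\phi)=\phi-P_f\phi$ with $\phi$ of bounded sectional variation form a uniformly bounded class of càdlàg functions with bounded variation norm, and such a class is Donsker (\citet{gill2001inefficient, van2017generally}). The positivity condition and the uniform boundedness of $f$ keep the normalizing constants bounded away from zero and infinity. I would therefore assume, as is implicit in the setup $D^*_f=S_f(\phi_f)$, that $\phi_f$ has uniformly bounded sectional variation norm over the model; this is the one regularity point that may have to be stated explicitly.

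Given the Donsker property, condition~3 ($P_0(D^*_{f_n}-D^*_{f_0})^2=o_p(1)$) and \citet[Lemma 19.24]{van2000asymptotic}-type equicontinuity yield $(P_n-P_0)(D^*_{f_n}-D^*_{f_0})=o_p(n^{-1/2})$. If one prefers to avoid the Donsker argument, an alternative is sample splitting/cross-fitting. This is less natural here because $f_n$ uses all the data, so I would go with the Donsker route.

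Combining the three bounds gives $\Psi^F(f_n)-\Psi^F(f_0)=(P_n-P_0)D^*_{f_0}+o_p(n^{-1/2})$. Since $D^*_{f_0}$ is the efficient influence curve in $\mathcal M$, the plug-in estimator is asymptotically efficient.
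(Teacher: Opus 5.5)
Your proposal is correct and follows the paper's proof essentially step for step: the exact-remainder identity, the Efficient Influence Curve Approximation theorem for $P_nD^*_{f_n}=o_p(n^{-1/2})$, condition~2 for $R(P_{f_n},P_0)$, and a Donsker-plus-$L^2(P_0)$-continuity (Lemma 19.24-type) argument for $(P_n-P_0)(D^*_{f_n}-D^*_{f_0})$. Your explicit requirement that $\phi_f$ have uniformly bounded sectional variation norm makes precise what the paper simply asserts as ``$\{D_f\}$ is $P_0$-Donsker.'' The only fix needed is to delete the opening ``Write $P_n=P_{f_n}$'': $P_n$ must remain the empirical measure throughout, since otherwise $P_nD^*_{f_n}$ would vanish trivially and the leading term would not be an empirical mean.
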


We discussed the plausibility of the first assumption in Section~\ref{subsect: Pointwise Asymptotic Normality of the HAL-MLE}. And here we demonstrate that the second and third assumptions are not only reasonable but also very weak with the examples of survival probability and moments. We leave the argument of median, which is slightly complicated, to Appendix~\ref{app_f: Asymp Efficiency}. 
For survival probability at $x_0$, we have 
\[P_0{(D^*_{f_n} - D^*_{f_0})} = \int I(x>x_0)(p_{f_n} - p_{f_0}) \mathrm{d}x =O(1)o_p(\Vert p_{f_n} - p_{f_0} \Vert_{\mu}) = o_p(n^{-\frac{k+1}{2k+3}}),
\]
by the Cauchy-Schwartz inequality. This implies \(P_0\{(D^*_{f_n}-D^*_{f_0})^2\} =o_p(n^{-\frac{2k+2}{2k+3}}) = o_p\bigl(1\bigr)\). 
\[R(P_{f_n}, P_0) \equiv \Psi(P_{f_n}) - \Psi(P_0) + P_0 D_P = S_{f_n}(x_0) - S_0(x_0) + P_0 (I(x >x_0) - P_{f_n}(x>x_0)) = 0.
\] 
The argument is basically the same for moments, \[P_0{(D^*_{f_n} - D^*_{f_0})} =\int x^k (p_{f_n} - p_{f_0}) \mathrm{d}x = o_p(n^{-\frac{k+1}{2k+3}}),\] by Cauchy-Schwartz inequality. \[R(P_{f_n}, P_0) = \mathbb{E}_{P_{f_n}}(x^k) - \mathbb{E}_{P_{0}}(x^k) + P_0 (x^k - \mathbb{E}_{P_{f_n}}(x^k)) = 0.\]

\subsection{HAL-TMLE} \label{HAL-TMLE}
We can exploit the information about the target parameter more effectively by incorporating the TMLE framework, namely HAL-TMLE. We found that the targeting step is very compatible with the link function, and each target parameter corresponds to a different targeting. 

\textbf{Targeting Step:} We construct a one-dimensional submodel $\{p_{f_{\varepsilon}^h}:\varepsilon\}$ indexed by a direction $h$ of a path $\{f_{\varepsilon}^h:\varepsilon\}$ through $f$ that satisfies
\begin{align} \label{lfm}
  \left  .  \frac{\mathrm{d}}{\mathrm{d}\varepsilon} \log p_{f_{\varepsilon}^h}\right |_{\varepsilon=0} = D^*_f(x),
\end{align}

\textbf{MLE Step:} Specifically, we select $f_{\varepsilon}^h = f + \varepsilon h$, and thus we have
\[
p_{f_{\varepsilon}^h} = \frac{\exp\bigl(f + \varepsilon h\bigr)}{\displaystyle \int \exp\bigl(f + \varepsilon h\bigr)\mathrm{d}x},
\]
where $f$ plays the role of $f_{n, \beta_n(M)}$. 
We can simply select $h=h_f=D^*_f$ to make this path a local least favorable path (LLFP). Let's denote the resulting path with $f_{n,\varepsilon}$
Then, the first step TMLE update would be defined by 
$f_{n,\varepsilon_n^1}$ with $\varepsilon_n^1=\arg\min_{\varepsilon}-P_n \log p_{f_{n,\varepsilon}}$. One can set $f_n^1=f_{n,\varepsilon_n^1}$ and define $f_{n,\varepsilon}^1$ as this LLFP but now with off-set $f_n^1$, and define $\varepsilon_n^2=\arg\min_{\varepsilon}-P_n \log p_{f_{n,\varepsilon_n^2}^1}$. We can iterate this TMLE-updating till $P_n D^*_{f_n^k}\leq \sigma_n/(n^{1/2}\log n)$, where
$\sigma_n^2 = (P_n D^*_{f_n})^2$ is the estimator of the sample variance of the efficient influence curve at the initial estimator. The final update satisfying this criterion is then the iterative TMLE $f_n^*$, implying the plug-in TMLE $\Psi^F(f_n^*)$ of $\Psi^F(f_0)$. 

A theoretical concern in iterative TMLE is the preservation of the function class.
The HAL-MLE $f_n$ belongs to a function space whose induced distribution $P_{f_n}^0$ lies in a 
Donsker class with covering number equivalent to that of $D^{(k)}([0,1])$. When applying a finite number of targeting steps, this property is preserved. However, performing infinitely many TMLE iterations may drive the estimator outside of this Donsker class.

According to \citet{van2017generally}, a single targeting step already suffices, because the initial HAL-MLE $f_n$ converges to $f_0$ at a rate faster than $n^{-1/4}$ and we also satisfy the positivity condition, the single-step update $f_n^1$ obtained by fluctuating along the least favorable submodel satisfie 
\[
P_n D^*_{f_n^1} = o_p(n^{-1/2}).
\]
Consequently, $f_n^1$ also attains the same asymptotic efficiency, while preserving the convergence rate and sectional variation norm of the initial HAL-MLE. One could also construct a universal least favorable path as in \citet{van2016one} and Chapter 5 in \citet{van2018targeted}, which guarantees that the single step TMLE update already solves $P_n D^*_{f_n^1}=0$. We formally state the theorem as follows.

\begin{theorem} [Asymptotic Efficiency of Single-step HAL‐TMLE] \label{thm:single-step-hal-tmle}
Suppose we impose a single targeting step to $f_n$ and yield $f_n^1$. And suppose that the second and third assumptions hold for $f_n^1$, that is,
\begin{enumerate}
\item (Negligible Remainder) \(R(P_{f_n^1},P_0)=o_p\bigl(n^{-1/2}\bigr)\).
\item (\(L^2(P_0)\)-continuity of EIC) \(P_0\{(D^*_{f_n^1}-D^*_{f_0})^2\}
      = o_p\bigl(1\bigr)\).
\end{enumerate}
Then $P_n D^*_{f_n^1} = o_p(n^{-1/2})$, and thus the single-step HAL-TMLE is asymptotically efficient.
\end{theorem}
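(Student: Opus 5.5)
We follow the standard TMLE argument: first show the single targeting step solves the efficient score equation up to $o_p(n^{-1/2})$, then combine with the exact-remainder expansion. Write $f_n^1 = f_n + \varepsilon_n^1 D^*_{f_n}$, with $\varepsilon_n^1 = \arg\min_{\varepsilon} -P_n \log p_{f_n+\varepsilon D^*_{f_n}}$.

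\textbf{Step 1: the first-order condition and Taylor expansion in $\varepsilon$.} Because the exponential-family link makes $\varepsilon\mapsto -P_n\log p_{f_n+\varepsilon h}$ convex, $\varepsilon_n^1$ satisfies the score equation $P_n S_{f_n^1}(D^*_{f_n}) = 0$. Expanding around $\varepsilon=0$ gives
\[
0 = P_n D^*_{f_n} - \varepsilon_n^1\, \Var_{P_{\tilde f}}(D^*_{f_n})
\]
for an intermediate $\tilde f$, so $\varepsilon_n^1 = P_n D^*_{f_n}/\Var_{P_{\tilde f}}(D^*_{f_n})$. Theorem~\ref{thm:Efficient Influence Curve Approximation} gives $P_n D^*_{f_n}=o_p(n^{-1/2})$ for the HAL-MLE. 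Even without that theorem, $P_nD^*_{f_n} = (P_n-P_{f_n})D^*_{f_n} = (P_n-P_0)D^*_{f_n} + (P_0-P_{f_n})D^*_{f_n}$. The first term is $O_p(n^{-1/2})$ by the Donsker property of $D^{(k)}_U$. The second is $O_p(\Vert p_{f_n}-p_0\Vert_{L^2(\mu)})$ by Theorem~\ref{thm:l2-convergence-hal-mle}. Hence $\varepsilon_n^1=O_p(n^{-1/2}\vee n^{-(k+1)/(2k+3)})$, using positivity to bound the variance away from zero whenever $D^*_{f_0}\neq 0$.

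\textbf{Step 2: translate the score at $f_n^1$ into $P_nD^*_{f_n^1}$.} Write
\[
P_n D^*_{f_n^1} = P_n (D^*_{f_n^1} - S_{f_n^1}(D^*_{f_n})).
\]
The score is $S_{f_n^1}(D^*_{f_n}) = D^*_{f_n} - P_{f_n^1}D^*_{f_n}$, and $P_{f_n^1}D^*_{f_n} = O(\varepsilon_n^1)$ since $P_{f_n}D^*_{f_n}=0$. The difference $D^*_{f_n^1}-D^*_{f_n}$ is controlled by the $L^2(P_0)$-continuity assumption. We split it as
\[
(P_n-P_0)(D^*_{f_n^1}-D^*_{f_n}) + P_0(D^*_{f_n^1}-D^*_{f_n}).
\]
The empirical-process term is $o_p(n^{-1/2})$ by asymptotic equicontinuity. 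This holds because $f_n^1$ stays in a Donsker class, being a one-step perturbation of a bounded-SVN function. The second condition ensures the $L^2(P_0)$ norm vanishes via the triangle inequality through $D^*_{f_0}$.

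The deterministic piece $P_0(D^*_{f_n^1}-D^*_{f_n})$ is the main obstacle. We would handle it by writing $P_0 D^*_{f} = \Psi^F(f_0)-\Psi^F(f) + R(P_f,P_0)$ for $f=f_n^1$ and for $f=f_n$. This reduces the piece to $\Psi^F(f_n)-\Psi^F(f_n^1)$ plus the two remainders. By pathwise differentiability along the path, $\Psi^F(f_n^1)-\Psi^F(f_n) = \varepsilon_n^1 P_{f_n}(D^*_{f_n})^2 + O((\varepsilon_n^1)^2)$. Since $\varepsilon_n^1 P_{f_n}(D^*_{f_n})^2 \approx P_nD^*_{f_n}$, these cancel with the corresponding linear term from Step 1 to first order. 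The leftover terms are $(\varepsilon_n^1)^2$, which is $o_p(n^{-1/2})$ because the rate exceeds $n^{-1/4}$, together with the remainders, which the first assumption (and the same condition for $f_n$ implied by Theorem~\ref{thm:l2-convergence-hal-mle}) makes $o_p(n^{-1/2})$. This yields $P_nD^*_{f_n^1}=o_p(n^{-1/2})$.

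\textbf{Step 3: efficiency.} Using $\Psi^F(f_n^1)-\Psi^F(f_0) = -P_0D^*_{f_n^1} + R(P_{f_n^1},P_0)$, we get
\[
\Psi^F(f_n^1)-\Psi^F(f_0) = (P_n-P_0)D^*_{f_n^1} - P_nD^*_{f_n^1} + R = (P_n-P_0)D^*_{f_0} + o_p(n^{-1/2}).
\]
The final equality uses Step 2, the negligible remainder, and equicontinuity with the $L^2(P_0)$-continuity condition, exactly as in Theorem~\ref{thm:plugin_HAL_asymptotic_efficiency}.
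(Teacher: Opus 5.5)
Your proof is correct and follows the paper's skeleton: establish $P_nD^*_{f_n^1}=o_p(n^{-1/2})$, then rerun the argument of Theorem~\ref{thm:plugin_HAL_asymptotic_efficiency} with $f_n^1$ in place of $f_n$, which is your Step 3. You differ in the key step. The paper does not prove it; it cites \citet{van2017generally} and appeals to the faster-than-$n^{-1/4}$ rate of the initial HAL-MLE. You derive it by combining three ingredients: the first-order condition $P_nD^*_{f_n}=P_{f_n^1}D^*_{f_n}=\varepsilon_n^1\Var_{P_{\tilde f}}(D^*_{f_n})$, the exact-remainder identity at both $f_n$ and $f_n^1$, and the pathwise derivative $\varepsilon_n^1P_{f_n}(D^*_{f_n})^2$ along the local least favorable path. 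The linear terms then cancel, so the single-step TMLE agrees with the one-step estimator $\Psi^F(f_n)+P_nD^*_{f_n}$ up to $O_p((\varepsilon_n^1)^2)$.

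This route has three advantages. It makes the proof self-contained. It shows exactly where the rate is used, namely $(\varepsilon_n^1)^2=O_p(n^{-(2k+2)/(2k+3)})=o_p(n^{-1/2})$. It also shows that Theorem~\ref{thm:Efficient Influence Curve Approximation}, with its score-space approximation assumption, is not needed for the TMLE.

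The price is a set of hypotheses beyond the statement, which you should state explicitly:
\begin{itemize}
\item negligible remainder and $L^2(P_0)$-continuity of the EIC at $f_n$ as well as at $f_n^1$ (your triangle inequality in Step 2 uses the latter);
\item a uniformly bounded second derivative of $\varepsilon\mapsto\Psi^F(f_n+\varepsilon D^*_{f_n})$;
\item $\Var_{P_{\tilde f}}(D^*_{f_n})$ bounded away from zero.
\end{itemize}

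Your remark that $R(P_{f_n},P_0)=o_p(n^{-1/2})$ follows from Theorem~\ref{thm:l2-convergence-hal-mle} is too strong. It holds when $R$ is dominated by a squared $L^2$ distance, and trivially for survival and moments where $R\equiv0$. For the median, however, the paper's argument needs the sup-norm rate.

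Finally, write out the cancellation in Step 2. It requires $\Var_{P_{\tilde f}}(D^*_{f_n})-P_{f_n}(D^*_{f_n})^2=O(\varepsilon_n^1)$, because the bound $P_{f_n^1}D^*_{f_n}=O(\varepsilon_n^1)$ on its own is not $o_p(n^{-1/2})$.
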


The following are some examples of statistical estimands for which we can compute this TMLE:

\begin{example}[Survival function and CDF]
For the marginal survival function $\Psi^F(f)=\int_{x_0}^1 p_f(y)dy$ at $x_0$, we have \(D^*_f(x) = I(x > x_0) - \mathbb{E}_{P_{f}}\bigl[I(x > x_0)\bigr].\) We can thus select $h = I(x > x_0)$. Note that, technically, we need $\mathbb{E}_{P_{f_n}}[h] = 0$, but the constant will cancel out by the normalizing constant. Similarly, for CDF $F_f$ at $x_0$ we can select $h=I(x\leq x_0)$.
\end{example}

\begin{example}[Median]
For the median $\Psi^F(f)=F_f^{-1}(0.5)$ we have 
\(D^*_f(x) = \frac{\frac12 - I(x < F_f^{-1}(0.5))}{f(F_f^{-1}(0.5)) },\) and thus $h = I(X< F_f^{-1}(0.5))$. We need to estimate the median based on the initial estimation and then do the targeting. Notice that this formula of the median is trivially generalized to any percentile. Since the density estimation changes from $f_n$ to $f_n^k$ after the $k$ steps targeting, we can iteratively update the median. In Theorem~\ref{thm:single-step-hal-tmle}, we show that a single step targeting already guarantees $P_n D^*_{f_n^1} = o_p(n^{-1/2})$, and thus the asymptotic efficiency holds.
\end{example}

\begin{example}[$k$-th order moments]
For the $k$-th order moment $\Psi^F(f)=\int x^k p_f(x)\mathrm{d}x$ we have
\(D^*_f(x) = x^k - \Psi^F(f),\) and thus $h = x^k$. Notice that this is just the parametric basis. This implies a motivation for not penalizing the parametric part in the $m$-th order HAL, like LAS and the classic TF, in the sense that the resulting HAL fit would be targeting all its k-th order moments, $k=1,\ldots,m$.\\
\end{example}


Notice that the LLFPs for survival probability and moments are actually their universal least favorable paths (ULFP), and the single-step HAL-TMLE in this case is equivalent to one-step HAL-TMLE in \citet{van2017generally}, and thus we have $P_n D^*_{f_n^1} = 0$ exactly. Also, by \citet{van2017generally}, $f_n^1$ and $f_n$ preserve the same convergence rate to $f_0$. Thus, we can extend the arguments of the assumptions of the plug-in HAL-MLE to the single-step HAL-TMLE directly. 

\subsection{Theoretical Property and Variance Estimation}
\label{subsec: EIC-based-variance-estimation}
As established in Section~\ref{Plug-in HAL-MLE} and Section~\ref{HAL-TMLE}, the score equation is solved at least at level $o_p(n^{-1/2})$. Thus, both estimators are asymptotically efficient and asymptotically linear, with their proofs unified in Appendix~\ref{app_f: Asymp Efficiency}. Since TMLE is asymptotically linear with influence curve $D^*_{f_n^1}$, variance estimation for $\Psi^F(f_n)$ can be obtained directly by computing the sample variance of the estimated $D^*_{f_n^1}(x_i)$.
\section{Optimization Algorithm} \label{sct: opt_alg}
Recall that we are optimizing the problem with the following form:
\begin{align}\label{constraint opt}
   \beta_n(M) = \arg\max_{\beta:\ \|\beta\|_1 \leq M} \sum_{i=1}^{n} \sum_{j=1}^{n+k+1} \beta_j \phi_j(x_i) - n\log C(\beta), 
\end{align} 
and its Lagrangian form is as follows:
\begin{align} \label{lag opt}
   \beta_n(M) = \arg\max_{\beta} \sum_{i=1}^{n} \sum_{j=1}^{n+k+1} \beta_j \phi_j(x_i) - n\log C(\beta) - \lambda \|\beta\|_1, 
\end{align} 

Tuning the upper bound $M$ of the $L_1$ norm in the constraint form is more statistically meaningful, because the final choice of $M$ should reflect the magnitude of the SVN of the truth. However, we prefer the Lagrangian form because the algorithm for the constraint form always involves projection to the $L_1$ ball. In practice, we find that soft thresholding provides a more robust dimension reduction through iterations than projection to the $L_1$ ball.

\subsection{Candidate Algorithms} \label{subsct: cand_alg}
While generic convex solvers such as \texttt{MOSEK}, \texttt{ECOS}, or \texttt{SCS} within the \texttt{CVXPY} package can, in principle, handle our convex optimization problem, their general strategy is to map any instance into a conic programming framework. This approach, though flexible, is not tailored to the structural features of our estimator. Moreover, as the sample size increases to the range of 3,000–5,000 observations, we find that the default knot point selection of these solvers becomes less effective: a large number of unnecessary knot points remain active, primarily due to accuracy constraints in the optimization process. A demonstration of this is shown in Figure~\ref{fig:opt-algorithms-tn}. These limitations motivate the development of specialized algorithms that exploit the structure of the problem and incorporate effective knot point selection strategies, yielding both improved scalability and enhanced statistical efficiency. 

We refer to the matrix of initiated basis evaluated at the data points as the basis matrix. The basis matrix is an $n \times (n+k+1)$ matrix. 
\paragraph{Standard Algorithm:} When using the truncated power basis, the Hessian constructed from the basis matrix is usually ill-conditioned and thus first-order methods like Fast Iterative Shrinkage-Thresholding Algorithm (FISTA) would converge slowly in iterations. Therefore, we propose a standard algorithm combining proximal Newton's method and the coordinate descent algorithm. An advantage of proximal Newton is that we can calculate the Hessian in closed form. Also, we could warm start the algorithm when using cross-validation to choose $\lambda$ in order to keep Newton's method within the quadratic convergence range. 

\paragraph{Large-scale Adjustment:} The proximal Newton's method fails for large-scale problems due to memory issues. When the sample size n is too large, such as 10,000 or 100,000, the initialized Hessian will be $O(n^2)$ and too large to be memorized. For this problem, we propose the Quasi-Newton method (L-BFGS) and coordinate descent. An easy implementation is just memorizing the most recent pair of differences in parameters and their corresponding gradient. Thus, the approximation of the Hessian is a scaled identity matrix, and we update the scale at each iteration. The memory reduces from $O(n^2)$ to $O(n)$. Pragmatically, we find out that including the coordinate descent is more robust for large-scale problems.

Another algorithm we propose is a variant of AdaGrad using the inner product defined in Equation~\ref{eq:inner-product-score}. We just compute the diagonal entries of the Hessian for each iteration. Thus, the cost per iteration becomes very cheap, and the memory is just $O(n)$ and totally affordable. 

\begin{remark}
The development of the large-scale methods seems to be unnecessary in the univariate case, because, in HAL theory like Appendix~\ref{appB: $L^2$ Convergence Analysis} and Appendix~\ref{appendix_E: Uniform Convergence Rate}, we only need about $O^+(n^{\frac{1}{2k+3}})$ knots, and we could choose them based on the percentile of the original sample while remaining the performance. However, in the multivariate case, we need such algorithms because (1) the definition of multivariate percentile could be meaningless and (2) even a small sample size could initialize a large number of parameters $n(2^d - 1)$ for $ d$-dimensional data.
\end{remark}

\subsection{Exploration of Optimization Algorithms} \label{subsct: exploration_of_optimization_algorithms}
We study the performance of the optimization algorithms in some fixed settings where \texttt{CVXPY} solvers have trouble selecting knot points. We here demonstrate the performance of the optimization algorithms on the Truncated Normal Data Generating Process (DGP), and the visualizations are shown in Figure~\ref{fig:opt-algorithms-tn}. We use the final result of \texttt{CVXPY} solutions as the baseline, and we evaluate the performance of the optimization algorithms by comparing the knot selection and loss convergence per iteration and per FLOP. The FLOPs are estimated by the computational complexity of the algorithms, which is shown in Table~\ref{tab:complexity-summary}. Additional results for all six DGPs and the reasoning of the computational complexity based on our implementation is provided in Appendix~\ref{app:knot-selection}.

\begin{table}[H]
   \centering
   \begin{tabular}{ll}
   \toprule
   Algorithm & Dominant per-iteration complexity \\
   \midrule
   FISTA & $\Theta\big((n + G)\,K\big)$ \\
   Proximal AdaGrad & $\Theta\big((n + G)\,K\big)$ \\
   Proximal Newton L\,-\,BFGS & $\Theta\big((1+L_k)\,(n + G)\,K\big)$ \\
   Proximal Newton (full) & $\Theta\big(G\,K^2 + s\,K^2\big) + \Theta\big((1+L_k)\,(n + G)\,K\big)$ \\
   \bottomrule
   \end{tabular}
   \caption{Dominant per\,-\,iteration computational complexity. Here $n$ is the number of samples, $G$ the number of midpoint intervals used for normalization, $K$ the number of basis coefficients, $s$ the active set size in coordinate descent, and $L_k$ the number of line\,-\,search objective evaluations at iteration $k$.}
   \label{tab:complexity-summary}
   \end{table}

\begin{figure}[H]
\centering
\includegraphics[width=.36\textwidth]{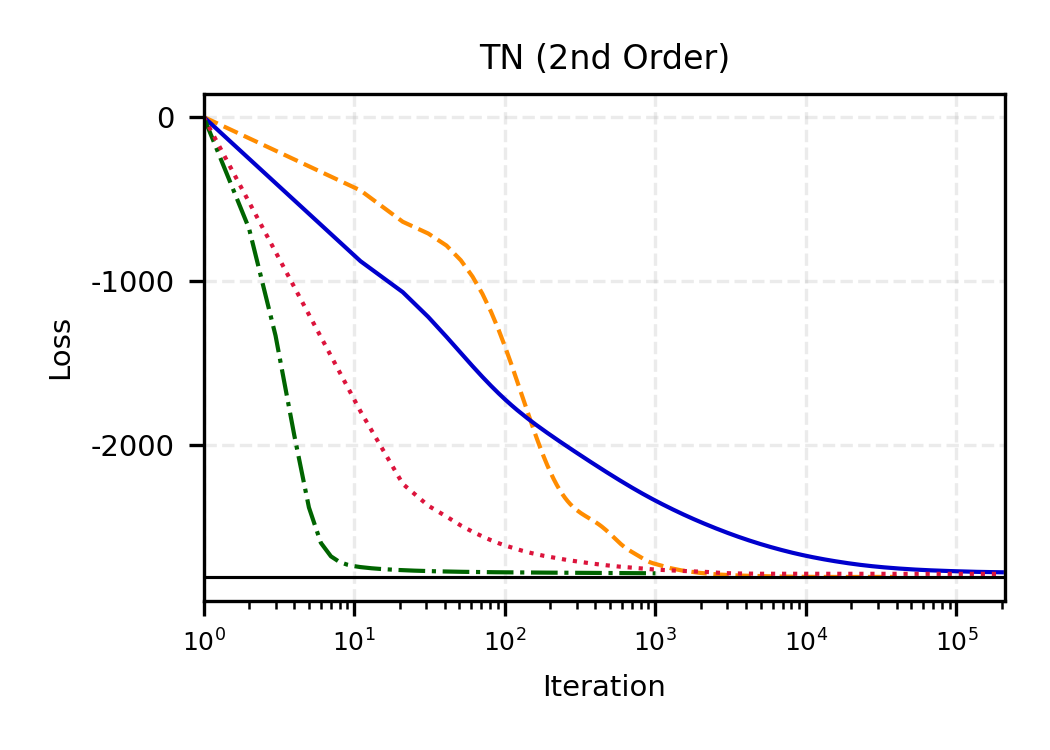}
\includegraphics[width=.36\textwidth]{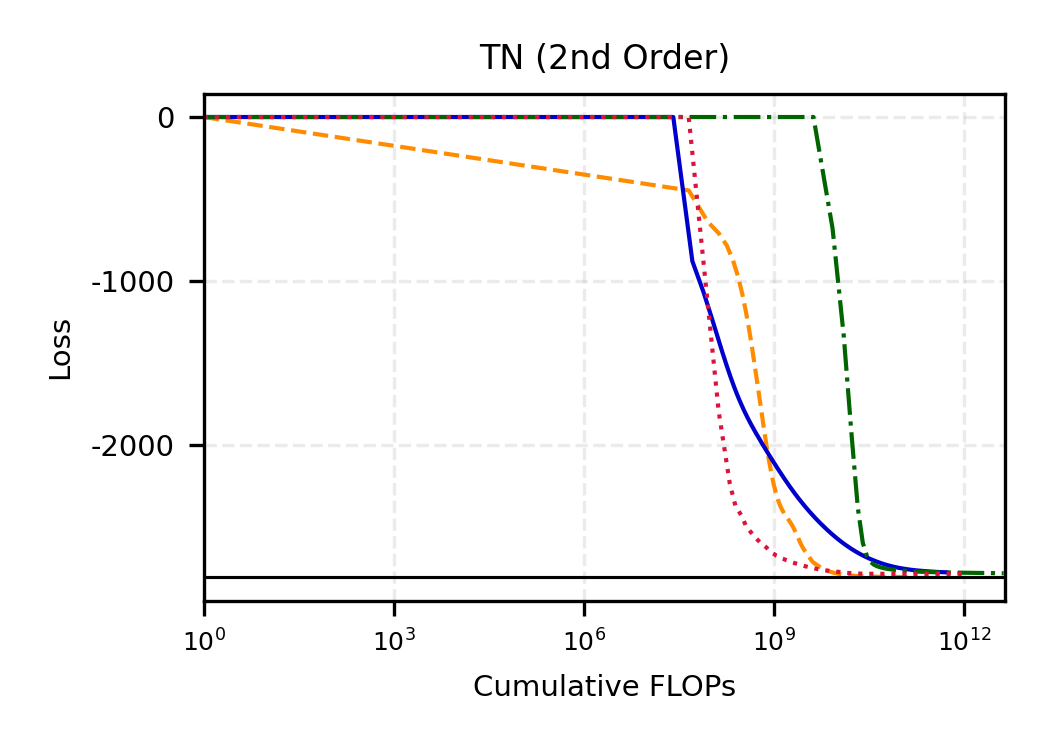}\\
\includegraphics[width=.36\textwidth]{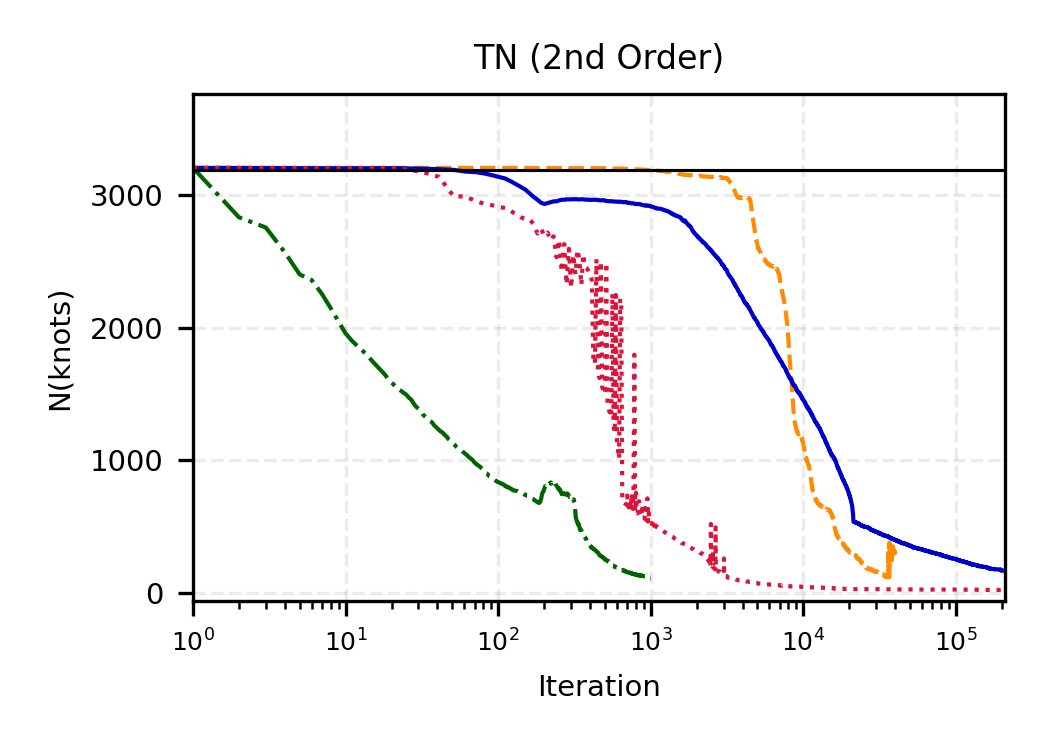}
\includegraphics[width=.36\textwidth]{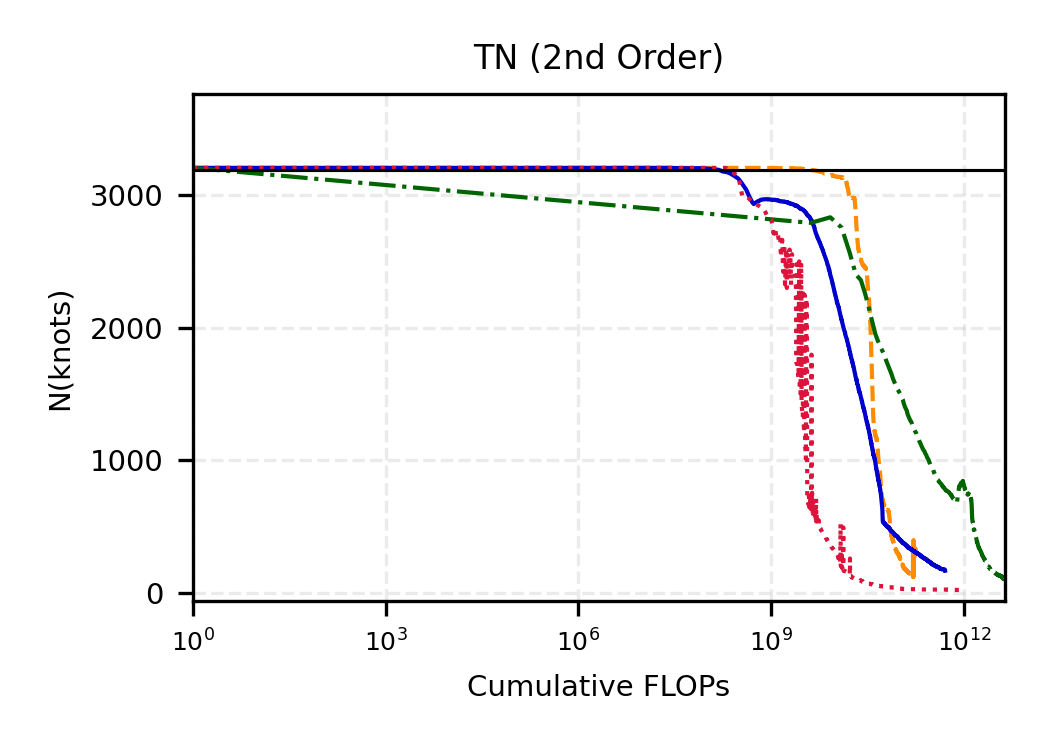}\\
\vspace{-0.5em}
\includegraphics[width=.72\textwidth]{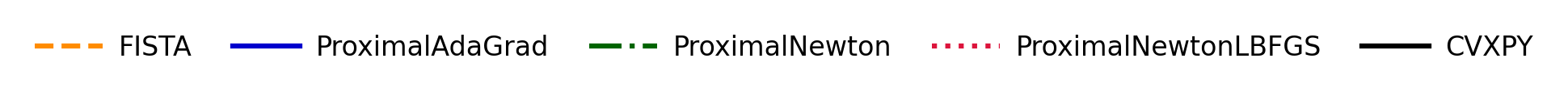}
\caption{Optimization algorithm comparison for Truncated Normal (2nd order basis): knot selection per iteration (top left), knot selection per FLOP (top right), loss convergence per iteration (bottom left), and loss convergence per FLOP (bottom right).}
\label{fig:opt-algorithms-tn}
\end{figure}

Based on the results, we find that the Proximal Newton algorithm takes the fewest iterations to converge, and it is also efficient in selecting knot points, which is expected because it is a second-order method and we have the closed-form Hessian. The Proximal Newton L-BFGS algorithm is also efficient in selecting knot points, and it is more efficient than the Proximal Newton algorithm in terms of FLOPs. However, the Proximal Newton L-BFGS algorithm is sometimes unstable in the process of knot selection. AdaGrad is very fast and efficient, but in practice, we find that it has oscillatory behavior as well. FISTA is easy to implement and robust, but it is slow as expected in terms of iterations and wall-clock time. 

Generally speaking, comparing to the \texttt{CVXPY} solvers, the optimization algorithms we proposed succeed in selecting knot points and converging to the global optimum. This aligns with the HAL-MLE theory that we only need $O^+(n^{\frac{1}{2k+3}})$ knots to achieve the desired performance, and this also informs us that the numerical problem of \texttt{CVXPY} solvers is severe when the sample size is large. However, we cannot make a fair comparison between the optimization algorithms and the \texttt{CVXPY} solvers in terms of wall-clock time because some of the \texttt{CVXPY} solvers are closed-source and some are coded in C, whereas we coded our own solvers in Python.

\begin{remark}[Computational comparison with B-splines]
\label{rem:comp_comparison}
It is well known that locally supported B-splines can be more computationally efficient, with their primary advantage stemming from the banded structure of the Hessian matrix. However, this benefit diminishes a lot in large-scale problems where the full Hessian cannot be computed explicitly and must instead be approximated. In addition, for the large-scale problem, the jumping matrix associated with the variation penalty is difficult to track when knot points are chosen in a data-adaptive manner, especially with smoothness order $k \geq 1$. By contrast, the truncated power basis—though less efficient computationally—offers the simplest and most transparent representation of sectional variation, making it particularly well suited for our theoretical analysis and algorithmic development.
\end{remark}
\section{Simulation}\label{sec:simulation}
This section documents the Monte Carlo simulation results for the HAL-MLE. We (i) test the uniform convergence and pointwise asymptotic normality of the HAL-MLE, (ii) test the asymptotic efficiency of the plug-in HAL-MLE and HAL-TMLE, and (iii) compare it with the logspline, TF and its equivalent variant with penalty on the parametric part (TFPP), and KDE methods, over a vast range of different DGPs and sample sizes.

\subsection{DGPs}
We use six univariate densities on $[0,1]$ across continuous vs discrete, concentrated vs heavy-tailed, and smooth vs oscillatory (single-scale vs multi-scale). The six DGPs are: Truncated Normal (TN), Truncated GMM with three equally spaced modes (GS3), Truncated GMM with three modes of differing scales and weights (GA3), Truncated GMM with five spikes and background (GS5), Step Function (Step), and Sinusoidal (Sine), as shown in Figure~\ref{fig:dgps}.

Each ground–truth density $p_0$ and population parameters, like moments, median, etc., are available in closed form or can be easily computed. We provide the exact parameterizations in Appendix~\ref{app:dgp_setup}.

\begin{figure}[H]
    \centering
    \includegraphics[width=0.9\textwidth]{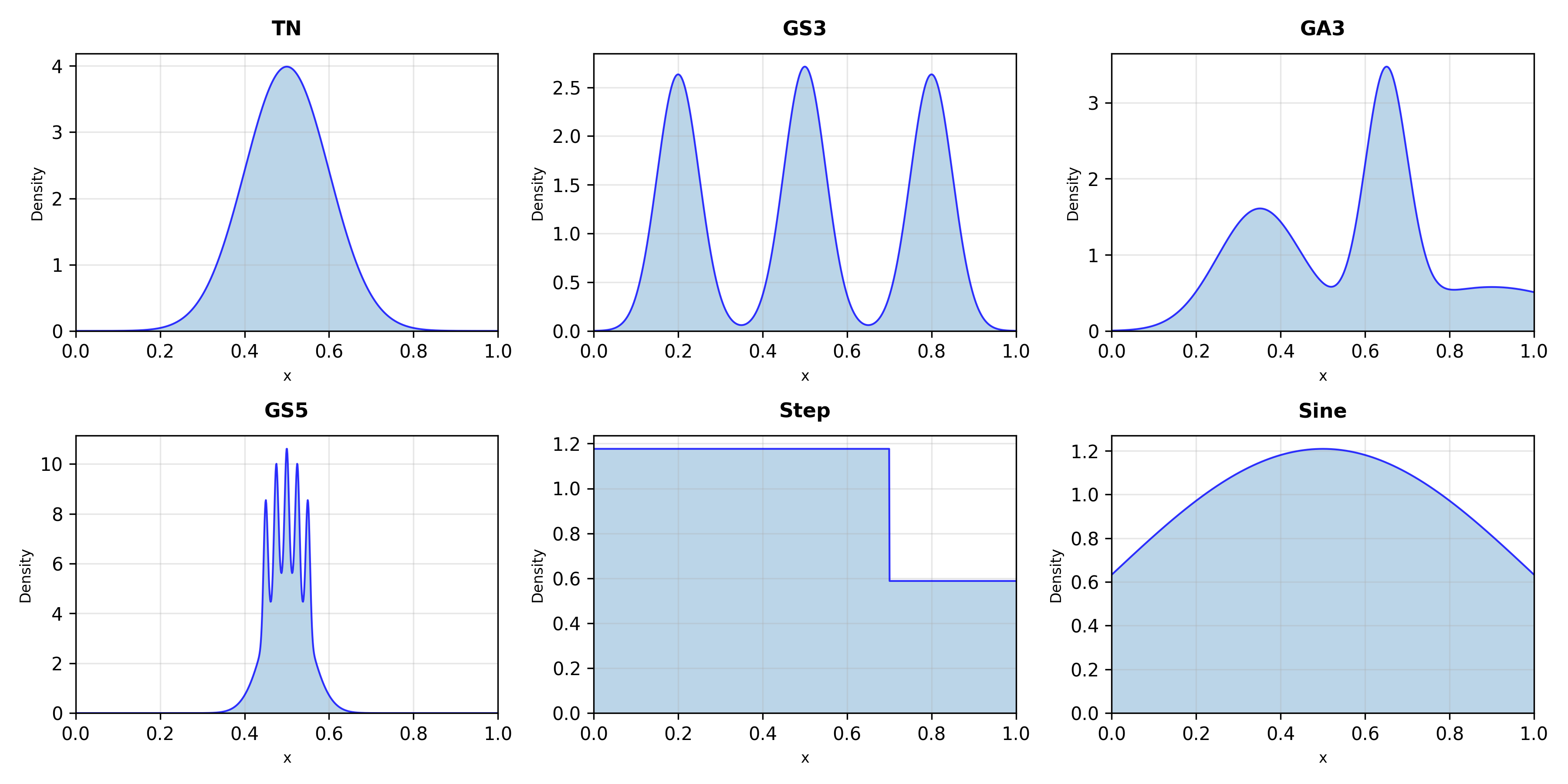}
    \caption{Six DGPs: (a) TN, (b) GS3, (c) GA3, (d) GS5, (e) Step, (f) Sine.}
    \label{fig:dgps}
\end{figure}

\subsection{Implementation of Cross-Validation with Optuna}
We use a 5-fold cross-validation to cross validate over the $L_1$ norm constraint and the basis order of the HAL-MLE. Instead of cross validating over each grid of the hyperparameter pairs, we used Optuna \citep{akiba2019optuna} to sample the hyperparameters with 50 steps. Throughout the cross-validation steps, we acknowledge the failure of the solvers occasionally and we have to switch to other solvers in \texttt{CVXPY}. In general, \texttt{MOSEK} is the fastest solver, and we use it as the default solver. \texttt{SCS} is relatively slow but more robust. The search space is $L_1$ norm constraint in $[10^{-3},10^{6}]$ (log scale) and basis order in $\{0,1,2\}$. The trial budget is 50 evaluations per estimator/DGP/sample size. As claimed in \citep{akiba2019optuna}, the performance of Optuna is very close to the performance of the grid search with a massive reduction in computational cost.

\subsection{HAL-MLE density estimation}
We here demonstrate the uniform convergence and pointwise asymptotic normality of the HAL-MLE.

\subsubsection{Uniform convergence (sup\textendash norm error).} \label{subsec:simulation_uniform_convergence}
We consider sample sizes $n \in \{25,50,100,200,400,800,1600,3200\}$ with $1000$ independent random samples per DGP. For each sample, we approximate the sup\textendash norm error $\lVert \hat p - p_0 \rVert_{\infty}$ by evaluating both the estimator and the truth on an evenly spaced grid of 201 points over $[0,1]$ and taking the maximum absolute deviation over grid points. By Theorem~\ref{thm:uniform-density-hal}, the uniform convergence rate is $O_p^+\big(n^{-(k+1)/(2k+3)}\big)$ for basis order $k\in\{0,1,2\}$. Because $k$ is selected by cross-validation over this set, we expect at least $O_p\big(n^{-1/3}\big)$ decay, even though we do not have the uniform convergence rate for $k = 0$. This can be read off from the median lines of the boxplots below as $n$ increases. If there is a convergence rate, the median line should be approximately a straight line with a decreasing slope, expected to be around or above $-\frac13$. 

We aggregate errors across samples and summarize their distributions by sample size using boxplots (both linear\textendash and log\textendash scale). The results for each DGPs are shown below in Figure~\ref{fig:uniform_convergence}.

\begin{figure}[H]
    \centering
    \includegraphics[width=.32\textwidth]{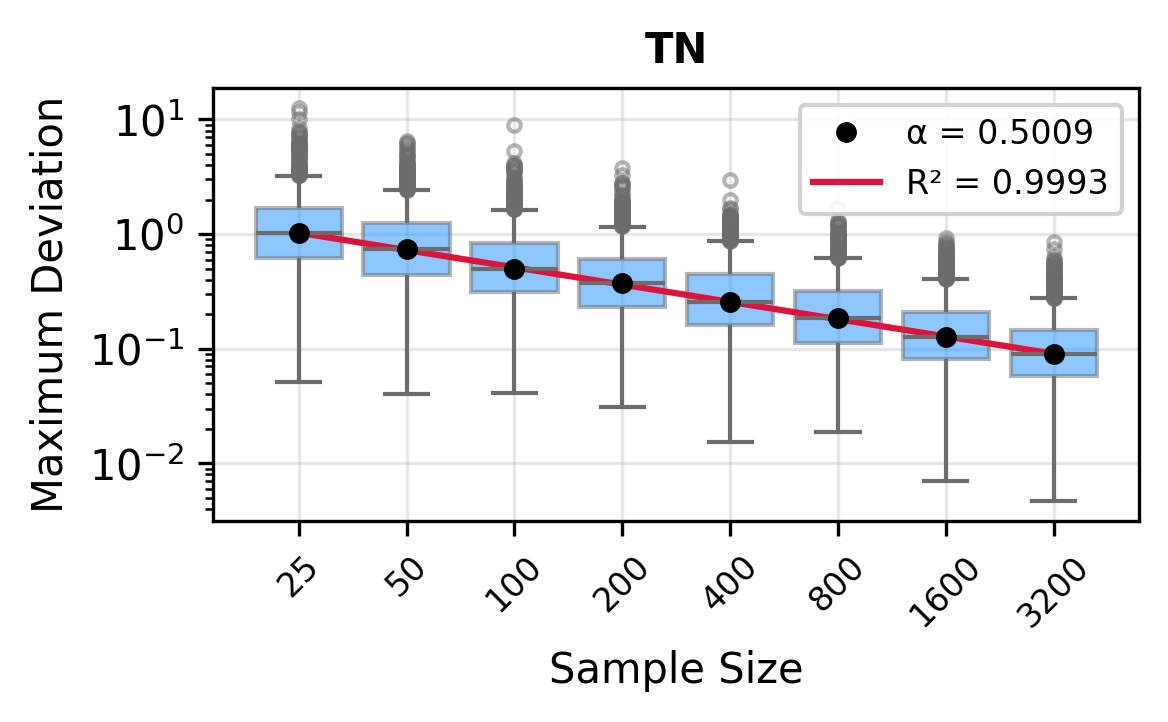}
    \hfill
    \includegraphics[width=.32\textwidth]{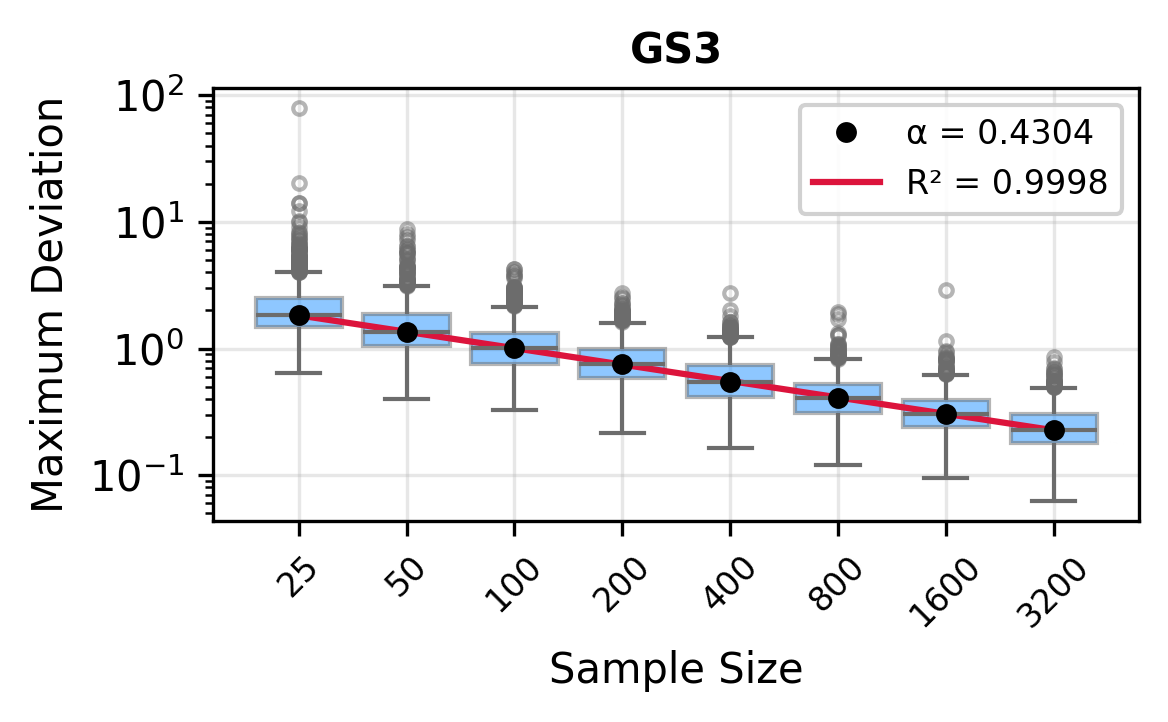}
    \hfill
    \includegraphics[width=.32\textwidth]{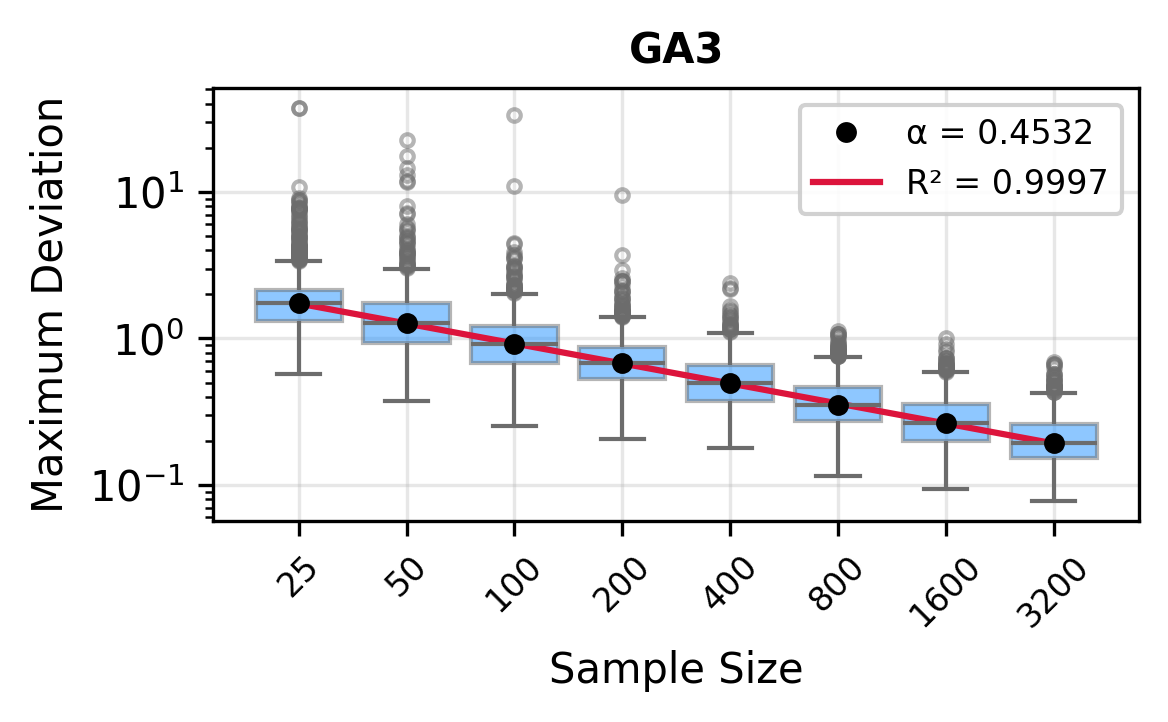}

    \vspace{0.4em}

    \includegraphics[width=.32\textwidth]{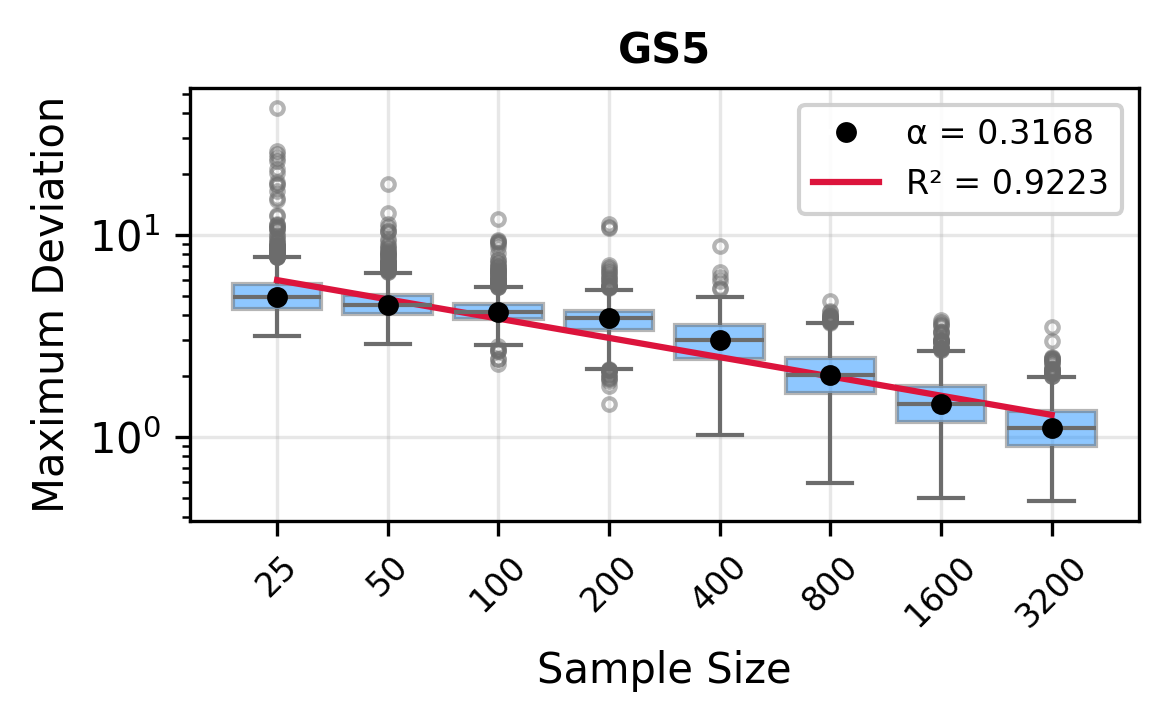}
    \hfill
    \includegraphics[width=.32\textwidth]{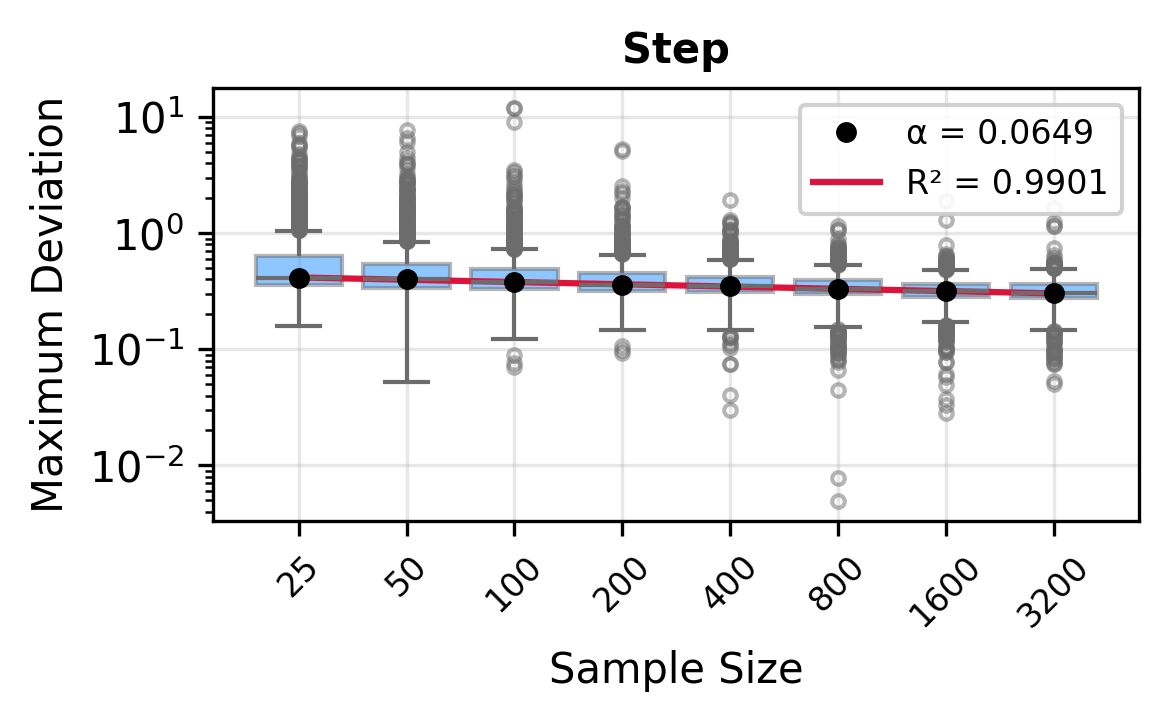}
    \hfill
    \includegraphics[width=.32\textwidth]{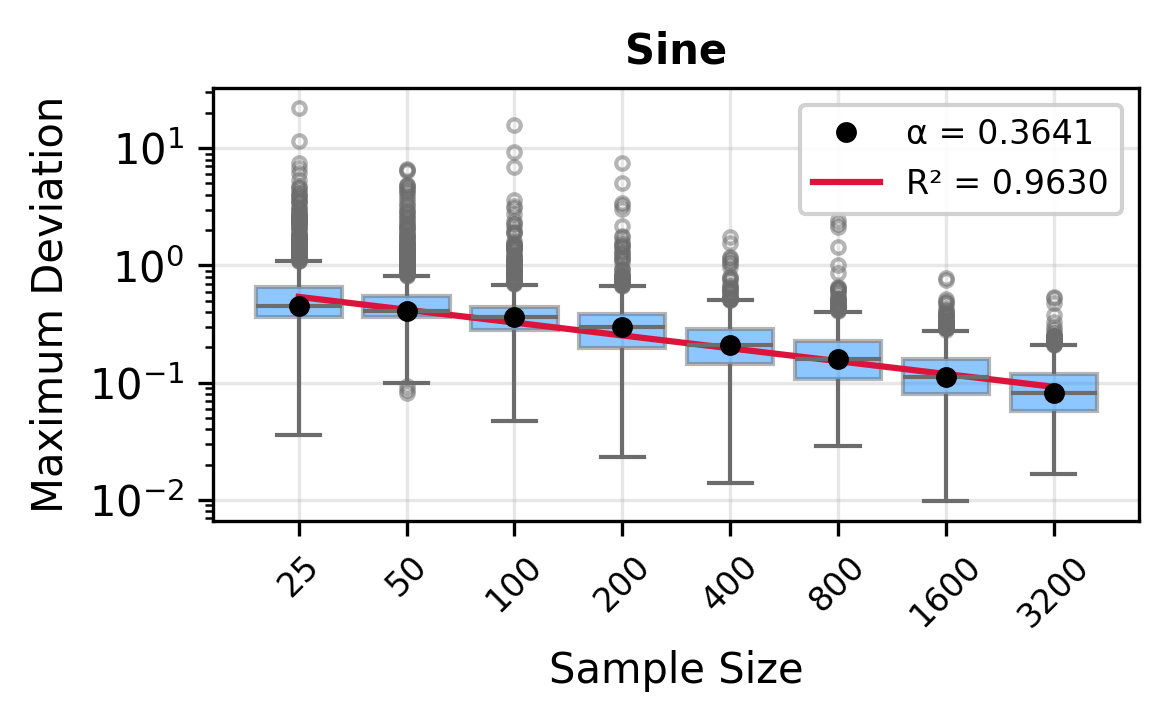}
    \caption{Uniform\textendash convergence sup\textendash norm error (log\textendash scale) by DGP. Panels (row\textendash wise, left to right): (a) TN, (b) GS3, (c) GA3, (d) GS5, (e) Step, (f) Sine.}
    \label{fig:uniform_convergence}
\end{figure}

We regress the median error on the sample size on a log-log scale and report the slope and $R^2$ in Table~\ref{tab:uniform_convergence_scaling}. The results are shown below in Table~\ref{tab:uniform_convergence_scaling}.

\begin{table}[H]
  \centering
  \small
  \setlength{\tabcolsep}{4pt}
  \renewcommand{\arraystretch}{1.2}
  \begin{tabular}{lcccccc}
    \toprule
    Metric & TN & GS3 & GA3 & GS5 & Step & Sine \\
    \midrule
    $\alpha$ (Decay Exponent) & 0.5009 & 0.4304 & 0.4532 & 0.3168 & 0.0649 & 0.3641 \\
    $R^2$ (Goodness of Fit) & 0.9993 & 0.9998 & 0.9997 & 0.9223 & 0.9901 & 0.9630 \\
    Significant ($p < 0.05$) & Yes & Yes & Yes & Yes & Yes & Yes \\
    \bottomrule
  \end{tabular}
  \caption{Uniform convergence scaling analysis. $\alpha$ represents the decay exponent in the power law $\text{error} \propto n^{-\alpha}$. $R^2$ measures goodness of fit on the log-log scale. Significance tests $H_0: \text{slope} = 0$ vs $H_1: \text{slope} \neq 0$ at $\alpha = 0.05$.}
  \label{tab:uniform_convergence_scaling}
\end{table}

This Table~\ref{tab:uniform_convergence_scaling} verifies that the adaptive HAL-MLE that selects the smoothness $k$ with cross-validation is converging at an adaptive rate. And the adaptive rates we are seeing align with the  smoothness of the DGPs. The Step Function might be too easy and does not have a huge space for improvement.

\subsubsection{Asymptotic normality: pointwise SE, CI width, and coverage.} \label{subsec:simulation_asymptotic_normality}
We assess the pointwise asymptotic normality of the HAL-MLE density mentioned in Theorem~\ref{thm:density_HAL_asymptotic_normality} and evaluate the performance of density variance estimation via the delta-method in Eq.~\ref{eq:delta-method-variance-estimator}. We assess the density variance estimation performance by introducing the oracle CI, which is the HAL-MLE point estimate \(\pm 1.96\) times the empirical standard deviation of the fitted densities across 1000 samples. Then, we evaluate the variance estimator by computing the coverage of the CI constructed by the z-statistic of the HAL-MLE point estimate. 

For each DGP and sample size \(n\), we evaluate on the 201 grid points between the first and last observation and summarize (i) interval widths and (ii) coverage of the truth \(p_0(x)\). Coverage is computed pointwise as the fraction of replicates whose 95\% CI contains the truth at each grid point. The coverage of the Delta-method variance estimator is the mean coverage over the 201 grid points. The full results, including the CI widths and the coverage of each grid point, are included in Appendix~\ref{app_i_subsec:asymptotic_normality_and_var_est}.

\noindent Table~\ref{tab:coverage_analysis} reports mean coverage summaries by DGP and sample size, shown as (plug\textendash in, oracle). Abbreviations: TN, GS3/GA3, GS5, Step, Sine. Each cell shows (estimated coverage, oracle coverage). Oracle coverage uses the empirical standard deviation across Monte Carlo Simulations.

\begin{table}[H]
  \centering
  \scriptsize
  \setlength{\tabcolsep}{2pt}
  \renewcommand{\arraystretch}{1.1}
  \begin{tabular}{lcccccccc}
    \toprule
    DGP & N=25 & N=50 & N=100 & N=200 & N=400 & N=800 & N=1600 & N=3200 \\
    \midrule
    TN & (94.7, 93.6) & (93.8, 92.9) & (93.7, 93.2) & (93.7, 92.1) & (92.6, 92.2) & (95.1, 92.1) & (95.5, 92.2) & (91.2, 92.0) \\
    GS3 & (83.7, 90.9) & (89.9, 91.2) & (90.0, 90.8) & (90.5, 91.3) & (91.2, 91.1) & (93.3, 91.4) & (92.2, 91.4) & (91.7, 91.4) \\
    GA3 & (81.5, 91.9) & (85.2, 92.4) & (87.7, 92.6) & (88.6, 92.9) & (90.9, 92.9) & (94.9, 92.8) & (95.6, 92.6) & (95.2, 92.8) \\
    GS5 & (93.8, 95.4) & (91.2, 94.1) & (88.9, 92.7) & (90.4, 93.7) & (95.3, 96.5) & (95.6, 96.5) & (94.5, 96.6) & (94.7, 96.4) \\
    Step & (90.0, 94.2) & (85.9, 92.2) & (81.9, 90.6) & (80.3, 90.3) & (82.9, 90.0) & (90.0, 90.1) & (93.2, 90.5) & (94.2, 91.0) \\
    Sine & (92.5, 94.3) & (91.0, 94.4) & (88.5, 94.7) & (85.5, 94.1) & (89.5, 92.8) & (94.2, 93.5) & (95.9, 93.5) & (96.2, 93.5) \\
    \bottomrule
  \end{tabular}
  \caption{Coverage probabilities (\%) for 95\% confidence intervals across DGPs and sample sizes. Each cell shows (estimated coverage, oracle coverage). Estimated coverage uses the method proposed in Section~\ref{density_sd_section}. Oracle coverage uses the empirical standard deviation across Monte Carlo replicates.}
  \label{tab:coverage_analysis}
\end{table}

We can see that the oracle CI coverage is always around 95\%, which means the density estimation is pointwise asymptotically normal. For most of the DGPs, the Delta-method variance estimator is also very close to the oracle CI coverage. A limitation of the Delta-method is that it does not work as well on the extrapolated region, where we have no data. For example, in the GS5 DGP, the distribution is very concentrated and the data points are centered around [0.2,0.8]. And the Delta-method variance estimator is not working as well when testing the coverage on the margins, even though the density estimation is still asymptotically normal.

\subsubsection{Asymptotic efficiency of plug-in HAL-MLE and HAL-TMLE.} \label{subsec:simulation_asymptotic_efficiency_of_plug_in_hal_mle_and_hal_tmle}
We here evaluate the asymptotic efficiency of the plug-in HAL-MLE and HAL-TMLE by comparing their performance in estimating some statistical estimands such as the mean, median, survival at 0.5, and second moment, against the existing asymptotically efficient estimators.

Trivially, the asymptotically efficient estimator of the population mean is the sample mean. And the asymptotically efficient estimator of the population median is the sample median. The asymptotically efficient estimator of population survival probability at 0.5 is the empirical survival proportion at 0.5. The asymptotically efficient estimator of the population second moment is the sample second moment.

Across DGPs and sample sizes, we compare the bias, variance, MSE, and the ratio bias/sd of plug-in HAL-MLE and HAL-TMLE with the asymptotically efficient estimators. The results of GA3 are shown below in Figure~\ref{fig:efficiency-comparison-ga3}. The full results are included in Appendix~\ref{app_i_subsec:asymptotic_efficiency}.

\begin{figure}[H]
  \centering
  \includegraphics[width=\textwidth]{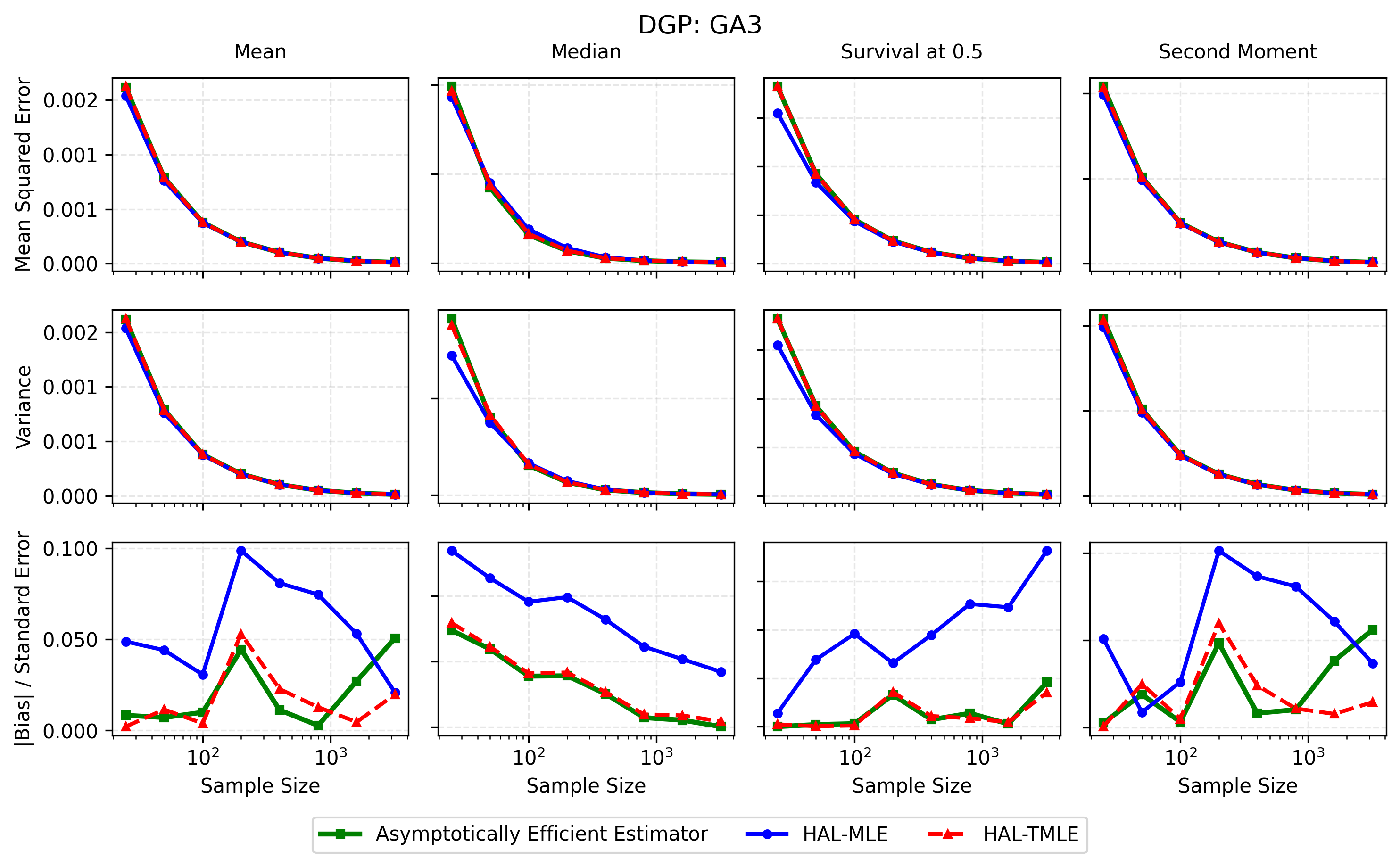}
  \caption{Representative asymptotic\textendash efficiency comparison (DGP: GA3). Columns show four statistical estimands (Mean, Median, Survival at 0.5, Second Moment) and rows show three metrics (MSE, Variance, $|$Bias$|$/SE). Curves compare the asymptotically efficient estimator (green), HAL\textendash MLE (blue), and HAL\textendash TMLE (red).}
  \label{fig:efficiency-comparison-ga3}
\end{figure}

They are comparable and thus verify the asymptotic efficiency of the plug-in HAL-MLE and HAL-TMLE. We can see that sometimes the plug-in HAL-MLE has super-efficiency over the asymptotically efficient estimator when the sample size is small. But after targeting, the EIC equation is solved, and the HAL-TMLE performance is the same as the asymptotically efficient estimator uptill some numerical errors. In the univariate case, we cannot expect the performance of Plug-in HAL-MLE and HAL-TMLE to be significantly better than their corresponding asymptotic efficient estimators, but one major advantage is that we can provide nonparametric inference using the EIC-based confidence interval.

\subsection{EIC\textendash based variance estimation for statistical estimands.} \label{subsec:simulation_eic-based-variance-estimation}
We evaluate the EIC-based variance estimation introduced in Section~\ref{subsec: EIC-based-variance-estimation} for the statistical estimands by computing the plug-in standard errors via the efficient influence curve and comparing the coverage of the oracle. We report the coverage of the 95\% confidence interval for four estimands—mean, second moment, survival at 0.5, and median—across six DGPs and eight sample sizes in Tables~\ref{tab:coverage_targeting_mean}, \ref{tab:coverage_targeting_second_moment}, \ref{tab:coverage_targeting_survival_0_5}, and \ref{tab:coverage_targeting_median}. Each table shows, for every DGP and sample size, the pair (EIC-based coverage, oracle coverage). EIC-based coverage uses plug-in standard errors derived from the efficient influence curve, while oracle coverage uses the empirical standard deviation of estimates across Monte Carlo simulations.

The full figure panels of CI width and coverage for each DGP are provided in Appendix~\ref{app_i_subsec:eic-based-variance-estimation}.

\begin{table}[H]
  \centering
  \scriptsize
  \setlength{\tabcolsep}{2pt}
  \renewcommand{\arraystretch}{1.1}
  \begin{tabular}{lcccccccc}
    \toprule
    DGP & N=25 & N=50 & N=100 & N=200 & N=400 & N=800 & N=1600 & N=3200 \\
    \midrule
    TN & (93.3, 94.6) & (94.4, 96.0) & (94.9, 95.2) & (95.8, 95.4) & (94.7, 94.7) & (93.9, 95.1) & (95.3, 95.2) & (94.3, 94.3) \\
    GS3 & (95.1, 95.9) & (95.2, 95.8) & (95.7, 95.9) & (95.5, 94.9) & (94.3, 95.1) & (95.3, 94.9) & (95.1, 94.6) & (96.3, 96.1) \\
    GA3 & (94.8, 95.4) & (94.7, 95.3) & (95.6, 94.6) & (95.3, 95.0) & (94.9, 95.2) & (94.3, 94.2) & (96.2, 94.9) & (94.8, 94.6) \\
    GS5 & (95.5, 96.0) & (94.3, 94.7) & (96.2, 96.0) & (94.7, 94.5) & (93.1, 93.6) & (94.9, 94.8) & (95.5, 94.4) & (94.6, 94.0) \\
    Step & (93.5, 95.1) & (95.6, 96.4) & (92.6, 93.7) & (95.6, 95.3) & (94.0, 94.6) & (94.0, 95.1) & (95.2, 95.0) & (94.4, 94.3) \\
    Sine & (92.8, 93.6) & (95.1, 95.1) & (94.5, 94.7) & (95.8, 95.4) & (94.7, 94.9) & (94.3, 95.3) & (95.3, 95.3) & (94.4, 94.1) \\
    \bottomrule
  \end{tabular}
  \caption{Coverage for mean}
  \label{tab:coverage_targeting_mean}
\end{table}
\begin{table}[H]
  \centering
  \scriptsize
  \setlength{\tabcolsep}{2pt}
  \renewcommand{\arraystretch}{1.1}
  \begin{tabular}{lcccccccc}
    \toprule
    DGP & N=25 & N=50 & N=100 & N=200 & N=400 & N=800 & N=1600 & N=3200 \\
    \midrule
    TN & (93.2, 94.7) & (94.8, 95.4) & (94.1, 94.8) & (95.6, 95.1) & (94.5, 94.9) & (93.9, 95.1) & (95.2, 95.2) & (94.5, 94.3) \\
    GS3 & (94.9, 95.3) & (94.9, 95.6) & (95.6, 95.7) & (95.4, 95.0) & (94.0, 94.4) & (94.6, 94.8) & (95.2, 94.7) & (96.7, 95.1) \\
    GA3 & (94.3, 95.6) & (94.4, 95.1) & (95.8, 95.1) & (95.0, 95.0) & (95.5, 95.8) & (93.9, 94.3) & (95.8, 95.1) & (95.6, 95.1) \\
    GS5 & (95.5, 96.0) & (94.2, 94.8) & (96.0, 96.1) & (95.3, 95.1) & (93.2, 94.1) & (94.8, 94.8) & (95.2, 94.4) & (94.9, 94.1) \\
    Step & (92.8, 96.0) & (94.8, 96.9) & (91.4, 94.4) & (94.9, 95.7) & (93.5, 94.0) & (94.0, 95.1) & (95.2, 95.1) & (94.2, 93.8) \\
    Sine & (93.3, 94.2) & (94.6, 95.0) & (94.1, 94.6) & (95.2, 95.4) & (93.4, 93.5) & (94.2, 95.1) & (94.7, 94.7) & (94.7, 94.0) \\
    \bottomrule
  \end{tabular}
  \caption{Coverage for second moment}
  \label{tab:coverage_targeting_second_moment}
\end{table}
\begin{table}[H]
  \centering
  \scriptsize
  \setlength{\tabcolsep}{2pt}
  \renewcommand{\arraystretch}{1.1}
  \begin{tabular}{lcccccccc}
    \toprule
    DGP & N=25 & N=50 & N=100 & N=200 & N=400 & N=800 & N=1600 & N=3200 \\
    \midrule
    TN & (95.4, 95.4) & (94.4, 94.4) & (95.0, 95.0) & (95.4, 95.4) & (94.4, 94.7) & (95.4, 95.5) & (95.5, 95.5) & (95.2, 94.6) \\
    GS3 & (95.9, 95.9) & (93.6, 93.6) & (94.8, 94.8) & (95.0, 95.0) & (94.5, 94.7) & (95.8, 95.5) & (93.8, 93.8) & (96.5, 94.5) \\
    GA3 & (94.2, 96.5) & (96.0, 95.3) & (93.2, 94.8) & (93.9, 94.9) & (95.3, 95.3) & (95.6, 95.0) & (96.1, 95.4) & (95.8, 95.5) \\
    GS5 & (96.1, 96.1) & (93.1, 94.3) & (94.4, 95.1) & (95.9, 95.7) & (94.3, 95.1) & (94.8, 95.4) & (94.8, 94.7) & (95.1, 94.8) \\
    Step & (93.4, 96.2) & (95.1, 96.8) & (96.2, 95.8) & (94.8, 95.5) & (93.9, 94.6) & (94.3, 94.5) & (96.0, 96.0) & (95.0, 94.2) \\
    Sine & (95.4, 95.4) & (94.5, 94.5) & (95.1, 95.1) & (95.4, 95.4) & (94.4, 94.4) & (94.9, 95.6) & (95.3, 95.4) & (95.2, 94.6) \\
    \bottomrule
  \end{tabular}
  \caption{Coverage for survival at 0.5}
  \label{tab:coverage_targeting_survival_0_5}
\end{table}
\begin{table}[H]
  \centering
  \scriptsize
  \setlength{\tabcolsep}{2pt}
  \renewcommand{\arraystretch}{1.1}
  \begin{tabular}{lcccccccc}
    \toprule
    DGP & N=25 & N=50 & N=100 & N=200 & N=400 & N=800 & N=1600 & N=3200 \\
    \midrule
    TN & (91.5, 94.3) & (94.0, 94.8) & (95.5, 95.9) & (95.0, 94.9) & (94.7, 95.5) & (94.8, 95.5) & (95.4, 95.6) & (95.4, 94.5) \\
    GS3 & (93.8, 92.8) & (97.0, 95.8) & (97.7, 94.7) & (96.6, 95.3) & (94.7, 94.9) & (94.9, 95.2) & (93.6, 94.1) & (96.2, 95.1) \\
    GA3 & (89.8, 91.3) & (92.4, 93.8) & (93.7, 94.5) & (96.3, 94.7) & (96.7, 94.4) & (96.1, 94.8) & (96.7, 95.1) & (95.0, 94.8) \\
    GS5 & (91.2, 94.8) & (92.7, 93.4) & (95.6, 94.7) & (96.7, 94.9) & (96.5, 94.0) & (96.3, 95.9) & (95.2, 94.8) & (96.4, 94.6) \\
    Step & (91.8, 95.2) & (93.3, 95.5) & (94.7, 96.3) & (93.7, 94.6) & (94.2, 95.4) & (95.1, 95.6) & (95.8, 96.3) & (95.8, 95.2) \\
    Sine & (91.9, 94.0) & (93.7, 94.1) & (95.1, 94.5) & (94.2, 94.5) & (95.2, 95.2) & (94.7, 95.1) & (95.7, 96.0) & (96.1, 95.3) \\
    \bottomrule
  \end{tabular}
  \caption{Coverage for median}
  \label{tab:coverage_targeting_median}
\end{table}

We can see that the EIC-based variance estimator and the oracle coverage are all very close to 95\%.

\subsection{Comparison with Existing Methods} \label{subsec:comparative_benchmarks}
We compare the HAL-MLE with the existing methods, including the TF, TFPP, logspline, and KDE methods. TF(PP) are carried out here with \texttt{CVXPY} as well for the sake of fairness. However, we also implemented an ADMM solver following \citet{ramdas2016fast} for TF on the same uniform grid, with similar performance to the \texttt{CVXPY} implementation; see Appendix~\ref{app:tf_admm}. We evaluate the performance of the existing methods at $n=800$ over a common grid of 19 points on $[0.05,0.95]$; and report the bias, variance, and MSE in Figure~\ref{fig:methods-bias-n800}, Figure~\ref{fig:methods-variance-n800}, and Figure~\ref{fig:methods-mse-n800}. 

\subsubsection{Hyperparameter selection.}
The hyperparameter selection for the TF(PP), logspline, and KDE methods are as follows: (i) The TF and TFPP methods have a $\lambda$ in $[10^{-3},10^{6}]$ (log scale)for the $\ell_1$ penalty and $k$ for the $k$th order finite differences ($k\in\{0,1,2\}$). (ii) The logspline method is a spline basis for the log-density with default hyperparameter choices in the R package \texttt{logspline}. (iii) The KDE method is a kernel smoother with jointly tuned kernel (Gaussian, Epanechnikov, tophat) and bandwidth $h$ in $[10^{-3},10^{6}]$ (log scale).

\begin{figure}[H]
    \centering
    \includegraphics[width=\textwidth]{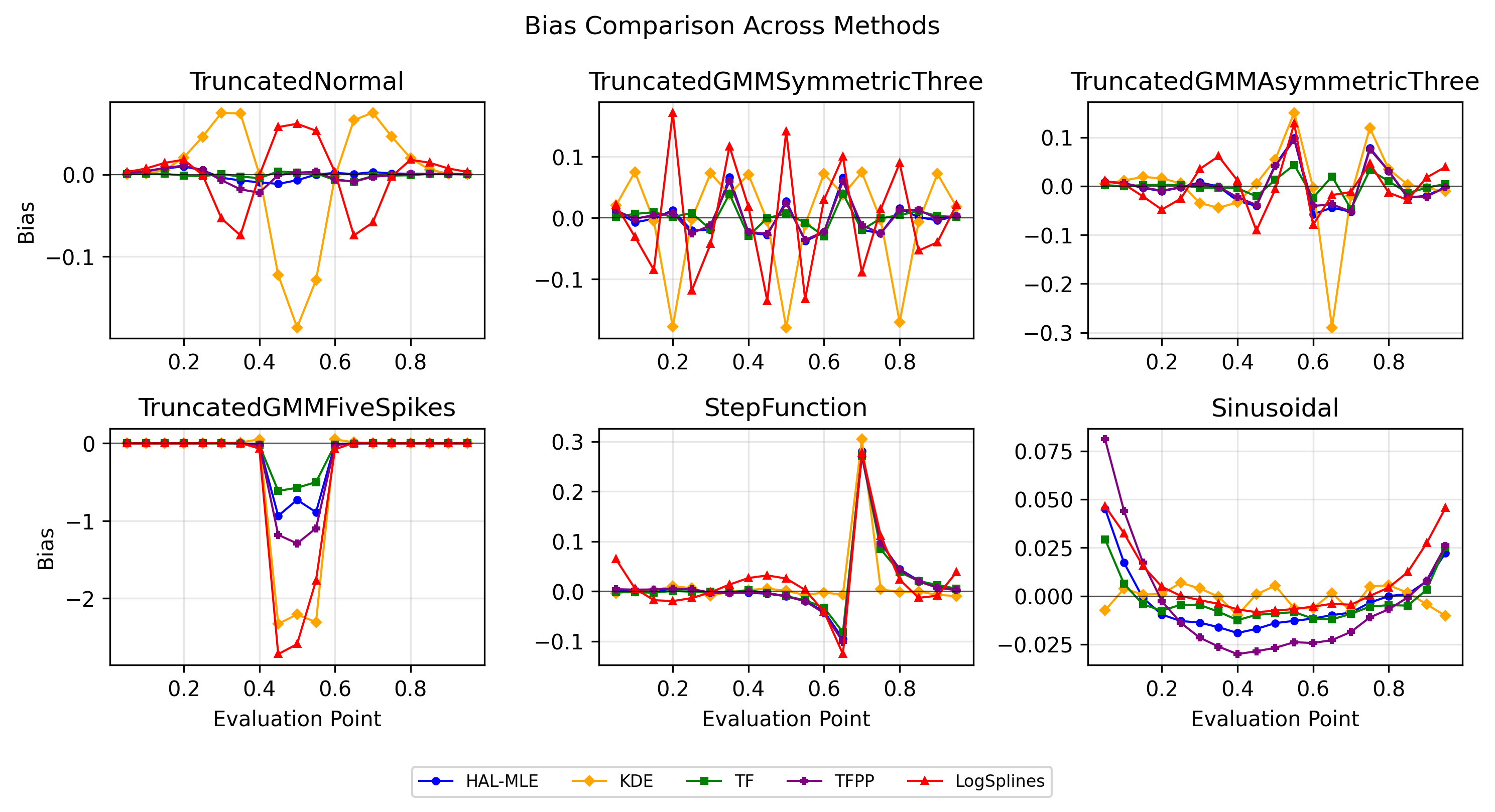}
    \caption{Bias at $n=800$ across six DGPs (columns) for five estimators (legend).}
    \label{fig:methods-bias-n800}
\end{figure}

\begin{figure}[H]
    \centering
    \includegraphics[width=\textwidth]{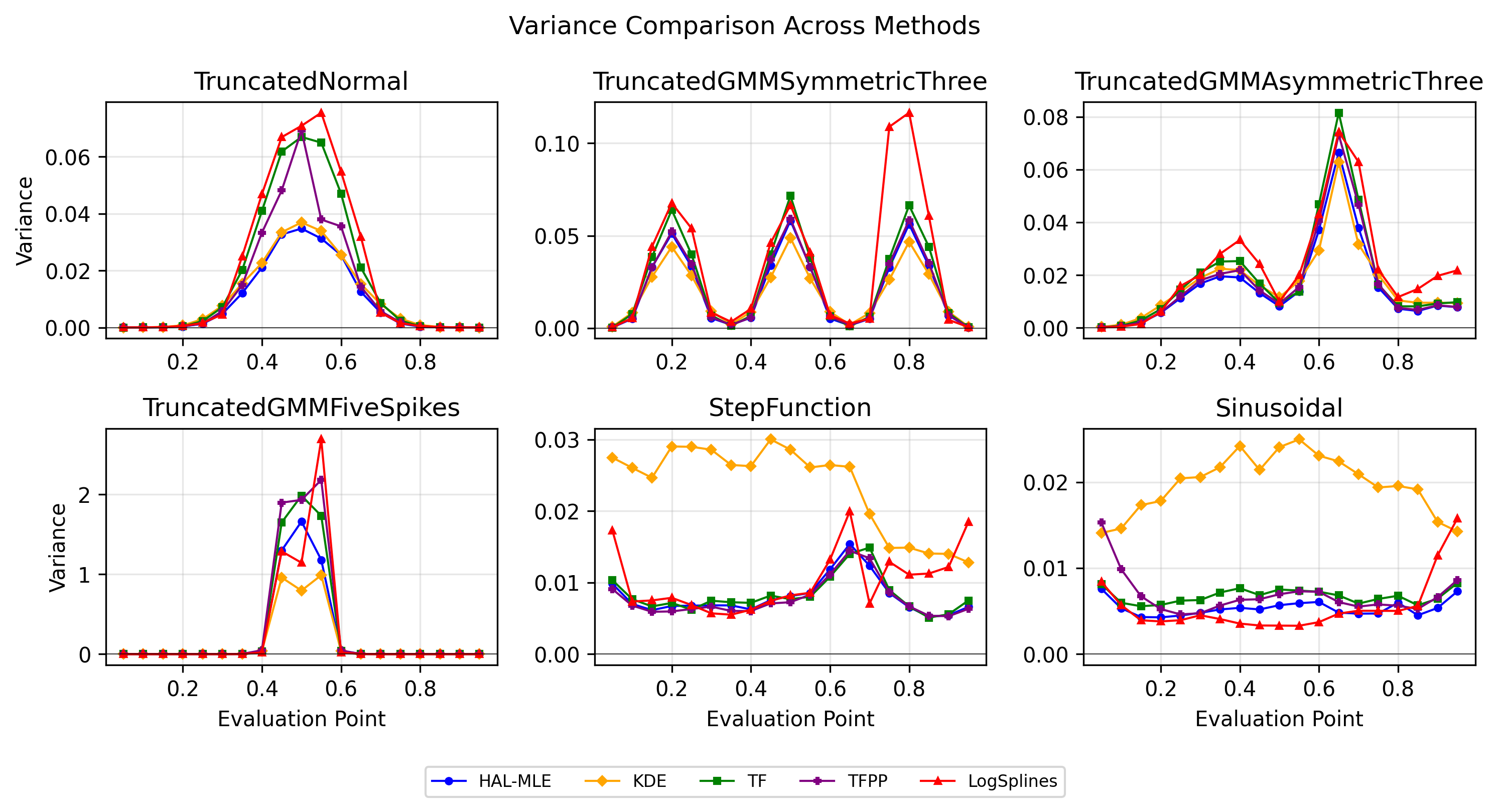}
    \caption{Variance at $n=800$ across six DGPs (columns) for five estimators (legend).}
    \label{fig:methods-variance-n800}
\end{figure}

\begin{figure}[H]
    \centering
    \includegraphics[width=\textwidth]{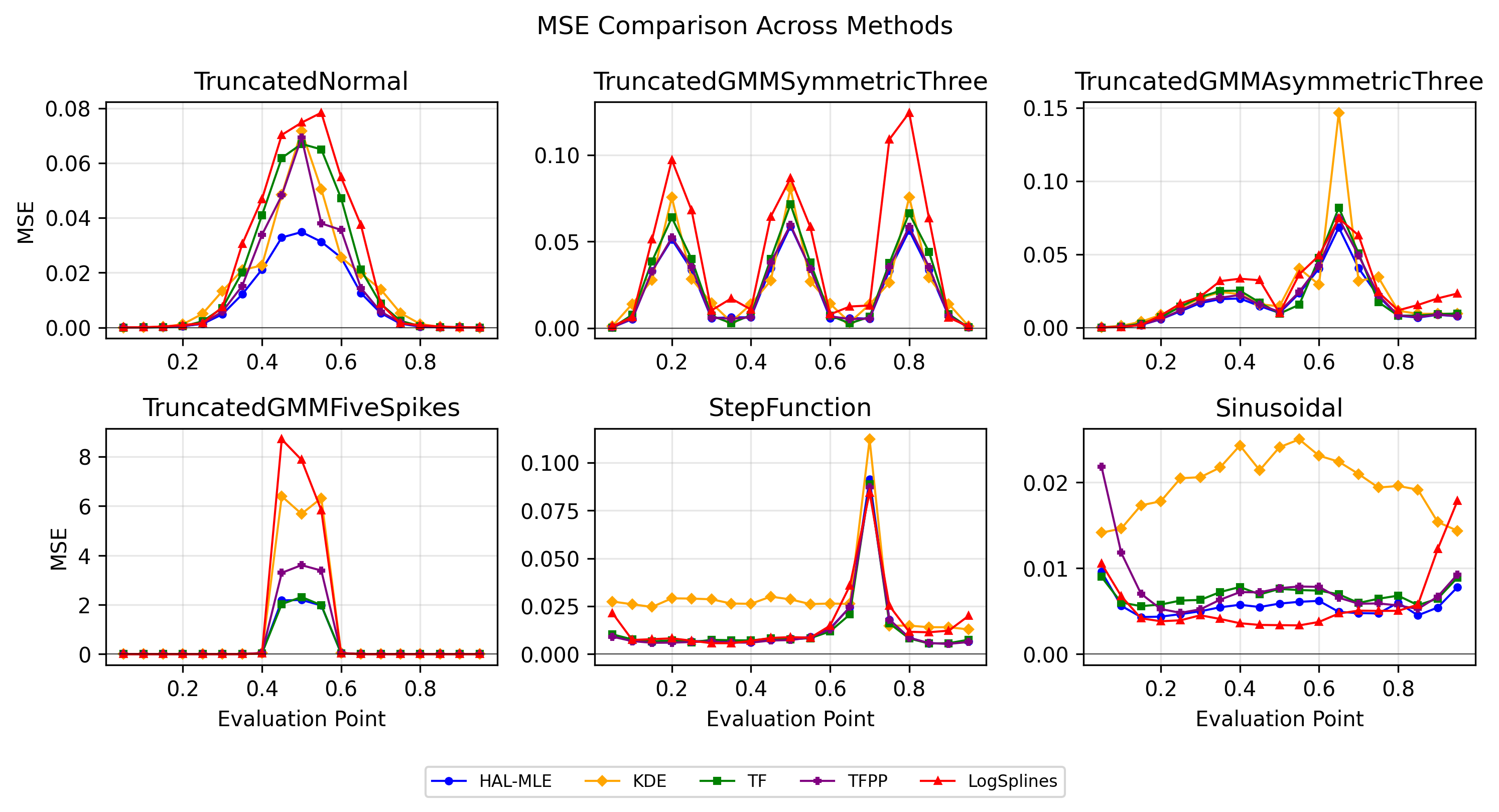}
    \caption{MSE at $n=800$ across six DGPs (columns) for five estimators (legend).}
    \label{fig:methods-mse-n800}
\end{figure}

Some observations of the comparison are as follows: (i) Comparing with logspline, which used the same link function but without controlling for the variation norm, the HAL-MLE is less biased and has a much lower MSE for the highly oscillatory DGPs, while preserving a slightly better performance on the other DGPs. (ii) Comparing with the KDE, the linear smoother, the HAL-MLE is less biased and has a much lower MSE and variance for the highly oscillatory, sudden change, and heavy-tailed DGPs. (iii) Comparing with the TF and TFPP, which controlled for the variation norm and the sectional variation norm respectively, but with a different basis system, the HAL-MLE are comparable in terms of bias and MSE. Theoretically, the 0th and 1st order falling factorial basis and the 0th and 1st order truncated power basis are equivalent \citep{tibshirani2022divided}, but the performance of TFPP and HAL-MLE here are comparable but not equivalent. One reason is that the implementation of TF(PP) uses the uniform knots instead of data-adaptive knots as in the HAL-MLE, even though using the data-adaptive knots is plausible as described in \cite{wang2014falling}. The second reason is that our implementation of TF as described in Appendix~\ref{app:tf_admm} deals with the normalizing constant brutally and thus the performance is not equivalent as expected. We acknowledge that more effort is needed, and it is beyond the scope of this paper. However, we do argue that the current implementation of TF and TFPP with uniform knots is the more computationally efficient than the HAL-MLE with a comparable performance. We verified that the divided difference matrix and its variant with penalty on the parametric part, as introduced in \cite{wang2014falling}, using uniform knots or data-adaptive knots, have better condition number than the truncated power basis respectively. The performance gives us the confidence of generalizing the pointwise asymptotic normality and uniform convergence properties of HAL-MLE to TF.

\section{Case Study}\label{sec:case-study}

This section presents a real data analysis using the classic Galaxies recession–velocity measurements. We follow the same dataset choice as \citet{bak2021penalized} and organize the discussion in three parts: (i) data description and preprocessing, (ii) density fitting and variance estimation for the density itself, and (iii) inference for several statistical estimands via plug–in HAL–MLE and HAL–TMLE.

\subsection{Data and preprocessing}
We analyze the \emph{Galaxies} data (R package \texttt{MASS}; \citealp{venables2002modern}), consisting of velocities for $n=82$ galaxies in the Corona Borealis region measured from six conic sections of space \citep[][Table~1]{postman1986probes}. If galaxies are clustered, the velocity density is expected to exhibit three to seven modes that correspond to superclusters \citep{roeder1990density}. 

For the preprocessing, we apply a monotone affine rescaling of the observed velocities to $[0,1]$. Throughout the figures, the horizontal axis represents the velocity as a proportion of the speed of light $c$: $x\in[0,1]$ corresponds to $v/c$. 

\subsection{Density fit and variance estimation}
We fit the HAL–MLE density on the rescaled axis. The resulting fit captures all prominent modes and the troughs between them, reflecting the heterogeneous galaxy population. We estimate the standard errors for the density via a delta–method as in Section~\ref{density_sd_section} and construct the confidence intervals. We benchmark these standard errors against a nonparametric bootstrap. This is a visualization of how bootstrap results differ from the delta-method results on the same small dataset. However, the bootstrap is much more computationally expensive, and thus we do not recommend it for practical use at this point. The shape of the density is similar to \citet{bak2021penalized} even though the data is rescaled.

\begin{figure}[H]
    \centering
    \includegraphics[width=\textwidth]{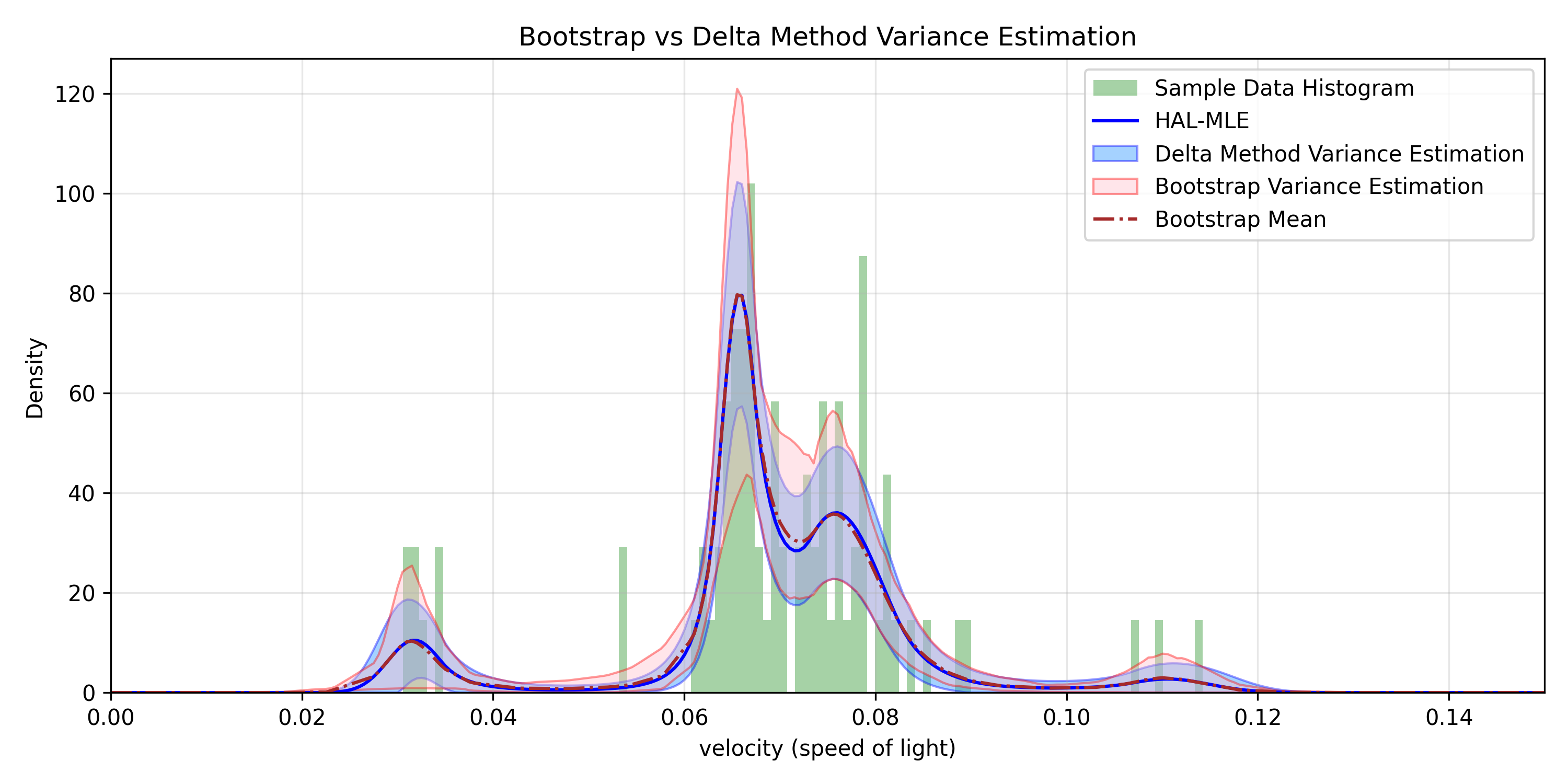}
    \caption{Galaxy velocities: histogram of the data with 50 bins, HAL–MLE density, and variance estimation for the density via the delta method (blue band) compared with a nonparametric bootstrap (red band). The $x$--axis is velocity as a proportion of the speed of light.}
    \label{fig:galaxy-density-variance}
\end{figure}

Overall ,the delta–method bands agree with the bootstrap, but there are regions where they are slightly narrower. One reason for this is that the parametric assumption of the Delta Method might cause under-estimation of variance. Another reason is that with the bootstrap, some bootstrap samples encounter the edge case of the solver, and the estimation is not accurate and varies a lot.

\subsection{Plug-in HAL-MLE versus HAL-TMLE}
We visualize the performance of the plug-in HAL-MLE and HAL-TMLE for the mean, median, and survival function. And we also visualize what targeting does to a fitted density based on the different statistical estimands. The results are shown below in Figure~\ref{fig:galaxy-mean}, Figure~\ref{fig:galaxy-median}, and Figure~\ref{fig:galaxy-survival}.

\begin{figure}[H]
    \centering
    \includegraphics[width=0.9\textwidth]{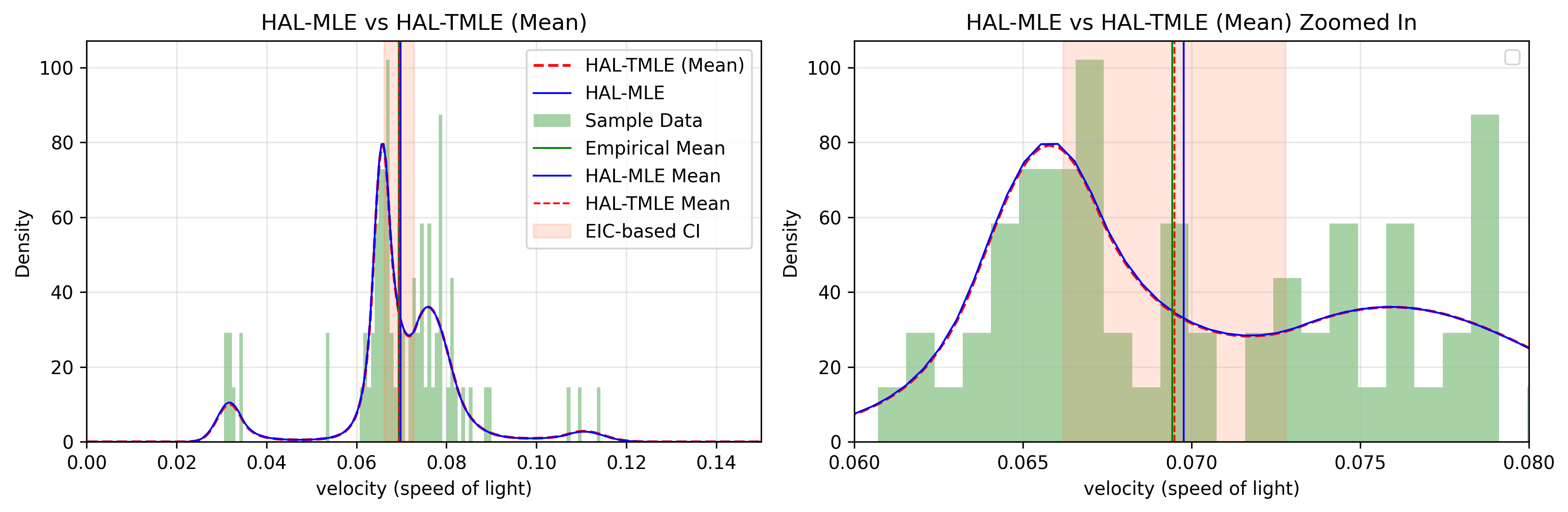}
    \caption{Mean functional: plug–in HAL–MLE versus HAL–TMLE with the sample mean shown as the asymptotically efficient benchmark. Left: full scale; right: zoom around the dominant mode.}
    \label{fig:galaxy-mean}
\end{figure}

\begin{figure}[H]
    \centering
    \includegraphics[width=0.9\textwidth]{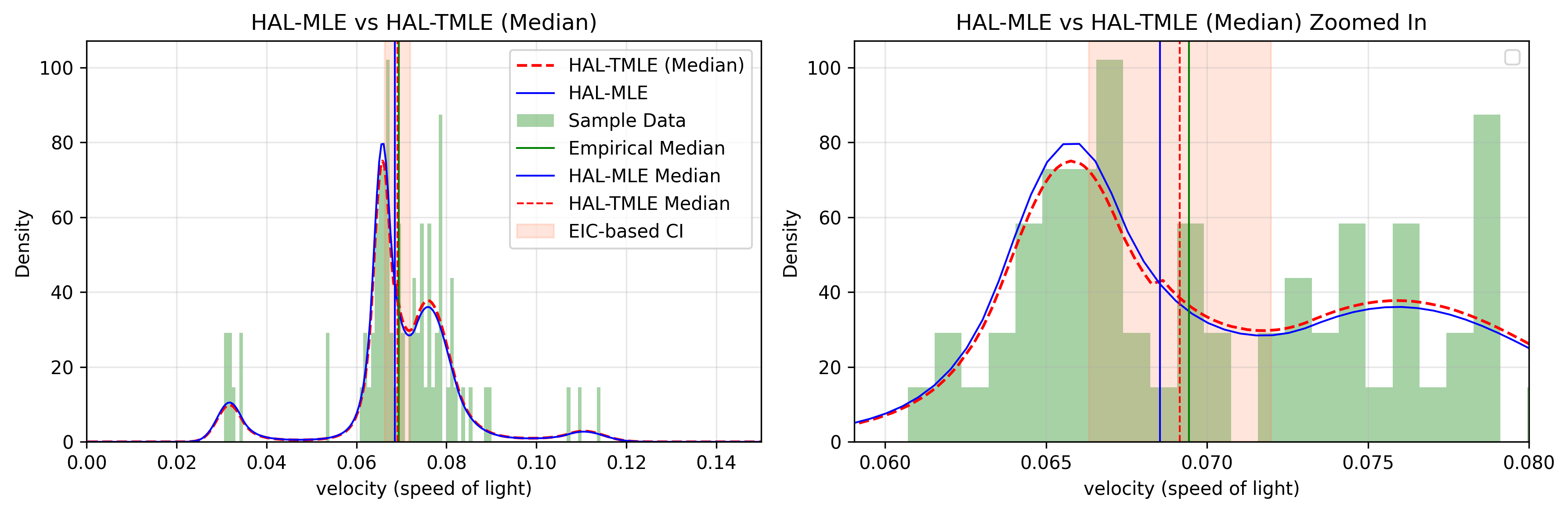}
    \caption{Median functional: plug–in HAL–MLE versus HAL–TMLE with the sample median shown for reference. Targeting nudges the estimate to align with the efficient score for the quantile.}
    \label{fig:galaxy-median}
\end{figure}

\begin{figure}[H]
    \centering
    \includegraphics[width=0.9\textwidth]{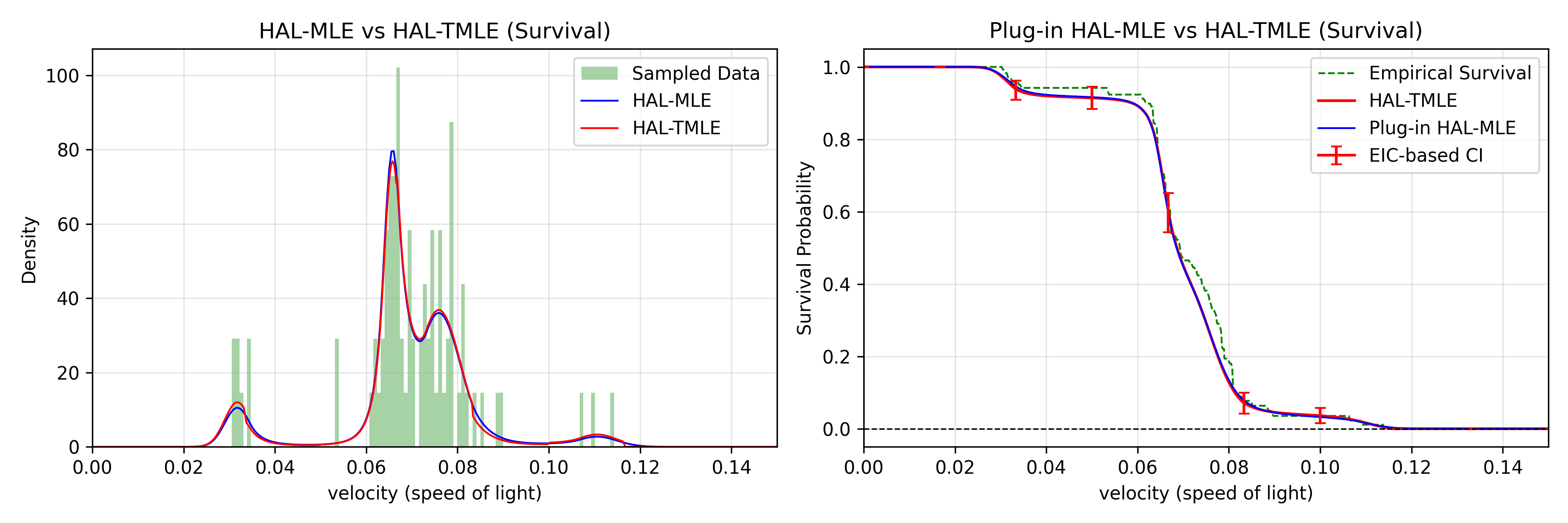}
    \caption{Survival functional $S(t)$: plug–in HAL–MLE versus HAL–TMLE, compared to the empirical survival curve. Right panel overlays EIC–based intervals for representative $t$.}
    \label{fig:galaxy-survival}
\end{figure}

The TMLE of the survival function at 0.5 shifts the entire tail after 0.5 up and down. It solves the EIC equation, but it breaks the smoothness of the density. The TMLE of mean and median debiases the estimation from the plug-in HAL-MLE towards the sample mean and median, while still preserving a fitted density slightly different from the plug-in HAL-MLE.

\section{Discussion} \label{sec:discussion}
We constructed a nonparametric density estimation technique based on HAL-MLE. We proved and verified the pointwise asymptotic normality and the uniform convergence for the univariate case when \(k\ge1\); we also give a practical delta‑method confidence interval for the density and show that plug‑in HAL‑MLE and HAL‑TMLE are asymptotically efficient for pathwise differentiable statistical estimands. We proposed an EIC–based variance estimation accordingly. Specialized proximal‑Newton/L‑BFGS/AdaGrad solvers scale and select knots effectively. We compared the performance of  different univariate density estimators, including KDE, TF(PP), log-spline, and HAL-MLE, over the same benchmark. We used a galaxy‑velocity case study to visualize the convergence, confidence intervals, and targetings. The limitation of this paper is that we only focused on the univariate case to compare and connect the classical density estimation methods in such setting. In the future, we can consider the generalization of the assumptions and extend the results to the multivariate case. We can also consider the adaptation to different data structures, such as censored/coarsened data.

\section{Acknowledgments} \label{sec:acknowledgments}
We would like to first thank Professor Ryan Tibshirani for his valuable comments and suggestions on the implementation of TF and TFPP. We had multiple discussions with him on the theoretical and practical aspects of the falling factorial basis and truncated power basis. We also learned a lot about convex optimization and the specialized ADMM algorithm for TF from him. We thank Zhexiao Lin for the detailed feedback and editing. We also acknowledge the anonymous reviewers for their helpful comments and suggestions.

\clearpage
\bibliography{ref}
\ifdefined\isarxiv%
\bibliographystyle{template/iclr2024_conference}
\else
\bibliographystyle{template/IEEE}
\fi

\newpage
\appendix
\newpage
\section{Notation, Definition, and Representation} \label{Appendix A: Notation, Definition}
\subsection{Notation}
\paragraph{Acronym}~\\
TV: Total Variation \\
BTV: Bounded Total Variation \\
SVN: Sectional Variational Norm \\
BSVN: Bounded Sectional Variational Norm \\
HAL: Highly Adaptive Lasso \\
TMLE: Target Maximum Likelihood Estimator (or Minimum Loss)\\
HAL-TMLE: TMLE that uses HAL for estimation of nuisance functions \\
HAL-MLE: Highly Adaptive Lasso Maximum Likelihood Estimator
KDE: Kernel Density Estimation \\
LAS: Local Adaptive Splines \\
TF: Trend Filtering \\
TFPP: Trend Filtering with Penalty on the Parametric Part \\
LSDE: Logspline Density Estimation \\
PLSDE: TV-penalized Logspline Density Estimation \\
KM: Kaplan-Meier Estimator \\
CV: Cross Validation \\
cadlag function: multivariate real-valued function on (say) unit cube $[0, 1]^d$ that is right-continuous with left-hand limits \\
CDF: generalized cumulative distribution function.\\
DGP: Data-Generating Process. \\
LLFP: local least favorable path \\
ULFP: universal least favorable path \\

\paragraph{Mathematical Notation}~\\
$\sim$: We say $X \sim P_0$ if the random variable $X$ has distribution $P_0$. \\
$\lesssim$: We say $a \lesssim b$ if $a \leq Cb$ for some constant $C$.\\
$\asymp$: We say $a \asymp b$ if $cb \leq a \leq Cb$ for some constant $c$ and $C$. \\
$\asymp^+$: We say $a \asymp^+ b$ if $cb \leq a \leq 
Cb(\log n)^p$ for some constant $c$, $C$, and finite $p$. Note that $p$ can be negative. \\
$\equiv$: Definition.
$\mathrm{TV}(\cdot)$: Total variation functional as defined in Section 9.1 of \citet{tibshirani2022divided}. When the function $f$ is a càdlàg function, $\mathrm{TV}(f) = \int_0^1 |df(u)|$. This is also referred to as the total variation norm (a seminorm), denoted as $|f|_v$. \\
$X$: Euclidean valued observed random variable. \\
$P_0$: True data-generating distribution of X or the truth expectation operator in empirical process. \\
$P_n$: Empirical data-generating distribution of X or the empirical expectation operator in empirical process. \\
$P_{f_n}$: Expectation operator with measure $Q_n$. \\
$P_{f_0}$: Expectation operator with measure $f_0$, which is the truth $P_0$. \\
$(\mathbb{G}_n(f) : f \in \mathcal{F})$: denotes empirical process $\mathbb{G}_n = n^{1/2}(P_n - P) \in \ell^{\infty}(\mathcal{F})$ indexed by a class of functions $\mathcal{F}$.\\
$\mathcal{M}$: Statistical model, the set of possible probability distributions for $P_0$.\\
$O^+(r(n))$: We use notation $O^+(r(n))$ if the term is $O(r(n)(\log n)^p)$ for some finite $p$, and the $+$ notation is also used in $O_p^+$. In words, it is described as "up till a $\log n$ factor".\\
$d_0(f, f_0)$: loss-based dissimilarity between $f$ and $f_0$ defined by $d_0(f, f_0) = P_0L(f) - P_0L(f_0)$ for some general loss $L$.\\
$P_0 \in \mathcal{M}$: $P_0$ is known to be an element of the statistical model $\mathcal{M}$.\\
$\mathrm{d}f/\mathrm{d}\mu$: Radon-Nikodym derivative of measure implied by cadlag function $f$ w.r.t. Lebesgue measure $\mu$.\\
$p = \mathrm{d}P/\mathrm{d}\mu$: Density of $P$ w.r.t. dominating measure $\mu$\\
$p_0$: True density of data-generating distribution $P_0$ w.r.t. appropriate dominating measure $\mu$\\
$p_f$: The density $p$ indexed by a function $f$ through a link function.\\
$\| f \|_v$: variation norm of cadlag function $f$.\\
$f^{(k)}$: k-th order Lebesgue-Radon-Nikodym derivative of cadlag function $f$. \\
$N(\varepsilon, \mathcal{F}, \| \cdot \|)$: the covering number of class $\mathcal{F}$ w.r.t. some metric defined as the minimal number of balls with radius $\varepsilon$ needed to cover $\mathcal{F}$.\\
$J(\delta, \mathcal{F}, \| \cdot \|)$: the entropy integral for class $\mathcal{F}$ w.r.t. some metric defined as $\int_0^{\delta} \sqrt{\log N(\varepsilon, \mathcal{F}, \| \cdot \|)} \mathrm{d}\varepsilon$. $J_{\infty}(\delta, \mathcal{F})$ is used when the norm is the supremum norm. $J_{2}(\delta, \mathcal{F})$ is used when the norm is the $L^2$ norm.\\
$\Pi$: projection operator.\\
$S_f$: score operator at $f$.\\
$D^*(P)$: canonical gradient (efficient influence curve) of a pathwise differentiable parameter at $P$.\\

\paragraph{Definition}~\\
$f_n$: we use subscript n for estimations, like $\beta_n$ or $C_n$. Here in this paper, $f_n$ means the HAL-MLE.\\
$f_{n, \beta_n(M)}$: the HAL-MLE with a user-specified M. So it is parameterized by the coefficient $\beta_n$ under the $L_1$ constraint less than M. $f_{n, \beta_n(M)}$ is the MLE over $D^{(k)}_M(\mathcal{R}_n)$\\
$M_{cv}$: the hyperparameter M chosen by CV.\\
$f_n^{rel}$: Relaxed HAL-MLE. Relax the HAL-MLE $f_{n, \beta_n(M)}$ by loosening the $L_1$ constraint with a larger constant $C > M$, such that it is a MLE over $D^{(k)}_C(\mathcal{R}_n)$. Notice that we use the word "fully relaxed" when we drop the $L_1$ constraint completely \\
Undersmooth: The behavior that we intentionally include more basis to reduce the bias, which will increase variance. Practically, we can choose $M' > M_{cv}$ to realize this. It represents the trade off between bias and variance. \\
$f_{n,0}$: The oracle HAL-MLE over a sample of size n. Such that $f_{n,0}=\arg\min_{f\in D^{(k)}_M({\cal R}_{n,0} )}-P_0 \log p_{f}$. \\
$\tilde f_n$: The projection of $f_n$ to some other linear subspace. Typically, we used $\tilde f_n$ to denote the projection of $f_n$ on $ D^{(k)}_M({\cal R}_{n,0})$ in Appendix~\ref{appendix_D: proof of asymptotic normality}. \\
$f_n^k$: the k-th step TMLE update, shorthand notation of $f_{n, \varepsilon_n^k}$. $f_n$ is also $f_n^0$. \\
$f^{(k)}$: the k-th order derivative of f. \\
$C_n$: The normalizing constant of $\tilde f_n$, also denoted as $C(\tilde f_n)$, or $C(\tilde \beta_n)$. \\
$C_{n,0}$: The oracle normalizing constant, also denoted as $C(f_{n,0})$, or $C(\beta_{n,0})$. \\
$\| f \|_v^*$: sectional variation norm $\| f \|_v^* = \int_{[0,1]^d} | df(u) |$.\\
$D^{(0)}_U([0, 1]^d)$: Space of real valued cadlag functions on $[0, 1]^d$ with BSVN for some $U < \infty$. We use $D^{(0)}_U([0, 1])$ for our univariate case.\\
$\| f \|_{v,k}^*$: $k$-th order sectional variation norm of f.\\
$D^{(k)}_U([0, 1])$: functions $f$ in $D^{(0)}_U([0, 1])$ for which all k-th order Lebesgue-Radon-Nikodym derivatives exist and $\| f \|_{v,k}^* < U < \infty$\\
$\mathcal{R}_0$: complete infinite index set of basis.\\
$\mathcal{R}_n$: a finite data-adaptive index set of basis. The active set after Lasso selection. For the nonparametric part of the basis system, we index them each by a tuple $(k, u)$ for $k$-th order splines and any knot point $u \in (0,1]$. For the parametric part of the basis system, we index them each by a tuple $(i, 0)$, for $i \leq k$ and starting point 0. As such, we could define the total index set as a union of all these. \\
$D^{(k)}(\mathcal{R}_n)$: for a given subset $\mathcal{R}_n \subset \mathcal{R}_0$, this is defined as the space of functions that are in closure of linear span of $\{\phi_{k,u} : (k, u) \in \mathcal{R}_n\}$. \\
$D^{(k)}_M(\mathcal{R}_n)$: the subset of $D^{(k)}(\mathcal{R}_n)$ with a user-specified $M$ as the $L_1$ norm constraint of the basis.\\
$D^{(k)}_M(\mathcal{R}_{n,0})$: an independent oracle space with a user-specified $M$ as the $L_1$ norm constraint of the basis. A technique used for proving pointwise asymptotic normality.\\
$J_n$: the effective dimension of $D^{(k)}_M(\mathcal{R}_n)$. The number of active basis. \\
$J_{n,0}$: the effective dimension of $D^{(k)}_M(\mathcal{R}_{n,0})$. The number of active basis. \\
$\phi_j^*$: some orthonormal basis function on $D^{(k)}_M(\mathcal{R}_{n,0})$.\\
$\bar \phi_{n,x}$: a special basis function at point $x$ defined as $\bar \phi_{n,x} = J_{n,0}^{-1/2} \sum_{j \in \mathcal{R}_{n,0}} \phi_j^*(x) \phi_j^*$. Used interchangably with $\bar \phi$.\\
$\tilde \phi_j$: the projection of $\phi_j^*$ onto some linear span, like $D^{(k)}_M(\mathcal{R}_n)$. In general, we use the tilde sign above an element to denote its projection on some other spaces, e.g. $\tilde \phi_{n,x}$, $\tilde f_n$.\\

\subsection{Representation Theorem} \label{App:representation thm}
This is the proof for Proposition \ref{thm:HAL_representation}

\begin{proof}
\textit{Base case:} For $k = 0$ (with $u_0 = u$),
\[
  f(x)
  = f(0) + \int_{(0, x]} \mathrm{d}\,f(u)
  = f(0) + \int_{(0, 1]} I\{u \le x\}\,\mathrm{d}\,f(u)
  = f(0) + \int_0^1 (x - u)_+^0 \,\mathrm{d}\,f(u).
\]

\textit{Induction:} If, for a $k$‑times differentiable function $f$, the representation holds true, we now show it for a $(k+1)$‑times differentiable $f$.

Since $f^{(k)}$ is differentiable,
\[
  \mathrm{d}\,f^{(k)}(u_k) = f^{(k+1)}(u_k)\,\mathrm{d}u_k,
  \quad f^{(k+1)}(u_k)\in D^{(0)}_U([0,1]).
\]
Starting from the inductive hypothesis,
\begin{align*}\label{HAL_representation_formula}
  f(x)
  &= \sum_{i = 0}^k \frac{1}{i!}f^{(i)}(0)\,x^{i}
    + \int_0^1 \frac{1}{k!}(x - u_k)_+^k\,\mathrm{d}\,f^{(k)}(u_k) \\
  &= \sum_{i = 0}^k \frac{1}{i!}f^{(i)}(0)\,x^{i}
    + \int_0^1 \frac{1}{k!}(x - u_k)_+^k\,f^{(k+1)}(u_k)\,\mathrm{d}u_k \\
  &= \sum_{i = 0}^k \frac{1}{i!}f^{(i)}(0)\,x^{i}
    + \int_0^1 \frac{1}{k!}(x - u_k)_+^k \\
  &\quad \times \Bigl(f^{(k+1)}(0)
    + \int_0^{u_k} \mathrm{d}\,f^{(k+1)}(u_{k+1})\Bigr)\,\mathrm{d}u_k \\
  &= \sum_{i = 0}^k \frac{1}{i!}f^{(i)}(0)\,x^{i}
    + \int_0^1 \frac{1}{k!}(x - u_k)_+^k\,f^{(k+1)}(0)\,\mathrm{d}u_k \\
  &\quad + \int_0^1 \frac{1}{k!}\int_0^{u_k}(x - u_k)_+^k\,\mathrm{d}\,f^{(k+1)}(u_{k+1})\,\mathrm{d}u_k \\
  &= \sum_{i = 0}^k \frac{1}{i!}f^{(i)}(0)\,x^{i}
    + f^{(k+1)}(0)\int_0^x \frac{1}{k!}(x - u_k)^k\,\mathrm{d}u_k \\
  &\quad + \int_0^1 \frac{1}{k!}\int_0^{u_k}(x - u_k)_+^k\,\mathrm{d}\,f^{(k+1)}(u_{k+1})\,\mathrm{d}u_k \\
  &= \sum_{i = 0}^{k+1} \frac{1}{i!}f^{(i)}(0)\,x^{i}
    + \int_0^1 \frac{1}{k!}\int_0^{u_k}(x - u_k)_+^k\,\mathrm{d}\,f^{(k+1)}(u_{k+1})\,\mathrm{d}u_k \\
  &= \sum_{i = 0}^{k+1} \frac{1}{i!}f^{(i)}(0)\,x^{i} \\
  &\quad + \int_0^1 \frac{1}{k!}\int_0^1(x - u_k)^k I\{u_{k+1}\le u_k\le x\}\,\mathrm{d}\,f^{(k+1)}(u_{k+1})\,\mathrm{d}u_k \\
  &= \sum_{i = 0}^{k+1} \frac{1}{i!}f^{(i)}(0)\,x^{i} \\
  &\quad + \int_0^1 \frac{1}{k!}\int_0^1(x - u_k)^k I\{u_{k+1}\le u_k\le x\}\,\mathrm{d}u_k\,\mathrm{d}\,f^{(k+1)}(u_{k+1}) \\
  &\qquad\text{(by Fubini's theorem)}\\
  &= \sum_{i = 0}^{k+1} \frac{1}{i!}f^{(i)}(0)\,x^{i}
    + \int_0^1\frac{1}{k!}\Bigl(\int_{u_{k+1}}^{x}(x - u_k)^k\,\mathrm{d}u_k\Bigr)\,\mathrm{d}\,f^{(k+1)}(u_{k+1})\\
  &= \sum_{i = 0}^{k+1} \frac{1}{i!}f^{(i)}(0)\,x^{i}
    + \int_0^1\frac{1}{(k+1)!}(x - u_{k+1})_+^{k+1}\,\mathrm{d}\,f^{(k+1)}(u_{k+1}).
\end{align*}

Then, by induction, the representation holds for all \(k\).
\end{proof}

\newpage
\section{\texorpdfstring{$L^2$}{L2} Convergence Analysis} \label{appB: $L^2$ Convergence Analysis}
We start with the $L^2$ convergence proof, which is a standard proof based on the convergence of loss-based dissimilarity. Let \(\mathcal F := D^{(k)}_U([0,1])\), and define \(f_{0} \;=\; \arg\min_{f\in\mathcal F} P_{0}L(f)\). Let $M$ be the user-supplied $L_1$ norm constraint and $\mathcal{R}_n$ be the index set of the finite-dimensional working model, and define 
\(f_{n} \;=\; \arg\min_{f\in D^{(k)}_M(\mathcal{R}_n)} P_{n}L(f).\)
Define the loss-based dissimilarity to be
\(d_{0}(f_{n},f_{0}) \;=\; P_{0}L(f_{n}) - P_{0}L(f_{0}).\) Notice that $d_{0}(f_{n},f_{0})$ is just the KL divergence of the true density $p_{f_{0}}$ and the estimated density $p_{f_{n}}$ when $L$ is $-\log p_f$. Knowing the fact that KL divergence is equivalent to the squared $L^2$-norm when the positivity assumption in Theorem~\ref{thm:density_HAL_asymptotic_normality} holds true, we can show the $L^2$ convergence of HAL-MLE by showing the convergence of $d_{0}(f_{n},f_{0})$. Again, the positivity assumption is guaranteed by the link function.

\subsection{Proof of Theorem~\ref{thm:l2-convergence-hal-mle}}
We show the convergence by discussing the covering number of $D^{(k)}_U([0,1]^d)$ and its relationship to the covering number of $\mathcal{F}_L := \{L(f) - L(f_0): f \in \mathcal F\}$.
The sup–norm covering number of $D^k_M[0,1]^d$ is covered in Lemma 29 of \citep{vanderlaan2023higherordersplinehighly}, and this implies the $L^2$ covering number of $D^k_M[0,1]^d$. However, for the univariate case, it is sufficient—and conventional—to rely directly on existing \(L^{2}\) results for functions whose \(k^{\text{th}}\) derivative has bounded total variation. The equivalency is already stated in Section~\ref{Relationship with TV}. The earliest source we have been able to locate is \citep{birman1967piecewise}, which establishes the sharp bound
\(
\log N\!\Bigl(\varepsilon,\; D^{(k)}_U([0,1]),\; L^2(\mu)\Bigr)
\;\lesssim\;
\Bigl(\tfrac{C}{\varepsilon}\Bigr)^{\frac{1}{\,k+1\,}},
\) where $C$ is a constant.
Similar results and follow-up applications also appear in \citep{mammen1997locally}, \citep{tibshirani2014adaptive}, and \citep{sadhanala2019additive}. Consequently, we adopt the one-dimensional \(L_{2}\) entropy bound throughout; it is fully compatible with—and indeed implied by the broader sup–norm result in \citep{vanderlaan2023higherordersplinehighly}.

To show the connection between $N\!\Bigl(\varepsilon,\; \mathcal{F},\; L^2(\mu)\Bigr)$ and $N\!\Bigl(\varepsilon,\; \mathcal{F}_L,\; L^2(P_0)\Bigr)$, we list two key assumptions.

\begin{assumption}[Quadratic loss-based dissimilarity assumption]\label{as1}
For the loss function \(L\) and loss–based dissimilarity 
\(d_{0}(f,f_{0}) = P_{0}L(f) - P_{0}L(f_{0})\),
assume 
\[
\sup_{f \in \mathcal{F}}
\frac{P_{0}\!\bigl\{L(f) - L(f_{0})\bigr\}^{2}}
     {d_{0}(f,f_{0})}
\;= O(1) <\;\infty.
\]
\end{assumption}

\begin{remark}
When \(L(f) = -\log p_f\) (negative log-likelihood), then Assumption~\ref{as1} becomes a fact \citet{van2004asymptotic}.
\end{remark}

\begin{assumption}[Norm comparison]\label{as2}
Let \(\lVert\cdot\rVert\) be the norm used for  $N\!\Bigl(\varepsilon,\; \mathcal{F},\; \lVert\cdot\rVert\Bigr)$.  
Assume
\[
\sup_{f \in \mathcal{F}}
\frac{P_{0}\!\bigl\{L(f) - L(f_{0})\bigr\}^{2}}
     {\lVert f - f_{0}\rVert^{2}}
\; = O(1) <\;\infty.
\]
\end{assumption}

\begin{remark}
If \(\lVert\cdot\rVert\) is chosen as either \(L_{2}(\mu)\) or \(L_{\infty}\),
then Assumption~\ref{as1} implies Assumption~\ref{as2}, based on the equivalency of KL divergence and the squared $L^2$ distance under positivity assumption.
\end{remark}

\begin{theorem}[Covering number equivalency]\label{Covering number equivalency}
For a choice of \(\lVert\cdot\rVert\) such that Assumption~\ref{as2} holds (e.g. $L^2$ and $L_{\infty}$), Assumption~\ref{as2} implies that $N\!\Bigl(\varepsilon,\; \mathcal{F},\; \lVert\cdot\rVert\Bigr) \asymp N\!\Bigl(\varepsilon,\; \mathcal{F}_L,\; L^2(P_0)\Bigr)$.
\end{theorem}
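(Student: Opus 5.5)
The plan is to prove the two covering bounds separately and to build the cover explicitly in each direction. The bridge is the map $T: f \mapsto L(f) - L(f_0)$ from $\mathcal F$ onto $\mathcal F_L$. For the log-likelihood loss this map is
\[
T(f) = -(f - f_0) + \log C(f) - \log C(f_0),
\]
so it is affine in $f$ up to the scalar normalizer. The upper bound $N(\varepsilon,\mathcal F_L,L^2(P_0)) \lesssim N(c\varepsilon,\mathcal F,\lVert\cdot\rVert)$ is the essential direction. The reverse bound is needed only because the statement is written with $\asymp$.

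\emph{Upper bound.} The first step is to upgrade Assumption~\ref{as2} from a bound relative to $f_0$ to a Lipschitz bound between arbitrary pairs $f,g\in\mathcal F$:
\[
P_0\{L(f)-L(g)\}^2 \le K \lVert f-g\rVert^2 .
\]
Assumption~\ref{as2} only controls differences against the fixed point $f_0$, so I would argue through the explicit form of the loss rather than the assumption itself. We have
\[
L(f)-L(g) = -(f-g) + \log C(f) - \log C(g).
\]
Under the uniform bound $\sup_{f\in\mathcal F}\lVert f\rVert_\infty\le U'$, which follows from the bounded sectional variation norm, the mean value theorem gives $|\log C(f)-\log C(g)| \le e^{2U'}\lVert f-g\rVert_{L^1(\mu)}$. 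The positivity fact gives $p_0\le e^{2U'}$, so $P_0(f-g)^2 \le e^{2U'}\lVert f-g\rVert_{L^2(\mu)}^2$. Together these yield $K$ for both the $L^2(\mu)$ and $L_\infty$ norms.

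With the Lipschitz bound in hand, I take an $\varepsilon/\sqrt K$-net $\{f_j\}$ of $\mathcal F$ in $\lVert\cdot\rVert$. Its image $\{L(f_j)-L(f_0)\}$ is then an $\varepsilon$-net of $\mathcal F_L$ in $L^2(P_0)$. This gives $N(\varepsilon,\mathcal F_L,L^2(P_0)) \le N(\varepsilon/\sqrt K,\mathcal F,\lVert\cdot\rVert)$.

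\emph{Lower bound.} For the reverse direction I need $\lVert f-g\rVert \lesssim \lVert T(f)-T(g)\rVert_{L^2(P_0)}$. The identifiability issue is that $f$ is only determined by $p_f$ up to an additive constant. I would fix this with the normalization that $\mathcal F$ is indexed so that $f\mapsto p_f$ is injective, for instance by pinning $\int f\,d\mu=0$ or by working modulo constants. Then $T(f)-T(g) = -(f-g) + c$ with the constant $c=\log C(f)-\log C(g)$. Since $p_0\ge \delta$, we get $\lVert T(f)-T(g)\rVert_{L^2(P_0)}^2 \ge \delta \lVert (f-g) - c\rVert_{L^2(\mu)}^2$. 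Minimizing over the constant then lower-bounds this by the $L^2(\mu)$ distance of the centered versions. An $\varepsilon$-net of $\mathcal F_L$ therefore pulls back to a $C\varepsilon$-net of $\mathcal F$ modulo constants, using a $2\varepsilon$-separated set argument.

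For the sup norm, the lower bound holds only at the level of entropy order $\varepsilon^{-1/(k+1)}$ and not constant-for-constant. I would state it that way, which is all the rate proof needs. Combining the two directions gives $\log N(\varepsilon,\mathcal F,\lVert\cdot\rVert) \asymp \log N(\varepsilon,\mathcal F_L,L^2(P_0))$, up to rescaling $\varepsilon$ by constants, and this is the sense of $\asymp$ I would claim.

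The main obstacle is the lower bound. There the loss map is not injective because of the normalizing constant, and for $L_\infty$ the $L^2(P_0)$ distance cannot dominate the sup distance pointwise. I would therefore restrict the claim to $L^2(\mu)$ modulo constants, or to entropy orders, noting that only the upper bound feeds into the proof of Theorem~\ref{thm:l2-convergence-hal-mle}.
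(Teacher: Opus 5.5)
Your upper bound is the same argument as the paper's: push an $\varepsilon$-net of $\mathcal F$ forward through $f\mapsto L(f)-L(f_0)$. Where you differ is in how the key estimate $P_0\{L(f_j^\varepsilon)-L(f)\}^2=O(\lVert f_j^\varepsilon-f\rVert^2)$ is justified. The paper simply invokes Assumption~\ref{as2}. That assumption only compares an element with the fixed $f_0$, and the triangle inequality cannot convert it into a bound in terms of $\lVert f_j^\varepsilon-f\rVert$. You instead derive the pairwise Lipschitz bound from $L(f)-L(g)=-(f-g)+\log C(f)-\log C(g)$, the uniform sup bound on $\mathcal F$, and the mean value theorem for $\log C$. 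This is what actually makes the step valid; the price is that the conclusion rests on the log-likelihood structure rather than on Assumption~\ref{as2} by itself.

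For the reverse direction, the paper offers only the claim that $\mathcal F$ and $\mathcal F_L$ are in one-to-one correspondence, which is false since $L(f+c)=L(f)$. Your treatment is the correct repair: you work modulo constants, use $p_0\ge\delta$ to get $\lVert T(f)-T(g)\rVert_{L^2(P_0)}^2\ge\delta\lVert \bar f-\bar g\rVert_{L^2(\mu)}^2$ for the centred versions, and claim $\asymp$ only modulo constants or for $\log N$ after rescaling $\varepsilon$, where the constant direction costs only $O(\log(1/\varepsilon))$.

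Two small caveats. First, take internal nets (centres in $\mathcal F$, replacing $\varepsilon$ by $2\varepsilon$). For the $L^2(\mu)$ norm an external centre need not be bounded, and your Lipschitz constant would not apply to it. Second, your sup-norm lower bound at entropy order $\varepsilon^{-1/(k+1)}$ requires $k\ge 1$. For $k=0$ the indicators $I\{x\ge t\}$, $t\in(0,1]$, all lie in $\mathcal F$ at pairwise sup-distance $1$. Hence $N(\varepsilon,\mathcal F,L_\infty)=\infty$ for $\varepsilon<1/2$, while $N(\varepsilon,\mathcal F_L,L^2(P_0))$ is finite, so the $L_\infty$ version of the equivalence simply fails there.
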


\begin{proof}
Let $\{f_j^\varepsilon: j = 1, \cdots, N(\varepsilon)\}$ be an $\varepsilon$-covering number on $\mathcal{F}$, then there $\exists j$, such that for any $f \in \mathcal{F}$, $\lVert f_j^\varepsilon - f\lVert < \varepsilon $. Notice that for negative log-likelihood loss, there is a one-to-one relationship between $\mathcal{F}$ and $\mathcal{F}_L$.

And this implies that any element in $\mathcal{F}_L$ can be expressed as $L(f) - L(f_0)$. Constructing a net $\{L(f_j^\varepsilon) - L(f_0): j = 1, \cdots, N(\varepsilon)\}$ on $\mathcal{F}_L$ based on that covering number, then $\lVert((L(f_j^\varepsilon) - L(f_0)) - (L(f) - L(f_0))\lVert_{L^2(P_0)}^2 = \lVert L(f_j^\varepsilon) - L(f)\lVert_{L^2(P_0)}^2 = P_0\{L(f_j^\varepsilon) - L(f) \}^2 =O(\lVert f_j^\varepsilon - f\rVert^{2}) = O(\varepsilon^2)$. The second last step is implied by Assumption~\ref{as2}. So this net is a covering number on $\mathcal{F}_L$ which indicates the equivalence of the covering number of both classes.
\end{proof}

Now we have developed all the tools needed for proving Theorem~\ref{thm:l2-convergence-hal-mle}. 

\begin{proof}
Let \( J(\delta, \mathcal{F}_L, L^2(P_0)) \) be the entropy integral of $\mathcal{F}_L$; for brevity we write \(J_2(\delta)\). Based on Theorem~\ref{Covering number equivalency}, $\mathcal{F}_L$ and $\mathcal{F}$ shares the same entropy integral \(J_2(\delta) = \delta^\frac{2k+1}{2k+2}\).

Define \( \mathbb{G}_n = n^{1/2} (P_n - P_0) \) to be the empirical process. \(P_{n}\) is the empirical measure and
\(P_{0}\) the true data-generating measure. Formal definitions of covering numbers and entropy integrals can be found in
Section 2.2 of \citet{van1996weak}.

\begin{align}
0
\;\le\;
d_{0}(f_{n},f_{0})
&= -\bigl(P_{n}-P_{0}\bigr)\bigl\{L(f_{n})-L(f_{0})\bigr\}
    + P_{n}\bigl\{L(f_{n})-L(f_{0})\bigr\} \notag\\
&\le -\bigl(P_{n}-P_{0}\bigr)\bigl\{L(f_{n})-L(f_{0})\bigr\} \text{, because $f_n$ is the empirical risk minimizer} \notag\\ 
&\le n^{-1/2}\,
   \sup_{\substack{f\in\mathcal F_{L}\\ \lVert f\rVert_{L^{2}(P_{0})}\leq\delta}}
   \bigl\lvert \mathbb{G}_{n}(f) \bigr\rvert \notag \\
&\lesssim n^{-1/2} J_2(\delta) \left( 1 + \frac{J_2(\delta)}{\delta^2 n^{1/2}} \right) \text{Section 3.4.2 of \citep{van1996weak}.}  \notag
\end{align}
\( J_2(\delta) \) dominates \( \frac{J_2(\delta)^2}{\delta^2 n^{1/2}} \) when \( J_2(\delta) \lesssim \delta^2 n^{1/2} \). For any \( \delta \geq \delta_n \asymp n^{-\frac{k+1}{2k+3}} \), we can use \( J_2(\delta) \) as the upper bounded for \( \mathbb{E}_{P_0}\sup_{\substack{f\in\mathcal F_{L}\\ \lVert f\rVert_{L_{2}(P_{0})}<\delta}}
   \bigl\lvert \mathbb{G}_{n}(f) \bigr\rvert\).

The sketch of the following proof is to create a sequence of $\delta \geq \delta_n$, denote as $\delta_k$, to iteratively solve for the optimal rate. We start with the trivially large radius \(\delta_{0}=1\) (and the corresponding $J(\delta_0) = O_p(1)$), which yields  
\(
d_{0}(f_{n},f_{0}) = O_{p}\!\bigl(n^{-1/2}\bigr).
\)
By Assumption~\ref{as1},
\[
P_{0}\!\bigl\{L(f_{n})-L(f_{0})\bigr\}^{2}=O_{p}\!\bigl(d_{0}(f_{n},f_{0})\bigr)=O_{p}\!\bigl(n^{-1/2}\bigr),
\qquad
\bigl\lVert L(f_{n})-L(f_{0})\bigr\rVert_{L_{2}(P_{0})}
        =O_{p}\!\bigl(n^{-1/4}\bigr).
\]
Hence we update the radius to \(\delta_{1}\asymp n^{-1/4}\), which is still
larger than or equal to \(\delta_n\).
We repeat the argument, iteratively shrinking \(\delta\) until convergence.
At convergence the \((k+1)\)-st update equals the \(k\)-th, i.e. 
\(\delta_{k+1}=\delta_{k}=\delta\), and the fixed point solves
\[
\delta
=\sqrt{n^{-1/2}J_2(\delta)}
= n^{-1/4}\,\delta^{\frac{2k+1}{4k+4}}
=\delta_n.
\]
Plugging \(\delta_n\) back into \(J_2(\delta)\) gives
\[
d_{0}(f_{n},f_{0})
\;=\;
O_{p}\!\bigl(n^{-1/2}J(\delta_n)\bigr)
\;=\;
O_{p}\!\bigl(n^{-\frac{2k+2}{2k+3}}\bigr).
\]
The positivity of density holds, which implies that 
\(d_{0}(f_{n},f_{0})\asymp\lVert p_{f_{n}}-p_{f_{0}}\rVert_{L^{2}(\mu)}^{2}\);
therefore
\[
\lVert p_{f_{n}}-p_{f_{0}}\rVert_{L^{2}(\mu)}
=O_{P}\!\bigl(n^{-\frac{k+1}{2k+3}}\bigr).
\]

\end{proof}




\newpage
\section{Score Equation Analysis for HAL-MLE} \label{App C: Score Analysis}
HAL-MLE, unlike the log-splines, is not a full parametric MLE on the working model. Instead, we have a $L_1$ norm constraint on the coefficients. So we pay a price that the  empirical score equation is not exactly 0. In this appendix, we will show that the scores we need can be approximately solved with a negligible error.

Recall the definition of the score. Let $S_{\beta}(\phi_j)=\frac{\mathrm{d}}{\mathrm{d}\beta(j)}\log p_{\beta}$ be the score at $\beta$ for $\beta(j)$, the $j$-th coefficient of $\beta$. We also denote this score as $S_{f_{\beta}}(\phi_j)$.
Note that this equals
\[
S_{\beta}(\phi_j)=\phi_j(X)-P_{\beta}\phi_j, \]
where $P_{\beta}\phi_j\equiv \int \phi_j(x)p_{\beta}(x) \mathrm{d}x$.
We note $\phi\rightarrow S_{\beta}(\phi)$ is linear. HAL-MLE has one $L_1$ norm constraint and solves $J_n - 1$ scores. We name the linear span of these $J_n - 1$ scores as the $L_1$ constrained score space, and notice that this space is a closed linear subspace of $D^k(\mathcal{R}_n)$. In the latter proof in Appendix~\ref{appendix_D: proof of asymptotic normality} and Appendix~\ref{appendix_E: Uniform Convergence Rate}, we will construct $\bar{\phi}_{n,x}$ as defined before Theorem~\ref{thm:asymptotic-linearity-hal-mle} and we want to elaborate on the rate of $P_n S_{f_n}(\bar{\phi}_{n,x})$.

Let $\tilde{\phi}_{n,x}$ be the projection of $\bar{\phi}_{n,x}$ onto the $L_1$ constrained score space, that is, the linear span of scores $S_{f_n}(\tilde{\phi}_j)$, $j\in {\cal R}_n$ that HAL solves exactly so that $P_n S_{f_n}(\tilde{\phi}_j)=0$ for $j\in {\cal R}_n$. Then, $P_n S_{f_n}(\tilde{\phi}_{n,x})=0$, and thus we have
\[
P_n S_{f_n}(\bar{\phi}_{n,x})=P_n \{S_{f_n}(\bar{\phi}_{n,x})-S_{f_n}(\tilde{\phi}_{n,x})\}.
\]
We note that the $L^2(P_{f_{n,0}})$-norm of $\bar{\phi}_{n,x}$ is bounded:
\begin{align} \label{eq:bar-phi-norm-L2P0}
\Vert \bar{\phi}_{n,x} \Vert_{\beta_{n,0}}=J_{n,0}^{-1}\sum_{j\in {\cal R}_{n,0}}\{\phi_j^*(x)\}^2=O(1).
\end{align}
We have $p_{f_{n,0}}>\delta>0$ on $[0,1]$, so that we also have 
\begin{align} \label{eq:bar-phi-norm-mu-Lebesgue}
\Vert \bar{\phi}_{n,x} \Vert_{\mu}=O(1).
\end{align}

Recall the $L^2$ Approximation Error Assumption between $D^k(\mathcal{R}_n)$ and $D^k_U([0,1])$ in Theorem~\ref{thm:asymptotic-linearity-hal-mle} 
\[
\sup_{f\in D^{(k)}_U([0,1])}\inf_{g\in D^{(k)}(\mathcal R_{n})}\|f-g\|_{\mu}
=O^+\bigl(J_{n}^{-(k+1)}\bigr).
\]

The idea of this assumption is that we can select a rich enough set of basis functions that can approximate the truth in $L^2(\mu)$-norm. The $L_1$ constrained score space has only one less basis than $D^k(\mathcal{R}_n)$, so it is reasonable to extend this assumption to the $L_1$ constrained score space. 

\begin{theorem}[Negligibility of Score Approximation Error for HAL-MLE] \label{thm:score-approximation-negligibility-hal-mle}
Suppose that the $L^2(\mu)$-Approximation Error Assumption can be extended to the $L_1$ constrained score space, then the empirical score equation satisfies
\[
n^{1/2}P_n S_{f_n}(\bar{\phi}_{n,x}) = o_P(1).
\]
\end{theorem}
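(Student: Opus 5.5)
The plan starts from the identity already derived above the statement. Since $P_n S_{f_n}(\tilde\phi_{n,x})=0$, and $\phi\mapsto S_{f_n}(\phi)=\phi-P_{f_n}\phi$ is linear, we have
\[
n^{1/2}P_nS_{f_n}(\bar\phi_{n,x})=n^{1/2}P_nS_{f_n}(h_n),\qquad h_n\equiv\bar\phi_{n,x}-\tilde\phi_{n,x}.
\]
I would split the right-hand side as
\[
n^{1/2}(P_n-P_0)S_{f_n}(h_n)+n^{1/2}P_0S_{f_n}(h_n).
\]
I would then control the two terms separately. The first is handled with an empirical process argument. The second is handled with a second-order (Cauchy--Schwarz) argument that uses the $L^2$ rate of Theorem~\ref{thm:l2-convergence-hal-mle}.

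\textbf{Step 1: size of $h_n$.} By \eqref{eq:bar-phi-norm-mu-Lebesgue}, $\bar\phi_{n,x}$ has bounded $L^2(\mu)$ norm. Because $J_{n,0}^{-1/2}\bar\phi_{n,x}$ is a finite combination of splines in $D^{(k)}(\mathcal R_{n,0})$, it has bounded $k$-th order sectional variation norm. Invoking the assumed extension of the $L^2(\mu)$ approximation condition to the $L_1$ constrained score space, and taking $\tilde\phi_{n,x}$ to be a best approximant, I would conclude
\[
\|h_n\|_\mu=O^+\bigl(J_{n,0}^{1/2}J_n^{-(k+1)}\bigr).
\]
Here the factor $J_{n,0}^{1/2}$ comes from rescaling, and it is absorbed by choosing $J_n\asymp J_{n,0}$ under undersmoothing. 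Under the rate condition $J_{n,0}^{-(k+1)}=o((J_{n,0}/n)^{1/2})$, this gives $\|h_n\|_\mu=o^+(1)$, and in fact $\|h_n\|_\mu = o((J_{n,0}^2/n)^{1/2})$ up to logarithmic factors. Since $p_0$ and $p_{f_n}$ are bounded above and away from zero (positivity), the same bound holds in $L^2(P_0)$ and in $L^2(P_{f_n})$. It therefore also holds for $S_{f_n}(h_n)$, because centering does not increase the $L^2$ norm.

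\textbf{Step 2: empirical process term.} The functions $S_{f_n}(h_n)$ lie in a class with sectional variation norm bounded by a constant times $J_{n,0}^{1/2}$. By the entropy bound of \citet{birman1967piecewise} used in Appendix~\ref{appB: $L^2$ Convergence Analysis}, this class has entropy integral $J_2(\delta)\lesssim \delta^{(2k+1)/(2k+2)}$ up to the scale factor. The maximal inequality of Section 3.4.2 of \citet{van1996weak} then gives
\[
n^{1/2}(P_n-P_0)S_{f_n}(h_n)=O_P\bigl(J_2(\|h_n\|)\bigr)=o_P(1),
\]
since $\|h_n\|\to0$. Asymptotic equicontinuity of a Donsker class with shrinking $L^2$ radius would also suffice.

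\textbf{Step 3: bias term, the main obstacle.} Write
\[
P_0S_{f_n}(h_n)=(P_0-P_{f_n})h_n=\int h_n\,(p_0-p_{f_n})\,d\mu .
\]
By Cauchy--Schwarz this is at most $\|h_n\|_\mu\|p_{f_n}-p_0\|_\mu$. Theorem~\ref{thm:l2-convergence-hal-mle} gives $\|p_{f_n}-p_0\|_\mu=O_P(n^{-(k+1)/(2k+3)})$. With $J_n\asymp J_{n,0}\asymp n^{1/(2k+3)}$ up to logarithmic factors, $\|h_n\|_\mu=O^+(n^{(1/2-(k+1))/(2k+3)})$. Multiplying by $n^{1/2}$, the exponent of the bound is
\[
\frac{(2k+3)/2-(k+1)+1/2-(k+1)}{2k+3}=\frac{-k}{2k+3}<0 \quad\text{for }k\ge1.
\]
This is precisely why $k\ge1$ is needed. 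I expect the main difficulty to be the bookkeeping of the logarithmic factors and the $J_{n,0}^{1/2}$ scaling of $\bar\phi_{n,x}$. I would handle these by letting slight undersmoothing absorb the $\log n$ powers, as described in the remark on cross-validation and undersmoothing. Combining Steps 2 and 3 yields $n^{1/2}P_nS_{f_n}(\bar\phi_{n,x})=o_P(1)$.
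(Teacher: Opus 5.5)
Your proposal follows the paper's proof: the same identity $P_nS_{f_n}(\bar\phi_{n,x})=P_nS_{f_n}(\bar\phi_{n,x}-\tilde\phi_{n,x})$, split into $(P_n-P_0)(\bar\phi_{n,x}-\tilde\phi_{n,x})$, bounded by the entropy-integral maximal inequality after a $J_{n,0}^{1/2}$ rescaling, and $(P_0-P_{f_n})(\bar\phi_{n,x}-\tilde\phi_{n,x})$, bounded by Cauchy--Schwarz together with Theorem~\ref{thm:l2-convergence-hal-mle}. The one difference is that you carry the $J_{n,0}^{1/2}$ factor into the bias term, where the paper drops it and gets $n^{-(2k+2)/(2k+3)}$, which is why $k\ge1$ appears explicitly in your exponent $-k/(2k+3)$; because the scale grows with $n$, Step~2 should also state the explicit rate $J_{n,0}^{1/2}J_n^{-(2k+1)/2}\to0$, since $\|h_n\|\to0$ alone (or fixed-class equicontinuity) does not give it.
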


\begin{proof}
If the $L^2(\mu)$-Approximation Error Assumption holds for $\bar{\phi}_{n,x}$ and its projection $\tilde{\phi}_{n,x}$ as well, then,  
\[
\Vert \bar{\phi}_{n,x} - \tilde{\phi}_{n,x} \Vert_{L^2(\mu)} = O^+(J_n^{-(k+1)}),
\]

\begin{align*}
P_n S_{f_n}(\bar{\phi}_{n,x}) &= P_n \{S_{f_n}(\bar{\phi}_{n,x})-S_{f_n}(\tilde{\phi}_{n,x})\} \\
&= (P_n - P_0) \{\bar{\phi}_{n,x}-\tilde{\phi}_{n,x}\} - (P_{f_n} - P_0) \{\bar{\phi}_{n,x}-\tilde{\phi}_{n,x}\} \\
\end{align*}    

The first term can be bounded by an empirical process, where we shorthanded $\|\cdot\|_{L^2(\mu)}$ by $\|\cdot\|_{\mu}$,
\begin{align*}
    (P_n - P_0) \{\bar{\phi}_{n,x}-\tilde{\phi}_{n,x}\} &= J_{n,0}^{\frac{1}{2}} n^{-\frac12} \sup_{\substack{f\in\mathcal D^{(k)}_U([0,1])\\ \lVert f\rVert_{L^{2}(P_{0})}\leq\delta}}
    \bigl\lvert \mathbb{G}_{n}(f) \bigr\rvert   \\
    &= J_{n,0}^{\frac{1}{2}} n^{-\frac12} J(\delta_n) \\
    &= J_{n,0}^{\frac{1}{2}} n^{-\frac12} \delta_n^{\frac{2k+1}{2k+2}} \\
    &= O_P^+(J_{n,0}^{\frac{1}{2}} n^{-\frac12} (J_n^{-1/k+1})^{(2k+1)/(2k+2)}) \\
    &= O_P^+(n^{-\frac{1}{4k+6}} n^{-\frac12} (n^{-\frac{2k+1}{4k+6}})) \\
    &= O_P^+(n^{-\frac{4k+5}{4k+6}}) \\
    &= o_p(n^{-\frac{1}{2}}) \\
\end{align*}
We here used the same empirical process as in Appendix~\ref{appB: $L^2$ Convergence Analysis}, and the $\delta_n$ is the is the $L^2(P_0)$ convergencerate of $J_n^{-\frac{1}{2}}(\bar{\phi}_{n,x}-\tilde{\phi}_{n,x})$ going to 0. This rate is implied by the $L^2$ Approximation Error Assumption and the positivity of the true density $p_{0}$.

The second term can be bounded by Cauchy-Schwarz inequality.
\begin{align*}
    (P_{f_n} - P_0) \{\bar{\phi}_{n,x}-\tilde{\phi}_{n,x}\} &\leq \|p_{f_n} - p_0\|_{\mu} \cdot \left\|\bar{\phi}_{n,x}-\tilde{\phi}_{n,x}\right\|_{\mu} \\
    &= \|p_{f_n} - p_0\|_{\mu} \cdot \left\|\bar{\phi}_{n,x}-\tilde{\phi}_{n,x}\right\|_{\mu} \\
    &= O_P^+(n^{-\frac{k+1}{2k+3}} J_n^{-(k+1)})  \\
    &= O_P^+(n^{-\frac{2k+2}{2k+3}}) \\
    &= o_p(n^{-\frac{1}{2}}) \\
\end{align*}
And thus, we have
\begin{align*}
    P_n S_{f_n}(\bar{\phi}_{n,x}) &= O_P^+(n^{-\frac{2k+2}{2k+3}}),
\end{align*}
and 
\begin{align*}
    n^{1/2}P_n S_{f_n}(\bar{\phi}_{n,x}) &= o_P(1).
\end{align*}
\end{proof}


\newpage
\section{Proof of Pointwise Asymptotic Normality} \label{appendix_D: proof of asymptotic normality}

This appendix includes the proof of Theorem~\ref{thm:asymptotic-linearity-hal-mle}, Theorem~\ref{thm:density_HAL_asymptotic_normality}, and Corollary~\ref{coro: Delta-method for Density Estimation}.

\begin{proof}
Again, recall that the HAL‐MLE over $D^{(k)}_M(\mathcal R_n)$ by
\[
f_n \;=\;\arg\min_{f\in D^{(k)}_M(\mathcal R_n)}\;-\,P_n\log p_f = f_{\beta_n}.
\]
For the proof, we need to build an auxiliary tool with a set ${\cal R}_{n,0}$ that is independent of $P_n$ instead of ${\cal R}_n$. Let \(\mathcal R_{n,0} = \mathcal R(P_n^{\#})\), using the same selection algorithm on an independent sample \(P_n^{\#}\sim P_0\).
Define the oracle MLE over $D^{(k)}_M(\mathcal R_{n,0})$ as
\[f_{n,0}=\arg\min_{f\in D^{(k)}_M(\mathcal R_{n,0})}\;-\,P_0\log p_f \;=\;f_{\beta_{n,0}},\] the oracle is represented as $f_{n,0}=\sum_{j=1}^{J}\beta_{n,0,j}\,\phi_j^{\ast}$, and it solves the score equation at $f_{n,0}$, \[P_0 S_{f_{n,0}}(\phi_j^{\ast}) = 0\;\;(\forall j).\]
Suppose \(D^{(k)}_M(R_{n,0})\) has effective dimension \(J_{n,0}\) with orthonormal basis
\(\phi^* \equiv \{\phi_j^{\ast}\}_{j=1}^{J_{n,0}}\), and we can project $f_n =  \Pi(f_n\mid D^{(k)}_M({\cal R}_{n,0}))$ from \(D^{(k)}_M(R_{n,0})\) to \(D^{(k)}_M(R_{n})\), denoted by $\tilde f_n$. Since $\tilde f_n$ is an element on $D^{(k)}_M(\mathcal R_{n,0})$, we can also represent $\tilde f_n $ in terms of \(\{\phi_j^{\ast}\}_{j=1}^{J_{n,0}}\), and we have $\tilde f_n = \sum_{j=1}^{J_{n,0}}\tilde \beta_{n}(j)\,\phi_j^{\ast}$. 

\subsection{Outline}
We here give an outline of the proof first and then argue the rates of each terms along the process. Denote the score equation at $f_n$ as $r_n(\cdot) \;=\; P_n S_{f_n}(\cdot)$, and denote the score equation at $\tilde f_n$ as $\tilde r_n(\cdot) \;=\; P_n S_{\tilde f_n}(\cdot)$. Define a linear operator T of an vector with $J$ elements such that \(T(b)(\cdot)=\sqrt{\frac{n}{J}}\sum_{j=1}^{J} b_j\,\phi_j^{\ast}(\cdot)\). Previously in Section~\ref{HAl_MLE with Link Function},  we define the normalized score vector at \(x\) by
\(
\bar{\phi}_{n,x}(\cdot)\equiv J_{n,0}^{-1/2}\sum_{j\in {\cal R}_{n,0}}\phi_j^*(x)\phi_j^*(\cdot)\).
Notice that $T(\phi^*(x))(\cdot) = \bar\phi_{n,x}$, and we shorthand it to be $\bar\phi$ in the following outline.


\begin{align*}
&\sqrt{\frac{n}{J_{n,0}}}\bigl(\tilde f_n(x) - f_{n,0}(x)\bigr) \\
&\quad= T(\tilde \beta_n - \beta_{n,0})(x) \\
&\quad= T\!\Bigl(
      -\frac{\mathrm{d}}{\mathrm{d}\beta_{n,0}}
      P_0 S_{f_{n,0}}(\phi^{\ast})
      \,( \tilde \beta_n - \beta_{n,0})
    \Bigr)(x) &&\text{(Identity Outer Product)} \\[2pt]
&\quad= T\!\bigl(
        - P_0\{S_{\tilde f_n}-S_{f_{n,0}}\}(\phi^{\ast})
        + R_{1n}(\phi^{\ast})\bigr)(x) &&\text{(Taylor Expansion)} \\[2pt]
&\quad= T\!\bigl(
        -P_0\{S_{\tilde f_n}-S_{f_n}\}
        - P_0\{S_{f_n}-S_{f_{n,0}}\}
        + R_{1n}
      \bigr)(\phi^{\ast}) &&\text{(Add\&subtract $P_0 S_{f_n}$)} \\[2pt]
&\quad= T\!\bigl(
        -\{P_{\tilde f_n}-P_{f_n}\}
        + (P_n-P_0)S_{f_n}
        - r_n + R_{1n}
      \bigr)(\phi^{\ast}) &&\text{(Score expression)} \\[3pt]
&\quad= -n^{1/2}\bigl((P_{\tilde f_n} - P_{f_n})(\bar\phi)\bigr) + n^{1/2}(P_n-P_0)S_{f_{n,0}}(\bar\phi) +  \\
&\qquad\quad (P_n-P_0)\bigl\{S_{f_n}(\bar\phi)-S_{f_{n,0}}(\bar\phi)\bigr\} - n^{1/2}r_n(\bar\phi)
     + n^{1/2}R_{1n}(\bar\phi) &&\text{(Linearity of $T$)} \\[3pt]
&\quad= - o_P(1)
     + n^{1/2}(P_n-P_0)S_{f_{n,0}}(\bar\phi) + 0\\
&\qquad\quad - o_P(1) + o_P(1) &&\text{(Rate substitution)} \\[3pt]
&\quad = n^{1/2}(P_n-P_0)S_{f_{n,0}}(\bar\phi) + o_P(1).
\end{align*}

$\sqrt{J_{n,0}}S_{f_{n,0}}(\bar\phi)$ serves as the influence curve of HAL-MLE, and it is a function of $x$ since $\bar\phi$ is a function of $x$. For an application of the CLT we need that the variance under $P_0$ of $S_{\beta_{n,0}}(\bar{\phi})$ is $O(1)$. We have \[S_{\beta_{n,0}}(\bar{\phi})=\bar{\phi}(X)-P_{\beta_{n,0}}\bar{\phi}.\] Therefore this variance can be bounded by $\| \bar{\phi}\|_{P_0}^2$. 
Recall the Positivity Condition for Theorem~\ref{thm:l2-convergence-hal-mle}, the density $p_f$ is bounded away from zero uniformly for all $f \in \mathcal F$. And the BSVN condition implies that the density $p_f$ is uniformly bounded from above by some constant as well, and then this should generally be true for the oracle MLE. Then, it follows that 
\[
\| \bar{\phi}\|_{P_0}=O(\| \bar{\phi}\|_{P_{\beta_{n,0}} } )=O( J_{n,0}^{-1}\sum_{j\in {\cal R}_{n,0}}\{\phi_j^*(x)\}^2)=O(1).
\]
Let $\sigma^2_{n,0}\equiv P_0 \{S_{\beta_{n,0}}(\bar{\phi})\}^2$. We can then apply the empirical process theory here because $S_{f_{n,0}}(\bar\phi)$ is independent from the sample data based on our construction of oracle MLE.

Then the rest of the proof of the pointwise asymptotic normality from the projection $\tilde f_n$ to the oracle $f_{n,0}$ depends on the rates of the terms $n^{1/2}\bigl(P_{\tilde f_n} - P_{f_n})(\bar\phi)\bigr)$, $n^{1/2}r_n(\bar\phi)$, and $n^{1/2}R_{1n}(\bar\phi)$. By Appendix~\ref{App C: Score Analysis}, we have that $n^{1/2} r_n(\bar \phi)$ is $o_P(1)$. For the rest of the terms, we will discuss them in the following subsections.

We want the property from HAL-MLE $f_n$ to the truth $f_0$, thus we need to deal with the projection error from $f_n$ to $\tilde{f}_n$ and the oracle approximation error from $f_{n,0}$ to $f_0$. And both of the errors could be bounded by the Uniform Approximation Assumption in Theorem~\ref{thm:asymptotic-linearity-hal-mle} of this working model w.r.t. $D^{(k)}_U([0,1])$. Since $f_n\in D^{(k)}_U([0,1])$, we certainly have that $\| f_n-\tilde{f}_n\|_{\infty} = O^+(1/J_{n,0}^{k+1})$. And $\|f_{n,0} - f_0\|_{\infty} = O^+(J_{n,0}^{-(k+1)})$. They become negligible when choosing $(n/J_{n,0})^{1/2}\asymp^+ J_{n,0}^{(k+1)}$. By adding these two terms, we achieve the pointwise asymptotic normality of HAL-MLE $f_n$ to the truth $f_0$, as follows:
\begin{align*}
    \sqrt{\frac{n}{J_{n,0}}}\bigl(f_n - f_{n,0}\bigr)(x) = -\,n^{1/2}(P_n-P_0)S_{f_{n,0}}(\bar\phi)
     + o_P(1) + \sqrt{\frac{n}{J_{n,0}}}O^{+}\!\bigl(J_{n,0}^{-(k+1)}\bigr),
\end{align*}
and thus, 
\begin{align}
    \sqrt{\frac{n}{J_{n,0}}}\bigl(f_n - f_{0}\bigr)(x) &= -\,n^{1/2}(P_n-P_0)S_{f_{n,0}}(\bar\phi)
     + o_P(1) + \sqrt{\frac{n}{J_{n,0}}}O^{+}\!\bigl(J_{n,0}^{-(k+1)}\bigr) \nonumber \\
     &= -\,n^{1/2}(P_n-P_0)S_{f_{n,0}}(\bar\phi)
     + o_P(1),
\end{align}
or equivalently, $\sigma^{-1}_{n,0} n^{1/2}(P_n-P_0)S_{\beta_{n,0}}(\bar{\phi}_{n,x})$ converges in distribution to $N(0,1)$, while $\sigma^2_{n,0}=O(1)$.

\subsection*{The Term of $R_{1n}(\bar\phi)$}
Here, we argue why $n^{1/2}R_{1n}(\bar\phi)$ is $o_P(1)$.
We get the $R_{1n}(\phi_j^*)$ term as a function of some basis function $\phi_j^*$ by the Taylor expansion of the score function $S_{f_n}(\phi_j^*)$ around $f_{n,0}$.
\begin{align}
   R_{1n}(\phi_j^*) &= P_0 \{S_{\tilde \beta_n}(\phi_j^*) - S_{\beta_{n,0}}(\phi_j^*)\} - \frac{\mathrm{d}}{\mathrm{d}\beta_{n,0}}P_{\beta_{n,0}}(\phi_j^*)(\tilde \beta_n - \beta_{n,0}) \nonumber \\
   &= P_{\tilde \beta_n}(\phi_j^*) - P_{\beta_{n,0}}(\phi_j^*) - \frac{\mathrm{d}}{\mathrm{d}\beta_{n,0}}P_{\beta_{n,0}}(\phi_j^*)(\tilde \beta_n - \beta_{n,0}) \nonumber \\
   &= -\int \phi_j^* (p_{\tilde \beta_n} - p_{\beta_{n,0}}) - \frac{\mathrm{d}}{\mathrm{d}\beta_{n,0}}p_{\beta_{n,0}}(\tilde \beta_n - \beta_{n,0}) \mathrm{d}x.
\end{align}
Denoting $R_{1n}(\tilde{f}_n,f_{n,0}) \equiv p_{\tilde \beta_n} - p_{\beta_{n,0}} - \frac{\mathrm{d}}{\mathrm{d}\beta_{n,0}}p_{\beta_{n,0}}(\tilde \beta_n - \beta_{n,0})$, and using the shorthanded notation $C(\tilde f_n)=C_n$ and $C(f_{n,0})=C_{n,0}$ for the normalizing constant, then we have the following lemma.

\begin{lemma}\label{app_d_lemma:R1n_phi_j}
\begin{align*}
R_{1n}(\phi_j) &= - \int \phi_j R_{1n}(\tilde{f}_n,f_{n,0}) \mathrm{d}x\\
 &= -C_{n,0}^{-2}\int \phi_j \exp(f_{n,0})\mathrm{d}x \cdot \tfrac{1}{2} \int \exp(\xi(\tilde{f}_n,f_{n,0}))(\tilde{f}_n-f_{n,0})^2 \mathrm{d}x\\
 &\quad +(C_n-C_{n,0})^2/(C_nC_{n,0}^2)\int \phi_j \exp(f_{n,0}) \mathrm{d}x\\
 &\quad +C_{n,0}^{-1}\tfrac{1}{2}\int \phi_j \exp(\xi(\tilde{f}_n,f_{n,0}))(\tilde{f}_n-f_{n,0})^2\mathrm{d}x\\
 &\quad -(C_n-C_{n,0})/(C_nC_{n,0})\int \phi_j (\exp(\tilde{f}_n)-\exp(f_{n,0}))\mathrm{d}x .
\end{align*}
\end{lemma}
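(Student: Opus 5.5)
The plan is to prove the lemma by a direct second-order expansion of the map $f\mapsto p_f=\exp(f)/C(f)$ around $f_{n,0}$, working pointwise in $x$ and integrating against $\phi_j$ only at the end. The first equality, $R_{1n}(\phi_j)=-\int\phi_j R_{1n}(\tilde f_n,f_{n,0})\,\mathrm{d}x$, is immediate from the display preceding the lemma: $P_0 S_\beta(\phi_j)=P_0\phi_j-P_\beta\phi_j$, the term $P_0\phi_j$ cancels in the difference, and differentiation under the integral gives $\frac{\mathrm{d}}{\mathrm{d}\beta}P_\beta\phi_j=\int\phi_j\frac{\mathrm{d}}{\mathrm{d}\beta}p_\beta\,\mathrm{d}x$. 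Differentiating under the integral is justified because everything lives on $[0,1]$ and all $f$ in the working model are uniformly bounded under the $L_1$ constraint. So the work is to write $R_{1n}(\tilde f_n,f_{n,0})$ as an explicit second-order remainder.

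Set $\Delta\equiv\tilde f_n-f_{n,0}$. Because $f_\beta$ is linear in $\beta$, this equals $\sum_j(\tilde\beta_n-\beta_{n,0})(j)\phi_j^*$. The derivative term then has a closed form, using the score computation in (\ref{eq:inner-product-score}):
\[
\tfrac{\mathrm{d}}{\mathrm{d}\beta_{n,0}}p_{\beta_{n,0}}(\tilde\beta_n-\beta_{n,0})=p_{f_{n,0}}\bigl(\Delta-P_{f_{n,0}}\Delta\bigr)=C_{n,0}^{-1}e^{f_{n,0}}\Delta-C_{n,0}^{-2}e^{f_{n,0}}\textstyle\int e^{f_{n,0}}\Delta .
\]
For the difference of densities I would use the decomposition
\[
p_{\tilde f_n}-p_{f_{n,0}}=C_{n,0}^{-1}\bigl(e^{\tilde f_n}-e^{f_{n,0}}\bigr)+e^{\tilde f_n}\bigl(C_n^{-1}-C_{n,0}^{-1}\bigr),
\]
and then expand the pieces as follows. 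First, apply the mean value form $e^{\tilde f_n}-e^{f_{n,0}}=e^{f_{n,0}}\Delta+\tfrac12 e^{\xi}\Delta^2$, with $\xi=\xi(\tilde f_n,f_{n,0})$ pointwise between the two functions. Integrating this gives $C_n-C_{n,0}=\int e^{f_{n,0}}\Delta+\tfrac12\int e^{\xi}\Delta^2$. Second, use the exact identity
\[
C_n^{-1}-C_{n,0}^{-1}=-\frac{C_n-C_{n,0}}{C_{n,0}^2}+\frac{(C_n-C_{n,0})^2}{C_nC_{n,0}^2}.
\]
Third, split $e^{\tilde f_n}=e^{f_{n,0}}+(e^{\tilde f_n}-e^{f_{n,0}})$ in the second summand. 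In that summand I keep the unexpanded factor $-(C_n-C_{n,0})/(C_nC_{n,0})$ on the $(e^{\tilde f_n}-e^{f_{n,0}})$ piece, and apply the two-term identity only on the $e^{f_{n,0}}$ piece.

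Subtracting the derivative term, the first-order pieces cancel exactly. The $C_{n,0}^{-1}e^{f_{n,0}}\Delta$ pieces cancel directly. The $-C_{n,0}^{-2}e^{f_{n,0}}\int e^{f_{n,0}}\Delta$ piece cancels against the first-order part of $-e^{f_{n,0}}(C_n-C_{n,0})/C_{n,0}^2$. What remains are four terms:
\begin{enumerate}
\item $-C_{n,0}^{-2}e^{f_{n,0}}\cdot\tfrac12\int e^{\xi}\Delta^2$, the quadratic part of the normalizer expansion;
\item $e^{f_{n,0}}(C_n-C_{n,0})^2/(C_nC_{n,0}^2)$;
\item $C_{n,0}^{-1}\tfrac12 e^{\xi}\Delta^2$;
\item $-(e^{\tilde f_n}-e^{f_{n,0}})(C_n-C_{n,0})/(C_nC_{n,0})$.
\end{enumerate}
Multiplying by $\phi_j$ and integrating yields the four lines of the lemma.

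There is no analytic obstacle; the only delicate step is bookkeeping. The main risk is the overall sign convention in front of $\int\phi_j R_{1n}(\tilde f_n,f_{n,0})$, which I would fix by re-deriving the first equality from the definition of $R_{1n}(\phi_j^*)$ and matching it to the displayed statement. A second point to state carefully is that $\xi$ is chosen pointwise and measurably, so that $\int\phi_j e^{\xi}\Delta^2$ makes sense. This is harmless since $e^{\xi}$ is bounded between $e^{\tilde f_n}$ and $e^{f_{n,0}}$, which is also what the later rate arguments will use.
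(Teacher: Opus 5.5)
Your argument is the paper's own: the same four-term decomposition of $p_{\tilde f_n}-p_{f_{n,0}}$, the same pointwise Lagrange expansion of $e^{\tilde f_n}$ integrated to expand $C_n-C_{n,0}$, and the same exact cancellation of the first-order terms. You reach the decomposition by splitting $e^{\tilde f_n}=e^{f_{n,0}}+(e^{\tilde f_n}-e^{f_{n,0}})$ rather than by rewriting $C_n^{-1}=C_{n,0}^{-1}-(C_n-C_{n,0})/(C_nC_{n,0})$, which is only cosmetic.

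The one thing to correct is your plan for the sign: it cannot be ``matched'' to the displayed statement. Define $R_{1n}(\phi)=P_0\{S_{\tilde f_n}-S_{f_{n,0}}\}(\phi)-\frac{\mathrm{d}}{\mathrm{d}\beta_{n,0}}P_0S_{\beta_{n,0}}(\phi)(\tilde\beta_n-\beta_{n,0})$, as forced by the Taylor step in the outline. Do not anchor on the display just before the lemma, which is itself sign-inconsistent across its lines. With this definition you get $R_{1n}(\phi_j)=-\int\phi_jR_{1n}(\tilde f_n,f_{n,0})\,\mathrm{d}x$. Your computation, however, shows that the four displayed lines equal $+\int\phi_jR_{1n}(\tilde f_n,f_{n,0})\,\mathrm{d}x$; so does the paper's, which only derives the pointwise remainder. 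So the chain in the statement carries an overall sign typo, and the correct conclusion is $R_{1n}(\phi_j)=-(\text{four lines})$. This is immaterial downstream, since each of the four terms is only bounded in absolute value.
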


\noindent{\bf Proof:}
Let $C_n=C_n$ and $C_{n,0}=C_{n,0}$. 
We have
\begin{align*}
p_{\tilde{f}_n}-p_{f_{n,0}} 
 &= C_n^{-1}\exp(\tilde{f}_n)-C_{n,0}^{-1}\exp(f_{n,0})\\
 &= \{C_n^{-1}-C_{n,0}^{-1}\}\exp(f_{n,0})+C_n^{-1}(\exp(\tilde{f}_n)-\exp(f_{n,0}))\\
 &= (C_{n,0}-C_n)/(C_nC_{n,0}) \exp(f_{n,0})+C_n^{-1}(\exp(\tilde{f}_n)-\exp(f_{n,0}))\\
 &= - (C_n-C_{n,0})/C_{n,0}^2 \exp(f_{n,0})+(C_n-C_{n,0})^2/(C_nC_{n,0}^2)\exp(f_{n,0})\\
 &\quad + C_{n,0}^{-1}(\exp(\tilde{f}_n)-\exp(f_{n,0}))-(C_n-C_{n,0})/(C_nC_{n,0})(\exp(\tilde{f}_n)-\exp(f_{n,0})).
\end{align*}

By exact Taylor expansion of $\exp(x)=\exp(x_0)+\exp(x_0)(x-x_0)+\tfrac{1}{2}\exp(\xi(x,x_0))(x-x_0)^2$ for a $\xi(x,x_0)$ in between  $x$ and $x_0$, we have
\[
\exp(\tilde{f}_n)-\exp(f_{n,0})=\exp(f_{n,0})(\tilde{f}_n-f_{n,0})+\tfrac{1}{2}\exp(\xi(\tilde{f}_n,f_{n,0})) (\tilde{f}_n-f_{n,0})^2.\]
We have 
\begin{align} \label{eq:Taylor_exp_norm_const}
C_n-C_{n,0} &= \int (\exp(\tilde{f}_n)-\exp(f_{n,0})) \mathrm{d}x \nonumber\\
 &= \tfrac{1}{2}\int \exp(\xi(\tilde{f}_n,f_{n,0}))(\tilde{f}_n-f_{n,0})^2 \mathrm{d}x+
\int \exp(f_{n,0})(\tilde{f}_n-f_{n,0}) \mathrm{d}x.
\end{align}
Thus,
\begin{align*}
-C_{n,0}^{-2} \exp(f_{n,0})(C_n-C_{n,0}) 
 &= -C_{n,0}^{-2}\exp(f_{n,0})\int \exp(f_{n,0})(\tilde{f}_n-f_{n,0})\mathrm{d}x\\
 &\quad -C_{n,0}^{-2}\exp(f_{n,0}) \tfrac{1}{2} \int \exp(\xi(\tilde{f}_n,f_{n,0}))(\tilde{f}_n-f_{n,0})^2 \mathrm{d}x.
\end{align*}
So
\begin{align*}
p_{\tilde{f}_n}-p_{f_{n,0}} 
 &= -C_{n,0}^{-2}\exp(f_{n,0})\int \exp(f_{n,0})(\tilde{f}_n-f_{n,0})\mathrm{d}x\\
 &\quad -C_{n,0}^{-2}\exp(f_{n,0}) \tfrac{1}{2} \int \exp(\xi(\tilde{f}_n,f_{n,0}))(\tilde{f}_n-f_{n,0})^2 \mathrm{d}x\\
 &\quad +(C_n-C_{n,0})^2/(C_nC_{n,0}^2)\exp(f_{n,0})
+C_{n,0}^{-1}\exp(f_{n,0})(\tilde{f}_n-f_{n,0})\\
 &\quad +C_{n,0}^{-1}\tfrac{1}{2}\exp(\xi(\tilde{f}_n,f_{n,0}))(\tilde{f}_n-f_{n,0})^2\\
 &\quad -(C_n-C_{n,0})/(C_nC_{n,0})(\exp(\tilde{f}_n)-\exp(f_{n,0}))\\
 &\equiv \frac{\mathrm{d}}{\mathrm{d}f_{n,0}}p_{f_{n,0}}(\tilde{f}_n-f_{n,0})+R_{1n}(\tilde{f}_n,f_{n,0}),
\end{align*}
where
\begin{align*}
\frac{\mathrm{d}}{\mathrm{d}f_{n,0}}p_{f_{n,0}}(\tilde{f}_n-f_{n,0})
 &= -C_{n,0}^{-2}\exp(f_{n,0})\int \exp(f_{n,0})(\tilde{f}_n-f_{n,0})\mathrm{d}x\\
 &\quad +C_{n,0}^{-1}\exp(f_{n,0})(\tilde{f}_n-f_{n,0}).\\
R_{1n}(\tilde{f}_n,f_{n,0})
 &= -C_{n,0}^{-2}\exp(f_{n,0}) \tfrac{1}{2} \int \exp(\xi(\tilde{f}_n,f_{n,0}))(\tilde{f}_n-f_{n,0})^2 \mathrm{d}x\\
 &\quad +(C_n-C_{n,0})^2/(C_nC_{n,0}^2)\exp(f_{n,0})\\
 &\quad +C_{n,0}^{-1}\tfrac{1}{2}\exp(\xi(\tilde{f}_n,f_{n,0}))(\tilde{f}_n-f_{n,0})^2\\
 &\quad -(C_n-C_{n,0})/(C_nC_{n,0})(\exp(\tilde{f}_n)-\exp(f_{n,0})).
\end{align*}
This completes the proof of Lemma~\ref{app_d_lemma:R1n_phi_j}. 

We apply Lemma~\ref{app_d_lemma:R1n_phi_j} to the term $R_{1n}(\bar\phi)$ and bound each term separately.
\begin{align*}
R_{1n}(\bar\phi)
  &= - \int \bar\phi \Biggl\{
        p_{f_n}-p_{f_{n,0}}
        - \frac{\mathrm{d}}{\mathrm{d}f_{n,0}}p_{f_{n,0}}(f_n-f_{n,0})
      \Biggr\} \mathrm{d}x \\[6pt]
  &\equiv -\tfrac{1}{2} C_{n,0}^{-2} \int \bar\phi \exp(f_{n,0})\, \mathrm{d}x 
            \int \exp\!\bigl(\xi(\tilde{f}_n,f_{n,0})\bigr)
             (\tilde{f}_n-f_{n,0})^2\, \mathrm{d}x \\[6pt]
  &\quad + \frac{(C_n-C_{n,0})^2}{C_n C_{n,0}^2}
            \int \bar\phi \exp(f_{n,0})\, \mathrm{d}x \\[6pt]
  &\quad + \tfrac{1}{2}C_{n,0}^{-1}\int \bar\phi 
             \exp\!\bigl(\xi(\tilde{f}_n,f_{n,0})\bigr)(\tilde{f}_n-f_{n,0})^2\, \mathrm{d}x \\[6pt]
  &\quad - \frac{C_n-C_{n,0}}{C_n C_{n,0}}
            \int \bar\phi \bigl(\exp(\tilde{f}_n)-\exp(f_{n,0})\bigr)\, \mathrm{d}x .
\end{align*}
For the first term, we know that $\exp\!\bigl(\xi(\tilde{f}_n,f_{n,0})\bigr)$ is bounded above and below, so we can treat it as a constant, and then we have
\begin{align*} 
C_{n,0}^{-2}\int \bar\phi \exp(f_{n,0})\, \mathrm{d}x \int \exp\!\bigl(\xi(\tilde{f}_n,f_{n,0})\bigr)(\tilde{f}_n-f_{n,0})^2\, \mathrm{d}x &\lesssim 
(P_{f_{n,0}}(\bar\phi) \,)(\Vert \tilde{f}_n - f_{n,0}\Vert_{\mu}^2)\\
&= O(1) O_p(n^{-\frac{2k+2}{2k+3}}) = o_p(n^{-1/2)}
\end{align*}

We use the shorthand notation for $\Vert \cdot \Vert_{L^2(P_{f_{n,0}})}$ as $\Vert \cdot \Vert_{f_{n,0}}$. By Taylor expanding of $C(\tilde f_n)$ at $C_{n,0}$ as in Eq~\ref{eq:Taylor_exp_norm_const}, we have $C(\tilde f_n) - C_{n,0} = O_p(\Vert\tilde{f}_n-f_{n,0}\Vert_{\mu})$. Then 
the second term can be bounded by $O(\Vert \phi\Vert_{f_{n,0}})$ times $\Vert \tilde{f}_n-f_{n,0}\Vert_{\mu}^2$.
\begin{align*}
\frac{(C_n-C_{n,0})^2}{C_n C_{n,0}^2}\int \bar\phi \exp(f_{n,0})\, \mathrm{d}x &= \frac{(C_n-C_{n,0})^2}{C_n C_{n,0}} \int \frac{\bar\phi \exp(f_{n,0})}{C_{n,0}} \mathrm{d}x \\
& \lesssim (\Vert \tilde{f}_n - f_{n,0}\Vert_{\mu}^2)(\Vert \phi\Vert_{f_{n,0}} \,)\\
&= O_p(n^{-\frac{2k+2}{2k+3}}) O(1) = o_p(n^{-1/2)}
\end{align*}

For the third term, again treating $\exp\!\bigl(\xi(\tilde{f}_n,f_{n,0})\bigr)$ as a constant, we need the Bounded Basis Assumption and Holder's inequality, and then we have,
\begin{align*}
\tfrac{1}{2}C_{n,0}^{-1}\int \bar\phi 
             \exp\!\bigl(\xi(\tilde{f}_n,f_{n,0})\bigr)(\tilde{f}_n-f_{n,0})^2\, \mathrm{d}x
&\lesssim \Vert \bar\phi \Vert_{\infty}\, \int (\tilde f_n - f_{n,0})^2 \mathrm{d}x\\
&=  O(J_{n,0}^{1/2})\, \Vert \tilde f_n - f_{n,0} \Vert_{\mu}^{2} \\
&= O_p(n^{1/4k+6}) O_p(n^{-\frac{2k+2}{2k+3}}) = o_p(n^{-1/2)},
\end{align*}

For the last term, we can similarly Taylor expand $\exp(\tilde{f}_n)$ on $\exp(f_{n,0})$, we have $\exp(\tilde{f}_n) - \exp(f_{n,0}) = \exp(\xi(\tilde{f}_n,f_{n,0}))(\tilde{f}_n - f_{n,0})$ for some $\xi$. Again using the Taylor Expansion of Eq~\ref{eq:Taylor_exp_norm_const}, and Cauchy-Schwartz inequality,
\begin{align*}
\frac{C_n-C_{n,0}}{C_n C_{n,0}}
            \int \bar\phi \bigl(\exp(\tilde{f}_n)-\exp(f_{n,0})\bigr)\, \mathrm{d}x &\lesssim \Vert \tilde{f}_n - f_{n,0} \Vert_{\mu}\Vert \phi \Vert_{\mu} \Vert \tilde{f}_n - f_{n,0} \Vert_{\mu}\\
            &= O_p(n^{-\frac{k+1}{2k+3}}) \, O(1) \,O_p(n^{-\frac{k+1}{2k+3}}) = o_p(n^{-1/2)}.
\end{align*}

Thus, combining all four terms, we showed $n^{1/2}R_{1n}(\bar\phi)$ is $o_P(1)$.

\subsection*{The Term $(P_{\tilde f_n} - P_{f_n})(\bar\phi)$}
Here, we argue why $n^{1/2}\bigl((P_{\tilde f_n} - P_{f_n})(\bar\phi)\bigr)$ is $o_P(1)$. This term denotes the projection error of the score function $S_{f_n}(\bar\phi)$ to the oracle score function $S_{f_{n,0}}(\bar\phi)$. And we bound it in a slighly different way.

\begin{align*}
(P_{\tilde f_n} - P_{f_n})(\bar\phi) &= J_{n,0}^{-\frac12}\sum_{j\in {\cal R}_{n,0}} (P_{\tilde f_n} - P_{f_n})(\phi_j^*) \phi_j^*(x)\\
&= J_{n,0}^{-\frac12}\sum_{j\in {\cal R}_{n,0}} P_{f_{n,0}}(p_{\tilde f_n} - p_{f_n}/p_{f_{n,0}}) \phi_j^* \phi_j^*(x)\\
&=J_{n,0}^{-\frac12}\sum_{j\in {\cal R}_{n,0}} P_{f_{n,0}}(p_{\tilde f_n} - p_{f_n}/p_{f_{n,0}}) \{\phi_j^* - P_{f_{n,0}}(\phi_j^*)\} \phi_j^*(x)\\
&=J_{n,0}^{-\frac12}\sum_{j\in {\cal R}_{n,0}} \langle (p_{\tilde f_n} - p_{f_n}/p_{f_{n,0}}), (\phi_j^*) \rangle_{\beta_{n,0}} \phi_j^*(x)\\
&= J_{n,0}^{-\frac12} \Pi(p_{\tilde f_n} - p_{f_n}/p_{f_{n,0}} \mid D^{(k)}(\mathcal{R}_{n,0}))(x)
\end{align*}

Notice that $p_{\tilde f_n} - p_{f_n}/p_{f_{n,0}}$ is a score in $L_0^2(P_{f_{n,0}})$, that is to say, the integration under measure $P_{f_{n,0}}$ is 0. And then use the inner product defined in Eq~\ref{eq:inner-product-score}, we can idenity the term as a projection of a score in $L_0^2(P_{f_{n,0}})$ to the space $D^{(k)}(\mathcal{R}_{n,0})$. 
\begin{align*}
   \Pi(p_{\tilde f_n} - p_{f_n}/p_{f_{n,0}} \mid D^{(k)}(\mathcal{R}_{n,0}))(x) &\leq \|\Pi(p_{\tilde f_n} - p_{f_n}/p_{f_{n,0}} \mid D^{(k)}(\mathcal{R}_{n,0}))\|_\infty \\
   &\leq \|p_{\tilde f_n} - p_{f_n}/p_{f_{n,0}} \|_\infty + \\
   &\quad \|\Pi(p_{\tilde f_n} - p_{f_n}/p_{f_{n,0}} \mid D^{(k)}(\mathcal{R}_{n,0})) - (p_{\tilde f_n} - p_{f_n}/p_{f_{n,0}}) \|_\infty \\
   &= O^+(1/J_{n,0}^{k+1}) + O^+(J_{n,0}^{-(k+1)}) \\
   &= O^+(1/J_{n,0}^{k+1}).
\end{align*}

All the three densities $p_{\tilde f_n}$, $p_{f_n}$, and $p_{f_{n,0}}$ are bounded from above and below, thus $\|p_{\tilde f_n} - p_{f_n}/p_{f_{n,0}} \|_\infty = O(\|\tilde f_n - f_{n,0}\|_{\infty}) = O^+(J_{n,0}^{-(k+1)})$, by the Uniform Approximation Assumption. Notice that $p_{\tilde f_n} - p_{f_n}/p_{f_{n,0}}$ has bounded sectional variation as well, and thus it is in $D^{(k)}_U([0,1])$. Thus, we can apply the Uniform Approximation Assumption again to get $\|\Pi(p_{\tilde f_n} - p_{f_n}/p_{f_{n,0}} \mid D^{(k)}(\mathcal{R}_{n,0})) - (p_{\tilde f_n} - p_{f_n}/p_{f_{n,0}}) \|_\infty = O^+(J_{n,0}^{-(k+1)})$. We can do this because $D^{(k)}(\mathcal{R}_{n,0})$ is a close linear subspace equipped with the $L^2(P_{f_{n,0}})$ norm, and thus the projection is unique. So the projection could be difined as the limiting result of a universal steepest descent algorithm, proving that projection has distance of the same order.

Thus, $(P_{\tilde f_n} - P_{f_n})(\bar\phi) \asymp J_{n,0}^{(-1/2)} O^+(J_{n,0}^{-(k+1)}) = O_P^+(n^{-\frac{1}{2}}) = o_p(n^{-\frac{1}{2}}),$ with some undersmoothening.

\subsection*{The Term $(P_n-P_0)\bigl\{S_{f_n}(\bar\phi)-S_{f_{n,0}}(\bar\phi)\bigr\}$}

Here, we argue why $(P_n-P_0)\bigl\{S_{f_n}(\bar\phi)-S_{f_{n,0}}(\bar\phi)\bigr\}=0.$
\[
E_n(\bar{\phi}_{n,x})\equiv (P_n-P_0)(S_{f_n}(\bar{\phi}_{n,x})-S_{f_{n,0}}(\bar{\phi}_{n,x})).\]
Again, based on our score expression, we have that $S_{f_n}(\phi)-S_{f_{n,0}}(\phi)=(P_{f_{n,0}}-P_{f_n})\phi$ is a constant due to cancellation of $\phi$. Therefore, the empirical process of a constant implies that $E_n(\bar{\phi}_{n,x})=0$.

\end{proof}

\subsection{Proof of Corollary~\ref{coro: Delta-method for Density Estimation}}
\begin{proof}
Let $g(f))(x)=\log p_f(x)$. Then, $g(f_0)(x)=\log p_0(x)=\log p_{f_0}(x)$. We wish to also analyze $g(f_n)(x)-g(f_0)(x)$ which implies the analysis for the density itself $p_{f_n}(x)-p_{f_0}(x)$ by a simple delta method argument. 
We note that
\begin{eqnarray*}
g(f_n)(x)-g(f_0)(x)&=& (f_n(x)-f_0(x))-(\log C(f_n)-\log C(f_0)).
\end{eqnarray*}
We have a linear approximation for $f_n(x)-f_0(x)$ given in Theorem~\ref{thm:asymptotic-linearity-hal-mle} given by $J_{n,0}^{1/2}(P_n-P_0)S_{f_{n,0} }(\bar{\phi}_{n,x})$. We can Taylor expand
\[
\log C(f_n)-\log C(f_0) = \frac{1}{C(f_0)}(C(f_n)-C(f_0)) - \frac{1}{C(\xi)^2}(C(f_n) - C(f_0))^2,
\]
where $\xi$ is between $f_n$ and $f_0$. Knowing $C(\xi)$ to be bounded,
\[
(C(f_n) - C(f_0))^2 = (\int_0^1 \exp(f_n - f_0) dx)^2 = (\int_0^1 \exp(\xi_2)(f_n - f_0) dx)^2 = O_p(\Vert f_n - f_0 \Vert_{\mu}^2),
\]
by another Taylor expansion with $\xi_2$ and $\exp(\xi_2)$ to be bounded. Thus, we have the second order remainder to be negligible by $(n/J_{n,0})^{1/2}O_p(\Vert f_n - f_0 \Vert_{\mu}^2) = o_p(1)$.

Then, notice that $C(f_0)$ is a pathwise differentiable function of $f_0$, choosing a path $f_{\epsilon}^h = f + \epsilon h$ where $\mathbb{E}_{P_{f_n}}[h] = 0$, 
\begin{align*}
    \left.\frac{\mathrm{d}}{\mathrm{d}\varepsilon} C(f_\varepsilon^h)\right|_{\varepsilon=0} &= \int_0^1 e^{f_0(x)} h(x) \, \mathrm{d}x \\
    &= \int_0^1  C(f) \, h\, \mathrm{d}P_f =0
\end{align*}
This implies $D^*_f =0$ for $C(f)$, i.e., the normalizing constant only changes in second order as one changes $P_{f}$. It will not contribute to linearization and we can treat $g(f_n)(x)-g(f_0)(x)$ as $f_n(x)-f_0(x)$.

Therefore, we can conclude that
\[
(n/J_{n,0})^{1/2}( g(f_n)(x)-g(f_0)(x))=(n/J_{n,0})^{1/2}(f_n(x)-f_0(x))+o_P(1).\]
Therefore, $\log p_{f_n}(x)-\log p_{f_0}(x)$ behaves as $(f_n-f_0)(x)$, which proves that 
\[
(n/J_{n,0})^{1/2}(p_{f_n}-p_{f_0})(x)=p_0(x)(n/J_{n,0})^{1/2}(f_n-f_0)(x)+o_P(1).
\]
\end{proof}

\newpage
\section{Proof of Uniform Convergence} \label{appendix_E: Uniform Convergence Rate}
This appendix includes the proof of Theorem~\ref{thm:uniform-density-hal}.
\begin{proof}
We extended the Basis Boundedness Assumption in Theorem~\ref{thm:density_HAL_asymptotic_normality} such that $R_{1n}(\bar{\phi}_{n,x})$ and $\tilde{r}_n(\bar{\phi}_{n,x})$ hold uniformly in $x\in [0,1]$. 
The uniform convergence result is obtained by using empirical process bounds in terms of the entropy integral to the empirical process  $(n^{1/2}(P_n-P_0)S: S\in {\cal S}_n)$ with ${\cal S}_n=\{S_{f_{n,0}}(\bar{\phi}_{n,x}):x\in [0,1]\}$. 
Notice that, by orthonormality and boundedness,
\[
\|J_{n,0}^{-\frac12}\bar\phi_{n,x}\|_{L^2(P_{f_{n,0}})}^2
=J_{n,0}^{-2}\sum_{j\in\mathcal R_{n,0}}\phi_j^*(x)^2
\;\lesssim\; J_{n,0}^{-1},
\]
hence we may take $\|J_{n,0}^{-\frac12}S_{f_{n,0}}(\bar{\phi}_{n,x})\|_{L^2(P)} \lesssim J_{n,0}^{-1/2}$. That is to say, the $L^2$-norm of these functions shrinks to zero at the rate $J_{n,0}^{-1/2}$. Also, because $x\mapsto \bar\phi_{n,x}$ is a one--dimensional mapping of x, its covering numbers satisfy
\[
\log N\!\bigl(\epsilon,\ \mathcal S_n,\ L^2(P_{f_{n,0}})\bigr)\ \lesssim\ \log(1/\epsilon),
\]
uniformly in $n$. Consequently, the entropy integral obeys the VC-1 form
\[
J(\delta,\mathcal S_n,L^2(P_{f_{n,0}}))
\;\equiv\;\int_0^\delta \sqrt{\log(1/\epsilon)}\,\mathrm{d}\varepsilon
\ \asymp \delta\sqrt{\log(1/\delta)}.
\]

So, let function class $\mathcal F_{S_n}$ be the function class that shrinks every function $s \in \mathcal S_n$ by $J_{n,0}^{\frac12}$. Then this function class $\mathcal F_{S_n}$ has a shrinkage $L^2$-norm and an entropy integral $J_2(\delta,\mathcal S_n)$

Recall that 
\begin{align*}
    \sqrt{\frac{n}{J_{n,0}}}\bigl(f_n - f_{0}\bigr)(x) 
     &= -\,n^{1/2}(P_n-P_0)S_{f_{n,0}}(\bar\phi)
     + o_P(1).
\end{align*}

Then, with a similar approach of bounding the empirical process in Appendix~\ref{appB: $L^2$ Convergence Analysis}, 
\begin{align*}
    \sup_{x\in[0,1]} |\,n^{1/2}(P_n-P_0)S_{f_{n,0}}(\bar\phi)| &=  
    J_{n,0}^{\frac12} \sup_{x\in[0,1]} |n^{1/2}(P_n-P_0) \Bigr(\frac{S_{f_{n,0}}(\bar\phi)}{J_{n,0}^{\frac12}} \Bigl) | \\
    &= J_{n,0}^{\frac12} \sup_{f \in \mathcal F_{S_n}, \|f\|_2 < \delta}|\mathbb G_n(f)| \\
    &\lesssim J_{n,0}^{\frac12} J_2(\delta_n, \mathcal F_{S_n}), \text{with a $\delta_n$ that makes $J_2(\delta_n)$ dominate} \\
    &= J_{n,0}^{\frac12} \delta_n\sqrt{\log(1/\delta_n)}.
\end{align*}

We know that $f \in \mathcal F_{S_n}, \|f\|_2 < J_{n,0}^{-1/2}$, then we can choose $\delta_n \asymp J_{n,0}^{-1/2}$. First, it is a valid choice for $J_2(\delta)$ to dominate, because $J_2(\delta_n) \asymp \delta_n\log(1/\delta_n) \lesssim \delta_n^2 n^{1/2}$. Then, $J_{n,0}^{\frac12} J_2(\delta_n, \mathcal F_{S_n}) = \sqrt{\log(1/\delta_n)} \asymp \sqrt{\log n}.$

With $J_{n,0} \asymp^+ n^{1/(2k+3)}$ this reads
\[
\sup_{x\in[0,1]}\, \big|\sqrt{n}(P_n-P_0)S_{f_{n,0}}(\bar\phi_{n,x})\big|
\;=\; O_p\!\Big(\sqrt{\log n}\Big).
\]

Then it follows:
 \[
(n/J_{n,0})^{1/2}\| f_n-f_0\|_{\infty}=O_P(\sqrt{\log n}),
\]
so that $ \| f_n-f_0\|_{\infty}=O_P^+(n^{-\frac{(k+1)}{(2k+3)}})$.

\end{proof}

\newpage
\section{Proof of Asymptotic Efficiency} \label{app_f: Asymp Efficiency}
This appendix includes the proof of Theorem~\ref{thm:Efficient Influence Curve Approximation} and Theorem~\ref{thm:plugin_HAL_asymptotic_efficiency}. We provide a standard TMLE proof establishing asymptotic efficiency under the assumption that $D^{(k)}_M({\cal R}_n)$ approximates $D^{(k)}_U([0,1])$. 

\subsection{Proof of Theorem~\ref{thm:Efficient Influence Curve Approximation}}
\begin{proof}
We have that $P_n S_{f_n}(\phi_j)=0$ for the $J_n - 1$ scores that define the $L_1$ constrained score space in $D^{(k)}({\cal R}_n)$. The tangent space generated by paths through $f$ is given by closure of linear span $\{S_f(\phi): \phi\in D^{(k)}_U([0,1])\}$. Thus, we can represent 
$D^*_f=S_f(\phi_f)$ for some $\phi_f\in D^{(k)}_U([0,1])$. Therefore, if the basis ${\cal R}_n$ is chosen so that $D^{(k)}({\cal R}_n)$  yields a $O^+(1/J_n^{k+1})$ $L^2$-approximation of $D^{(k)}_U([0,1])$ and this $L^2$ approximation can be extended to the corresponding $L_1$ constrained score space, then we can find a projection $\tilde{\phi}_f$ of $\phi_f$ onto the $L_1$ constrained score space of $D^{(k)}({\cal R}_n)$ that approximates $\phi_f$ in $L^2$-norm within distance $O^+(1/J_n^{k+1})$.
Therefore, following the same argument as in Appendix~\ref{App C: Score Analysis}, we have

\begin{align*}
P_n D^*_{f_n}=P_n S_{f_n}(\phi_{f_n})
&=P_n\{S_{f_n}(\phi_{f_n})-S_{f_n}(\tilde{\phi}_{f_n})\} + P_nS_{f_n}(\tilde{\phi}_{f_n})\\
&=(P_n-P_0)\{\phi_{f_n} - \tilde{\phi}_{f_n}\}+ \\
&\quad(P_0- P_{f_n})\{\phi_{f_n} - \tilde{\phi}_{f_n}\} + 0,
\end{align*}

where we note that the expectation of scores at $f_n$ under $P_{f_n}$ equals zero, because $S_{f_n}(\tilde{\phi}_{f_n}) = \tilde{\phi}_{f_n} - P_{f_n}\tilde{\phi}_{f_n}$ by definition.

\paragraph{Bounding the First Empirical Process Term}
We could directly use that the covering number of the basis class is the covering number of $D^{(k)}_U([0,1])$. And using the $L^2$ covering number from Appendix~\ref{appB: $L^2$ Convergence Analysis}, we can bound the first term can be bounded by $n^{-1/2}J_{2}(\delta_n,D^{(k)}_U([0,1]))$ with $\delta_n \asymp n^{-\frac{k+1}{2k+3}}$, where $J_{2}(\delta,D^{(k)}_U([0,1])\asymp\delta^{(2k+1)/(2k+2)}$. Thus, we have 
\begin{align*}
\left| (P_n - P_0)\left\{ \phi_{f_n} - \tilde{\phi}_{f_n} \right\} \right| 
&= O_P\left( n^{-1/2} J_2(\delta_n, D^{(k)}_U([0,1])) \right) \notag \\
&= O_P\left( n^{-1/2} \delta_n^{\frac{2k+1}{2k+2}} \right) \notag \\
&= O_P\left( n^{-\frac{2k+2}{2k+3}} \right) \notag \\
&= o_P\left( n^{-1/2} \right). 
\end{align*}

\paragraph{Bounding the Second Term}
The second term can be bounded with Cauchy-Schwarz inequality by $\| p_{f_n}-p_0\|_{\mu}\| \phi_{f_n}-\tilde{\phi}_{f_n}\|_{\mu}$, where we shorthanded $\|\cdot\|_{L^2(\mu)}$ by $\|\cdot\|_{\mu}$.
By the $L^2$ convergence Theorem~\ref{thm:l2-convergence-hal-mle}, we have $\| p_{f_n}-p_0\|_{\mu}=O_p(n^{-(k+1)/(2k+3)})$. By the $L^2$ Approximation Assumption, $\| \phi_{f_n}-\tilde{\phi}_{f_n}\|_{\mu}= O^+(J_n^{-(k+1)})$. So that we obtain the bound $O_p^+(n^{-(k+1)/(2k+3)} J_n^{-(k+1)})$. Choose $J_n \asymp^+ n^{1/(2k+3)}$, which could be achieved by cross validation or slight undersmoothening. Again, it follows that for all $k$, the second term is $o_P(n^{-1/2})$. 

\begin{align*}
\left| (P_0 - P_{f_n})\left\{ \phi_{f_n} - \tilde{\phi}_{f_n} \right\} \right|
&\leq \| p_{f_n} - p_0 \|_{\mu} \cdot \left\| \phi_{f_n} - \tilde{\phi}_{f_n} \right\|_{\mu} \notag \\
&\leq \| p_{f_n} - p_0 \|_{\mu} \cdot \left\|\phi_{f_n} - \tilde{\phi}_{f_n} \right\|_{\mu} \notag \\
&= O_P\left( n^{-\frac{k+1}{2k+3}} \right) \cdot O^+\left( J_n^{-(k+1)} \right) \notag \\
&= O_P^+\left( n^{-\frac{k+1}{2k+3}} n^{-\frac{k+1}{2k+3}} \right) \notag \\
&= o_P\left( n^{-1/2} \right), \label{eq:mean-difference-bound}
\end{align*}

Thus, we here finished proving Theorem~\ref{thm:Efficient Influence Curve Approximation}.
\end{proof}

\subsection{Proof of Theorem~\ref{thm:plugin_HAL_asymptotic_efficiency}}
\begin{proof}
Given we have established $P_n D^*_{f_n}=o_P(n^{-1/2})$, we have
\[
\Psi^F(f_n)-\Psi^F(f_0)=(P_n-P_0)D^*_{f_n}+R(P_{f_n},P_{f_0})+o_p(n^{-1/2}).\]
The Positivity Condition implies that the rate of $d_0(f_n,f_0)$ has the same rate of convergence in $L^2(\mu)$-norm. Then, by the Negligible Remainder Assumption in Theorem~\ref{thm:plugin_HAL_asymptotic_efficiency}, we can generally bound $R(P_{f_n},P_{f_0})$ by $\| p_{f_n}-p_{f_0}\|^2_{\mu}$.
So then $R(P_{f_n},P_{f_0})=O_P(n^{-\frac{2k+2}{2k+3}})$.
We also have that $\{D_f: f\in D^{(k)}_U([0,1])\}$ is a $P_0$-Donsker class and we assume 
$P_0\{D^*_{f_n}-D^*_{f_0}\}^2\rightarrow_p 0$. 

Therefore,
\[
(P_n - P_0)\{D^*_{f_0} - D^*_{f_n}\}
= n^{-1/2}\,o_p(1)
= o_p(n^{-1/2}).
\]

This then proves 
\[
\Psi^F(f_n)-\Psi^F(f_0)= (P_n - P_0)D^*_{f_0}+o_P(n^{-1/2}).\]
This proves asymptotic efficiency of $\Psi(f_n)$ as an estimator of $\Psi(f_0)$. 
\end{proof}

\subsection{Verification of the Assumptions for Median}
\begin{proof}
Recall that for the median functional $\psi(f) = F^{-1}(0.5)$, the efficient influence function is given by
\[
D_f(X) = \frac{1}{f(\psi)} \bigl( \tfrac{1}{2} - I(X \le \psi) \bigr),
\]
where $f$ is the corresponding density and $\psi = \psi(f)$ is the median. Let $\psi_n = \psi(f_n)$ and $\psi_0 = \psi(f_0)$.
\paragraph{Verification of the Second Assumption}
For the second assumption, we aim to show that 
\[
P_0\{D_{f_n} - D_{f_0}\}^2 = o_p(1).
\]

We have $\Vert f_n - f_0\Vert_{\infty} = O_P^+(n^{-\frac{k+1}{2k+3}})$. Then the CDFs $F_n$ and $F_0$ are close to each other in the sense that $\Vert F_n - F_0\Vert_{\infty} \leq \Vert f_n - f_0\Vert_{1} \leq \Vert f_n - f_0\Vert_{\infty} = O_P^+(n^{-\frac{k+1}{2k+3}})$.

Since $F_n(\psi_n) = F_0(\psi_0) = \tfrac{1}{2}$, we have
\[
0 = F_n(\psi_n) - F_0(\psi_0)
  = \{F_0(\psi_n) - F_0(\psi_0)\} + \{F_n(\psi_n) - F_0(\psi_n)\}.
\]
By the mean value theorem, there exists some $\xi_n$ between $\psi_n$ and $\psi_0$ such that
\[
F_0(\psi_n) - F_0(\psi_0) = f_0(\xi_n)\,(\psi_n - \psi_0).
\]
Taking absolute values yields
\[
|f_0(\xi_n)|\,|\psi_n - \psi_0| = |F_n(\psi_n) - F_0(\psi_n)|.
\]
Since $|F_n(\psi_n) - F_0(\psi_n)| \le \|F_n - F_0\|_\infty = O_P^+\!\left(n^{-\frac{k+1}{2k+3}}\right)$
and $\|f_0\|_\infty = O(1)$, we obtain
\[
|\psi_n - \psi_0|
= O_P^+\!\left(n^{-\frac{k+1}{2k+3}}\right).
\]

We start by expanding
\begin{align*}
D_{f_n} - D_{f_0}
&= \frac{1}{f_n(\psi_n)} \bigl( \tfrac{1}{2} - I(X \le \psi_n) \bigr)
 - \frac{1}{f_0(\psi_0)} \bigl( \tfrac{1}{2} - I(X \le \psi_0) \bigr) \\
&= \frac{f_0(\psi_0) - f_n(\psi_n)}{f_n(\psi_n) f_0(\psi_0)} \bigl( \tfrac{1}{2} - I(X \le \psi_0) \bigr)
  + \frac{1}{f_n(\psi_n)} \bigl(I(X \le \psi_0) - I(X \le \psi_n)\bigr).
\end{align*}
If we bound the $L^2(P_0)$-norm for the two terms on the right hand side, then we bound the left hand side.

For the first term, write
\[
f_0(\psi_0) - f_n(\psi_n) = \bigl(f_0(\psi_0) - f_n(\psi_0)\bigr)
   + \bigl(f_n(\psi_0) - f_n(\psi_n)\bigr).
\]
Assuming $f$ is differentiable with bounded derivative in a neighborhood of $\psi_0$, the mean-value theorem gives
\[
f_n(\psi_0) - f_n(\psi_n) = - f_n^{(1)}(\xi_n)(\psi_n - \psi_0)
\]
for some $\xi_n$ between $\psi_n$ and $\psi_0$. The $L^2(P_0)$-norm of the first term is bounded by $\Vert f_n - f_0\Vert_{P_0} + \vert\psi_n - \psi_0\vert = O_P^+(n^{-\frac{k+1}{2k+3}}) = o_P(1)$.

For the second term, note that WLOG, we have $\psi_0 < \psi_n$. Since $\psi_n-\psi_0$ converges to zero we have that the $L^2(P_0)$-norm of the latter indicator converges to zero.

\paragraph{Verification of the Third Assumption}
We aim to show that
\[
R(P_{f_n},P_0) = o_p(n^{-1/2}).
\]
\begin{align*}
R(P_{f_n},P_0)
&= \psi_n - \psi_0 + P_0 D_{f_n} \\
&= \psi_n - \psi_0 + \frac{F_0(\psi_0) - F_0(\psi_n)}{f_n(\psi_n)} \\
&= \psi_n - \psi_0 + \frac{f_0(\tilde{\xi}_n)}{f_n(\psi_n)} (\psi_0 - \psi_n) \\
&= \frac{f_n(\psi_n) - f_0(\tilde{\xi}_n)}{f_n(\psi_n)} (\psi_n - \psi_0) \\
& \leq \Vert f_n - f_0\Vert_{\infty} \cdot \vert\psi_n - \psi_0\vert = O_P^+(n^{-\frac{k+1}{2k+3}}) \cdot O_P^+(n^{-\frac{k+1}{2k+3}}) = o_P(n^{-1/2}).
\end{align*}

\end{proof}

\subsection{Proof of Theorem~\ref{thm:single-step-hal-tmle}}
For a single-step HAL-TMLE, knowing that $f_n^1$ converges to $f_0$ at a faster rate than $n^{-1/4}$, we have $P_n D^*_{f_n^1}= o_p(n^{-1/2})$, according to \citet{van2017generally}.
And thus the same proof of asymptotic efficiency holds for HAL-TMLE as well by extending all the $f_n$ to $f_n^1$. This is why we claim the proof above to be a standard TMLE proof.

\newpage
\section{Simulation Data Generating Process Setup}
\label{app:dgp_setup}
\subsection{Data Generating Process for Section~\ref{sec:simulation}}

We here provide the detailed data generating process (DGP) for the simulation study in Section~\ref{sec:simulation}. All distributions are defined on the support $[0,1]$ and we implement six different DGPs to evaluate the performance of our HAL-MLE methods across various distributional shapes and complexities.

\subsubsection{DGP Definitions and Parameters} \label{app_g_1:dgp_definitions}

\paragraph{1. Truncated Normal Distribution}
The truncated normal distribution is defined as:
\begin{align}
f(x) = \frac{\phi\left(\frac{x-\mu}{\sigma}\right)}{\sigma \left[\Phi\left(\frac{b-\mu}{\sigma}\right) - \Phi\left(\frac{a-\mu}{\sigma}\right)\right]}, \quad x \in [a,b]
\end{align}
where $\phi$ and $\Phi$ are the standard normal PDF and CDF respectively.

\textbf{Parameters:} $\mu = 0.5$, $\sigma = 0.1$, $a = 0$, $b = 1$

\paragraph{2. Sinusoidal Distribution}
The sinusoidal-based density is defined as:
\begin{align}
f(x) = \frac{\sin(\pi x) + 1.1}{C}, \quad x \in [0,1]
\end{align}
where $C = \int_0^1 (\sin(\pi t) + 1.1) dt = \frac{2}{\pi} + 1.1 \approx 1.7366$ is the normalization constant.

\textbf{Parameters:} No configurable parameters (fixed functional form)

\paragraph{3. Truncated GMM Symmetric Three Components}
A mixture of three truncated normal distributions:
\begin{align}
f(x) = \sum_{i=1}^3 w_i \cdot f_i(x), \quad x \in [0,1]
\end{align}
where each $f_i(x)$ is a truncated normal component.

\textbf{Parameters:}
\begin{itemize}
\item Component 1: $\mu_1 = 0.2$, $\sigma_1 = 0.05$, $w_1 = 0.33$
\item Component 2: $\mu_2 = 0.5$, $\sigma_2 = 0.05$, $w_2 = 0.34$
\item Component 3: $\mu_3 = 0.8$, $\sigma_3 = 0.05$, $w_3 = 0.33$
\end{itemize}

\paragraph{4. Truncated GMM Five Spikes}
A mixture of six truncated normal components with five narrow spikes and one broad component:
\begin{align}
f(x) = \sum_{i=1}^6 w_i \cdot f_i(x), \quad x \in [0,1]
\end{align}

\textbf{Parameters:}
\begin{itemize}
\item Spike components ($i = 1,\ldots,5$): $\mu_i \in \{0.45, 0.475, 0.5, 0.525, 0.55\}$, $\sigma_i = 0.005$, $w_i = 1/15$
\item Broad component: $\mu_6 = 0.5$, $\sigma_6 = 0.05$, $w_6 = 2/3$
\end{itemize}

\paragraph{5. Truncated GMM Asymmetric Three Components}
An asymmetric mixture of three truncated normal distributions:
\begin{align}
f(x) = \sum_{i=1}^3 w_i \cdot f_i(x), \quad x \in [0,1]
\end{align}

\textbf{Parameters:}
\begin{itemize}
\item Component 1: $\mu_1 = 0.35$, $\sigma_1 = 0.1$, $w_1 = 0.4$
\item Component 2: $\mu_2 = 0.65$, $\sigma_2 = 0.05$, $w_2 = 0.4$
\item Component 3: $\mu_3 = 0.9$, $\sigma_3 = 0.2$, $w_3 = 0.2$
\end{itemize}

\paragraph{6. Step Function Distribution}
A piecewise constant distribution:
\begin{align}
f(x) = \frac{1}{C} \begin{cases}
\ell_1, & x \in [0, b) \\
\ell_2, & x \in [b, 1]
\end{cases}
\end{align}
where $C = \ell_1 \cdot b + \ell_2 \cdot (1-b)$ is the normalization constant.

\textbf{Parameters:} $\ell_1 = 1.0$, $\ell_2 = 0.5$, $b = 0.7$

\subsubsection{Population Parameters}

Table~\ref{tab:dgp_population_params} summarizes the true population parameters for each DGP, including the mean, median, variance, second moment, and survival probability at $x = 0.5$.

\begin{table}[htbp]
\centering
\caption{Population Parameters for Data Generating Processes}
\label{tab:dgp_population_params}
\begin{threeparttable}
\begin{tabular}{lccccc}
\toprule
DGP & Mean & Median & Variance & Second Moment & $S(0.5)$ \\
\midrule
Truncated Normal & 0.5000$^*$ & 0.5000$^*$ & 0.010000 & 0.260000 & 0.5000$^*$ \\
Sinusoidal & 0.5000$^*$ & 0.5000$^*$ & 0.070145 & 0.320145 & 0.5000$^*$ \\
GMM Symmetric & 0.5000$^*$ & 0.5000$^*$ & 0.061896 & 0.311896 & 0.5000$^*$ \\
GMM Five Spikes & 0.5000$^*$ & 0.5000$^*$ & 0.002092 & 0.252092 & 0.5000$^*$ \\
GMM Asymmetric & 0.559669 & 0.608370 & 0.041087 & 0.354317 & 0.619610 \\
Step Function & 0.438235 & 0.425000 & 0.071283 & 0.263333 & 0.411760 \\
\bottomrule
\end{tabular}
\begin{tablenotes}
\small
\item $^*$Exact values due to symmetry around $x = 0.5$
\item $S(0.5) = P(X > 0.5)$ is the survival probability at $x = 0.5$
\end{tablenotes}
\end{threeparttable}
\end{table}

The population parameters in Table~\ref{tab:dgp_population_params} serve as ground truth for evaluating the accuracy of our HAL-MLE estimators across different distributional characteristics. 

\subsection{Random Seed Generation} \label{app_g_2:random_seed}
In this subsection, we provide the random seeds used for generating the simulation data. A master seed ($m=42$) generates a deterministic sequence of replicate seeds $\{s_r\}_{r=1}^R$ with $R=1000$ per $(\text{DGP}, n)$. In replicate $r$ the data sampler is seeded by $s_r$; cross‑validation fold assignments use the same $s_r$ (ensuring identical partitions across estimators).

\newpage
\section{Trend Filtering with a Specialized ADMM Algorithm for Density Estimation}
\label{app:tf_admm}

In this appendix, we describe how we extend the Trend Filtering (TF) method to our density estimation framework. We start from the equivalency between zero-order falling factorial basis and zero-order truncated power basis. This allows us to reparameterize our zero-order HAL-MLE to TF with zero-order divided differences. We start with the likelihood function $p_{\text{fn}}$ and construct the likelihood using linear combinations of basis functions through the link function. By reparameterizing our $\phi \beta$ as $\theta$, we face the challenge of the normalizing constant $\int_0^1 \exp(\phi \beta) dx$, which we approximate by breaking the integral into a sum over grid bins, yielding the normalizing constant $\sum_j \exp(\theta_j) \Delta x_j$.

\subsection{Problem Formulation}

\subsubsection{Uniform Grid Construction}

Our current implementation in \texttt{methods/non\_HAL\_method/TF\_CVXPY} uses an \emph{equally spaced} grid on $[0,1]$ (rather than a data-adaptive grid). This avoids numerical pathologies caused by extremely small gaps in data-adaptive grids when $n$ is large, and matches the grid choice used elsewhere in our experiments.

\begin{enumerate}
\item Choose the number of bins $J = n+1$ (so the parameter dimension is comparable to the legacy data-adaptive choice).
\item Define uniform knots: $x_j = j/J$ for $j = 0,1,\ldots,J$.
\item Create bins: $[x_0, x_1), [x_1, x_2), \ldots, [x_{J-1}, x_J]$ with widths $\Delta x_j = x_{j+1}-x_j = 1/J$.
\end{enumerate}

Given observations $y_1,\ldots,y_n$, let $c_j$ be the histogram count in bin $j$:
\[
c_j \;=\; \#\{i:\; y_i \in [x_j,x_{j+1})\}, \qquad j=0,1,\ldots,J-1,
\]
so that $\sum_{j=0}^{J-1} c_j = n$.

\subsubsection{Likelihood Construction and Reparameterization}

We start with the likelihood function $p_{\text{fn}}$ and construct the likelihood using linear combinations of basis functions through the link function. The key insight is that the TF basis functions are equivalent to truncated power basis functions, allowing us to reparameterize our original $\phi \beta$ parameters as $\theta$.

We model the log-density as piecewise constant over the grid:
\begin{align*}
\log p_\theta(x) = \theta_j, \quad x \in [x_j, x_{j+1}), \quad j = 0, 1, \ldots, J-1.
\end{align*}

As in the implementation, we fix an identifiability constraint $\theta_0=0$ and optimize over the full vector $\theta=(\theta_0,\ldots,\theta_{J-1})\in\mathbb{R}^J$ subject to this constraint.

The normalizing constant is approximated by a Riemann sum on the grid:
\begin{align*}
Z(\theta) = \sum_{j=0}^{J-1} \exp(\theta_j)\,\Delta x_j.
\end{align*}

The corresponding density function is:
\begin{align*}
p_\theta(x) = \frac{\exp(\theta_j)}{Z(\theta)}, \quad x \in [x_j, x_{j+1}).
\end{align*}

Using the binned likelihood (i.e., grouping observations by their bin counts), the negative log-likelihood becomes:
\begin{align*}
f(\theta) = -\sum_{j=0}^{J-1} c_j \theta_j + n \log Z(\theta).
\end{align*}

\subsubsection{Total Variation Penalty}

The total variation representation of TF is the $\ell_1$ norm of $\phi^{-1} \theta$. Since TF does not penalize the parametric part, this representation simplifies to the $\ell_1$ norm of the zero-order divided difference of $\theta$. We can further extend this to $k$-th order divided differences.

The $k$-th order difference matrix $D^{(k)}$ is constructed recursively by Eq(3) of \citep{ramdas2016fast}.

The complete optimization problem is:
\begin{align} \label{eq:tf_admm_problem}
\boxed{
\min_{\theta \in \mathbb{R}^{J}} f(\theta)
\quad \text{s.t.}\quad \theta_0 = 0,\;\; \|D^{(k+1)}\theta\|_1 \leq C
}
\end{align}

This constrained form is equivalent (under standard constraint qualification conditions) to a penalized form with a Lagrange multiplier $\lambda\ge 0$:
\[
\min_{\theta:\,\theta_0=0}\; f(\theta) + \lambda \|D^{(k+1)}\theta\|_1.
\]

\subsection{ADMM Algorithm}

Following the approach in the trend filtering literature, we adapt the specialized ADMM algorithm for trend filtering to our density estimation case. The algorithm still consists three steps.

\subsubsection{Augmented Lagrangian}

We introduce auxiliary variables $\alpha = D^{(k)} \theta$. And we use $0$ as a placeholder for $\theta_0$ and such that $\theta = (0, \tilde{\theta})$ is the full parameter vector. The normalization is achieved by $Z(\theta)$. The augmented Lagrangian is:
\begin{align*}
\mathcal{L}_\rho(\theta, \alpha, u) &= f(\theta) + \lambda \|D^{(1)} \alpha\|_1 \\
&\quad + \frac{\rho}{2} \|\alpha - D^{(k)} \theta + u\|_2^2 - \frac{\lambda}{2} \|u\|_2^2
\end{align*}
where $u$ is the scaled dual variable and in practice we choose $\rho = \lambda$ in practice \citet{ramdas2016fast}.

\subsubsection{ADMM Iterations}

The ADMM algorithm alternates between three updates:

\paragraph{$\theta$-step (Non-smooth Optimization):}
\begin{align*}
\tilde{\theta}^{t+1} = \arg\min_{\tilde{\theta}} f(\tilde{\theta}) + \frac{\rho}{2} \|\alpha^t - D^{(k)} (0, \tilde{\theta}^t) + u^t\|_2^2
\end{align*}

The first step no longer has a closed-form solution like in the regression case \citet{ramdas2016fast} due to the likelihood. So we have to solve it using the L-BFGS algorithm.

\paragraph{$\alpha$-step (Fused Lasso):}
\begin{align*}
\alpha^{t+1} = \arg\min_\alpha \frac{1}{2} \|\alpha - v^t\|_2^2 + \frac{\lambda}{\rho} \|D^{(1)} \alpha\|_1
\end{align*}
where $v^t = D^{(k)} \theta^{t+1} - u^t$.

This is a standard fused lasso problem that can be solved using the solution path algorithm \citep{tibshirani2011solution}.

\paragraph{Dual Update:}
\begin{align*}
u^{t+1} = u^t + \alpha^{t+1} - D^{(k)} (0, \tilde\theta^{t+1})
\end{align*}

\subsubsection{Convergence Criteria}

We monitor primal and dual residuals:
\begin{align*}
r^t &= \alpha^t - D^{(k)} \theta^t \quad \text{(primal residual)} \\
s^t &= \rho D^{(k)T} (\alpha^t - \alpha^{t-1}) \quad \text{(dual residual)}
\end{align*}

The algorithm terminates when:
\begin{align*}
\|r^t\|_2 \leq \varepsilon_{\text{pri}} \quad \text{and} \quad \|s^t\|_2 \leq \varepsilon_{\text{dual}}
\end{align*}
with tolerances $\varepsilon_{\text{pri}} = 10^{-4} \sqrt{m}$ and $\varepsilon_{\text{dual}} = 10^{-4} \sqrt{n}$, where $m$ is the dimension of $\alpha$.

\subsection{TFPP} \label{subsec:tfpp}
We include the variant of TF with penalty on the parametric part as TFPP, by replacing the divided difference matrix $D^{(k)}$ with the variant $H$ in Eq(10) and Eq(11) of \citet{wang2014falling}. And so others can be adapted accordingly. Notice that Eq(10) and Eq(11) of \citet{wang2014falling} are for the data-adaptive knots, and we use the uniform knots here simply by replacing the diagonal matrix $\Delta^{(k)}$ with an scaled identity matrix, such that the ADMM algorithm can be adapted accordingly.

One problem of adapting this construction to data-adaptive knots is that the entries of the inverse of matrix H can be extremely large when we sample from the distribution defined on $[0,1]$. The data points can get extremely close to each other which a gap of $10^{-16}$ or smaller. This can cause numerical instability when we iterate the $D^{(k)}$, and the optimization can diverge. We address this by computing the inverse of matrix H using \citet[Algorithm 2]{wang2014falling}. This algorithm is more stable because the matrix H, with entries of $10^{16}$ or larger, can be calculated implicitly. However, a similar ADMM algorithm for TFPP density estimation with data-adaptive knots is still under development and will be addressed in the future.

\newpage
\section{Additional Simulation Results} \label{app_i:addl-sim}

\noindent This appendix provides additional simulation results complementing Section~\ref{sec:simulation}. It includes detailed visual summaries and diagnostics across all DGPs and sample sizes.

\paragraph{Contents of this appendix}
\begin{itemize}
    \item Asymptotic normality and variance estimation (Section~\ref{app_i_subsec:asymptotic_normality_and_var_est}): combined full\textendash page coverage and CI\textendash width panels across DGPs and sample sizes.
    \item Asymptotic efficiency (Section~\ref{app_i_subsec:asymptotic_efficiency}): efficiency comparisons for TN, GS3, GA3, GS5, Step, and Sine.
    \item EIC\textendash based variance estimation for statistical estimands (Section~\ref{app_i_subsec:eic-based-variance-estimation}): per\textendash DGP CI width and coverage figures.
\end{itemize}

\subsection{Asymptotic normality and variance estimation}\label{app_i_subsec:asymptotic_normality_and_var_est}
\begin{figure}[p]
    \centering
    \includegraphics[angle=90, height=0.90\textheight]{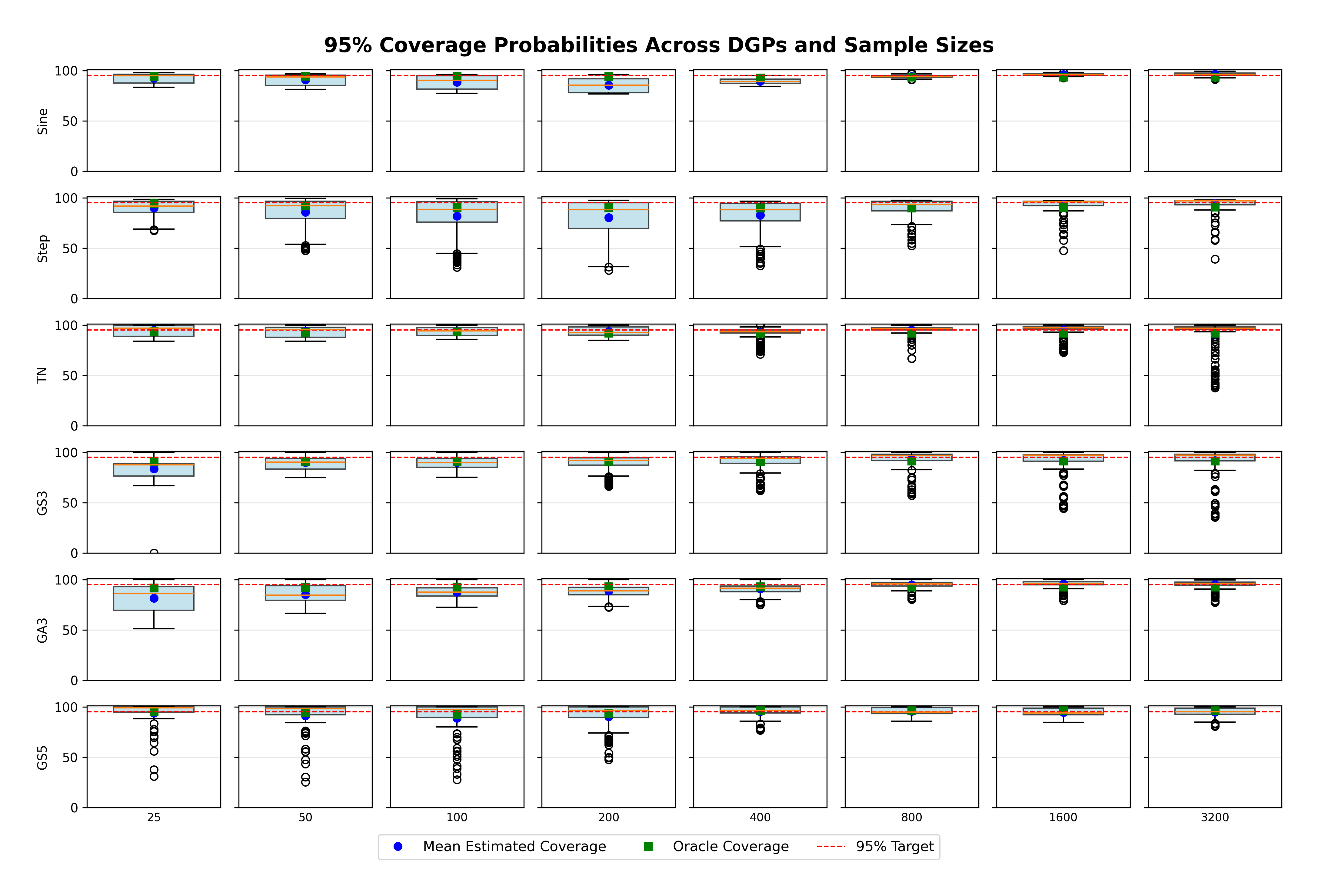}
    \caption{95\% coverage probabilities across DGPs and sample sizes (rotated for full\textendash page display).}
\end{figure}

\begin{figure}[p]
    \centering
    \includegraphics[angle=90, height=0.90\textheight]{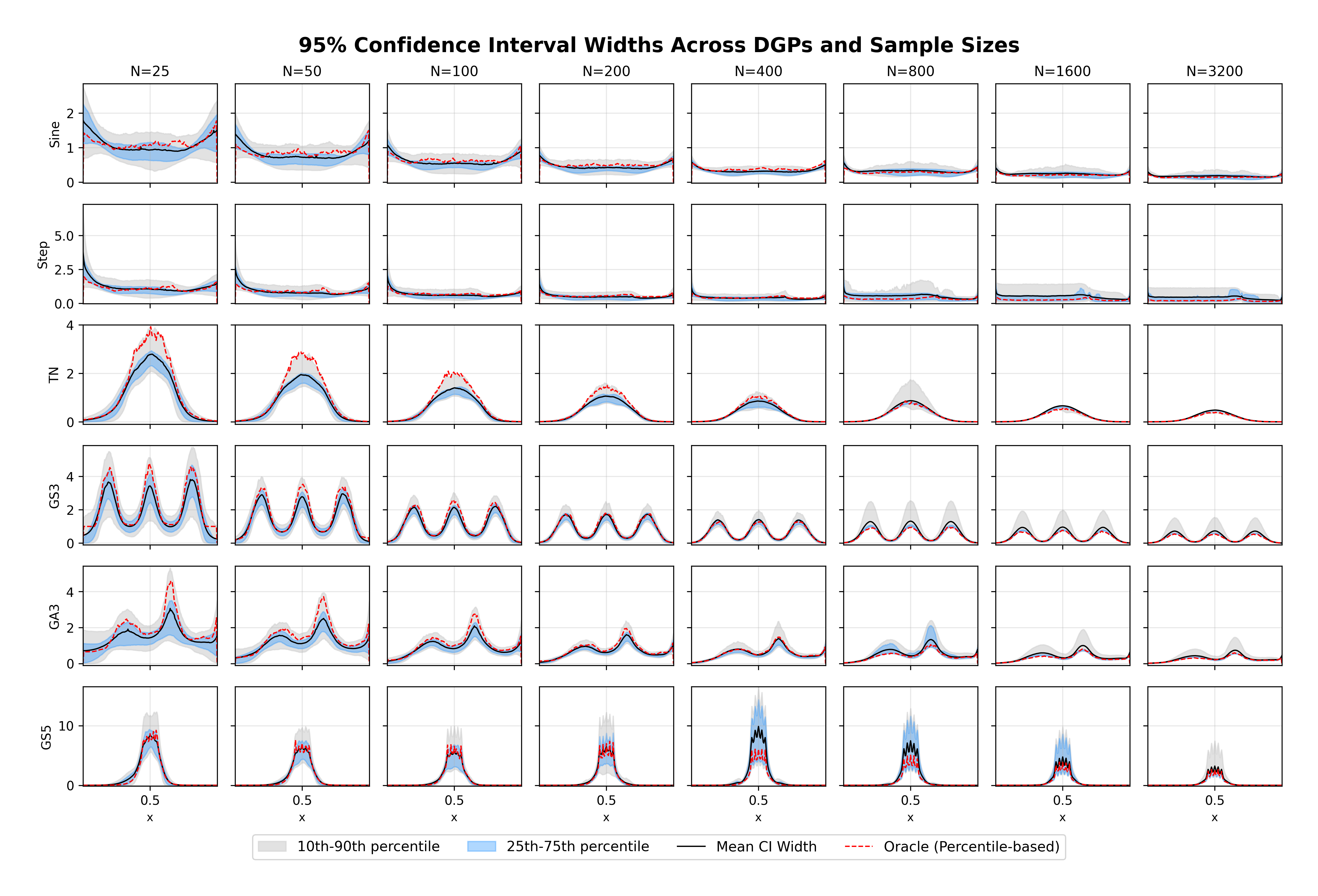}
    \caption{95\% confidence\textendash interval widths across DGPs and sample sizes (rotated for full\textendash page display).}
\end{figure}
\clearpage

\subsection{Asymptotic efficiency}\label{app_i_subsec:asymptotic_efficiency}
\begin{figure}[H]
    \centering
    \includegraphics[width=0.9\textwidth]{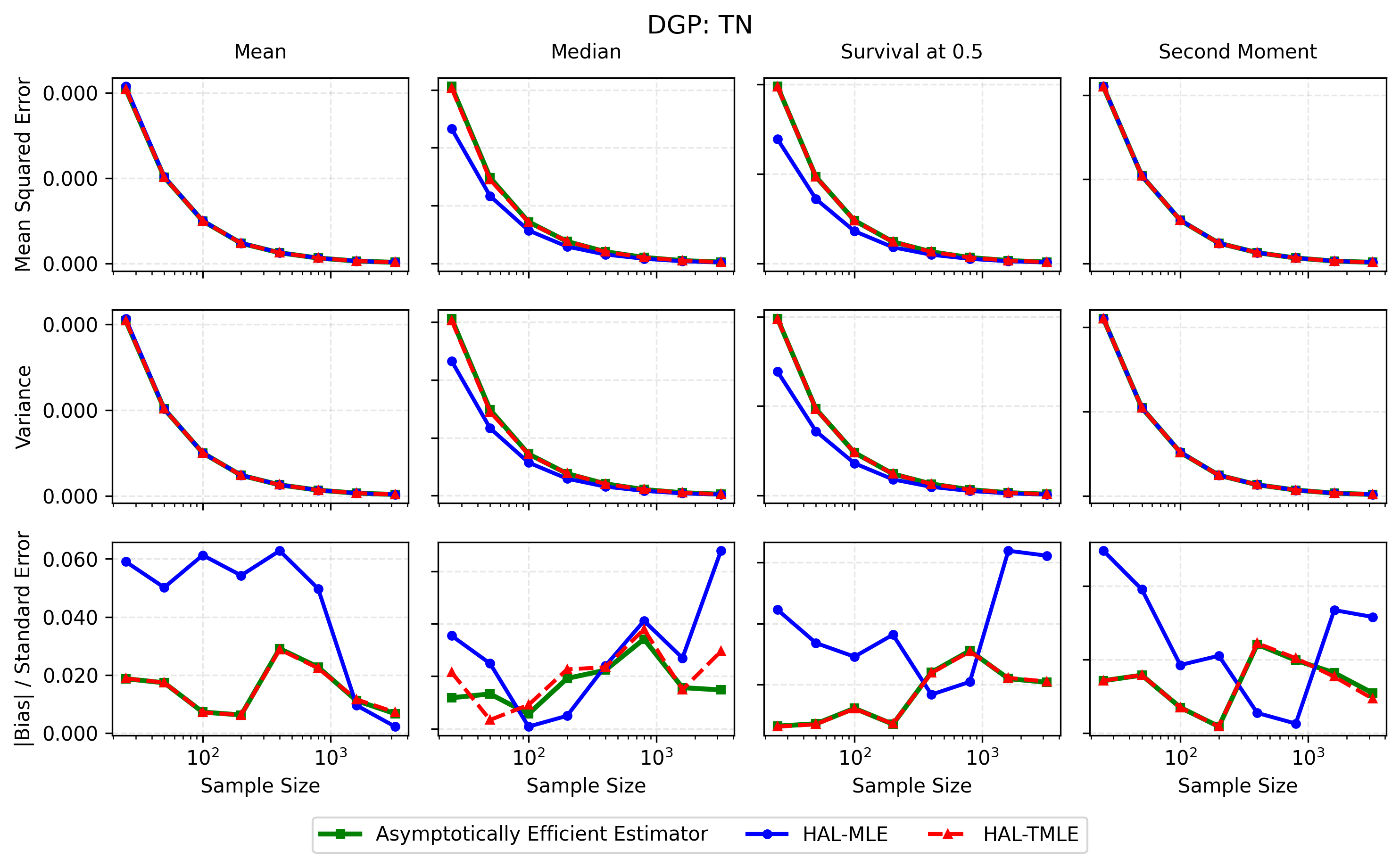}
    \caption{Asymptotic\textendash efficiency comparison for TN.}
\end{figure}

\begin{figure}[H]
    \centering
    \includegraphics[width=0.9\textwidth]{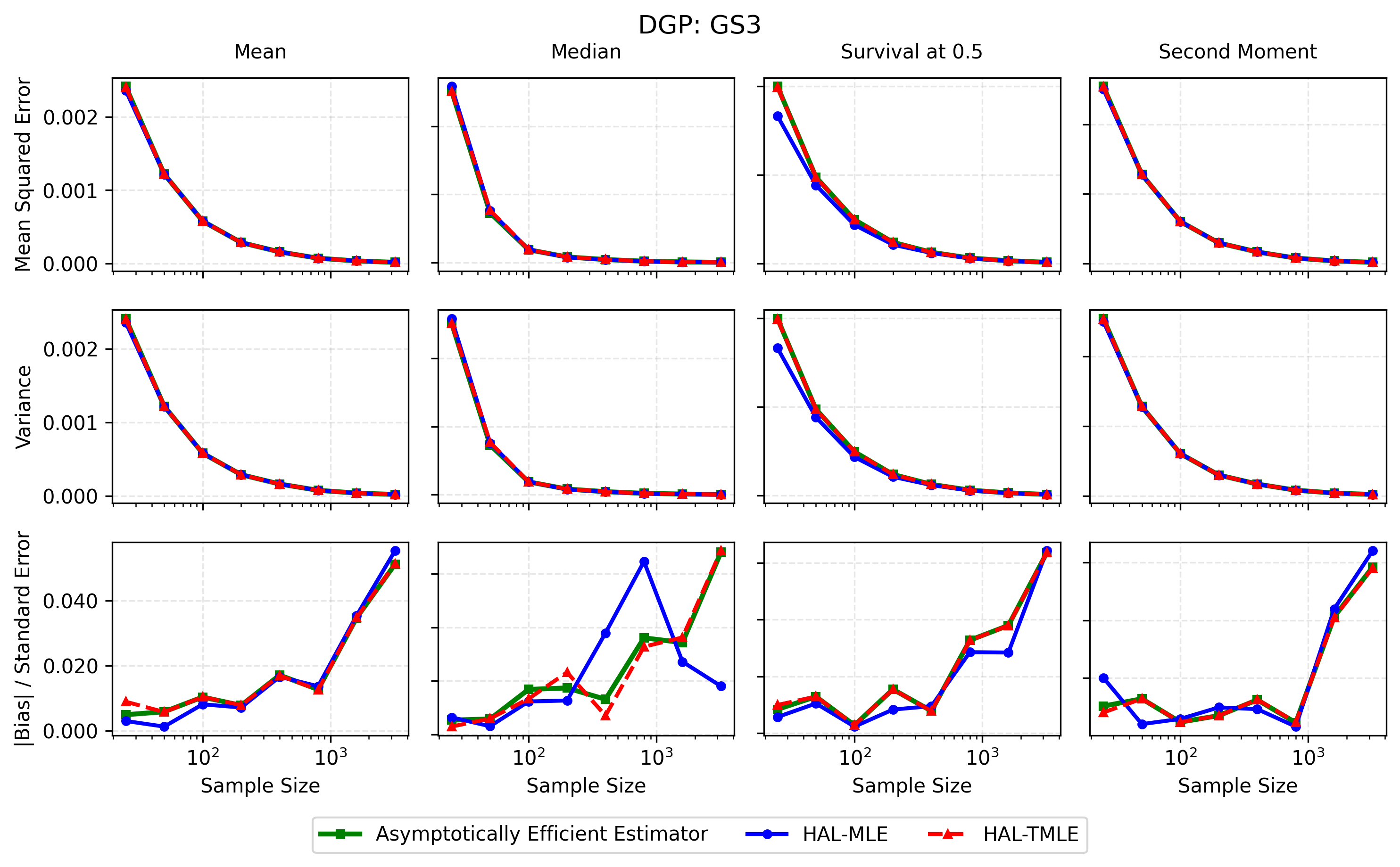}
    \caption{Asymptotic\textendash efficiency comparison for GS3.}
\end{figure}

\begin{figure}[H]
    \centering
    \includegraphics[width=0.9\textwidth]{resources/density_asymptotic_efficiency/TruncatedGMMAsymmetricThree/efficiency_comparison.png}
    \caption{Asymptotic\textendash efficiency comparison for GA3.}
\end{figure}

\begin{figure}[H]
    \centering
    \includegraphics[width=0.9\textwidth]{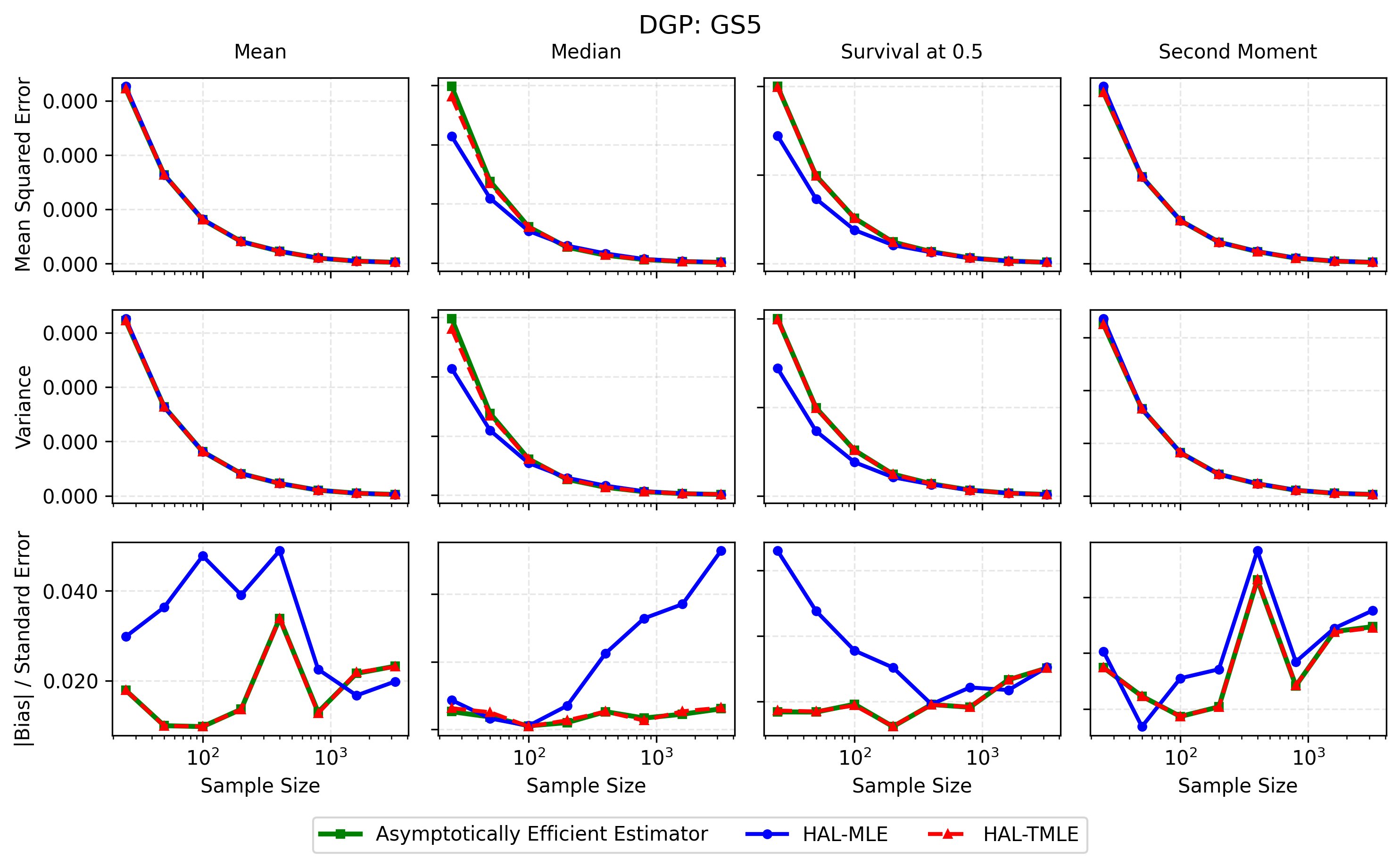}
    \caption{Asymptotic\textendash efficiency comparison for GS5.}
\end{figure}

\begin{figure}[H]
    \centering
    \includegraphics[width=0.9\textwidth]{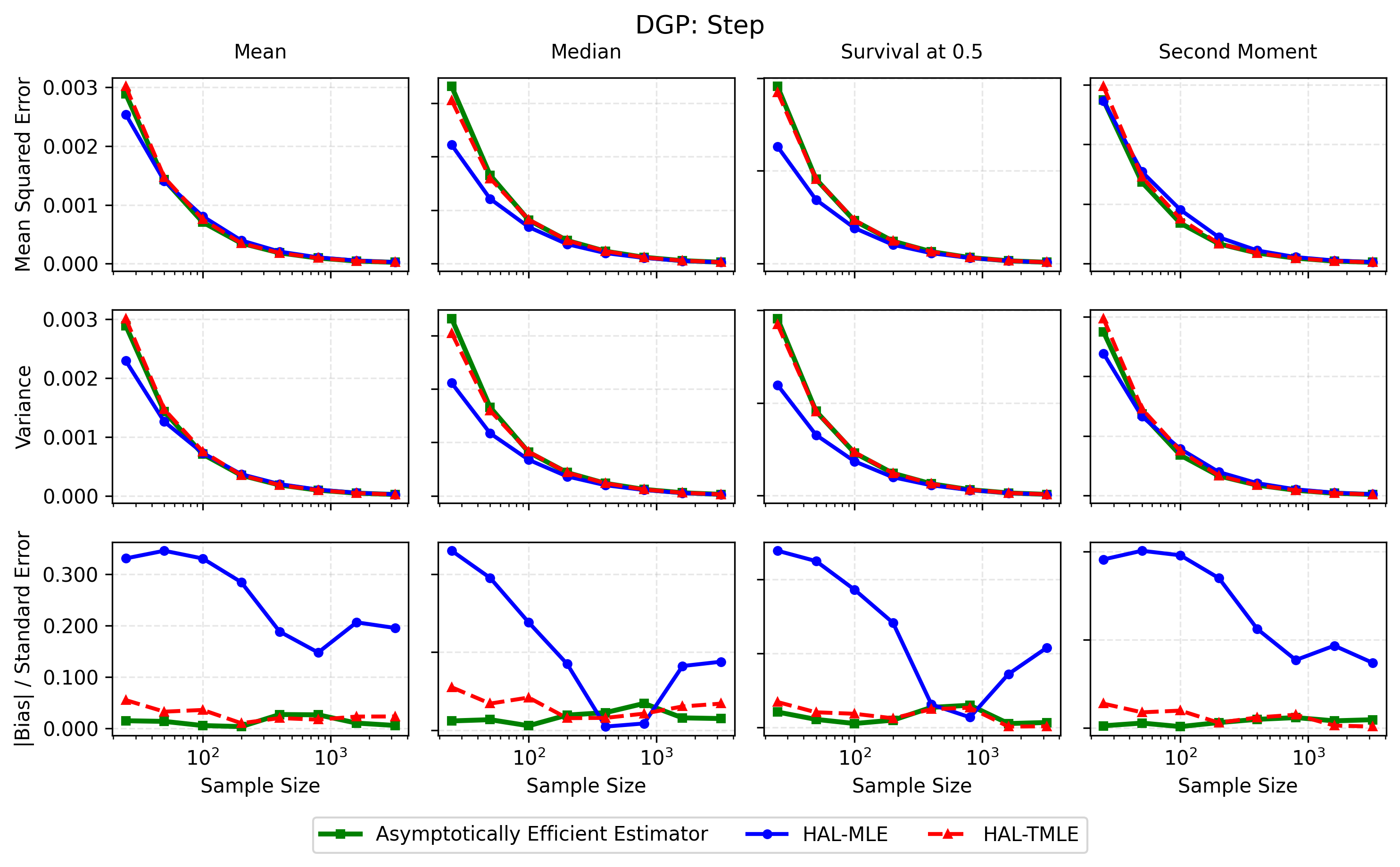}
    \caption{Asymptotic\textendash efficiency comparison for Step.}
\end{figure}

\begin{figure}[H]
    \centering
    \includegraphics[width=0.9\textwidth]{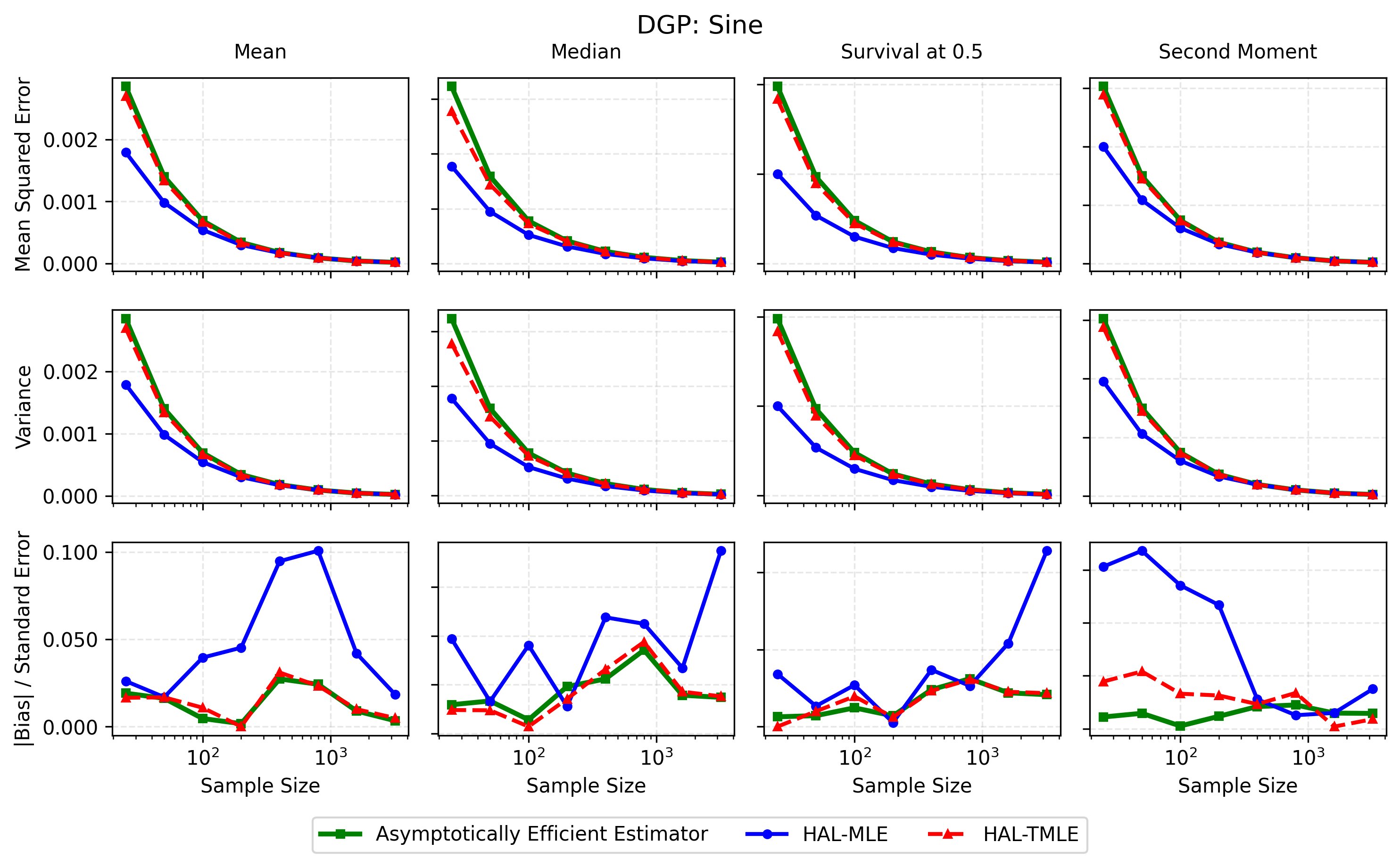}
    \caption{Asymptotic\textendash efficiency comparison for Sine.}
\end{figure}

\subsection{EIC\textendash based variance estimation for statistical estimands}\label{app_i_subsec:eic-based-variance-estimation}

\begin{figure}[H]
    \centering
    \includegraphics[width=0.75\textwidth]{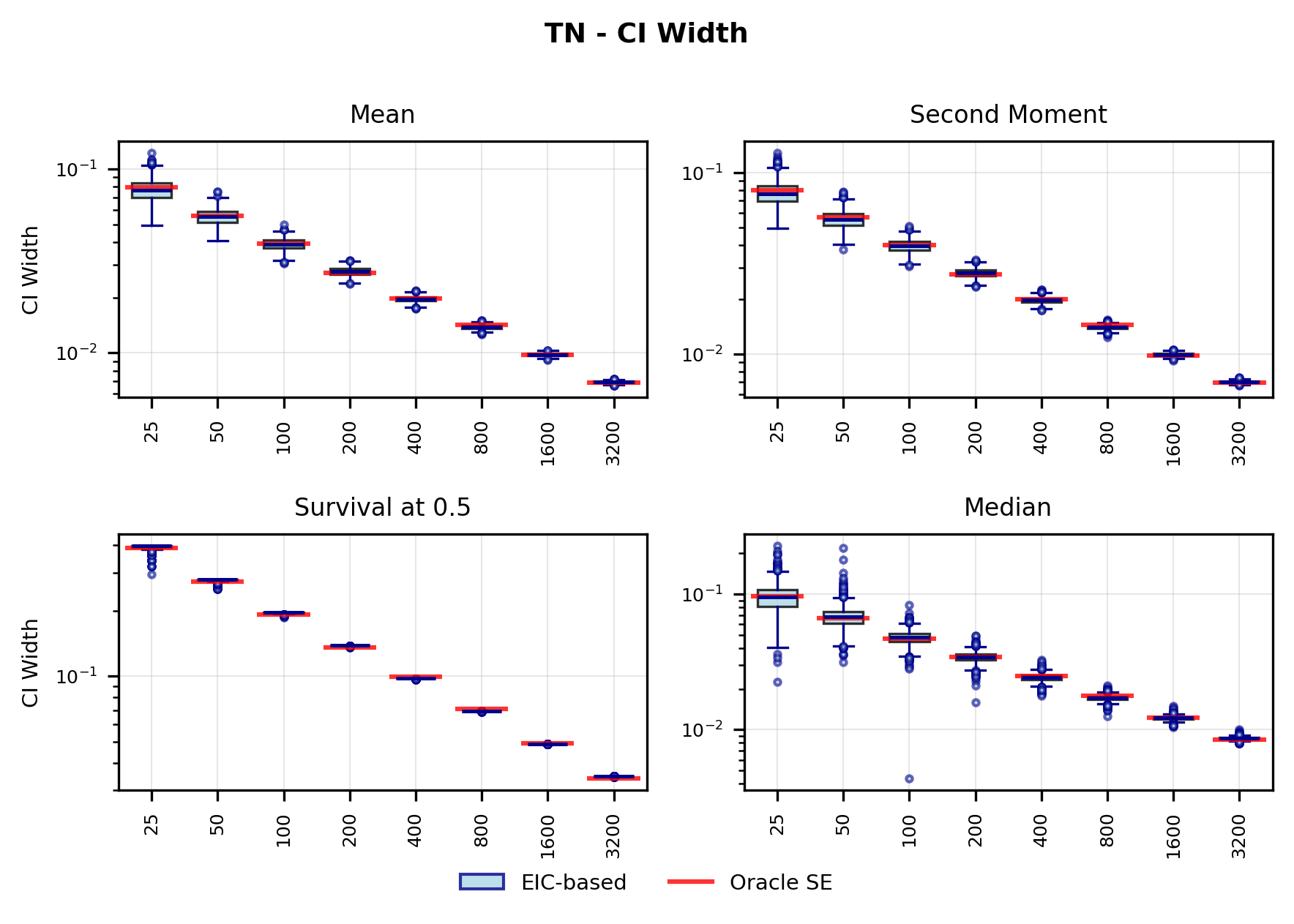}
    \caption{EIC\textendash based variance estimation for TN: CI width.}
\end{figure}

\begin{figure}[H]
    \centering
    \includegraphics[width=0.75\textwidth]{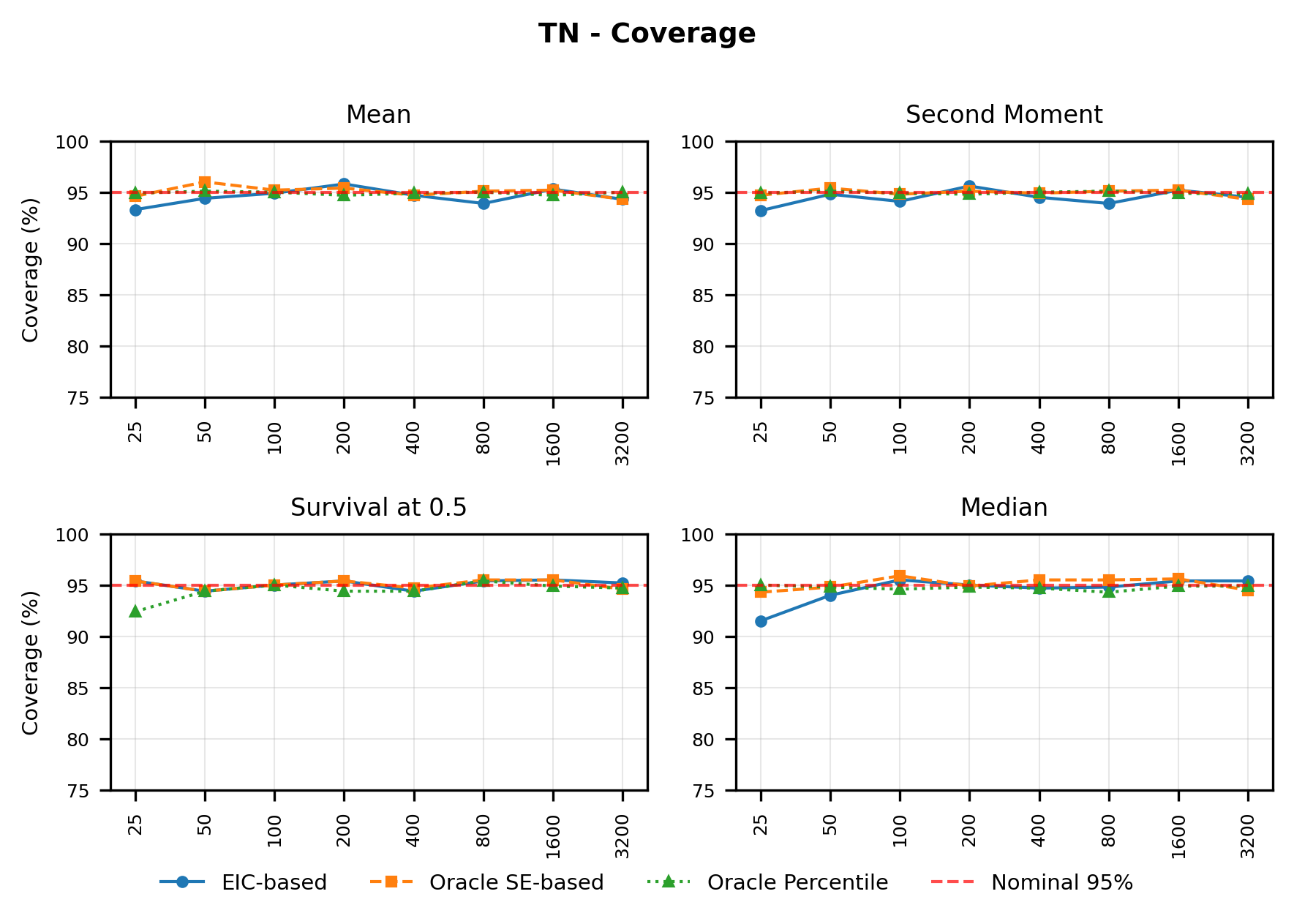}
    \caption{EIC\textendash based variance estimation for TN: coverage.}
\end{figure}

\begin{figure}[H]
    \centering
    \includegraphics[width=0.75\textwidth]{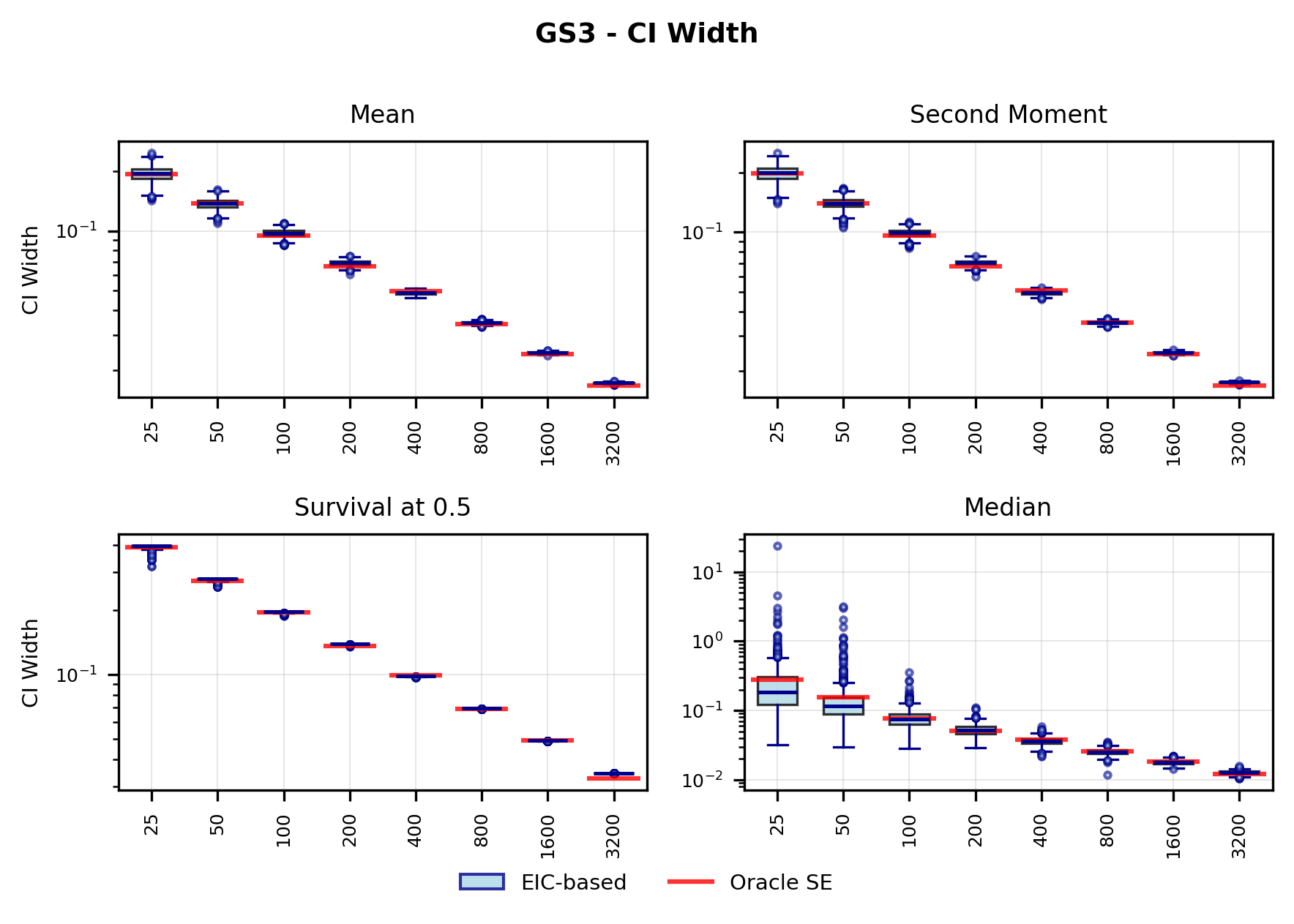}
    \caption{EIC\textendash based variance estimation for GS3: CI width.}
\end{figure}

\begin{figure}[H]
    \centering
    \includegraphics[width=0.75\textwidth]{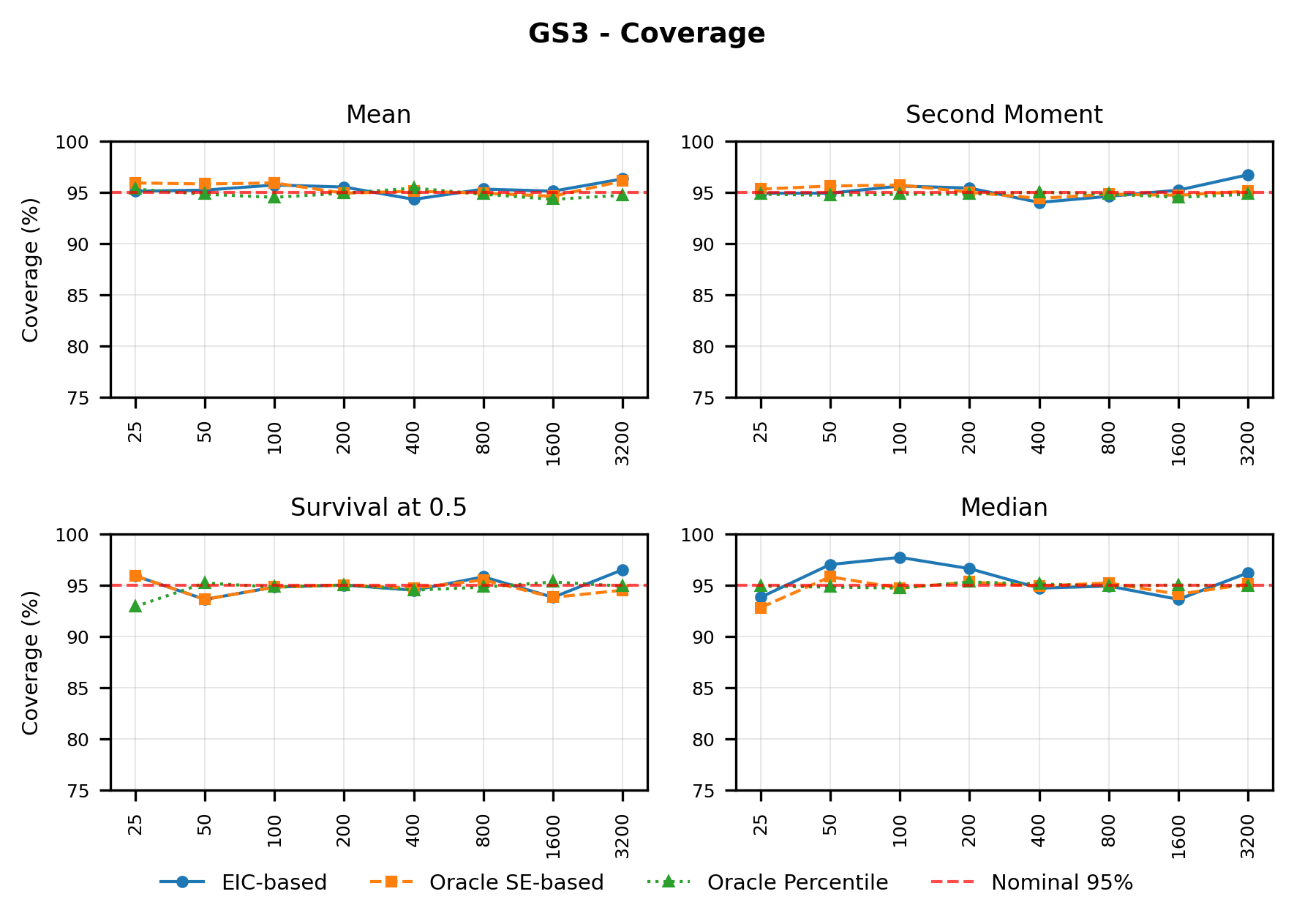}
    \caption{EIC\textendash based variance estimation for GS3: coverage.}
\end{figure}

\begin{figure}[H]
    \centering
    \includegraphics[width=0.75\textwidth]{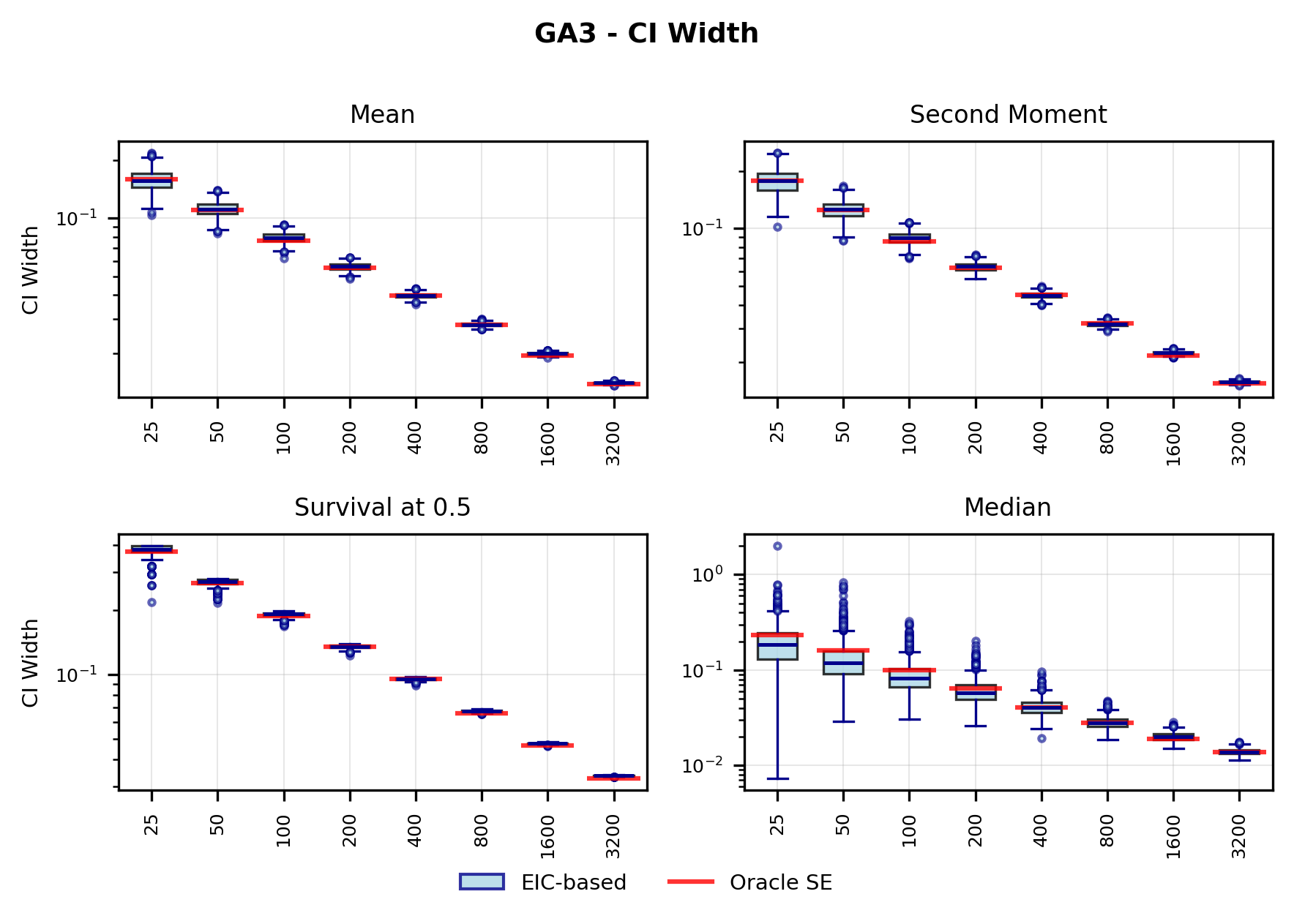}
    \caption{EIC\textendash based variance estimation for GA3: CI width.}
\end{figure}

\begin{figure}[H]
    \centering
    \includegraphics[width=0.75\textwidth]{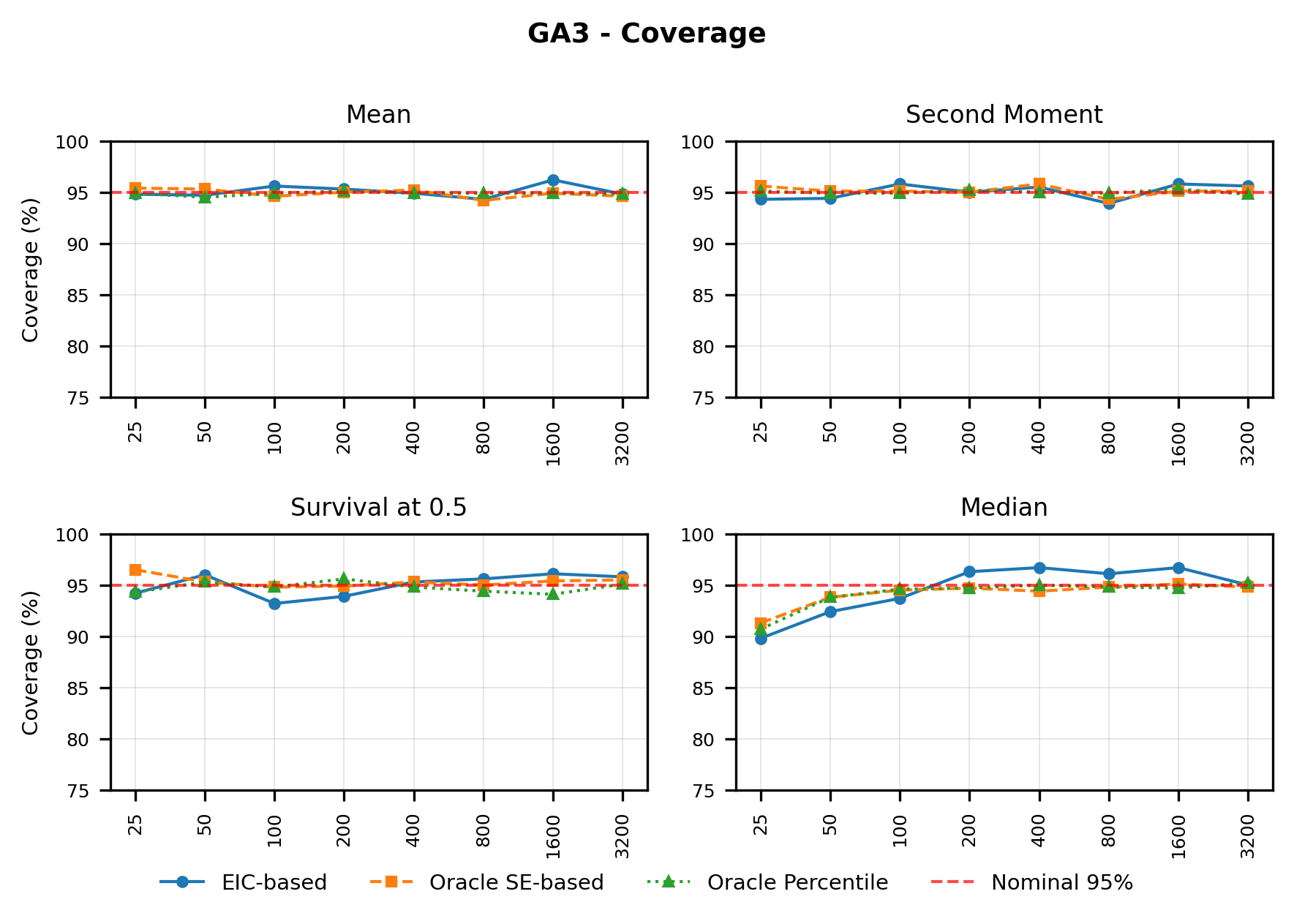}
    \caption{EIC\textendash based variance estimation for GA3: coverage.}
\end{figure}

\begin{figure}[H]
    \centering
    \includegraphics[width=0.75\textwidth]{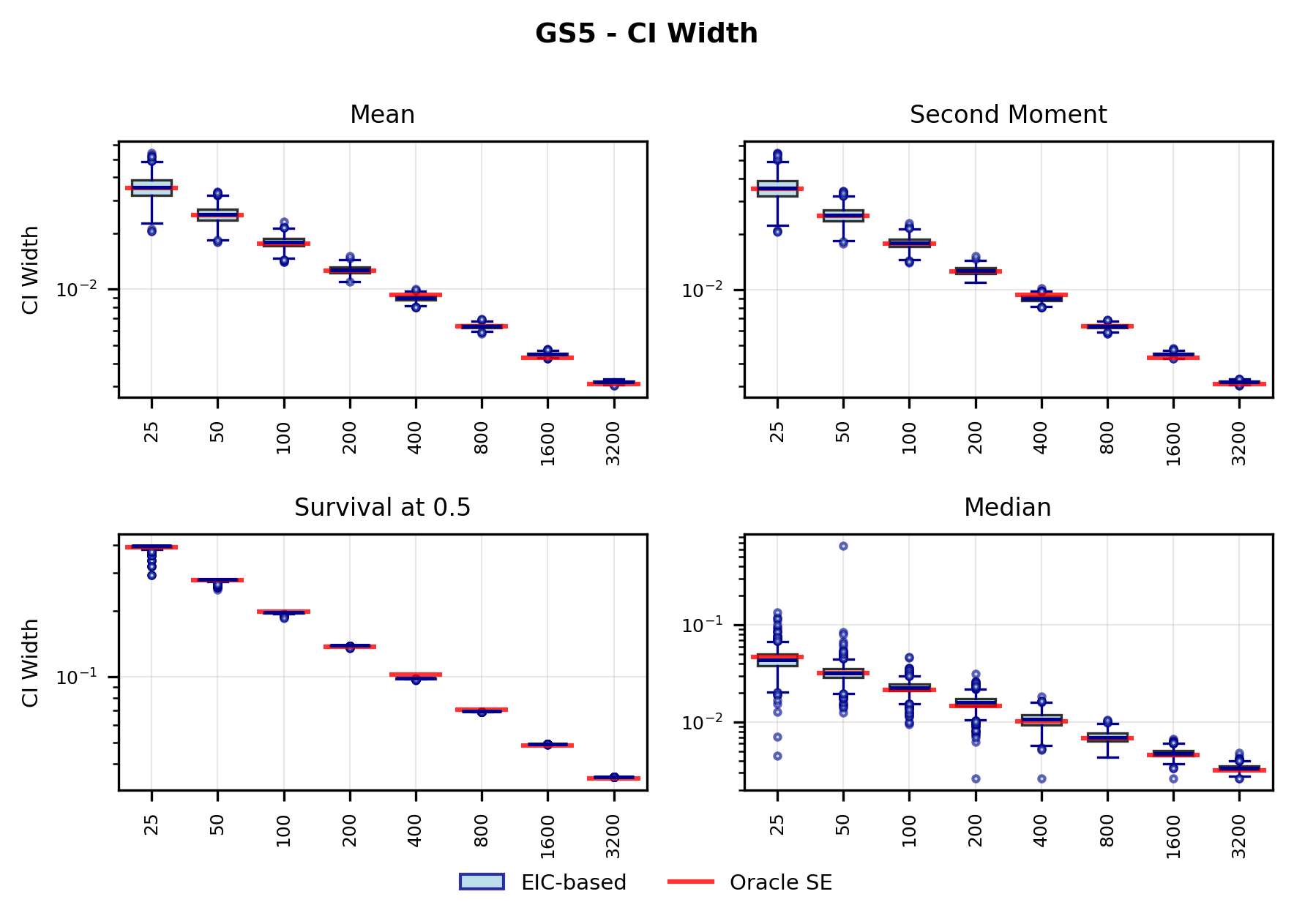}
    \caption{EIC\textendash based variance estimation for GS5: CI width.}
\end{figure}

\begin{figure}[H]
    \centering
    \includegraphics[width=0.75\textwidth]{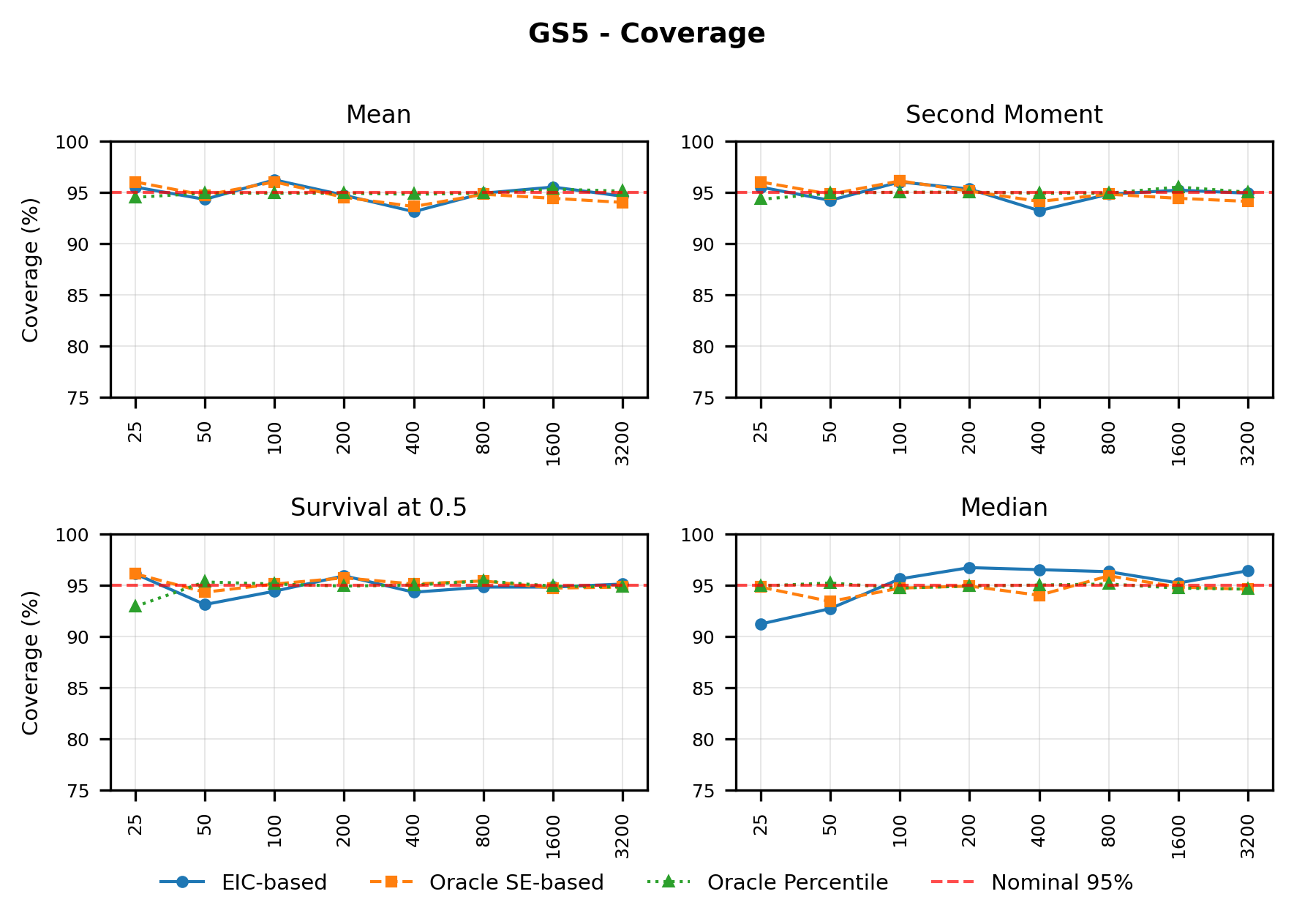}
    \caption{EIC\textendash based variance estimation for GS5: coverage.}
\end{figure}

\begin{figure}[H]
    \centering
    \includegraphics[width=0.75\textwidth]{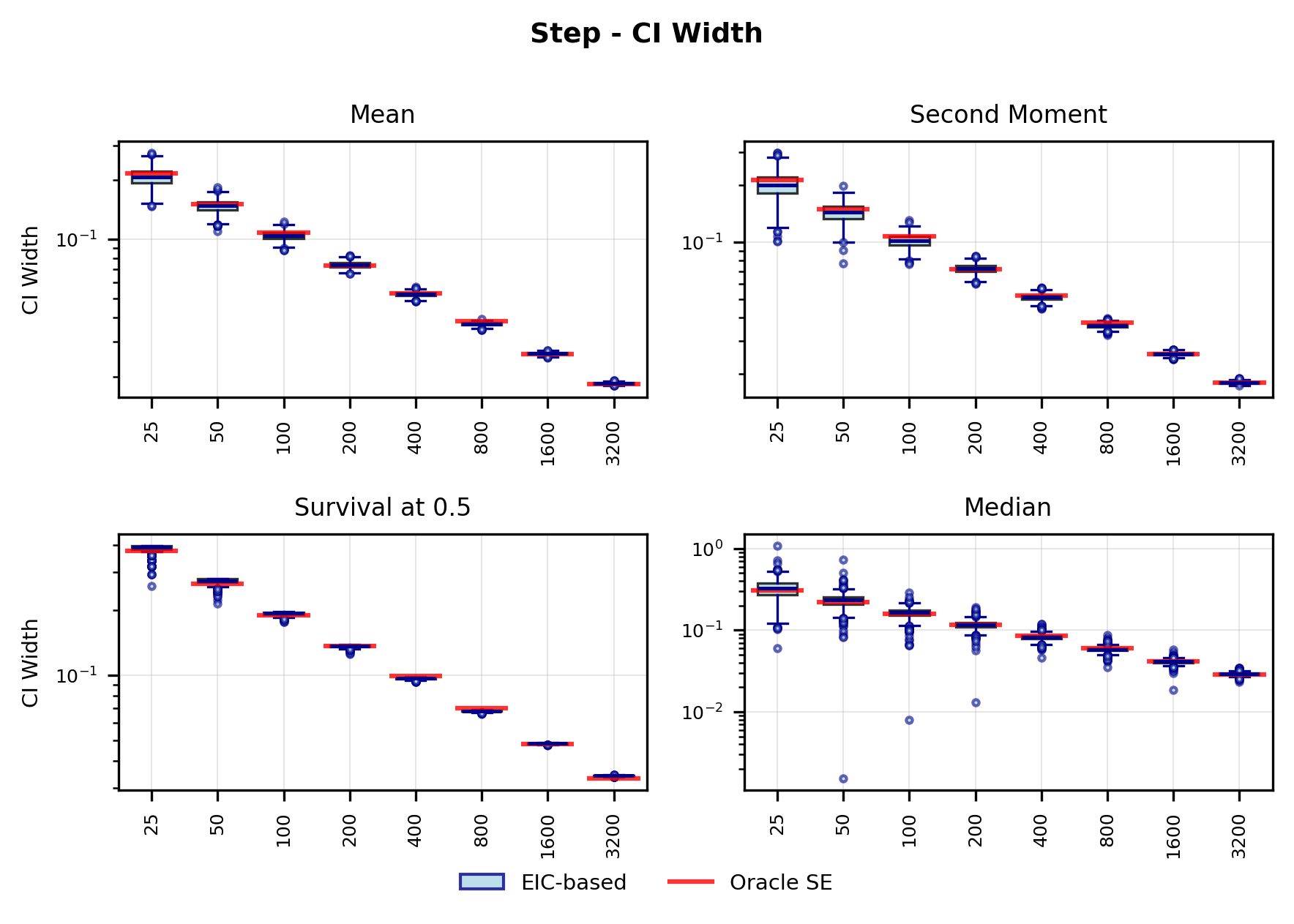}
    \caption{EIC\textendash based variance estimation for Step: CI width.}
\end{figure}

\begin{figure}[H]
    \centering
    \includegraphics[width=0.75\textwidth]{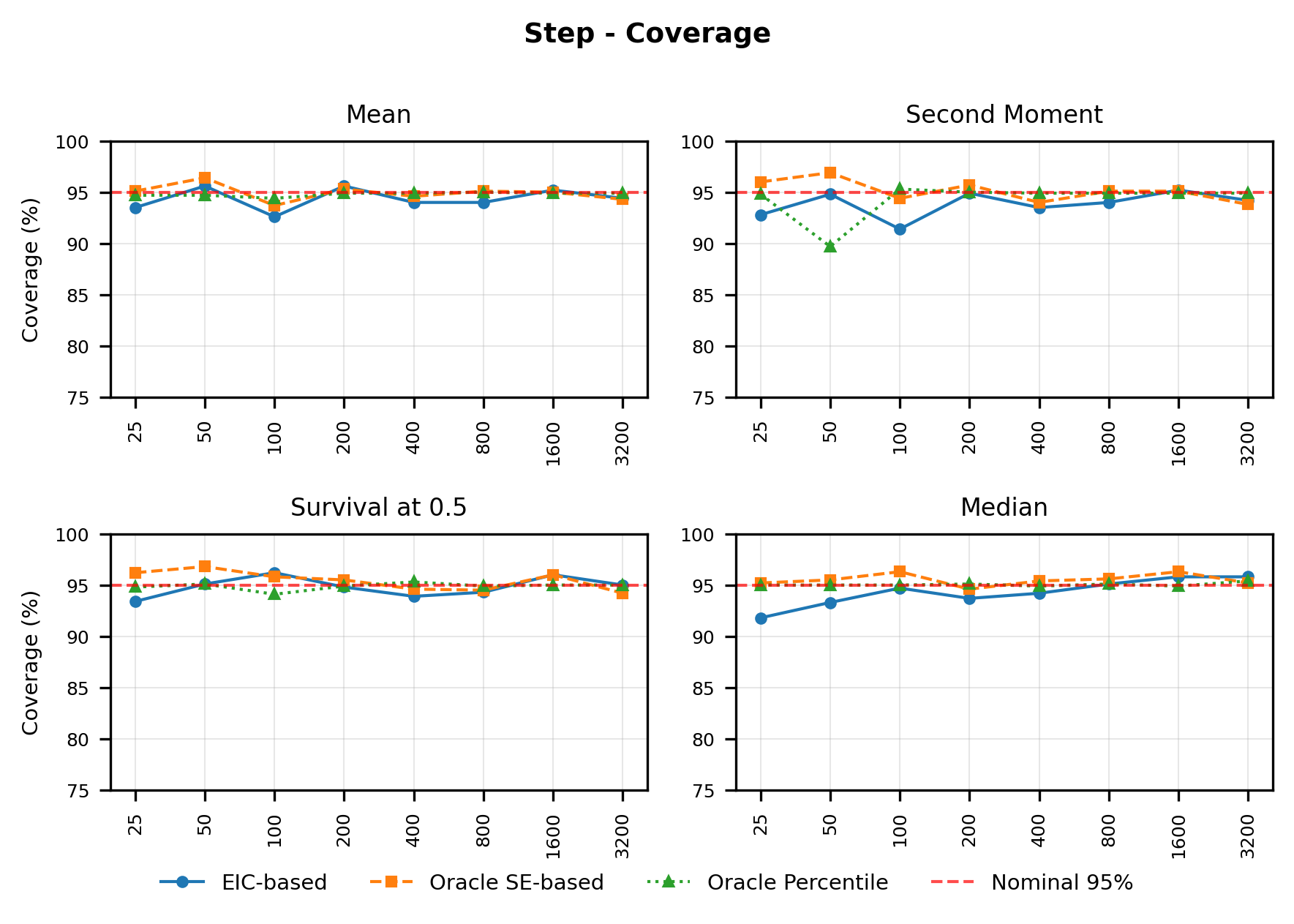}
    \caption{EIC\textendash based variance estimation for Step: coverage.}
\end{figure}

\begin{figure}[H]
    \centering
    \includegraphics[width=0.75\textwidth]{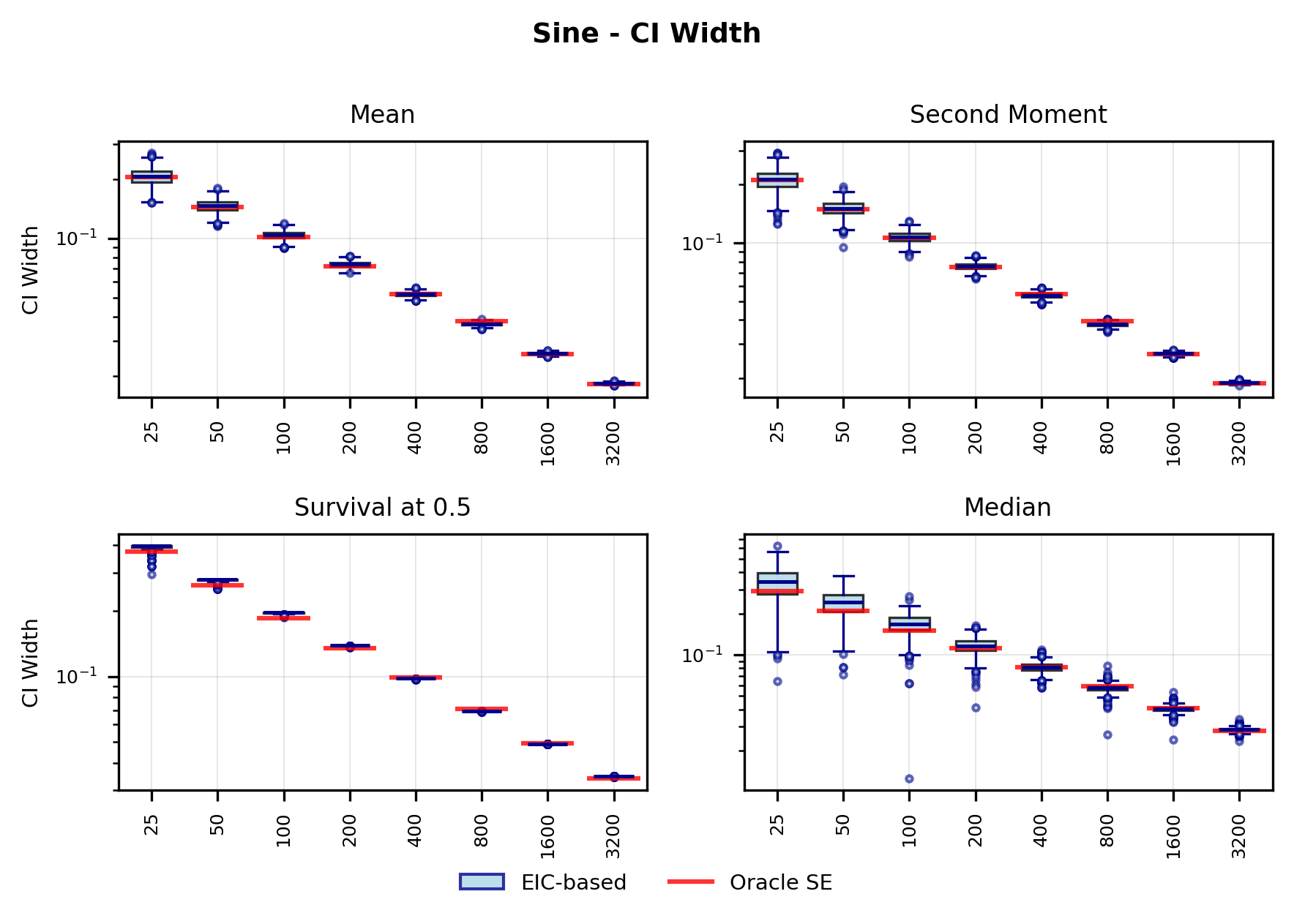}
    \caption{EIC\textendash based variance estimation for Sine: CI width.}
\end{figure}

\begin{figure}[H]
    \centering
    \includegraphics[width=0.75\textwidth]{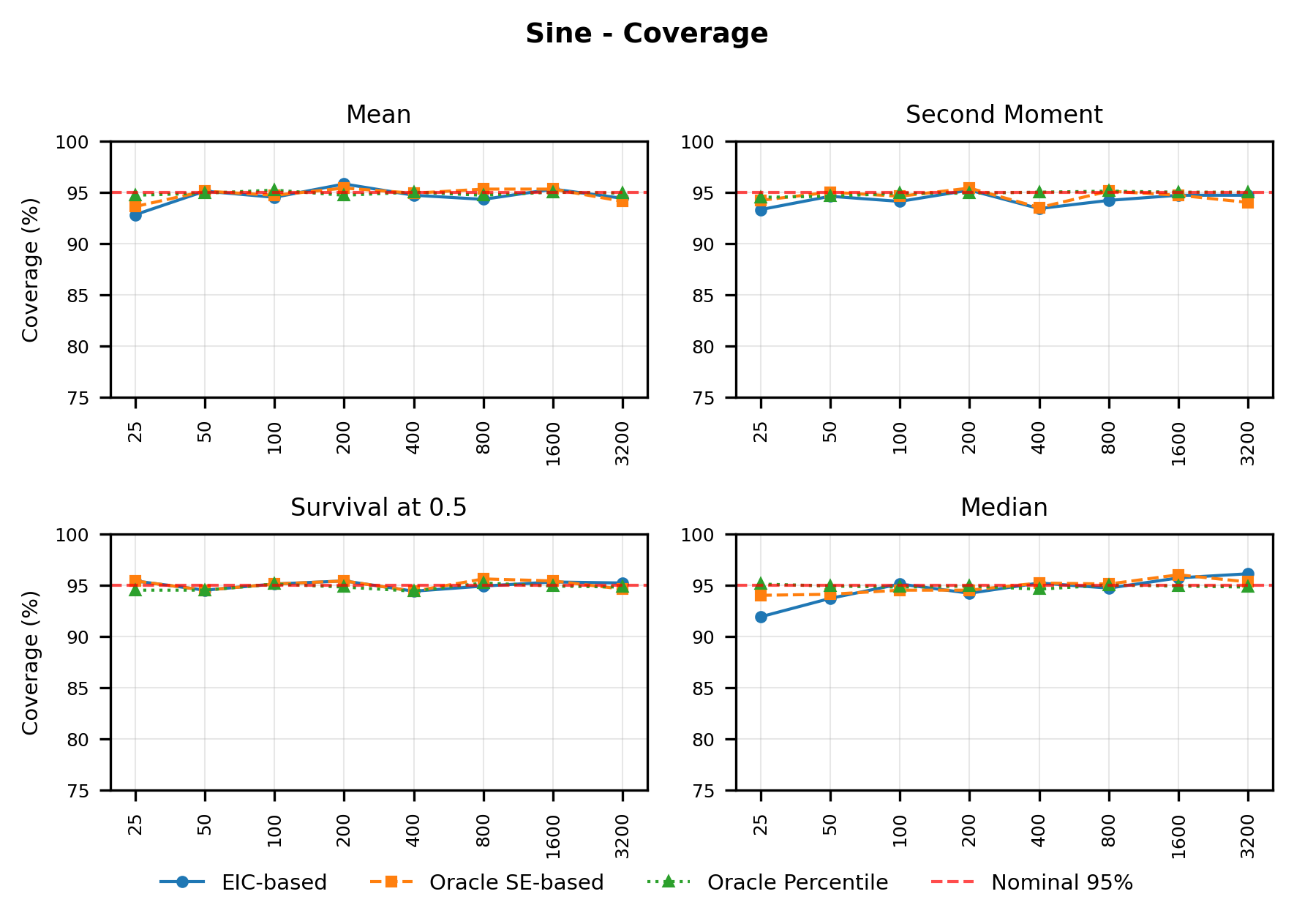}
    \caption{EIC\textendash based variance estimation for Sine: coverage.}
\end{figure}

\clearpage

\newpage

\section{Optimization Algorithm Performance Analysis} \label{app:knot-selection}

This appendix provides comprehensive visualization of optimization algorithm performance across all six data-generating processes (DGPs) and basis orders. The figures show both knot selection behavior and loss convergence patterns for four optimization algorithms: FISTA, ProximalAdaGrad, ProximalNewton, and ProximalNewtonLBFGS, compared against the \texttt{CVXPY} reference solution.

\subsection{Notation and setup}
We denote by $n$ the number of samples, $G$ the number of midpoint intervals used for normalization on $[0,1]$, $k \in \{0,1,2\}$ the basis order, $K$ the total number of basis coefficients (intercept, $k$ polynomial terms, and $M$ truncated\,-\,power terms), $s$ the active set size used in coordinate descent, and $L_k$ the number of objective evaluations performed by backtracking at iteration $k$. Throughout, matrix\,-\,vector multiplies with dense $a\times b$ arrays are costed as $2ab$ FLOPs.

\subsection{Per-iteration FLOP derivations}
We summarize implementation\,-\,aware FLOP counts for each algorithm; see the project note  \texttt{experiments/compare\_knot\_selection/algorithm\_flop\_per\_iter.md} in the experiment git repo for additional commentary and logging details.

\paragraph{FISTA.} A forward pass on data and midpoints and a backward pass via autograd each cost $\Theta\big((n+G)K\big)$, with an additional $\Theta(GK)$ for the exact normalization update. Dominant term: $\Theta\big((n+G)K\big)$.

\paragraph{Proximal AdaGrad.} Computing the gradient uses $\Theta\big(nK + GK\big)$; forming the diagonal Hessian via variance on midpoints contributes another $\Theta(GK)$ (tighter bound) to $\Theta(3GK)$ (conservative). Including the normalization step yields a dominant $\Theta\big((n+G)K\big)$.

\paragraph{Proximal Newton (full).} Each iteration forms the gradient in $\Theta\big((n+G)K\big)$, the Hessian as a weighted covariance on midpoints in $\Theta\big(GK^2\big)$, solves the proximal subproblem via coordinate descent in $\Theta\big(sK^2\big)$, and performs $L_k$ objective evaluations contributing $\Theta\big(L_k (n+G)K\big)$; normalization adds $\Theta(GK)$. Dominant term: $\Theta\big(GK^2 + sK^2\big) + \Theta\big((1+L_k)(n+G)K\big)$.

\paragraph{Proximal Newton L\,-\,BFGS.} Replacing the full Hessian with an L\,-\,BFGS update avoids the $GK^2$ term; per iteration is dominated by the gradient and line search plus normalization: $\Theta\big((1+L_k)(n+G)K\big)$.

\subsection{From iterations to cumulative FLOPs}
Given per\,-\,iteration FLOP estimates $F(k)$, cumulative FLOPs up to iteration $t$ are $C(t) = \sum_{k=1}^t F(k)$. Our visualization utilities infer $L_k$ from logged step sizes $\alpha_k$ (with backtracking factor $\beta=0.5$) and use consistent baselines at iteration/FLOP zero. See the note for practical defaults and alignment details.

\begin{figure}[H]
    \centering
    \includegraphics[width=.49\textwidth]{resources/optimization_algorithms/per_iter/TruncatedNormal_order_2_loss_per_iter.png}
    \hfill
    \includegraphics[width=.49\textwidth]{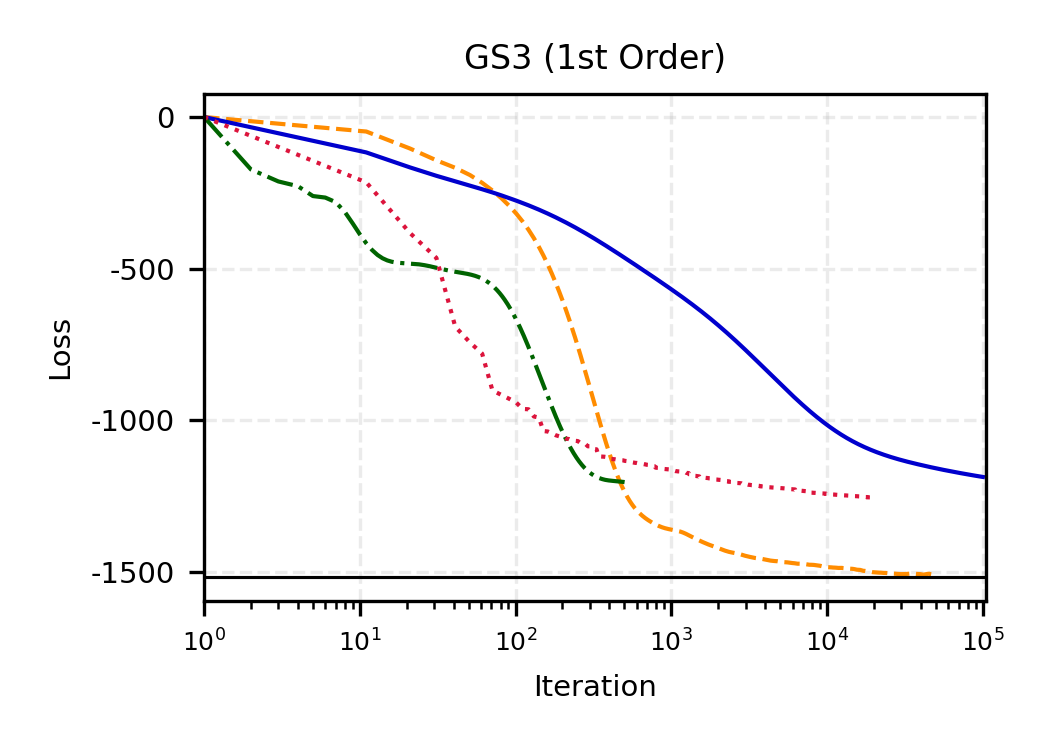}
    
    \includegraphics[width=.49\textwidth]{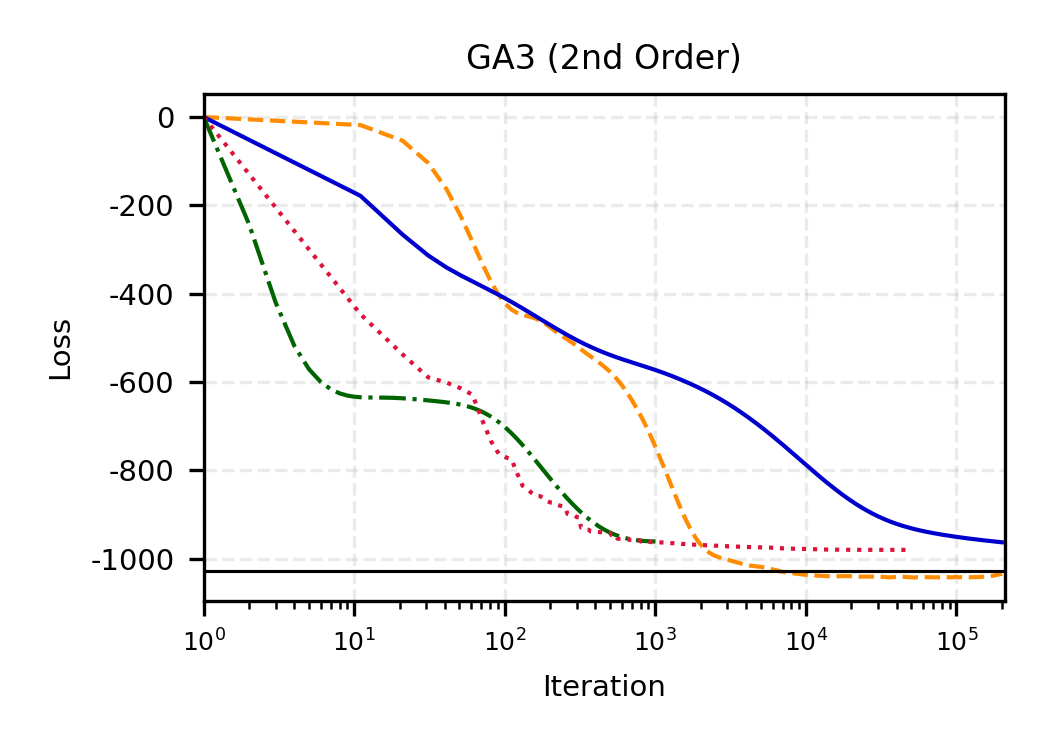}
    \hfill
    \includegraphics[width=.49\textwidth]{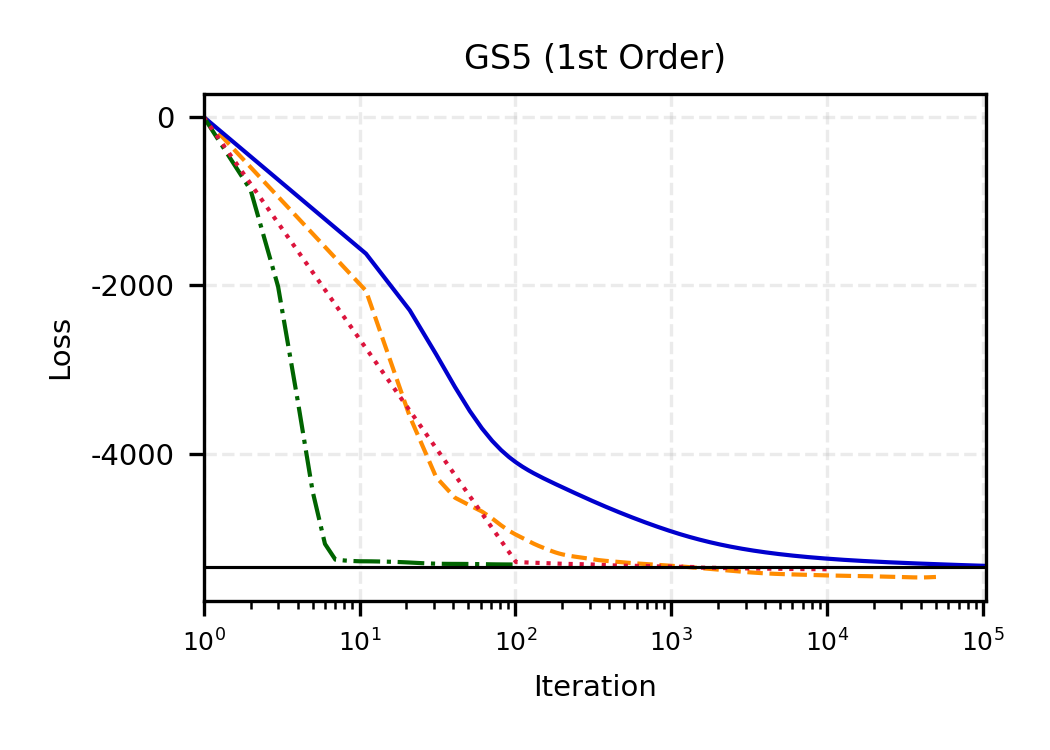}
    
    \includegraphics[width=.49\textwidth]{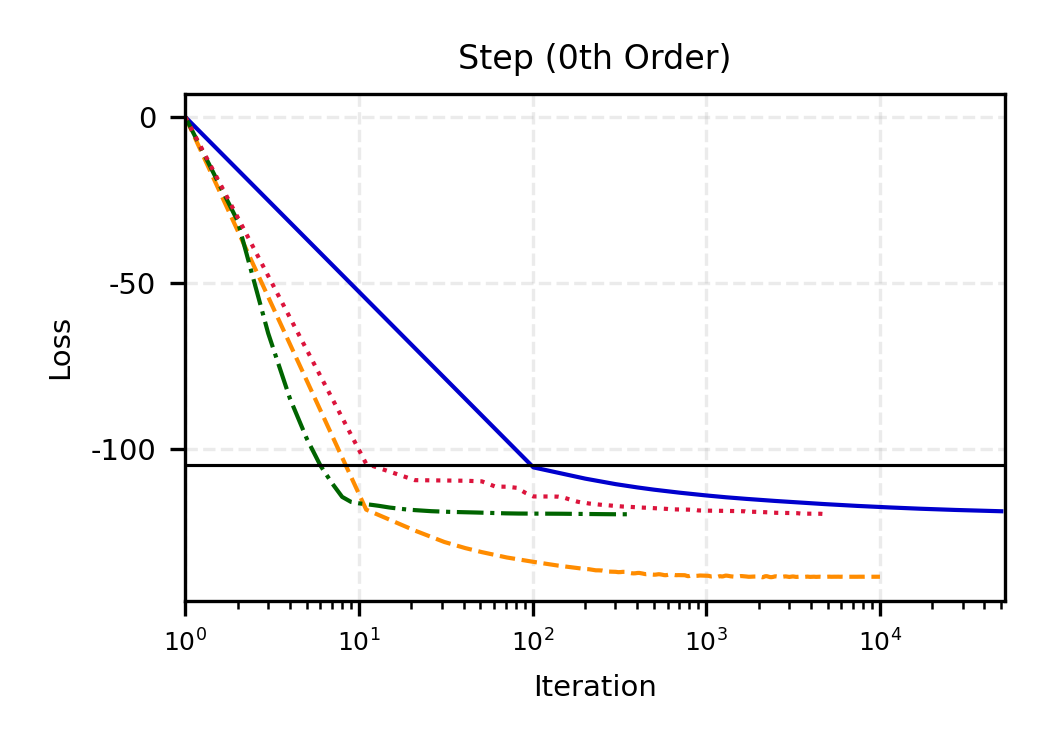}
    \hfill
    \includegraphics[width=.49\textwidth]{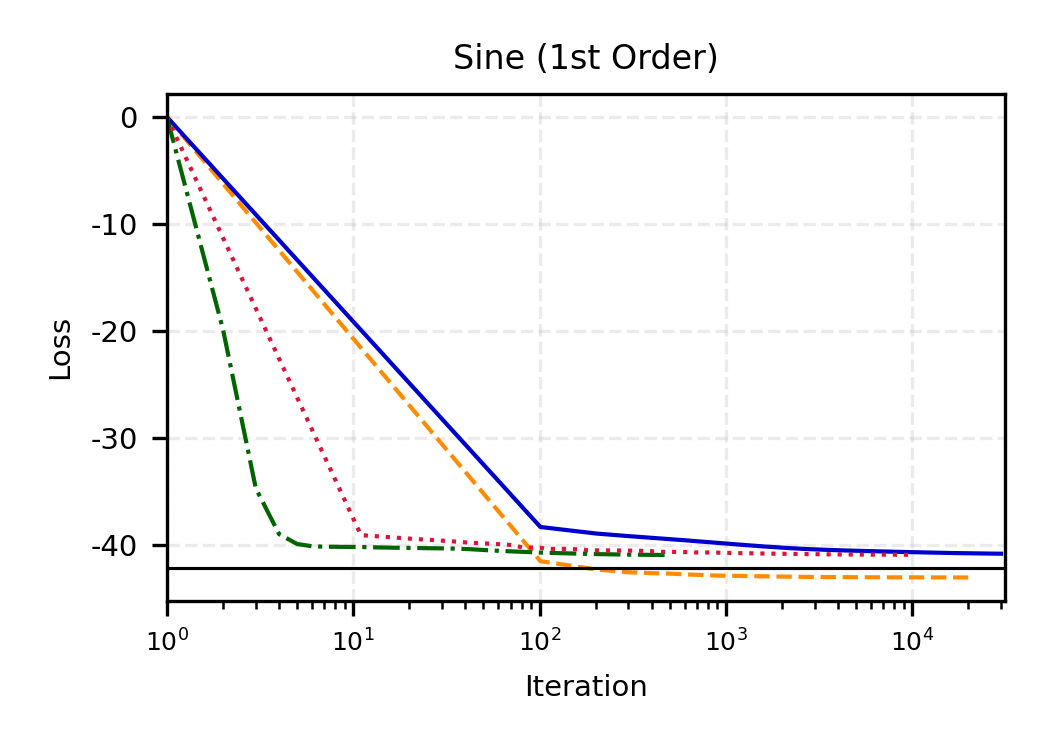}
    
    \caption{Per-iteration loss convergence for optimization algorithms across six DGPs. Panels (row-wise, left to right): (a) TN, (b) GS3, (c) GA3, (d) GS5, (e) Step, (f) Sine.}
    \label{fig:opt-per-iter-loss}
    \end{figure}
    
    \begin{figure}[H]
    \centering
    \includegraphics[width=.49\textwidth]{resources/optimization_algorithms/per_flop/TruncatedNormal_order_2_loss_per_flop.png}
    \hfill
    \includegraphics[width=.49\textwidth]{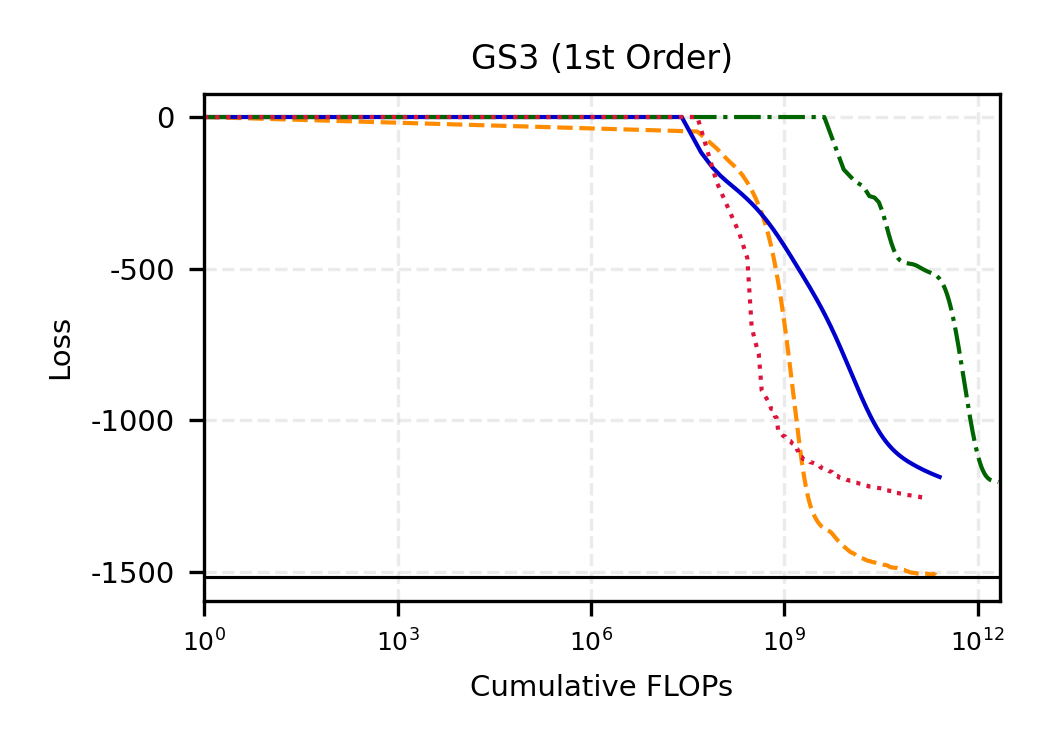}
    
    \includegraphics[width=.49\textwidth]{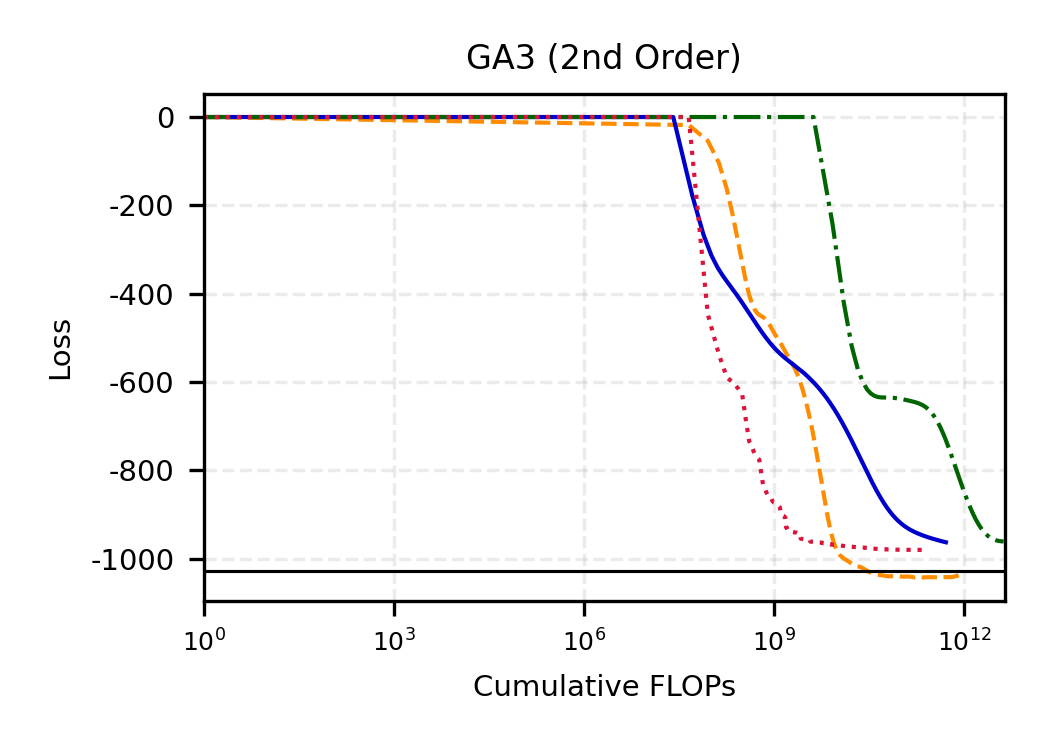}
    \hfill
    \includegraphics[width=.49\textwidth]{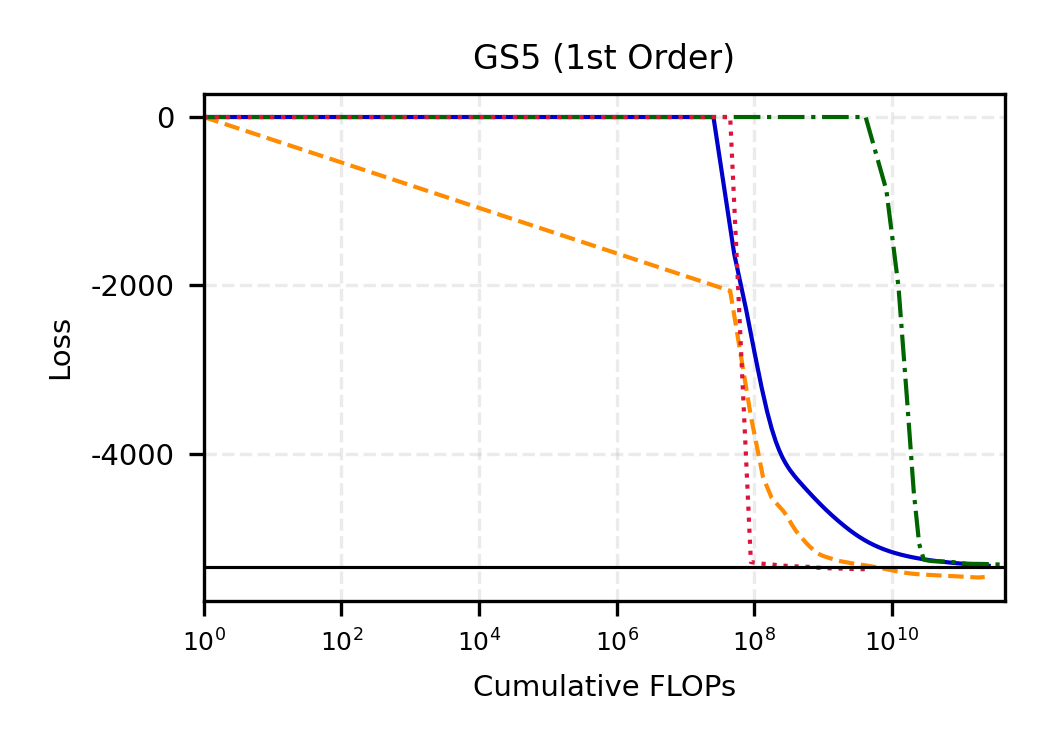}
    
    \includegraphics[width=.49\textwidth]{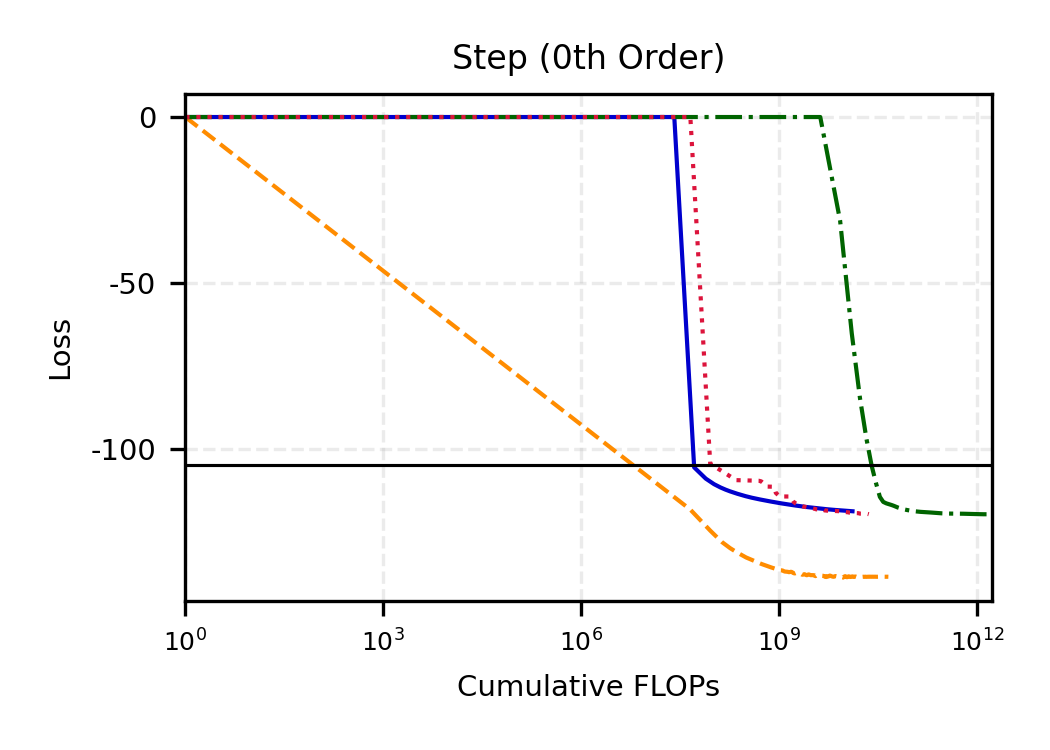}
    \hfill
    \includegraphics[width=.49\textwidth]{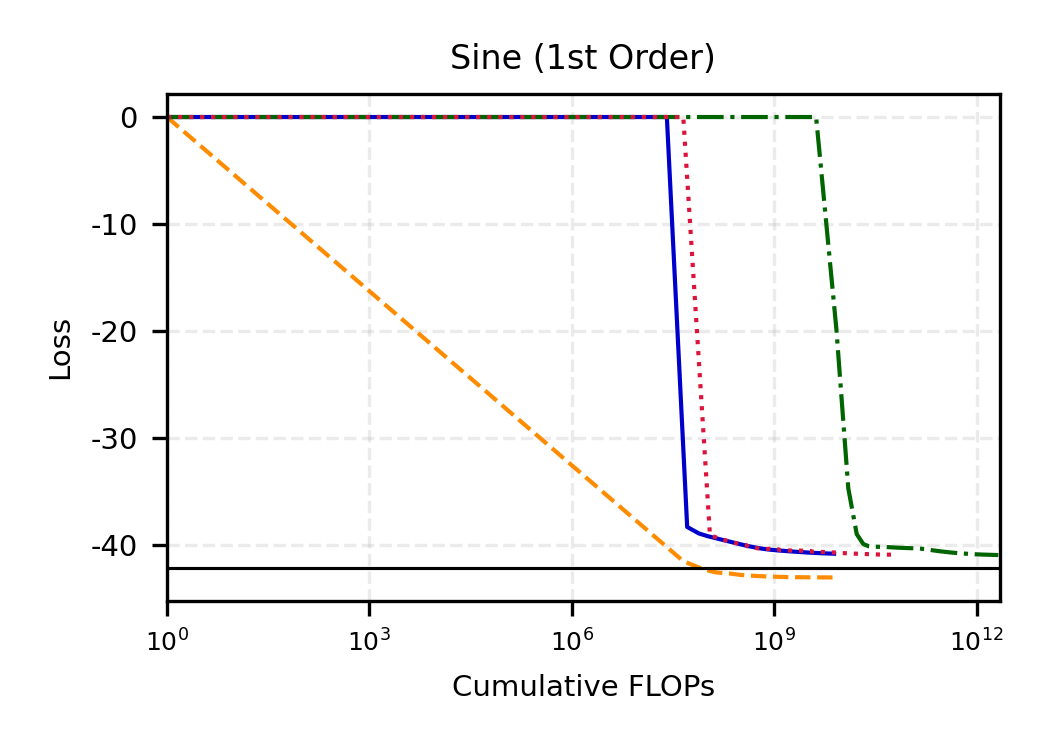}
    
    \caption{Per-FLOP-normalized loss convergence across six DGPs. Panels (row-wise, left to right): (a) TN, (b) GS3, (c) GA3, (d) GS5, (e) Step, (f) Sine.}
    \label{fig:opt-per-flop-loss}
 \end{figure}
 
 \begin{figure}[H]
 \centering
 \includegraphics[width=.49\textwidth]{resources/optimization_algorithms/per_iter/TruncatedNormal_order_2_knot_selection.png}
 \hfill
 \includegraphics[width=.49\textwidth]{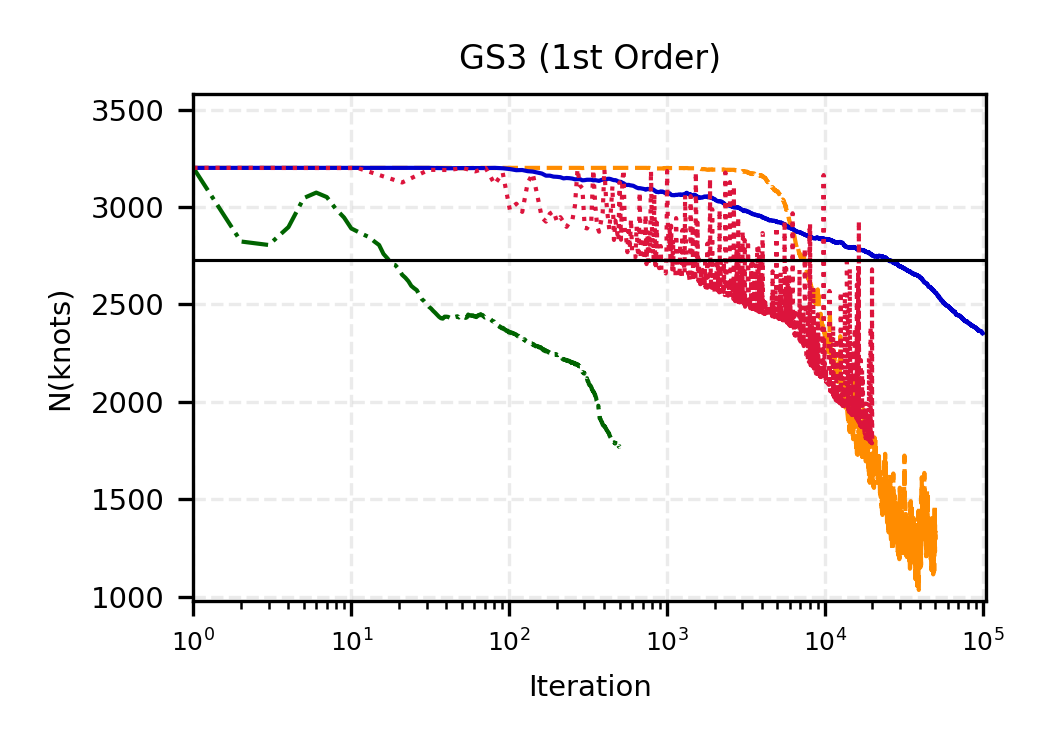}
 
 \includegraphics[width=.49\textwidth]{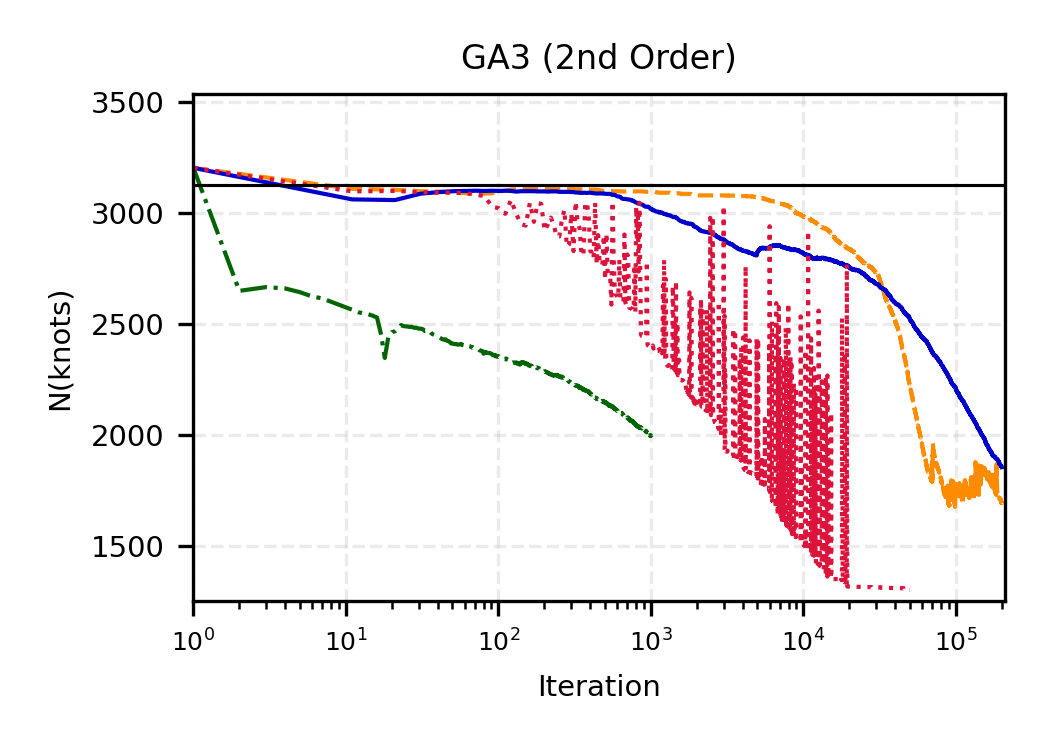}
 \hfill
 \includegraphics[width=.49\textwidth]{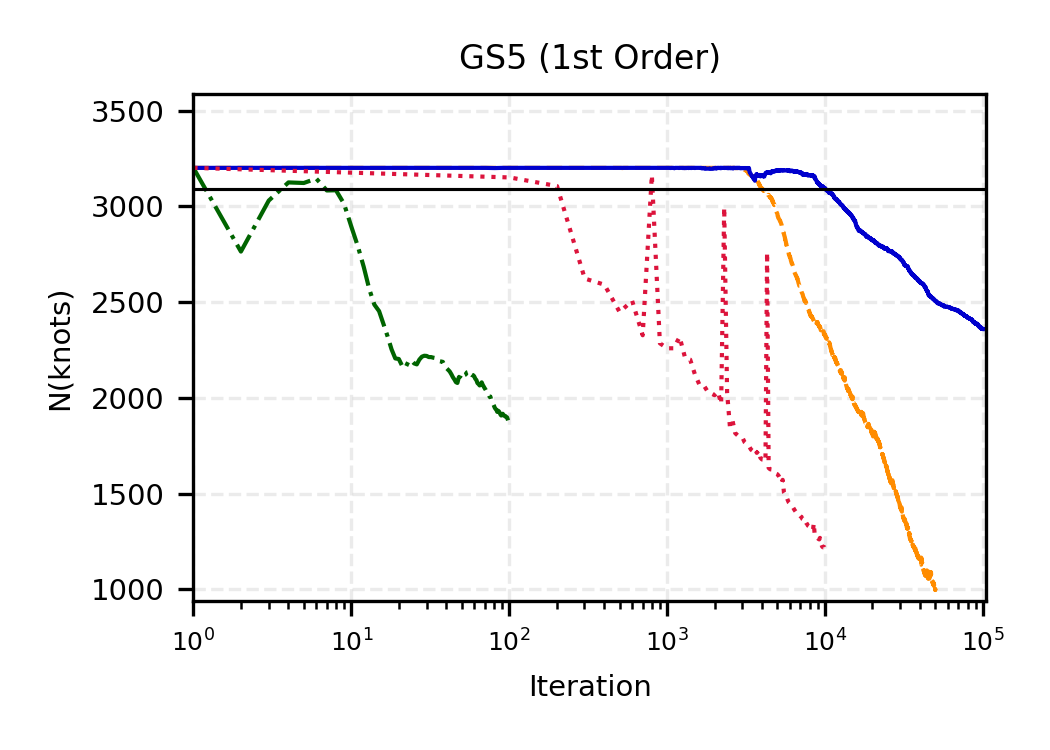}
 
 \includegraphics[width=.49\textwidth]{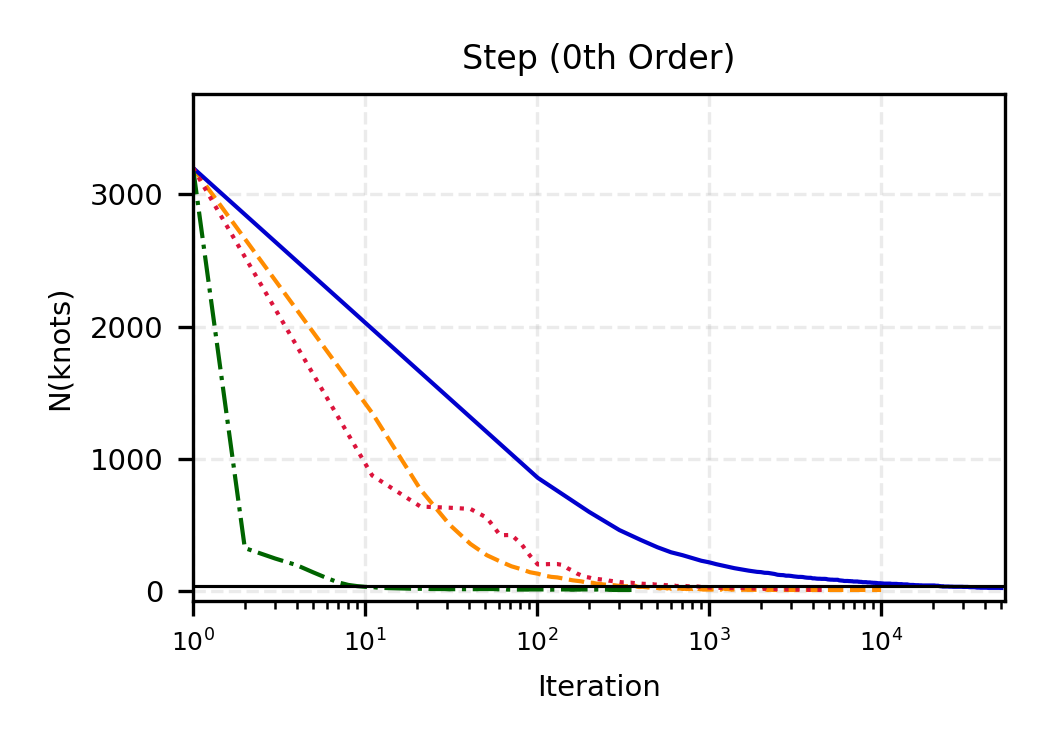}
 \hfill
 \includegraphics[width=.49\textwidth]{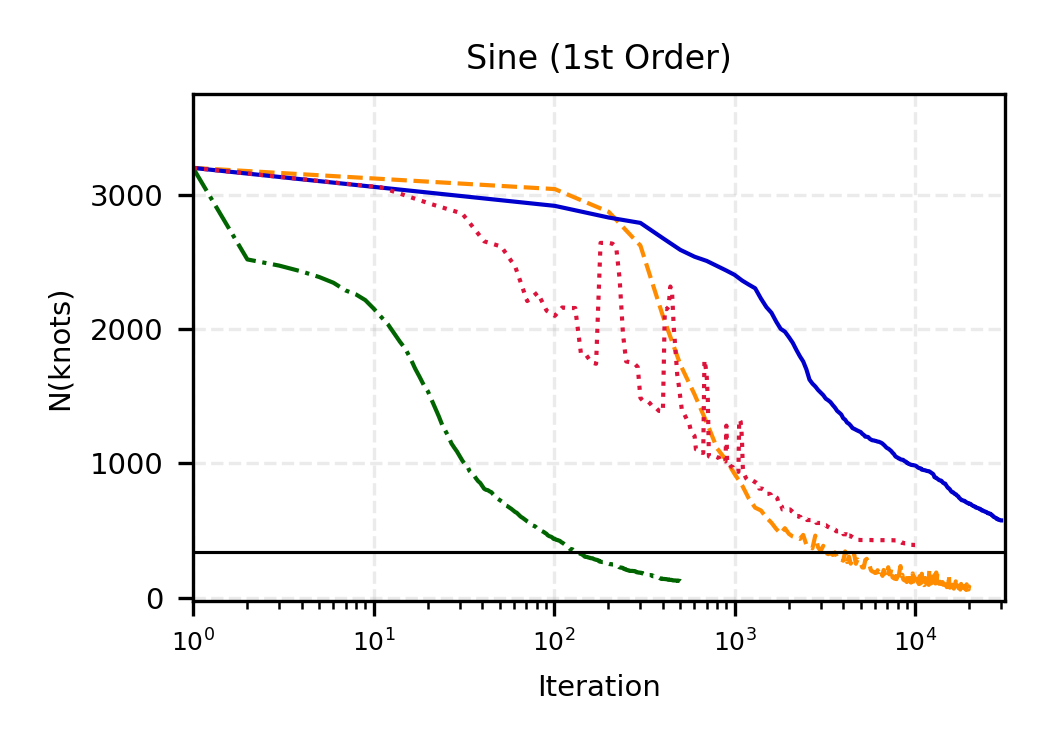}
 
 \end{figure}
 
 \begin{figure}[H]
    \centering
    \includegraphics[width=.9\textwidth]{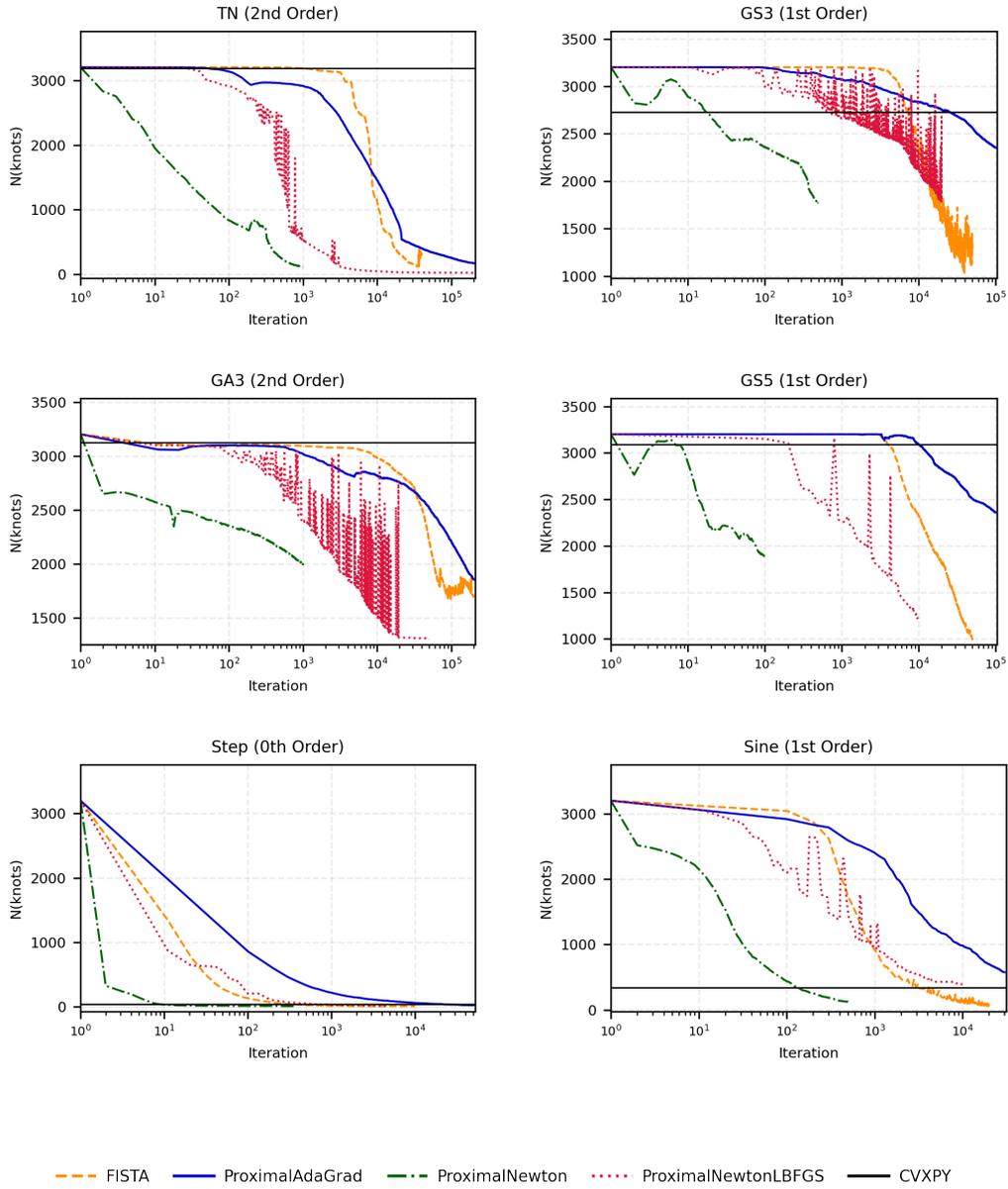}
    \caption{Per-iteration knot selection for optimization algorithms across six DGPs. Panels (row-wise, left to right): (a) TN, (b) GS3, (c) GA3, (d) GS5, (e) Step, (f) Sine.}
 \label{fig:opt-per-iter-knot}
 \end{figure}
 
 \begin{figure}[H]
 \centering
 \includegraphics[width=.49\textwidth]{resources/optimization_algorithms/per_flop/TruncatedNormal_order_2_knot_selection_flops.png}
 \hfill
 \includegraphics[width=.49\textwidth]{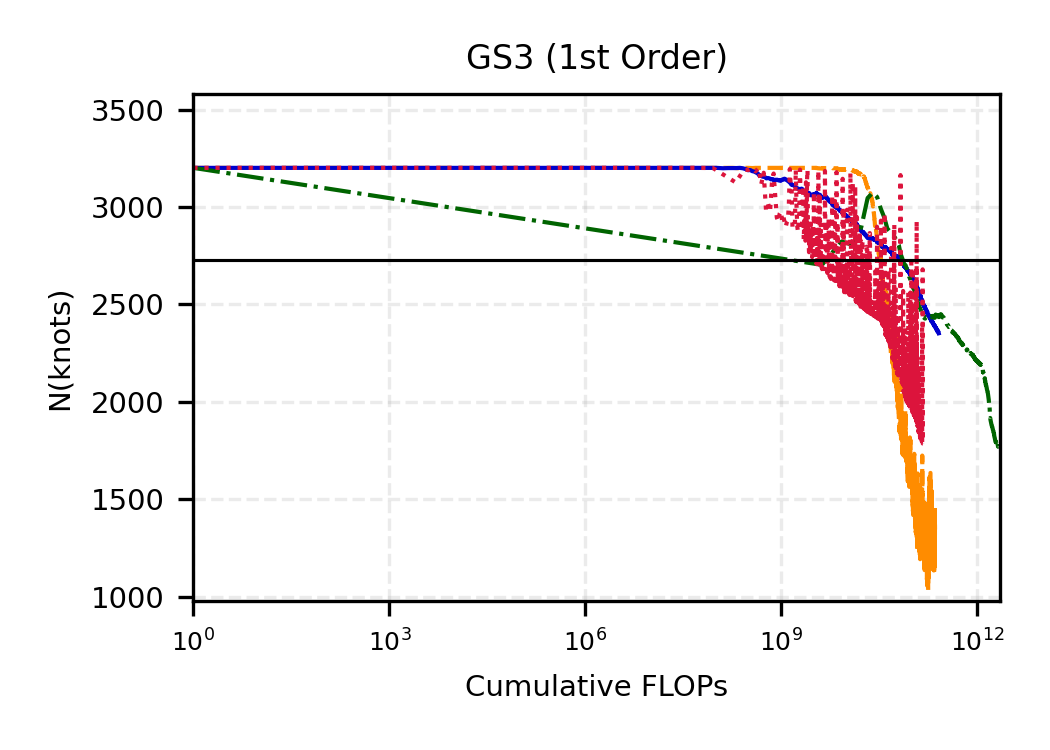}
 
 \includegraphics[width=.49\textwidth]{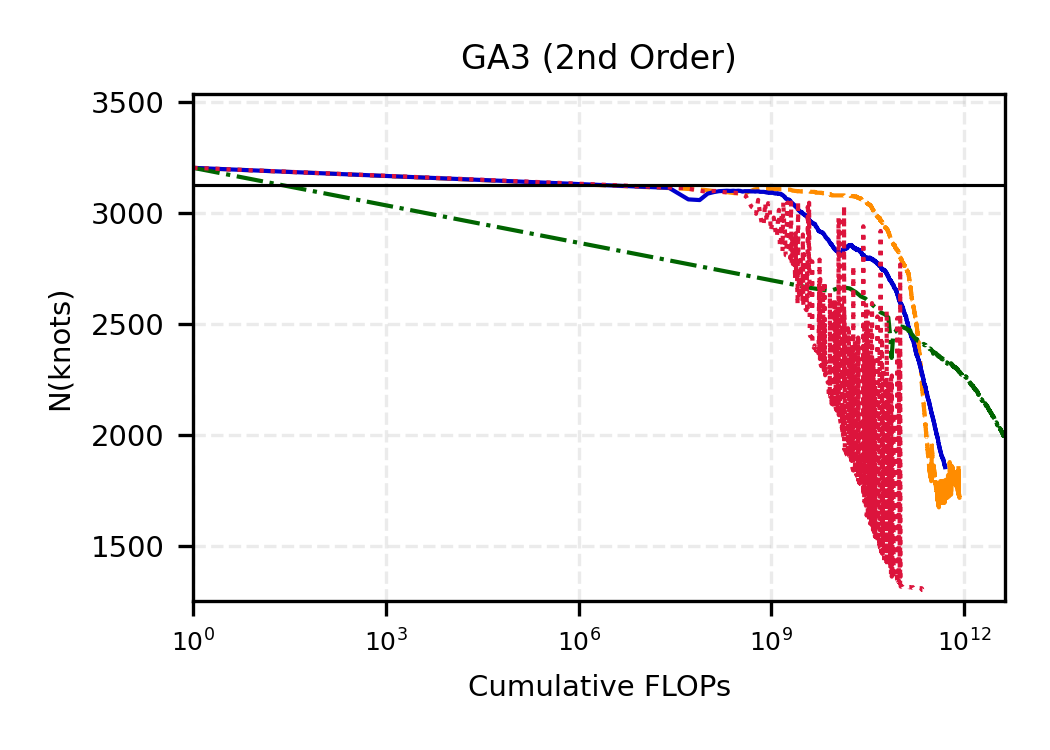}
 \hfill
 \includegraphics[width=.49\textwidth]{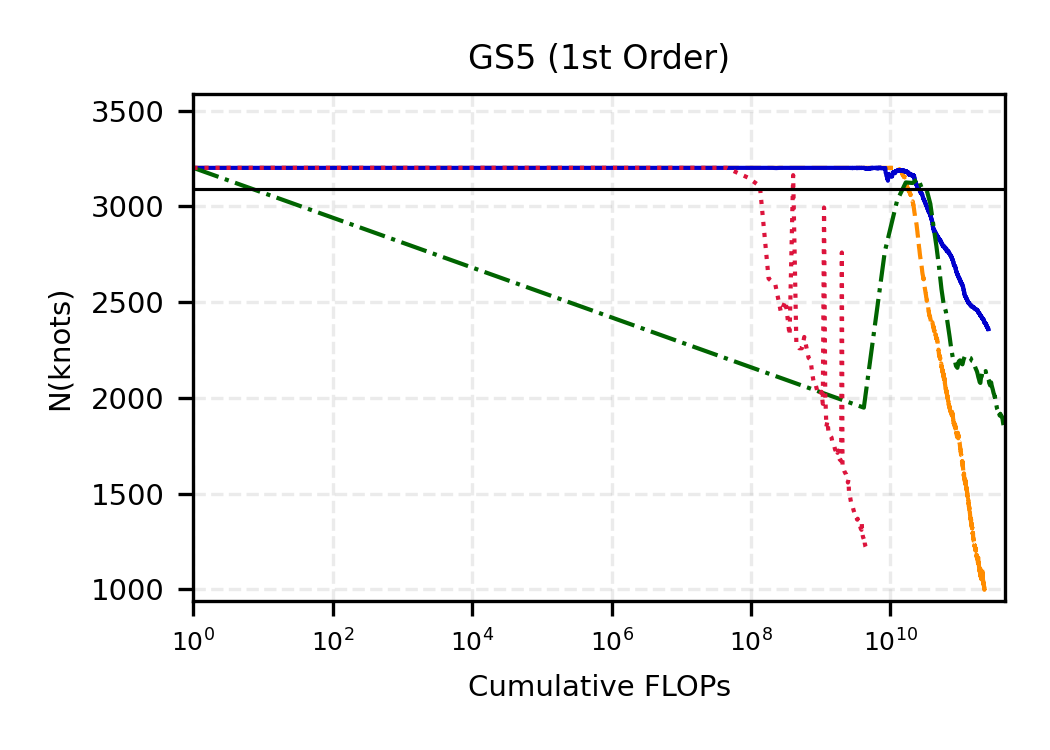}
 
 \includegraphics[width=.49\textwidth]{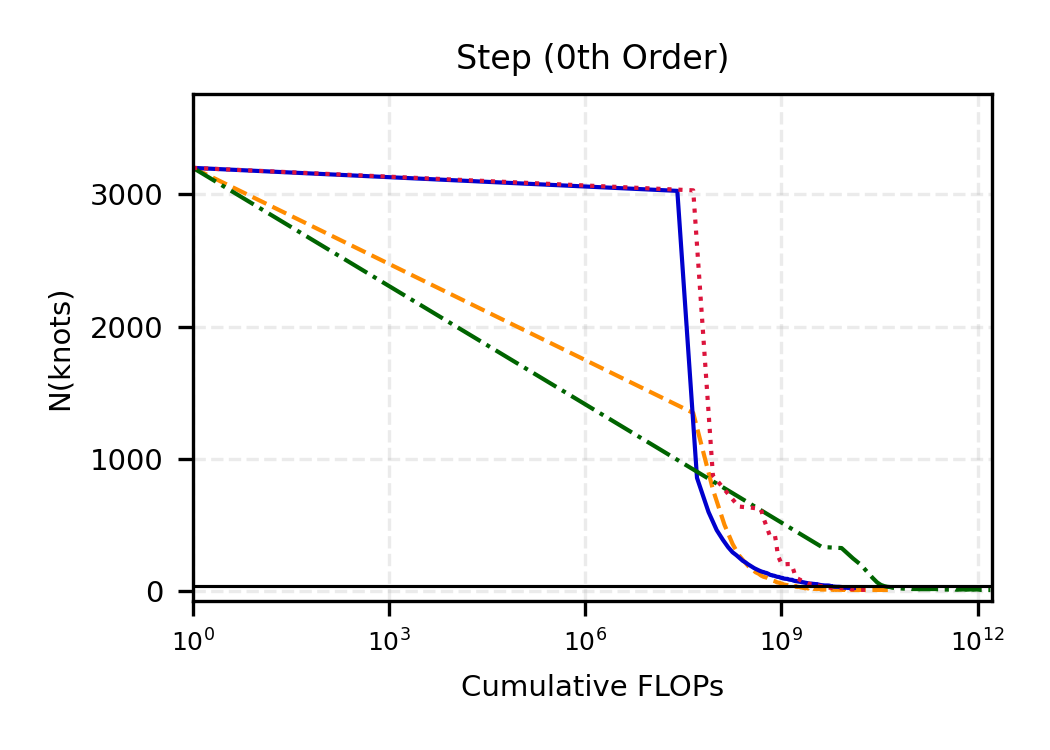}
 \hfill
 \includegraphics[width=.49\textwidth]{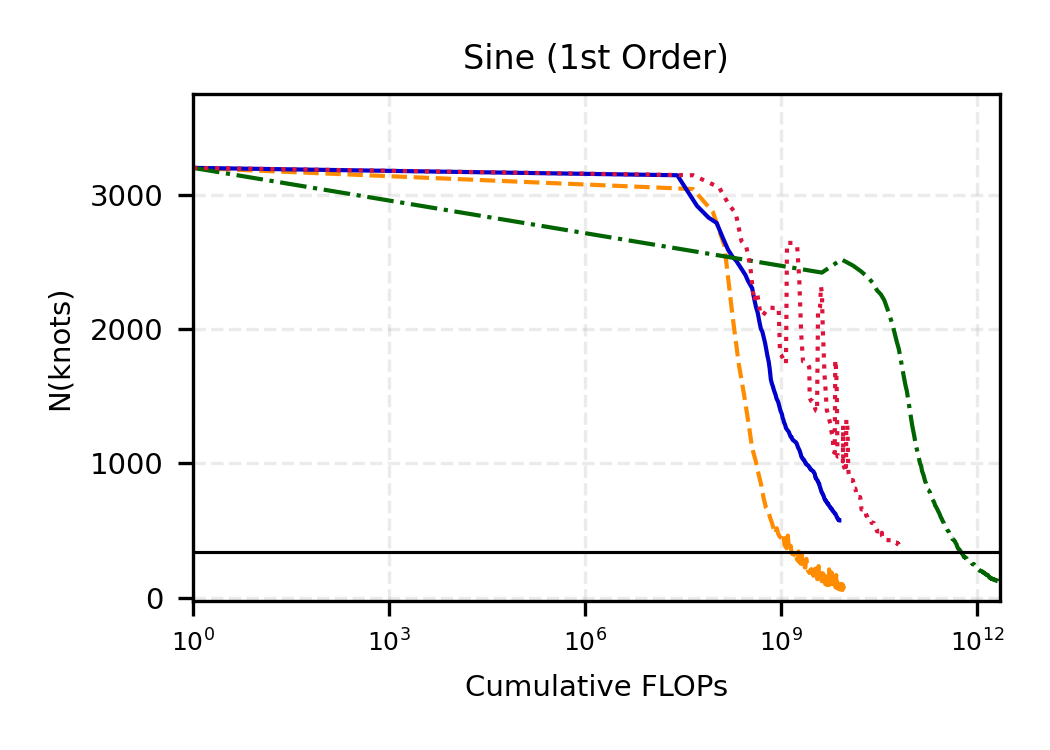}
 \end{figure}
 
 \begin{figure}[H]
    \centering
    \includegraphics[width=.9\textwidth]{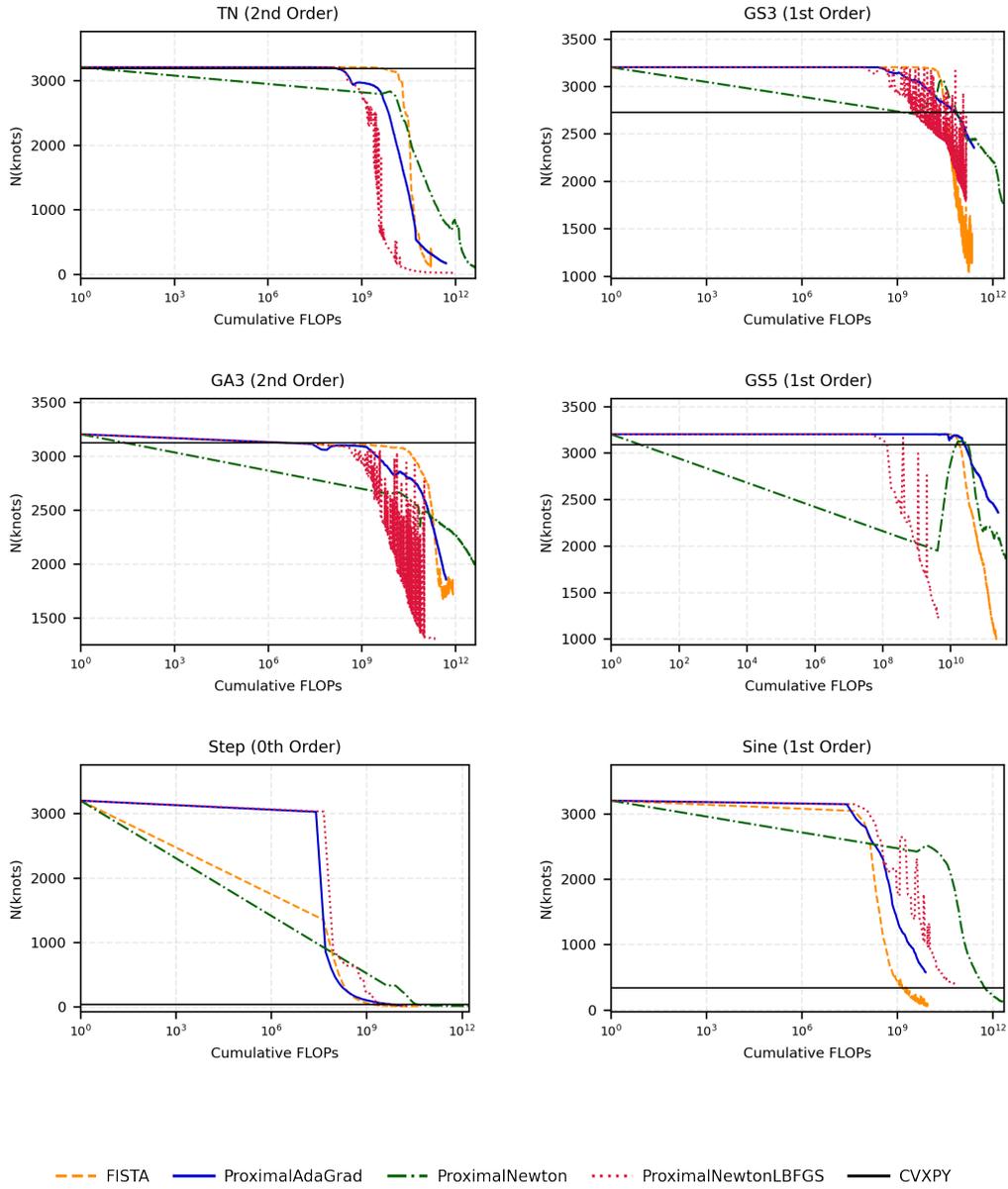}
    \caption{Per-FLOP-normalized knot selection across six DGPs. Panels (row-wise, left to right): (a) TN, (b) GS3, (c) GA3, (d) GS5, (e) Step, (f) Sine.}
    \label{fig:opt-per-flop-knot}
 \end{figure}

\onecolumn

\end{document}